
\documentclass[fleqn,preprint,3p,a4paper]{elsarticle}




\usepackage{amssymb}
\usepackage{amsmath}
\usepackage{amsthm}
\usepackage{dcolumn}
\usepackage{endnotes}
\usepackage{tabularx}
\usepackage[matrix,arrow]{xy}
\usepackage{wasysym}

\newtheorem{theorem}{Theorem}[section]
\newtheorem{proposition}[theorem]{Proposition}
\newtheorem{lemma}[theorem]{Lemma}
\newtheorem{corollary}[theorem]{Corollary}

\theoremstyle{definition}
\newtheorem{definition}[theorem]{Definition}
\newtheorem{example}[theorem]{Example}
\newtheorem{remark}[theorem]{Remark}
\newtheorem{question}[theorem]{Question}



\newcommand{\ir}{{\mathsf{Irr}}}

\newcommand{\cl}{{\rm cl}}
\newcommand{\ii}{{\rm int}}

\newcommand{\ua}{\mathord{\uparrow}}
\newcommand{\da}{\mathord{\downarrow}}

\newcommand{\mk}{\mathord{\mathsf{K}}}
\newcommand{\wdd}{\mathord{\mathsf{WD}}}
\newcommand{\md}{\mathord{\mathsf{D}}}
\newcommand{\dc}{\mathord{\mathsf{DC}}}
\newcommand{\kf}{\mathord{\mathsf{RD}}}

\journal{Topology and its applications}

\begin{document}

\begin{frontmatter}



\title{On $T_0$ spaces determined by well-filtered spaces\tnoteref{t1}}
\tnotetext[t1]{This research was supported by the National Natural Science Foundation of China (Nos. 11661057, 11361028,
61300153, 11671008, 11701500, 11626207); the Natural Science Foundation of Jiangxi Province , China (No. 20192ACBL20045); NSF Project of Jiangsu Province,
China (BK20170483); and NIE ACRF (RI 3/16 ZDS), Singapore}

\author[X. Xu]{Xiaoquan Xu\corref{mycorrespondingauthor}}
\cortext[mycorrespondingauthor]{Corresponding author}
\ead{xiqxu2002@163.com}
\address[X. Xu]{School of Mathematics and Statistics,
Minnan Normal University, Zhangzhou 363000, China}
\author[C. Shen]{Chong Shen}
\address[C. Shen]{School of Mathematics and Statistics,
Beijing Institute of Technology, Beijing 100081, China}
\ead{shenchong0520@163.com}
\author[X. Xi]{Xiaoyong Xi}
\ead{littlebrook@jsnu.edu.cn}
\address[X. Xi]{School of mathematics and Statistics,
Jiangsu Normal University, Xuzhou 221116, China}
\author[D. Zhao]{Dongsheng Zhao}
\address[D. Zhao]{Mathematics and Mathematics Education,
National Institute of Education Singapore, \\
Nanyang Technological University,
1 Nanyang Walk, Singapore 637616}
\ead{dongsheng.zhao@nie.edu.sg}

\begin{abstract}
  We first introduce and study two new classes of subsets in $T_0$ spaces - Rudin sets and $\wdd$ sets lying between the class of all closures of directed subsets and that of irreducible closed subsets. Using such subsets, we define three new types of topological  spaces - $\mathsf{DC}$ spaces, Rudin spaces and $\wdd$ spaces. The class of Rudin spaces lie between the class of  $\wdd$ spaces and that of  $\dc$ spaces, while the class of  $\dc$ spaces lies  between the class of Rudin spaces and that of sober spaces. Using Rudin sets and $\wdd$ sets, we formulate and prove a number of  new characterizations of well-filtered spaces and sober spaces. For a $T_0$ space $X$, it is proved that $X$ is sober if{}f $X$ is a well-filtered Rudin space if{}f $X$ is a well-filtered $\mathsf{WD}$ space. We also prove that every  locally compact $T_0$ space is a  Rudin space, and every core compact $T_0$ space is a $\wdd$ space. One immediate corollary is that  every core compact well-filtered space is sober, giving a positive answer to Jia-Jung problem. Using $\wdd$ sets, we present a more directed construction of the well-filtered reflections of $T_0$ spaces, and prove that the products of any collection of well-filtered spaces are well-filtered. Our study also leads to a number of problems, whose answering will deepen our understanding of the related spaces and structures.
\end{abstract}

\begin{keyword}
Sober space;  Well-filtered space; Well-filtered determined space; Well-filtered reflection; Smyth power space

\MSC 06B35; 06F30; 54B99; 54D30

\end{keyword}




\end{frontmatter}


\section{Introduction}

In the theory of non-Hausdorff topological spaces, the $d$-spaces, well-filtered spaces and sober spaces form three of the most important classes. Rudin's Lemma is a useful tool in topology and plays a crucial role in domain theory (see [2-9, 12-30]). In recent years, it has been used to study the various aspects of well-filtered spaces and sober spaces, initiated by Heckmann and Keimel
\cite{Klause-Heckmann}. In this paper, inspired by the topological version of Rudin's Lemma by Heckmann and Keimel, Xi and Lawson's work \cite{Xi-Lawson-2017} on well-filtered spaces and our recent works \cite{Shenchon, xuxizhao} on sober spaces and well-filtered reflections of $T_0$ spaces, we introduce and investigate two new classes of subsets in $T_0$ spaces - Rudin sets and $\wdd$ sets lying between the class of all closures of directed subsets and that of irreducible closed subsets. Using such subsets, we introduce and study three new types of topological spaces - directed closure spaces ($\mathsf{DC}$ spaces for short), Rudin spaces and well-filtered determined spaces ($\wdd$ spaces for short). Rudin spaces lie between $\wdd$ spaces and $\dc$ spaces, and $\dc$ spaces lie between Rudin spaces and sober spaces. We shall prove  that closed subspaces, retracts and products of Rudin spaces (resp. $\wdd$ spaces) are again Rudin spaces (resp., $\wdd$ spaces). Using Rudin sets and $\wdd$ sets, we formulate and prove a number of  new characterizations of well-filtered spaces and sober spaces. For a $T_0$ space $X$, it is proved that $X$ is sober if{}f  $X$ is a well-filtered Rudin space if{}f $X$ is a well-filtered $\wdd$ space. In \cite{E_20182}, Ern\'e proved that in a locally hypercompact $T_0$ space $X$, every irreducible closed subset $A$ of $X$ is the closure of a directed subset of $X$. So locally hypercompact $T_0$ spaces are $\mathsf{DC}$ spaces. Furthermore, we prove that every locally compact $T_0$ space is a  Rudin space and every core compact $T_0$ space is a $\wdd$ space. As a corollary we deduce that every core compact well-filtered space is sober, giving a positive answer to Jia-Jung problem \cite{jia-2018}, which has been independently answered by Lawson and Xi \cite{Lawson-Xi} using a different method.

It is well-known that the category of all sober spaces ($d$-spaces) is reflective in the category of all $T_0$ spaces (see [8, 13, 24-26]). But for quite a long time,
it is not known whether the category of all well-filtered spaces is reflective in the category of all $T_0$ space.
Recently, following Keimel and Lawson's method \cite{Keimel-Lawson}, which originated from Wyler's method \cite{Wyler}, Wu, Xi, Xu and Zhao [9] gave a positive answer to the above problem. Following Ershov's method of constructing the $d$-completion of $T_0$ spaces, Shen, Xi, Xu and Zhao presented a different construction of the well-filtered reflection of $T_0$ spaces. In the current paper, using $\wdd$ sets, we present a more direct construction of the well-filtered reflections of $T_0$ spaces, and prove that products of well-filtered spaces are well-filtered. Some major properties of well-filtered reflections of $T_0$ spaces are investigated. Comparatively, the technique presented in this paper is not just more direct, but also more simple. In addition, it can be directly applied to the general $K$-ifications in the sense of Keimel and Lawson \cite{Keimel-Lawson}. In
a forthcoming article we will use the technique to set up the $K$-ification theory
of $T_0$ spaces. Our study also leads to a number of problems, whose answering will deepen our understanding of the related spaces and structures.

\section{Preliminary}

In this section, we briefly  recall some fundamental concepts and notations that will be used in the paper. Some basic properties of irreducible sets and compact saturated sets are presented.

For a poset $P$ and $A\subseteq P$, let
$\mathord{\downarrow}A=\{x\in P: x\leq  a \mbox{ for some }
a\in A\}$ and $\mathord{\uparrow}A=\{x\in P: x\geq  a \mbox{
	for some } a\in A\}$. For  $x\in P$, we write
$\mathord{\downarrow}x$ for $\mathord{\downarrow}\{x\}$ and
$\mathord{\uparrow}x$ for $\mathord{\uparrow}\{x\}$. A subset $A$
is called a \emph{lower set} (resp., an \emph{upper set}) if
$A=\mathord{\downarrow}A$ (resp., $A=\mathord{\uparrow}A$). Define $A^{\uparrow}=\{x\in P : x \mbox{ is an upper bound of } A \mbox{ in }P\}$. Dually, define $A^{\downarrow}=\{x\in P : x \mbox{ is a lower bound of } $A$ \mbox{ in } $P$\}$. The set $A^{\delta}=(A^{\ua})^{\da}$ is called the \emph{cut} generated by $A$. Let $P^{(<\omega)}=\{F\subseteq P : F \mbox{~is a nonempty finite set}\}$ and $\mathbf{Fin} ~P=\{\uparrow F : F\in P^{(<\omega)}\}$.
 For a nonempty subset $A$ of $P$, define $max (A)=\{a\in A : a \mbox{~ is a maximal element of~} A\}$ and $min (A)=\{a\in A : a \mbox{~ is a minimal element of~} A\}$.

A nonempty subset $D$ of a poset $P$ is \emph{directed} if every two
elements in $D$ have an upper bound in $D$. The set of all directed sets of $P$ is denoted by $\mathcal D(P)$. $I\subseteq P$ is called an \emph{ideal} of $P$ if $I$ is a directed lower subset of $P$. Let $\mathrm{Id} (P)$ be the poset (with the order of set inclusion) of all ideals of $P$. Dually, we define the concept of \emph{filters} and denote the poset of all filters of $P$ by $\mathrm{Filt}(P)$.  $P$ is called a
\emph{directed complete poset}, or \emph{dcpo} for short, if for any
$D\in \mathcal D(P)$, $\bigvee D$ exists in $P$. $P$ is said to be \emph{Noetherian} if it satisfies the \emph{ascending chain condition} ($\mathrm{ACC}$ for short): every ascending chain has a greatest member. Clearly, $P$ is Noetherian if{}f every directed set of $P$ has a largest element (equivalently, every ideal of $P$ is principal).

As in \cite{redbook}, the \emph{upper topology} on a poset $Q$, generated
by the complements of the principal ideals of $Q$, is denoted by $\upsilon (Q)$. A subset $U$ of $Q$ is \emph{Scott open} if
(i) $U=\mathord{\uparrow}U$ and (ii) for any directed subset $D$ for
which $\bigvee D$ exists, $\bigvee D\in U$ implies $D\cap
U\neq\emptyset$. All Scott open subsets of $Q$ form a topology,
and we call this topology  the \emph{Scott topology} on $P$ and
denote it by $\sigma(P)$. The space $\Sigma~\!\! Q=(Q,\sigma(Q))$ is called the
\emph{Scott space} of $Q$. The upper sets of $Q$ form the (\emph{upper}) \emph{Alexandroff topology} $\alpha (Q)$.

The category of all $T_0$ spaces is denoted by $\mathbf{Top}_0$. For a subcategory  $\mathbf{K}$ of the category $\mathbf{Top}_0$, the objects of $\mathbf{K}$ will be called $\mathbf{K}$-spaces. For $X\in \mathbf{Top}_0$, we use $\leq_X$ to represent the \emph{specialization order} of $X$, that is, $x\leq_X y$ if{}f $x\in \overline{\{y\}}$). In the following, when a $T_0$ space $X$ is considered as a poset, the order always refers to the specialization order if no other explanation. Let $\mathcal O(X)$ (resp., $\mathcal C(X)$) be the set of all open subsets (resp., closed subsets) of $X$, and let $\mathcal S^u(X)=\{\ua x : x\in X\}$. Define $\mathcal S_c(X)=\{\overline{{\{x\}}} : x\in X\}$ and $\mathcal D_c(X)=\{\overline{D} : D\in \mathcal D(X)\}$.

\begin{remark}\label{Adelta=clAdelta} Let $X$ be a $T_0$ space, $C\subseteq X$ and $x\in X$. Then the followings are equivalent:
\begin{enumerate}[\rm (1)]
   \item $x\in C^{\ua}$;
   \item $C\subseteq \da x$;
   \item $\overline{C}\subseteq \da x$;
   \item $x\in \overline{C}^{\ua}$.
\end{enumerate}
Therefore,

$\bigcap_{c\in C}\ua c=C^{\ua}=\overline{C}^{\ua}=\bigcap_{b\in \overline{C}}\ua b $ ~~~
and~~~
$ C^{\delta}=\bigcap \{\da x : C\subseteq \da x\}=\bigcap \{\da x : \overline{C}\subseteq \da x\}=\overline{C}^{\delta}.$
\end{remark}

For a $T_0$ space $X$ and a nonempty subset $A$ of $X$, $A$ is \emph{irreducible} if for any $\{F_1, F_2\}\subseteq \mathcal C(X)$, $A \subseteq F_1\cup F_2$ implies $A \subseteq F_1$ or $A \subseteq  F_2$.  Denote by $\ir(X)$ (resp., $\ir_c(X)$) the set of all irreducible (resp., irreducible closed) subsets of $X$. Clearly, every subset of $X$ that is directed under $\leq_X$ is irreducible. $X$ is called \emph{sober}, if for any  $F\in\ir_c(X)$, there is a unique point $a\in X$ such that $F=\overline{\{a\}}$. The category of all sober spaces with continuous mappings is denoted by $\mathbf{Sob}$.

The following two lemmas on irreducible sets are well-known.

\begin{lemma}\label{irrsubspace}
Let $X$ be a space and $Y$ a subspace of $X$. Then the following conditions are equivalent for a
subset $A\subseteq Y$:
\begin{enumerate}[\rm (1)]
	\item $A$ is an irreducible subset of $Y$.
	\item $A$ is an irreducible subset of $X$.
	\item ${\rm cl}_X A$ is an irreducible closed subset of $X$.
\end{enumerate}
\end{lemma}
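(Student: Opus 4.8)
The plan is to run the cycle (1) $\Rightarrow$ (2) $\Rightarrow$ (3) $\Rightarrow$ (1), resting on two elementary facts about subspaces that should be recalled at the outset: (a) a set $C$ is closed in $Y$ if{}f $C=Y\cap G$ for some $G\in\mathcal C(X)$; and (b) for any $B\subseteq X$ and $G\in\mathcal C(X)$ one has $B\subseteq G$ if{}f $\mathrm{cl}_X B\subseteq G$. Nonemptiness is part of the definition of irreducibility and is trivially preserved in all three directions, so the only content is the covering condition, and these two facts are precisely what is needed to move that condition between $Y$, $X$, and $\mathrm{cl}_X A$.

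For (1) $\Rightarrow$ (2): given $A\subseteq F_1\cup F_2$ with $F_1,F_2\in\mathcal C(X)$, intersect with $Y$ to get $A\subseteq (Y\cap F_1)\cup(Y\cap F_2)$ (using $A\subseteq Y$), where each $Y\cap F_i$ is closed in $Y$ by (a); irreducibility of $A$ in $Y$ yields $A\subseteq Y\cap F_i\subseteq F_i$ for some $i$. For (2) $\Rightarrow$ (3): $\mathrm{cl}_X A$ is closed in $X$ by construction, and if $\mathrm{cl}_X A\subseteq F_1\cup F_2$ with $F_i\in\mathcal C(X)$ then $A\subseteq F_1\cup F_2$, so $A\subseteq F_i$ for some $i$ by (2), whence $\mathrm{cl}_X A\subseteq F_i$ by (b); thus $\mathrm{cl}_X A\in\ir_c(X)$.

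For (3) $\Rightarrow$ (1): let $A\subseteq C_1\cup C_2$ with $C_1,C_2$ closed in $Y$, and write $C_i=Y\cap G_i$ with $G_i\in\mathcal C(X)$ via (a). Then $A\subseteq G_1\cup G_2$, so $\mathrm{cl}_X A\subseteq G_1\cup G_2$ by (b), and irreducibility of $\mathrm{cl}_X A$ in $X$ gives $\mathrm{cl}_X A\subseteq G_i$ for some $i$; intersecting with $Y$ and using $A\subseteq Y$ yields $A\subseteq Y\cap G_i=C_i$. I do not expect any genuine obstacle here: the lemma is routine, and the single point that requires attention is bookkeeping of the ambient space in which closures and closed sets are formed, which is entirely handled by facts (a) and (b).
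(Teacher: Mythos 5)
Your proof is correct. The paper states this lemma as well-known and gives no proof, and your cyclic argument (1) $\Rightarrow$ (2) $\Rightarrow$ (3) $\Rightarrow$ (1), using the description of closed sets in a subspace and the fact that $B\subseteq G$ if{}f $\mathrm{cl}_X B\subseteq G$ for $G$ closed, is exactly the standard argument one would supply; all steps, including the preservation of nonemptiness, check out against the paper's definition of irreducibility.
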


\begin{lemma}\label{irrimage}
	If $f : X \longrightarrow Y$ is continuous and $A\in\ir (X)$, then $f(A)\in \ir (Y)$.
\end{lemma}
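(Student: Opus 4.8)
The plan is to verify the defining property of irreducibility directly for $f(A)$, using continuity of $f$ to pull closed sets back to closed sets. First I would record that $f(A)$ is nonempty, since $A$ is nonempty (every irreducible set is nonempty by convention); so the only thing left to check is the two-closed-sets condition from the definition of $\ir(Y)$.

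Next, I would fix an arbitrary pair $\{F_1,F_2\}\subseteq\mathcal C(Y)$ with $f(A)\subseteq F_1\cup F_2$ and aim to conclude $f(A)\subseteq F_1$ or $f(A)\subseteq F_2$. Applying $f^{-1}$ to the inclusion yields $A\subseteq f^{-1}(F_1\cup F_2)=f^{-1}(F_1)\cup f^{-1}(F_2)$, and by continuity of $f$ both $f^{-1}(F_1)$ and $f^{-1}(F_2)$ lie in $\mathcal C(X)$. Since $A\in\ir(X)$, one of $A\subseteq f^{-1}(F_1)$ or $A\subseteq f^{-1}(F_2)$ holds; applying $f$ to that inclusion gives $f(A)\subseteq F_1$ or $f(A)\subseteq F_2$, which is exactly what is needed.

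There is essentially no obstacle here: the statement is a one-line consequence of the continuity of $f$ together with the definition of irreducibility, the only ingredients being the elementary identity $f^{-1}(F_1\cup F_2)=f^{-1}(F_1)\cup f^{-1}(F_2)$ and the monotonicity fact that $A\subseteq f^{-1}(B)$ implies $f(A)\subseteq B$, neither of which requires any computation. Alternatively one could deduce it from Lemma~\ref{irrsubspace} by passing to the subspace $\overline{f(A)}$ of $Y$, but the direct verification above is shorter and self-contained, so that is the route I would take.
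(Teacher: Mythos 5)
Your proof is correct and is the standard argument (pull back the closed cover along $f$ by continuity, apply irreducibility of $A$, push forward); the paper states this lemma without proof as well known, and your verification matches what is intended. Note only that the first inclusion should be read as $A\subseteq f^{-1}(f(A))\subseteq f^{-1}(F_1\cup F_2)$, which you implicitly use and which is immediate.
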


\begin{remark}\label{subspaceirr}  If $Y$ is a subspace of a space $X$ and $A\subseteq Y$, then by Lemma \ref{irrsubspace}, $\ir (Y)=\{B\in \ir(X) : B\subseteq Y\}\subseteq \ir (X)$ and  $\ir_c (Y)=\{B\in \ir(X) : B\in \mathcal C(Y)\}\subseteq \ir (X)$. If $Y\in \mathcal C(X)$, then $\ir_c(Y)=\{C\in \ir_c(X) : C\subseteq Y\}\subseteq \ir_c (X)$.
\end{remark}

\begin{lemma}\label{irrprod}\emph{(\cite{Shenchon})}
	Let	$X=\prod_{i\in I}X_i$ be the product space of $T_0$ spaces  $X_i (i\in I)$. If  $A$ is an irreducible subset of $X$, then $\cl_X(A)=\prod_{i\in I}\cl_{X_i}(p_i(A))$, where $p_i : X \longrightarrow X_i$ is the $i$th projection for each $i\in I$.
\end{lemma}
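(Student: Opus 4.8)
The plan is to prove the two inclusions separately; only one of them uses irreducibility.

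First I would dispatch the easy inclusion $\cl_X(A)\subseteq\prod_{i\in I}\cl_{X_i}(p_i(A))$, which needs no hypothesis on $A$ beyond $A\subseteq X$. Each projection $p_i$ is continuous, so $p_i^{-1}(\cl_{X_i}(p_i(A)))$ is closed in $X$, and hence $\prod_{i\in I}\cl_{X_i}(p_i(A))=\bigcap_{i\in I}p_i^{-1}(\cl_{X_i}(p_i(A)))$ is closed in $X$. It contains $A$, since for $a\in A$ and each $i$ we have $p_i(a)\in p_i(A)\subseteq\cl_{X_i}(p_i(A))$. Being a closed set containing $A$, it contains the smallest such set, namely $\cl_X(A)$.

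For the reverse inclusion I would first record the standard reformulation of irreducibility: a nonempty $A\subseteq X$ is irreducible if{}f for any $U,V\in\mathcal O(X)$ with $A\cap U\neq\emptyset$ and $A\cap V\neq\emptyset$ one has $A\cap U\cap V\neq\emptyset$; a trivial induction (with base case $A\neq\emptyset$) then extends this to any finite family of open sets each meeting $A$, whose intersection still meets $A$. Now take any $x=(x_i)_{i\in I}\in\prod_{i\in I}\cl_{X_i}(p_i(A))$ and any basic open neighbourhood $W=\bigcap_{j\in J}p_j^{-1}(U_j)$ of $x$, where $J\subseteq I$ is finite and each $U_j\in\mathcal O(X_j)$ contains $x_j$. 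Since $x_j\in\cl_{X_j}(p_j(A))$, we get $U_j\cap p_j(A)\neq\emptyset$, i.e. $A\cap p_j^{-1}(U_j)\neq\emptyset$, for every $j\in J$. Each $p_j^{-1}(U_j)$ is open in $X$ and $J$ is finite, so the reformulation above gives $A\cap W=A\cap\bigcap_{j\in J}p_j^{-1}(U_j)\neq\emptyset$. Hence every basic open neighbourhood of $x$ meets $A$, so $x\in\cl_X(A)$; this proves $\prod_{i\in I}\cl_{X_i}(p_i(A))\subseteq\cl_X(A)$, and the two inclusions give the claimed equality.

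There is no real obstacle here; the one point that deserves care is that basic open sets of the product topology involve only finitely many coordinates, which is precisely what lets the finite-intersection property of irreducible sets be applied — the argument would break if one tried to test closure against an arbitrary (possibly infinite) open set, but that is never needed.
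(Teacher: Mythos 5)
Your proof is correct. The paper itself states this lemma without proof, citing \cite{Shenchon}, so there is nothing internal to compare against; your argument — the easy inclusion via $\prod_{i\in I}\cl_{X_i}(p_i(A))=\bigcap_{i\in I}p_i^{-1}(\cl_{X_i}(p_i(A)))$ being a closed set containing $A$, and the reverse inclusion via the finite-meet characterization of irreducibility applied to the finitely many coordinates of a basic open neighbourhood — is exactly the standard proof of this fact, and you correctly handle the one genuine subtlety (that basic opens constrain only finitely many coordinates, plus the empty-index base case $A\neq\emptyset$).
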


\begin{lemma}\label{prodirr}
	Let	$X=\prod_{i\in I}X_i$ be the product space of $T_0$ spaces  $X_i (i\in I)$  and $A_i\subseteq X_i$ for each $i\in I$. Then the following two conditions are equivalent:
\begin{enumerate}[\rm (1)]
	\item $\prod_{i\in I}A_i\in \ir (X)$.
	\item $A_i\in \ir (X_i)$ for each $i\in I$.
\end{enumerate}
\end{lemma}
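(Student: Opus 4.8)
The plan is to pass to the standard open-set reformulation of irreducibility: a nonempty subset $A$ of a space is irreducible if and only if for all open sets $U,V$ with $A\cap U\neq\emptyset$ and $A\cap V\neq\emptyset$ one has $A\cap U\cap V\neq\emptyset$. This is merely the complementary form of the definition, taking $F_1=X\setminus U$ and $F_2=X\setminus V$, and it is the form best suited to arguing in a product.

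For the implication $(1)\Rightarrow(2)$: since $\prod_{i\in I}A_i$ is irreducible it is nonempty, hence every $A_i$ is nonempty and $p_i\left(\prod_{j\in I}A_j\right)=A_i$; as each projection $p_i$ is continuous, Lemma \ref{irrimage} immediately gives $A_i\in\ir(X_i)$ for all $i\in I$. For $(2)\Rightarrow(1)$: each $A_i$ irreducible forces $A_i\neq\emptyset$, so $\prod_{i\in I}A_i\neq\emptyset$. Let $U,V\in\mathcal O(X)$ both meet $\prod_{i\in I}A_i$. Choosing $a\in\left(\prod_{i\in I}A_i\right)\cap U$ and a basic open box $\prod_{i\in I}W_i\subseteq U$ containing $a$, with $W_i\in\mathcal O(X_i)$ and $W_i=X_i$ for $i$ outside a finite set $F_1$, we get $a_i\in W_i\cap A_i$ for every $i$; similarly, from $b\in\left(\prod_{i\in I}A_i\right)\cap V$ we obtain a box $\prod_{i\in I}Z_i\subseteq V$ with $Z_i=X_i$ off a finite set $F_2$ and $b_i\in Z_i\cap A_i$. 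Now define $c\in X$ coordinatewise: for $i\in F_1\cap F_2$ use irreducibility of $A_i$ to pick $c_i\in W_i\cap Z_i\cap A_i$; for $i\in F_1\setminus F_2$ pick $c_i\in W_i\cap A_i$; for $i\in F_2\setminus F_1$ pick $c_i\in Z_i\cap A_i$; and for $i\notin F_1\cup F_2$ pick any $c_i\in A_i$. Then $c\in\prod_{i\in I}A_i$, and $c_i\in W_i$ for every $i$ (trivially when $i\notin F_1$), so $c\in\prod_{i\in I}W_i\subseteq U$, and likewise $c\in V$; hence $c\in\left(\prod_{i\in I}A_i\right)\cap U\cap V$, which establishes irreducibility.

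The only mildly delicate point is the bookkeeping with the finite supports $F_1,F_2$ of the two basic boxes: one must observe that each coordinate is constrained by at most the two conditions coming from $W_i$ and $Z_i$, so only the pairwise (two open sets) form of irreducibility of each $A_i$ is invoked, and the remaining coordinates are filled in using nonemptiness of the $A_i$ together with the axiom of choice. A marginally shorter variant would first note that closure commutes with arbitrary products and then apply Lemma \ref{irrsubspace} with $Y=X$ to reduce to the case where every $A_i$ is closed; however this does not genuinely avoid the box computation, so I would present the direct argument above.
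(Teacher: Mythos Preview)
Your proof is correct and follows essentially the same route as the paper: the forward direction via Lemma~\ref{irrimage} applied to the projections, and the backward direction via the open-set form of irreducibility, passing to basic open boxes with finite supports $F_1,F_2$ (the paper's $I_1,I_2$) and then using irreducibility of each $A_i$ on the finitely many constrained coordinates. Your coordinate-by-coordinate construction of $c$ is just a slightly more explicit rendering of the paper's line ``$A_i\cap U_i\cap V_i\neq\emptyset$ for each $i\in I_3=I_1\cup I_2$, hence $A\cap\bigcap p_i^{-1}(U_i)\cap\bigcap p_j^{-1}(V_j)\neq\emptyset$''.
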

\begin{proof}  (1) $\Rightarrow$ (2): By Lemma \ref{irrimage}.

(2) $\Rightarrow$ (1): Let $A=\prod_{i\in I}A_i$. For $U, V\in \mathcal O(X)$, if $A\cap U\neq\emptyset\neq A\cap V$, then there exist $I_1, I_2\in I^{(<\omega)}$ and $(U_i, V_j)\in \mathcal O(X_i)\times \mathcal O(X_j)$ for all $(i, j)\in I_1\times I_2$ such that $\bigcap_{i\in I_1}p_i^{-1}(U_i)\subseteq U$, $\bigcap_{j\in I_2}p_j^{-1}(V_j)\subseteq V$ and $A\cap \bigcap_{i\in I_1}p_i^{-1}(U_i)\neq\emptyset\neq A\cap \bigcap_{j\in I_2}p_i^{-1}(V_j)$. Let $I_3=I_1\cup I_2$. Then $I_3$ is finite. For $i\in I_3\setminus I_1$ and $j\in I_3\setminus I_2$, let $U_i=X_i$ and $V_j=X_j$. Then for each $i\in I_3$, we have $A_i\cap U_i\neq\emptyset\neq A_i\cap V_i$, and whence $A_i\cap U_i\cap V_i\neq\emptyset$ by $A_i\in \ir (X_i)$. It follows that $A\cap \bigcap_{i\in I_1}p_i^{-1}(U_i)\cap \bigcap_{j\in I_2}p_i^{-1}(V_j)\neq\emptyset$, and consequently, $A\cap U\cap V\neq \emptyset$. Thus $A\in \ir (X)$.

\end{proof}

By Lemma \ref{irrprod} and Lemma \ref{prodirr}, we obtain the following corollary.

\begin{corollary}\label{irrcprod} Let $X=\prod_{i\in I}X_i$ be the product space of $T_0$ spaces  $X_i (i\in I)$. If  $A\in \ir_c(X)$, then $A=\prod_{i\in I}p_i(A)$ and $p_i(A)\in \ir_c (X_i)$ for each $i\in I$.
\end{corollary}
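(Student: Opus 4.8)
The plan is to deduce everything directly from Lemma \ref{irrprod} (the product formula for closures of irreducible sets) together with Lemma \ref{irrimage} (continuous images of irreducible sets are irreducible), so no fresh argument is really needed.

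First I would apply Lemma \ref{irrprod} to the irreducible set $A$: since $A\in\ir_c(X)\subseteq\ir(X)$, we have $\cl_X(A)=\prod_{i\in I}\cl_{X_i}(p_i(A))$. Because $A$ is closed, $\cl_X(A)=A$, so
\[
A=\prod_{i\in I}\cl_{X_i}(p_i(A)).
\]
Next, note that $A\neq\emptyset$ (irreducible sets are nonempty by definition), hence every factor $\cl_{X_i}(p_i(A))$ is nonempty; this is exactly the condition under which a projection of a box product recovers the corresponding factor. Applying $p_i$ to the displayed equality therefore yields $p_i(A)=\cl_{X_i}(p_i(A))$, i.e.\ $p_i(A)$ is closed in $X_i$ for each $i\in I$. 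Combining this with Lemma \ref{irrimage}, which gives $p_i(A)\in\ir(X_i)$ since $p_i$ is continuous and $A$ is irreducible, we conclude $p_i(A)\in\ir_c(X_i)$ for every $i\in I$. Substituting $\cl_{X_i}(p_i(A))=p_i(A)$ back into the displayed equality gives $A=\prod_{i\in I}p_i(A)$, as required.

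There is essentially no obstacle here: the only point deserving a word of care is the nonemptiness of each factor (needed to pass from $A=\prod_i\cl_{X_i}(p_i(A))$ to $p_i(A)=\cl_{X_i}(p_i(A))$), which follows from the convention that irreducible subsets are nonempty. Everything else is a direct citation of Lemmas \ref{irrprod} and \ref{irrimage}.
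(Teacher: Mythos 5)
Your proof is correct and follows essentially the same route as the paper, which also derives the corollary from Lemma \ref{irrprod} (giving $A=\prod_{i\in I}\cl_{X_i}(p_i(A))$) together with the irreducibility of the projections; your use of Lemma \ref{irrimage} in place of Lemma \ref{prodirr} is an immaterial difference, since the relevant direction of Lemma \ref{prodirr} is itself proved by Lemma \ref{irrimage}. The care you take with nonemptiness when projecting the box product is a welcome explicit detail that the paper leaves implicit.
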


A $T_0$ space $X$ is called \emph{irreducible complete}, $r$-\emph{complete} for short, if for any
$A\in \ir(X)$, $\bigvee A$ exists in $X$. For a subset $B$ of $X$, $\bigvee B$ exists in $X$ if{}f $\bigvee \overline{B}$ exists in $X$, and $\bigvee B =\bigvee \overline{B}$ if they exist in $X$. So $X$ is irreducible complete if{}f $\bigvee A$ exists in $X$ for all
$A\in \ir_c(X)$.

\begin{remark}\label{Soberirrcomp} Every sober space is irreducible complete. In fact, if $X$ is a sober space and $A\in \ir (X)$, then there is an $x\in X$ such that $\overline{A}=\overline{\{x\}}$, and hence $\bigvee A=\bigvee\overline{A}=\bigvee \overline{\{x\}}=x$.
\end{remark}

Let $L$ be the complete lattice constructed by Isbell \cite{isbell}. Then $\Sigma L$ is irreducible complete, but is non-sober.

\begin{proposition}\label{uppertopSober} For any  poset $P$, the space $(P,\upsilon (P))$ is sober if{}f it is irreducible complete, where $\upsilon(P)$ is the upper topology on $P$.
\end{proposition}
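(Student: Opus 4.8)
The plan is first to record three elementary facts about $(P,\upsilon(P))$ that follow directly from the definition of the upper topology as the topology with subbasis $\{P\setminus\da x : x\in P\}$: (i) for every $x\in P$ one has $\overline{\{x\}}=\da x$, so $(P,\upsilon(P))$ is $T_0$ and its specialization order is exactly the given order of $P$; (ii) every basic open set of $(P,\upsilon(P))$ has the form $P\setminus\bigcup_{i=1}^{n}\da x_i$ for some $x_1,\dots,x_n\in P$ (the empty intersection giving $P$); and (iii) since each finite union $\bigcup_{i=1}^{n}\da x_i$ is a lower set and arbitrary intersections of lower sets are lower sets, every closed subset of $(P,\upsilon(P))$ is a lower set. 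By (i), ``$(P,\upsilon(P))$ is irreducible complete'' means precisely that $\bigvee A$ exists in $(P,\leq)$ for every $A\in\ir(P,\upsilon(P))$.

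For the necessity there is nothing to do: by Remark~\ref{Soberirrcomp} every sober space is irreducible complete, and by fact (i) the order involved is the order of $P$.

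For the sufficiency, assume $(P,\upsilon(P))$ is irreducible complete and let $A\in\ir_c(P,\upsilon(P))$. By hypothesis $a:=\bigvee A$ exists in $P$. I claim $a\in A$. If not, then $a$ lies in the open set $P\setminus A$, so by fact (ii) there are $x_1,\dots,x_n\in P$ with $a\in P\setminus\bigcup_{i=1}^{n}\da x_i$ and $\bigl(P\setminus\bigcup_{i=1}^{n}\da x_i\bigr)\cap A=\emptyset$; the latter says $A\subseteq\bigcup_{i=1}^{n}\da x_i$, and since $A$ is nonempty we may take $n\geq 1$. As $A$ is irreducible and each $\da x_i$ is closed, an easy induction on the two‑set case gives $A\subseteq\da x_i$ for some $i$, so $x_i$ is an upper bound of $A$ and hence $a=\bigvee A\leq x_i$, i.e.\ $a\in\da x_i$ --- contradicting $a\notin\bigcup_{j=1}^{n}\da x_j$. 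Thus $a\in A$. Since $A$ is a lower set by fact (iii), $\da a\subseteq A$; conversely $a$ is an upper bound of $A$, so $A\subseteq\da a$. Hence $A=\da a=\overline{\{a\}}$, and the point $a$ is unique because $(P,\upsilon(P))$ is $T_0$. Therefore $(P,\upsilon(P))$ is sober.

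I do not expect a serious obstacle. The only care needed is the bookkeeping of facts (i)--(iii) about the upper topology --- in particular that closed sets are lower sets and that $\overline{\{x\}}=\da x$ --- and checking that ``irreducible complete'' is read with respect to the specialization order, which here coincides with the given order on $P$. The one genuine idea is to pit the irreducibility of $A$ against a finite subbasic cover $A\subseteq\bigcup_{i=1}^{n}\da x_i$ witnessing that $\bigvee A$ would be missing from $A$.
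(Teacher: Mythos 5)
Your proof is correct and rests on the same key mechanism as the paper's: an irreducible set contained in a finite union $\bigcup_{i=1}^{n}\da x_i$ of principal ideals must lie in a single $\da x_i$. The only organizational difference is that you argue pointwise at $a=\bigvee A$ (showing $a\in A$ by contradiction with a basic open neighbourhood), whereas the paper computes $\cl_{\upsilon(P)}A$ as the cut $A^{\delta}=\bigcap\{\da x : A\subseteq \da x\}$ and must treat the case $\cl_{\upsilon(P)}A=P$ separately; your version absorbs that case for free.
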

\begin{proof} If the upper topology $\upsilon (P)$ is sober, then $(P, \upsilon (P))$ is irreducible complete by Remark \ref{Soberirrcomp}. Conversely, if $(P, \upsilon (P))$ is irreducible complete, we show that $\upsilon (P)$ is sober. For $A\in \ir ((P, \upsilon (P)))$, if $\cl_{\upsilon (P)} A=P$, then $P$ is irreducible in $(P, \upsilon (P))$ and hence has a largest element $\top$ since $(P, \upsilon (P))$ is irreducible complete. So $P=\downarrow \top=\overline{\{\top\}}$. If $\cl_{\upsilon (P)}A\neq P$, then there is a nonempty family $\{\da F_i : i\in I\}\subseteq \mathbf{Fin} (P)$ such that $\cl_{\upsilon (P)} A=\bigcap_{i\in I} \da F_i$. For each $i\in I$, $\cl_{\upsilon (P)} A\subseteq \da F_i$, and hence $\cl_{\upsilon (P)}A\subseteq \da x_i$ for some $x_i\in F_i$ by the irreducibility of $A$. Therefore, $\cl_{\upsilon (P)} A=\bigcap_{i\in I} \da x_i\supseteq\bigcap \{\da x : A\subseteq \da x\}=A^{\delta}\supseteq \cl_{\upsilon (P)} {A}$. Since $(P, \upsilon (P))$ is irreducible complete, $\bigvee A$ exists in $P$, and consequently, $\cl_{\upsilon (P)}A=A^{\delta}=\da \bigvee A=\overline{\{\bigvee A\}}$. Thus $\upsilon (P)$ is sober.
\end{proof}

For any topological space $X$, $\mathcal G\subseteq 2^{X}$ and $A\subseteq X$, let $\Diamond_{\mathcal G} A=\{G\in \mathcal G : G\bigcap A\neq\emptyset\}$ and $\Box_{\mathcal G} A=\{G\in \mathcal G : G\subseteq  A\}$. The symbols $\Diamond_{\mathcal G} A$ and $\Box_{\mathcal G} A$ will be simply written as $\Diamond A$  and $\Box A$ respectively if there is no confusion. The \emph{lower Vietoris topology} on $\mathcal{G}$ is the topology that has $\{\Diamond U : U\in \mathcal O(X)\}$ as a subbase, and the resulting space is denoted by $P_H(\mathcal{G})$. If $\mathcal{G}\subseteq \ir (X)$, then $\{\Diamond_{\mathcal{G}} U : U\in \mathcal O(X)\}$ is a topology on $\mathcal{G}$. The space $P_H(\mathcal{C}(X)\setminus \{\emptyset\})$ is called the \emph{Hoare power space} or \emph{lower space} of $X$ and is denoted by $P_H(X)$ for short (cf. \cite{Schalk}). Clearly, $P_H(X)=(\mathcal{C}(X)\setminus \{\emptyset\}, \upsilon(\mathcal{C}(X)\setminus \{\emptyset\}))$. So $P_H(X)$ is always sober by Proposition \ref{uppertopSober} (or \cite[Corollary 4.10]{ZhaoHo}). The \emph{upper Vietoris topology} on $\mathcal{G}$ is the topology that has $\{\Box_{\mathcal{G}} U : U\in \mathcal O(X)\}$ as a base, and the resulting space is denoted by $P_S(\mathcal{G})$.

\begin{remark} \label{eta continuous} Let $X$ be a $T_0$ space.
\begin{enumerate}[\rm (1)]
	\item If $\mathcal{S}_c(X)\subseteq \mathcal{G}$, then the specialization order on $P_H(\mathcal{G})$ is the order of set inclusion, and the \emph{canonical mapping} $\eta_{X}: X\longrightarrow P_H(\mathcal{G})$, given by $\eta_X(x)=\overline {\{x\}}$, is an order and topological embedding (cf. \cite{redbook, Jean-2013, Schalk}).
    \item The space $X^s=P_H(\ir_c(X))$ with the canonical mapping $\eta_{X}: X\longrightarrow X^s$ is the \emph{sobrification} of $X$ (cf. \cite{redbook, Jean-2013}).
\end{enumerate}
\end{remark}

For a space $X$, a subset $A$ of $X$ is called \emph{saturated} if $A$ equals the intersection of all open sets containing it (equivalently, $A$ is an upper set in the specialization order). We shall use $\mathord{\mathsf{K}}(X)$ to
denote the set of all nonempty compact saturated subsets of $X$ and endow it with the \emph{Smyth preorder}, that is, for $K_1,K_2\in \mathord{\mathsf{K}}(X)$, $K_1\sqsubseteq K_2$ if{}f $K_2\subseteq K_1$. $X$ is called \emph{well-filtered} if it is $T_0$, and for any open set $U$ and filtered family $\mathcal{K}\subseteq \mathord{\mathsf{K}}(X)$, $\bigcap\mathcal{K}{\subseteq} U$ implies $K{\subseteq} U$ for some $K{\in}\mathcal{K}$. The category of all well-filtered spaces with continuous mappings is denoted by $\mathbf{Top}_w$.
The space $P_S(\mathord{\mathsf{K}}(X))$, denoted shortly by $P_S(X)$, is called the \emph{Smyth power space} or \emph{upper space} of $X$ (cf. \cite{Heckmann, Schalk}). It is easy to see that the specialization order on $P_S(X)$ is the Smyth order (that is, $\leq_{P_S(X)}=\sqsubseteq$). The \emph{canonical mapping} $\xi_X: X\longrightarrow P_S(X)$, $x\mapsto\ua x$, is an order and topological embedding (cf. \cite{Heckmann, Klause-Heckmann, Schalk}). Clearly, $P_S(\mathcal S^u(X))$ is a subspace of $P_S(X)$ and $X$ is homeomorphic to $P_S(\mathcal S^u(X))$.

 \begin{lemma}\label{X-Smyth-irr} Let $X$ be a $T_0$ space and $A\subseteq X$. Then the following three conditions are equivalent:
 \begin{enumerate}[\rm (1)]
	\item $A\in\ir (X)$.
	\item $\xi_X(A)\in \ir (P_S(X))$.
    \item $\xi_X(A)\in \ir (P_S(\mathcal S^u(X)))$.
\end{enumerate}

Moreover, the following two conditions are equivalent:
 \begin{enumerate}[\rm (a)]
	\item $A\in\ir_c (X)$.
	\item $\xi_X(A)\in \ir_c (P_S(\mathcal S^u(X)))$.
\end{enumerate}
\end{lemma}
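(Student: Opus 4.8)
The plan is to reduce everything to the fact, recalled immediately before the statement, that the canonical mapping $\xi_X\colon X\longrightarrow P_S(X)$, $x\mapsto\ua x$, is an order and topological embedding whose corestriction $\xi_X\colon X\longrightarrow P_S(\mathcal S^u(X))$ is a homeomorphism. The only preliminary observation needed is that $\xi_X(A)=\{\ua a : a\in A\}\subseteq\mathcal S^u(X)$ for every $A\subseteq X$, so that $\xi_X(A)$ is actually a subset of the subspace $P_S(\mathcal S^u(X))$ of $P_S(X)$.

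For the equivalence of (1), (2) and (3) I would argue in a cycle. For (1) $\Rightarrow$ (2): since $\xi_X$ is continuous, Lemma \ref{irrimage} gives $\xi_X(A)\in\ir(P_S(X))$. For (2) $\Rightarrow$ (3): since $P_S(\mathcal S^u(X))$ is a subspace of $P_S(X)$ and $\xi_X(A)\subseteq P_S(\mathcal S^u(X))$, Remark \ref{subspaceirr} (equivalently Lemma \ref{irrsubspace}), which says $\ir(P_S(\mathcal S^u(X)))=\{B\in\ir(P_S(X)) : B\subseteq P_S(\mathcal S^u(X))\}$, yields $\xi_X(A)\in\ir(P_S(\mathcal S^u(X)))$. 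For (3) $\Rightarrow$ (1): the corestriction $\xi_X\colon X\longrightarrow P_S(\mathcal S^u(X))$ is a homeomorphism, so its inverse is continuous, and applying Lemma \ref{irrimage} to this inverse gives $A=\xi_X^{-1}(\xi_X(A))\in\ir(X)$.

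For the ``moreover'' part I would use that a homeomorphism carries closed sets to closed sets in both directions: since $\xi_X\colon X\longrightarrow P_S(\mathcal S^u(X))$ is a homeomorphism, $A\in\mathcal C(X)$ if{}f $\xi_X(A)\in\mathcal C(P_S(\mathcal S^u(X)))$. Combining this with the equivalence (1) $\Leftrightarrow$ (3) already established shows $A\in\ir_c(X)$ if{}f $\xi_X(A)\in\ir_c(P_S(\mathcal S^u(X)))$.

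There is no genuine obstacle here; the only points deserving a line of care are verifying that $\xi_X(A)$ really lands inside the subspace $P_S(\mathcal S^u(X))$ (so that Remark \ref{subspaceirr} applies) and being explicit about the space in which closures are computed when passing through the homeomorphism. Everything else is a direct invocation of Lemmas \ref{irrimage} and \ref{irrsubspace} together with the embedding properties of $\xi_X$. Note also that (2) and (3) are not combined into a single homeomorphism argument since $P_S(X)$ itself need not be homeomorphic to $X$; it is exactly for this reason that the subspace step via Remark \ref{subspaceirr} is used for (2) $\Rightarrow$ (3).
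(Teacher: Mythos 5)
Your proposal is correct and follows essentially the same route as the paper: (1)$\Rightarrow$(2) via Lemma \ref{irrimage}, (2)$\Rightarrow$(3) via Remark \ref{subspaceirr} and the subspace relation, and (3)$\Rightarrow$(1) together with (a)$\Leftrightarrow$(b) from the fact that $x\mapsto\ua x : X\longrightarrow P_S(\mathcal S^u(X))$ is a homeomorphism. The extra care you take in checking that $\xi_X(A)$ lands inside $P_S(\mathcal S^u(X))$ is a reasonable elaboration of the same argument.
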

\begin{proof} (1) $\Rightarrow$ (2): By Lemma \ref{irrimage}.

(2) $\Rightarrow$ (3): By Remark \ref{subspaceirr} and $P_S(\mathcal S^u(X)))$ is a subspace of $P_S(X)$.

(3) $\Rightarrow$ (1) and (a) $\Leftrightarrow$ (b): Since $x\mapsto \ua x : X \longrightarrow P_S(\mathcal S^u(X))$ is a homeomorphism.
\end{proof}

\begin{remark}\label{meet-in-Smyth} Let $X$ be a $T_0$ space and $\mathcal A\subseteq \mk (X)$. Then $\bigcap \mathcal A=\bigcap \overline{\mathcal A}$, here the closure of $\mathcal A$ is taken in $P_S(X)$. Clearly, $\bigcap \overline{\mathcal A}\subseteq\bigcap \mathcal A$. On the other hand, for any $K\in \overline{\mathcal A}$ and $U\in \mathcal O(X)$ with $K\subseteq U$ (that is, $K\in \Box U$), we have $\mathcal A\bigcap\Box U\neq\emptyset$, and hence there is a $K_U\in \mathcal A\bigcap\Box U$. Therefore, $K=\bigcap \{U\in \mathcal O(X) : K\subseteq U\}\supseteq\bigcap \{K_U : U\in \mathcal O(X) \mbox{ and } K\subseteq U\}\supseteq\bigcap \mathcal A$. It follows that $\bigcap \overline{\mathcal A}\supseteq\bigcap \mathcal A$. Thus $\bigcap \mathcal A=\bigcap \overline{\mathcal A}$.
\end{remark}

\begin{lemma}\label{K union} \emph{(\cite{jia-Jung-2016, Schalk})}  Let $X$ be a $T_0$ space. If $\mathcal K\in\mk(P_S(X))$, then  $\bigcup \mathcal K\in\mk(X)$.
\end{lemma}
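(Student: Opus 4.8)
The plan is to check directly the three defining properties of a nonempty compact saturated set for $\bigcup\mathcal K=\bigcup_{K\in\mathcal K}K$ in $X$. Nonemptiness is immediate: $\mathcal K\neq\emptyset$ and every member of $\mk(X)$ is nonempty, so $\bigcup\mathcal K\neq\emptyset$. Saturation is equally routine: each $K\in\mathcal K$ is an upper set for $\leq_X$, and a union of upper sets is an upper set. So the whole content of the lemma is the compactness of $\bigcup\mathcal K$ in $X$, and that is where I would focus.

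For compactness I would begin with an arbitrary open cover $\{U_i:i\in I\}\subseteq\mathcal O(X)$ of $\bigcup\mathcal K$ and turn it into an open cover of $\mathcal K$ \emph{inside} $P_S(X)$. For each $K\in\mathcal K$ we have $K\subseteq\bigcup\mathcal K\subseteq\bigcup_{i\in I}U_i$, so compactness of $K$ in $X$ gives a finite $I_K\subseteq I$ with $K\subseteq V_K:=\bigcup_{i\in I_K}U_i$. Since $V_K$ is open in $X$, the set $\Box V_K=\{Q\in\mk(X):Q\subseteq V_K\}$ is a basic open subset of $P_S(X)$ containing $K$, so $\{\Box V_K:K\in\mathcal K\}$ is an open cover of $\mathcal K$ in $P_S(X)$.

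Now I would invoke compactness of $\mathcal K$ in $P_S(X)$ to get a finite $\mathcal F\subseteq\mathcal K$ with $\mathcal K\subseteq\bigcup_{K\in\mathcal F}\Box V_K$, and then check $\bigcup\mathcal K\subseteq\bigcup_{K\in\mathcal F}V_K$: if $x\in\bigcup\mathcal K$, choose $Q\in\mathcal K$ with $x\in Q$; then $Q\in\Box V_K$ for some $K\in\mathcal F$, hence $x\in Q\subseteq V_K$. Since $\bigcup_{K\in\mathcal F}V_K=\bigcup_{i\in J}U_i$ with $J=\bigcup_{K\in\mathcal F}I_K$ finite, $\{U_i:i\in J\}$ is the required finite subcover, so $\bigcup\mathcal K\in\mk(X)$.

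The only real obstacle is the passage from an $X$-open cover to a $P_S(X)$-open cover: one has to replace the individual cover witnesses by the \emph{finite unions} $V_K$, precisely so that a single basic open set $\Box V_K$ of $P_S(X)$ swallows all of $K$ and the family $\{\Box V_K\}$ becomes a genuine cover of $\mathcal K$. Finiteness of the extracted subcover of $\bigcup\mathcal K$ is then automatic because $\mathcal F$ is finite and each $I_K$ is finite. I do not expect any subtlety beyond this bookkeeping.
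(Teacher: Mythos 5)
Your proof is correct. The paper itself states this lemma with a citation to Schalk and Jia--Jung and gives no proof, and your argument is exactly the standard one from those sources: nonemptiness and saturation are immediate, and for compactness the key move is precisely the one you isolate --- replacing each $K$'s finite subcover by its union $V_K$ so that the single basic open $\Box V_K$ of the upper Vietoris topology contains $K$, then applying compactness of $\mathcal K$ in $P_S(X)$. No gaps.
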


\begin{corollary}\label{Smythunioncont} \emph{(\cite{Schalk, jia-Jung-2016})}  For any $T_0$ space $X$ , the mapping $\bigcup : P_S(P_S(X)) \longrightarrow P_S(X)$, $\mathcal K\mapsto \bigcup \mathcal K$, is continuous.
\end{corollary}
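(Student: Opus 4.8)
The plan is to verify continuity directly against a base for the upper Vietoris topology, since that topology on $\mk(X)$ is generated by the sets $\Box_{\mk(X)}U$ with $U\in\mathcal O(X)$ \emph{as a base} (not merely a subbase). First I would note that the map is well defined: for $\mathcal K\in\mk(P_S(X))$ we have $\bigcup\mathcal K\in\mk(X)$ by Lemma \ref{K union}, so $\bigcup$ really does send $P_S(P_S(X))$ into $P_S(X)$. Thus it suffices to show that $\bigcup^{-1}(\Box_{\mk(X)}U)$ is open in $P_S(P_S(X))=P_S(\mk(P_S(X)))$ for every $U\in\mathcal O(X)$.

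The key computation is the set-theoretic identity
\[
\bigcup^{-1}(\Box_{\mk(X)}U)=\{\mathcal K\in\mk(P_S(X)) : \textstyle\bigcup\mathcal K\subseteq U\}=\{\mathcal K\in\mk(P_S(X)) : \mathcal K\subseteq \Box_{\mk(X)}U\},
\]
which holds simply because $\bigcup\mathcal K\subseteq U$ is equivalent to $K\subseteq U$ for every $K\in\mathcal K$, i.e. to $K\in\Box_{\mk(X)}U$ for every $K\in\mathcal K$. Now $\Box_{\mk(X)}U$ is an open subset of $P_S(X)$, so the right-hand side is exactly $\Box_{\mk(P_S(X))}(\Box_{\mk(X)}U)$, a basic open set of $P_S(P_S(X))$. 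Hence $\bigcup^{-1}(\Box_{\mk(X)}U)$ is open, and $\bigcup$ is continuous.

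There is essentially no hard step here; the whole argument is the observation that "union contained in $U$" unwinds to "every member lies in the open set $\Box_{\mk(X)}U$". The only points that deserve a line of care are recalling that the upper Vietoris sets $\Box U$ form a base (so testing on them is legitimate) and invoking Lemma \ref{K union} for well-definedness; everything else is the displayed identity.
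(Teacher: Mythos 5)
Your proposal is correct and follows essentially the same route as the paper: well-definedness via Lemma \ref{K union}, then the identity $\bigcup^{-1}(\Box U)=\{\mathcal K : \mathcal K\subseteq \Box U\}=\Box(\Box U)$, which is basic open in $P_S(P_S(X))$. Your explicit remarks that the $\Box U$ form a base and that this justifies testing continuity only on them are a welcome bit of extra care, but the substance is identical.
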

\begin{proof} For $\mathcal K\in\mk(P_S(X))$, $\bigcup \mathcal K=\bigcup \mathcal K\in\mk(X)$ by Lemma \ref{K union}. For $U\in \mathcal O(X)$, we have $\bigcup^{-1}(\Box U)=\{\mathcal K\in \mk (P_S(X)) : \bigcup \mathcal K\in \Box U\}=\{\mathcal K\in \mk (P_S(X)) : \mathcal K\subseteq \Box U\}=\eta_{P_S(X)}^{-1}(\Box (\Box U))\in \mathcal O(P_S(P_S(X)))$. Thus  $\bigcup : P_S(P_S(X)) \longrightarrow P_S(X)$ is continuous.
\end{proof}

As in \cite{E_20182}, a topological space $X$ is \emph{locally hypercompact} if for each $x\in X$ and each open neighborhood $U$ of $x$, there is  $\ua F\in \mathbf{Fin}~X$ such that $x\in\ii\,\ua F\subseteq\ua F\subseteq U$. A space $X$ is called a $C$-\emph{space} if for each $x\in X$ and each open neighborhood $U$ of $x$, there is $u\in X$ such that $x\in\ii\,\ua u\subseteq\ua u\subseteq U$). A set $K\subseteq X$ is called \emph{supercompact} if for
any arbitrary family $\{U_i : i\in I\}\subseteq \mathcal O(X)$, $K\subseteq \bigcup_{i\in I} U_i$  implies $K\subseteq U$ for some $i\in I$. It is easy to check that the supercompact saturated sets of $X$ are exactly the sets $\ua x$ with $x \in X$ (see \cite[Fact 2.2]{Klause-Heckmann}). It is well-known that $X$ is a $C$-space if{}f $\mathcal O(X)$ is a \emph{completely distributive} lattice (cf. \cite{E_2009}). A space $X$ is called \emph{core compact} if $\mathcal O(X)$ is a \emph{continuous lattice} (cf. \cite{redbook}).

\begin{theorem}\label{SoberLC=CoreC}\emph{(\cite{redbook})} Let $X$ be a sober space. Then $X$ is locally compact if{}f $X$ is core compact.
\end{theorem}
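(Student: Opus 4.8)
\noindent This is a classical fact --- it is precisely why the statement could be quoted from \cite{redbook} --- so here I only indicate the argument I have in mind. One direction needs no separation axiom: \emph{every} locally compact space is core compact. To see this, fix $U\in\mathcal O(X)$; for $V_1,V_2\in\mathcal O(X)$ with $V_i\ll U$ one has $V_1\cup V_2\ll U$, so $\{V\in\mathcal O(X):V\ll U\}$ is directed and its union is contained in $U$, while for each $x\in U$ local compactness produces $K\in\mk(X)$ with $x\in\ii\, K\subseteq K\subseteq U$, and compactness of $K$ forces $\ii\, K\ll U$. Hence $U$ is the directed union of the open sets way below it, i.e.\ $\mathcal O(X)$ is a continuous lattice and $X$ is core compact.

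For the substantive direction, suppose $X$ is sober and core compact; I want to show $X$ is locally compact. Given $x\in U\in\mathcal O(X)$, continuity of $\mathcal O(X)$ writes $U$ as the directed union of the open sets way below it, so I can pick an open $V$ with $x\in V\ll U$. Using the interpolation property of $\ll$ in the continuous lattice $\mathcal O(X)$, I would then construct recursively open sets $W_0=U$ and, given $V\ll W_n$, a $W_{n+1}$ with $V\ll W_{n+1}\ll W_n$. This yields a descending chain $U=W_0\supseteq W_1\supseteq W_2\supseteq\cdots$ with $V\subseteq W_n$ and $W_{n+1}\ll W_n$ for all $n$.

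I would then pass to $\mathcal F=\{O\in\mathcal O(X):W_n\subseteq O\text{ for some }n\}$. This is an upper set containing $U$; it is closed under finite intersections because the $W_n$ descend; and it is Scott open, since if $O=\bigcup\mathcal D$ with $\mathcal D\subseteq\mathcal O(X)$ directed and $W_n\subseteq O$, then $W_{n+1}\ll W_n\subseteq O$ gives $W_{n+1}\subseteq D$ for some $D\in\mathcal D$, so $D\in\mathcal F$. Thus $\mathcal F$ is a Scott open filter of $\mathcal O(X)$, and at this point I would invoke the Hofmann--Mislove theorem (see \cite{redbook}) --- this is the \emph{only} step that uses sobriety of $X$ --- to conclude $Q:=\bigcap\mathcal F\in\mk(X)$. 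Since every member of $\mathcal F$ contains some $W_n$, we get $Q=\bigcap_{n\in\mn}W_n$, hence $V\subseteq Q\subseteq W_0=U$; as $V$ is open this reads $x\in V\subseteq\ii\, Q\subseteq Q\subseteq U$ with $Q$ compact saturated, so $X$ is locally compact.

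The only genuine ingredient is the Hofmann--Mislove theorem used at the end; all the rest is routine manipulation of $\ll$. The part I expect to need the most care is arranging the descending chain $(W_n)$ so that $V$ stays below every $W_n$ while $W_{n+1}\ll W_n$ (this is exactly what makes $\mathcal F$ a Scott \emph{filter} rather than merely a Scott-open set), since it is the filter property that lets Hofmann--Mislove deliver a compact saturated $Q$ sandwiched between $V$ and $U$.
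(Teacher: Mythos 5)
Your argument is correct and is precisely the classical proof from the cited reference \cite{redbook} (the paper itself gives no proof of this theorem, only the citation): local compactness gives core compactness directly, and the converse under sobriety is obtained by interpolating a $\ll$-descending chain of opens and applying the Hofmann--Mislove theorem to the resulting Scott-open filter. No gaps; the one point you flag as delicate (keeping $V$ below every $W_n$ so that $\mathcal F$ is a filter and $Q=\bigcap_n W_n$ is sandwiched between $V$ and $U$) is handled correctly by your interpolation step.
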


For a $T_0$ space $X$ and a nonempty subset $C$ of $X$, it is easy to see that $C$ is compact if{}f $\ua C\in \mk (X)$. The following result is well-known (see, e.g., \cite[pp.2068]{E_2009}) .

\begin{lemma}\label{COMPminimalset} Let $X$ be a $T_0$ space and $C\in \mk (X)$. Then $C=\ua min(C)$ and  $min(C)$ is compact.
\end{lemma}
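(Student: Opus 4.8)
The plan is to prove the two assertions separately, the first one carrying essentially all the weight. Since $C$ is saturated we have $C=\ua C$, and as $min(C)\subseteq C$ this immediately gives $\ua min(C)\subseteq C$. So the only thing that needs work is the reverse inclusion $C\subseteq\ua min(C)$, i.e., that every point of $C$ lies above some minimal element of $C$.

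To establish this I would fix $x\in C$ and apply Zorn's Lemma to the poset $Q=C\cap\da x$ equipped with the order $\leq_X$, the goal being a minimal element of $Q$ that lies below $x$. The chain condition is exactly where compactness enters: given a nonempty chain $L\subseteq Q$, consider the family $\{C\cap\overline{\{\ell\}}:\ell\in L\}$. Each member is nonempty (it contains $\ell$) and is a closed subset of the subspace $C$, because $\overline{\{\ell\}}=\da\ell$; and since $L$ is totally ordered, every finite subfamily has nonempty intersection — it equals the member indexed by the least of the finitely many chosen elements — so the family has the finite intersection property. Compactness of $C$ then yields a point $y$ lying in the intersection of the whole family; choosing any $\ell_0\in L$ gives $y\leq\ell_0\leq x$, so $y\in Q$, and $y$ is a lower bound of $L$ in $Q$. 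Zorn's Lemma now produces a minimal element $m$ of $Q$. Finally $m$ is minimal not merely in $Q$ but in all of $C$: if $z\in C$ with $z\leq m$, then $z\leq m\leq x$ forces $z\in Q$, whence $z=m$ by minimality of $m$ in $Q$. Thus $m\in min(C)$ and $x\in\ua m\subseteq\ua min(C)$; since $x$ was arbitrary, $C=\ua min(C)$.

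For the compactness of $min(C)$ I would use that open sets are upper sets: if $\{U_i:i\in I\}\subseteq\mathcal O(X)$ covers $min(C)$, then $\bigcup_{i\in I}U_i$ is an upper set containing $min(C)$, hence contains $\ua min(C)=C$; compactness of $C$ then provides a finite $J\subseteq I$ with $C\subseteq\bigcup_{i\in J}U_i$, and a fortiori $min(C)\subseteq\bigcup_{i\in J}U_i$.

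The only genuinely delicate point is the finite-intersection-property verification that feeds Zorn's Lemma; everything else is routine bookkeeping, and the $T_0$ hypothesis is used only to ensure $\leq_X$ is a partial order so that minimal elements behave as expected.
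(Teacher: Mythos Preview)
Your proof is correct. The paper does not actually supply its own proof of this lemma: it merely records the result as well-known and gives a page reference to Ern\'e \cite{E_2009}, so there is nothing to compare against. Your argument---Zorn's Lemma on $C\cap\da x$ using compactness to furnish lower bounds for chains, followed by the observation that open covers of $min(C)$ are automatically open covers of $\ua min(C)=C$---is the standard one.
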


For a $T_0$ space $X$, $\mathcal{U}\subseteq \mathcal O(X)$ is called an \emph{open filter} if $\mathcal U\in \sigma (\mathcal O(X))\bigcap \mathrm{Filt}(\mathcal O(X))$. For $K\in \mk (X)$, let $\Phi (K)=\{U\in \mathcal O(X) : K\subseteq U\}$. Then $\Phi (K)\in \mathrm{OFilt(\mathcal O(X))}$ and $K=\bigcap \Phi (K)$. Obviously, $\Phi : \mk (X) \longrightarrow \mathrm{OFilt(\mathcal O(X))}, K\mapsto \Phi (K)$, is an order embedding. It is well-known that $\Phi$ is an order isomorphism if{}f $X$ is sober (see \cite{redbook, Hofmann-Mislove} or Theorem \ref{Hofmann-Mislove theorem} in this paper).

\section{$d$-spaces and directed closure spaces}

In this section, we give some equational characterizations of $d$-spaces. Based on directed sets, we introduce the concept of directed closure spaces, and discuss some basic properties of them.

A $T_0$ space $X$ is called a \emph{d-space} (or \emph{monotone convergence space}) if $X$ (with the specialization order) is a dcpo
 and $\mathcal O(X) \subseteq \sigma(X)$ (cf. \cite{redbook, Wyler}).

\begin{definition}\label{directbound} A $T_0$ space $X$ is called \emph{directed bounded}, $d$-\emph{bounded} for short, if for any $D\in \mathcal D(X)$, $D$ has an upper bound in $X$, that is, there is an $x\in X$ such that $D\subseteq \da x=\overline {\{x\}}$.
\end{definition}

Clearly, we have the following implications:

\begin{center}
$d$-space $\Rightarrow$ direct completeness $\Rightarrow$ $d$-boundedness.
\end{center}

For a poset $P$ with a largest element $\top$, any \emph{order compatible} topology $\tau$ on $P$ (that is, $\leq_{\tau}$ agrees with the original order on $P$) is $d$-bounded.

\begin{proposition}\label{d-bounded} For a $T_0$ space $X$, the following conditions are equivalent:
\begin{enumerate}[\rm (1)]
	\item $X$ is $d$-bounded.
    \item For any $D\in \mathcal D(X)$, $D^{\ua}=\bigcap\limits_{d\in D}\ua d\neq\emptyset$.
    \item  For any $D\in \mathcal D(X)$ and $A\in \mathcal C(X)$, if $D\subseteq A$, then $\bigcap\limits_{d\in D}\ua (A\cap\ua d)\neq\emptyset$.
    \item For any $D\in \mathcal D(X)$ and $A\in \ir_c(X)$, if $D\subseteq A$, then $\bigcap\limits_{d\in D}\ua (A\cap\ua d)\neq\emptyset$.
\end{enumerate}
\end{proposition}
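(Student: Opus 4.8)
The plan is to establish the four equivalences via $(1)\Leftrightarrow(2)$ together with the chain $(1)\Rightarrow(3)\Rightarrow(4)\Rightarrow(2)$, so that the circle closes. First I would dispose of $(1)\Leftrightarrow(2)$, which is immediate from the definition and Remark \ref{Adelta=clAdelta}: for $D\in\mathcal D(X)$ and $x\in X$ one has $x\in D^{\ua}=\bigcap_{d\in D}\ua d$ precisely when $d\leq_X x$ for every $d\in D$, i.e. when $D\subseteq\da x=\overline{\{x\}}$. Hence $D$ has an upper bound in $X$ if and only if $D^{\ua}\neq\emptyset$.

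For $(1)\Rightarrow(3)$, suppose $X$ is $d$-bounded and let $D\in\mathcal D(X)$, $A\in\mathcal C(X)$ with $D\subseteq A$. Pick an upper bound $x$ of $D$. Given $d\in D$, since $d\in D\subseteq A$ and $d\leq_X d$ we have $d\in A\cap\ua d$, and $d\leq_X x$, so $x\in\ua(A\cap\ua d)$. As $d$ is arbitrary, $x\in\bigcap_{d\in D}\ua(A\cap\ua d)$, so this set is nonempty. The implication $(3)\Rightarrow(4)$ is trivial, since $\ir_c(X)\subseteq\mathcal C(X)$.

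For $(4)\Rightarrow(2)$, let $D\in\mathcal D(X)$ and put $A=\overline{D}$. Then $D$ is irreducible (being directed), so by Lemma \ref{irrsubspace} $A=\cl_X D\in\ir_c(X)$, and clearly $D\subseteq A$. By $(4)$ there is some $z\in\bigcap_{d\in D}\ua(\overline{D}\cap\ua d)$. Fix $d\in D$: the membership $z\in\ua(\overline{D}\cap\ua d)$ yields $a\in\overline{D}$ with $d\leq_X a\leq_X z$, hence $d\leq_X z$. Since $d$ was arbitrary, $z\in D^{\ua}$, so $D^{\ua}\neq\emptyset$, which is $(2)$.

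I do not expect a genuine obstacle here; the proof is bookkeeping with the specialization order. The one point worth isolating is that, whenever $D\subseteq A$, the set $\bigcap_{d\in D}\ua(A\cap\ua d)$ in fact equals $D^{\ua}$ — the inclusion $\supseteq$ is the computation used in $(1)\Rightarrow(3)$ (with $x$ any upper bound), and $\subseteq$ is the sandwich argument in $(4)\Rightarrow(2)$. This is what makes the closed, or even irreducible closed, parameter $A$ inessential for $d$-boundedness, although exactly this sort of ``relativized upper bound'' is what will matter once $D$ is replaced by the sections of a Rudin set or $\wdd$ set later. The only care needed is to read $\ua(A\cap\ua d)$ correctly: $z$ belongs to it iff there is a witness $a$ with $d\leq_X a\leq_X z$, not merely with $a\leq_X z$.
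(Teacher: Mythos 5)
Your proof is correct and follows essentially the same route as the paper: the equivalence $(1)\Leftrightarrow(2)$ is immediate, the cycle is closed via $(3)\Rightarrow(4)\Rightarrow(2)$, and your key observation that $\bigcap_{d\in D}\ua(A\cap\ua d)=D^{\ua}$ whenever $D\subseteq A$ (using $d\in A\cap\ua d$ for one inclusion and $A\cap\ua d\subseteq\ua d$ for the other) is exactly the computation the paper performs with $A=\overline{D}$.
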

\begin{proof} (1) $\Leftrightarrow$ (2) and (3) $\Rightarrow$ (4): Trivial.

(2) $\Rightarrow$ (3): $\emptyset\neq\bigcap\limits_{d\in D}\ua d\subseteq\bigcap\limits_{d\in D}\ua (A\cap\ua d)$.

(4) $\Rightarrow$ (2): Since $D\in \mathcal D(X)$, $\overline{D}\in \ir_c(X)$. By condition (4), $\bigcap\limits_{d\in D}\ua d=\bigcap\limits_{d\in D}\ua (\overline{D}\cap\ua d)\neq\emptyset$.
\end{proof}

\begin{proposition}\label{d-spacecharac1} For a $T_0$ space $X$, the following conditions are equivalent:
\begin{enumerate}[\rm (1) ]
	        \item $X$ is a $d$-space.
            \item $\mathcal D_c(X)=\mathcal S_c(X)$.
            \item  For any $D\in \mathcal D(X)$ and $U\in \mathcal O(X)$, $\bigcap\limits\limits_{d\in D}\ua d\subseteq U$ implies $\ua d \subseteq U$ \emph{(}i.e., $d\in U$\emph{)} for some $d\in D$.
            \item  For any filtered family $\mathcal K\subseteq \mathcal S^u(X)$ and $U\in \mathcal O(X)$, $\bigcap \mathcal K\subseteq U$ implies $K\subseteq U$ for some $K\in\mathcal K$.
            \item  For any $D\in \mathcal D(X)$ and $A\in \mathcal C(X)$, if $D\subseteq A$, then $A\cap\bigcap\limits_{d\in D}\ua d\neq\emptyset$.
            \item  For any $D\in \mathcal D(X)$ and $A\in \ir_c(X)$, if $D\subseteq A$, then $A\cap\bigcap\limits_{d\in D}\ua d\neq\emptyset$.
            \item  For any $D\in \mathcal D(X)$, $\overline{D}\cap\bigcap\limits_{d\in D}\ua d\neq\emptyset$.
\end{enumerate}
\end{proposition}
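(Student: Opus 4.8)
The plan is to prove the equivalences through the web $(1)\Leftrightarrow(2)$, $(2)\Rightarrow(5)\Rightarrow(6)\Rightarrow(7)\Rightarrow(2)$, $(2)\Rightarrow(3)\Rightarrow(7)$ and $(3)\Leftrightarrow(4)$. Throughout I would identify $\bigcap_{d\in D}\ua d$ with the set $D^{\ua}$ of upper bounds of $D$ (Remark \ref{Adelta=clAdelta}), use that $\ua d\subseteq U$ is the same as $d\in U$ for $U\in\mathcal O(X)$, and use that $\overline D\in\ir_c(X)$ for $D\in\mathcal D(X)$ (directed sets are irreducible, together with Lemma \ref{irrsubspace}).

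For $(1)\Leftrightarrow(2)$: if $X$ is a $d$-space and $D\in\mathcal D(X)$, then $\bigvee D$ exists, $D\subseteq\da\bigvee D$ forces $\overline D\subseteq\overline{\{\bigvee D\}}$, and since every open set is Scott open every open neighbourhood of $\bigvee D$ meets $D$, so $\bigvee D\in\overline D$ and $\overline D=\overline{\{\bigvee D\}}$; the reverse containment $\mathcal S_c(X)\subseteq\mathcal D_c(X)$ is trivial. Conversely, assuming $(2)$, for $D\in\mathcal D(X)$ write $\overline D=\overline{\{a\}}=\da a$: then $a$ is an upper bound of $D$ dominated by every upper bound, so $a=\bigvee D$ and $X$ is a dcpo; and for $U\in\mathcal O(X)$ with $\bigvee D\in U$, since $a\in\overline D$ the set $U$ meets $D$, so $U\in\sigma(X)$.

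The chain $(2)\Rightarrow(5)\Rightarrow(6)\Rightarrow(7)\Rightarrow(2)$ I expect to be short: for $(2)\Rightarrow(5)$, with $D\subseteq A\in\mathcal C(X)$ and $\overline D=\overline{\{a\}}$, the point $a$ lies in $A\cap\bigcap_{d\in D}\ua d$; $(5)\Rightarrow(6)$ is trivial; $(6)\Rightarrow(7)$ comes from taking $A=\overline D$; and for $(7)\Rightarrow(2)$, any $x\in\overline D\cap\bigcap_{d\in D}\ua d$ satisfies $\overline D=\overline{\{x\}}$ because $D\subseteq\da x$ and $x\in\overline D$. For $(2)\Rightarrow(3)$: from $\bigcap_{d\in D}\ua d\subseteq U$ and $\overline D=\overline{\{a\}}$ we get $a\in U\cap\overline D$, hence some $d\in D\cap U$. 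For $(3)\Rightarrow(7)$: otherwise $\bigcap_{d\in D}\ua d\subseteq X\setminus\overline D\in\mathcal O(X)$, and $(3)$ would produce $d\in D\setminus\overline D$, which is absurd.

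Finally, $(3)\Leftrightarrow(4)$ should be essentially a restatement: $x\mapsto\ua x$ is an order isomorphism of $(X,\leq_X)$ onto $(\mathcal S^u(X),\sqsubseteq)$, so $D\mapsto\{\ua d:d\in D\}$ is a bijection between nonempty directed subsets of $X$ and nonempty filtered families inside $\mathcal S^u(X)$ under which $\bigcap_{d\in D}\ua d$ corresponds to $\bigcap\{\ua d:d\in D\}$. The only step I expect to need real care is the second half of $(2)\Rightarrow(1)$ — extracting Scott-openness of an arbitrary open set from the hypothesis that closures of directed sets are point closures; everything else is bookkeeping with the specialization order and the identity $\bigcap_{d\in D}\ua d=D^{\ua}$.
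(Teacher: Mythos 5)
Your proposal is correct and uses essentially the same ingredients as the paper's proof (the identification of $\bigcap_{d\in D}\ua d$ with $D^{\ua}$, the observation that $\bigvee D\in\overline{D}$ when opens are Scott open, and the order isomorphism $x\mapsto\ua x$ for $(3)\Leftrightarrow(4)$); the only difference is cosmetic, namely that you route most implications through condition (2) where the paper cycles $(1)\Rightarrow(3)\Rightarrow(5)\Rightarrow(6)\Rightarrow(7)\Rightarrow(1)$. Your implication web is complete and every step checks out.
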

\begin{proof} (1) $\Leftrightarrow$ (2): Clearly, (1) $\Rightarrow$ (2). Conversely, if condition (2) holds, then for each $D\in \mathcal D(X)$ and $A\in \mathcal C(X)$ with $D\subseteq A$, there is $x\in X$ such that $\overline{D}=\overline{\{x\}}$, and consequently, $\bigvee D=x$ and $\bigvee D \in A$ since $\overline{D}\subseteq A$. Thus $X$ is a dcpo and $\mathcal O(X)\subseteq \sigma(X)$, proving $X$ is a $d$-space.

(1) $\Rightarrow$ (3): Since $X$ is a $d$-space, $\ua \bigvee D=\bigcap\limits_{d\in D}\ua d\subseteq U\in \sigma(X)$. Therefore, $\bigvee D\in U$, and whence
$d\in U$ for some $d\in D$.

(3) $\Leftrightarrow$ (4): For $\mathcal K=\{\ua x_i : i\in I\}\subseteq \mathcal S^u(X)$, $\mathcal K$
is filtered in $\mathcal S^u(X)$ with the Smyth order if{}f $\{x_i : i\in I\}\in \mathcal D(X)$.

(3) $\Rightarrow$ (5): If $A\cap\bigcap\limits_{d\in D}\ua d=\emptyset$, then $\bigcap\limits_{d\in D}\ua d \subseteq X\setminus A$. By condition (3), $\ua d\subseteq X\setminus A$ for some $d\in D$, which is in contradiction with $D\subseteq A$.

(5) $\Rightarrow$ (6) $\Rightarrow$ (7): Trivial.

(7) $\Rightarrow$ (1): For each $D\in \mathcal D(X)$ and $A\in \mathcal C(X)$ with $D\subseteq A$, by condition (7), $\overline{D}\cap\bigcap\limits_{d\in D}\ua d\neq\emptyset$. Select an $x\in \overline{D}\cap\bigcap\limits_{d\in D}\ua d$. Then $D\subseteq \da x\subseteq \overline{D}$, and hence $\overline{D}=\da x$ and $\bigvee D=x$. Therefore, $\bigvee D\in A$ because $\overline{\{\bigvee D\}}=\overline{D}\subseteq A$. Thus $X$ is a $d$-space.
\end{proof}

In the following, we shall give some equational characterizations of $d$-spaces.

\begin{proposition}\label{d-spacecharac2} For a $T_0$ space $X$, the following conditions are equivalent:
\begin{enumerate}[\rm (1) ]
	        \item $X$ is a $d$-space.
            \item  $X$ is $d$-bounded \emph{(}especially, $X$ is a dcpo\emph{)}, and $\ua \left(A\cap\bigcap\limits_{d\in D} \ua d\right)=\bigcap\limits_{d\in D}\ua (A\cap \ua d)$ for any $D\in \mathcal D(X)$ and $A\in \mathcal C(X)$.
            \item  $X$ is $d$-bounded \emph{(}especially, $X$ is a dcpo\emph{)}, and $\ua \left(A\cap\bigcap \mathcal K\right)=\bigcap\limits_{K\in\mathcal K}\ua (A\cap K)$ for any filtered family $\mathcal K\subseteq \mathcal S^u(X)$ and $A\in \mathcal C(X)$.
            \item  $X$ is $d$-bounded \emph{(}especially, $X$ is a dcpo\emph{)}, and $\ua \left(A\cap\bigcap\limits_{d\in D} \ua d\right)=\bigcap\limits_{d\in D}\ua (A\cap \ua d)$ for any $D\in \mathcal D(X)$ and $A\in \ir_c(X)$.
            \item  $X$ is $d$-bounded \emph{(}especially, $X$ is a dcpo\emph{)}, and $\ua \left(A\cap\bigcap \mathcal K\right)=\bigcap\limits_{K\in\mathcal K}\ua (A\cap K)$ for any filtered family $\mathcal K\subseteq \mathcal S^u(X)$ and $A\in \ir_c(X)$.
\end{enumerate}
\end{proposition}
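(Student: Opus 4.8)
The plan is to prove Proposition~\ref{d-spacecharac2} by showing (1)~$\Rightarrow$~(2), the chain of implications (2)~$\Rightarrow$~(3), (2)~$\Rightarrow$~(4), (3)~$\Rightarrow$~(5) and (4)~$\Rightarrow$~(5) using the translation between directed subsets of $X$ and filtered families in $\mathcal{S}^u(X)$ together with Remark~\ref{subspaceirr} (every directed set is irreducible, and its closure is irreducible closed), and finally closing the loop with (5)~$\Rightarrow$~(1) by specialising the displayed equation to a suitable choice of $A$. Throughout, note that one inclusion of the displayed identity is automatic: since $A\cap\bigcap_{d\in D}\ua d \subseteq A\cap\ua d$ for each $d\in D$, we always have $\ua(A\cap\bigcap_{d\in D}\ua d)\subseteq\bigcap_{d\in D}\ua(A\cap\ua d)$, so only the reverse inclusion carries content. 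The same remark applies with a filtered family $\mathcal{K}\subseteq\mathcal{S}^u(X)$ in place of $\{\ua d : d\in D\}$, since $\bigcap\mathcal{K}=\bigcap_{d\in D}\ua d$ when $\mathcal{K}=\{\ua d : d\in D\}$.

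For (1)~$\Rightarrow$~(2): assume $X$ is a $d$-space. Then $X$ is a dcpo, hence $d$-bounded. Fix $D\in\mathcal{D}(X)$ and $A\in\mathcal{C}(X)$; write $x=\bigvee D$, so $\bigcap_{d\in D}\ua d=\ua x$. If $y\in\bigcap_{d\in D}\ua(A\cap\ua d)$, then for each $d\in D$ the set $A\cap\ua d\cap\da y$ is nonempty; I would argue that $\{A\cap\ua d\cap\da y : d\in D\}$ is a filtered family of nonempty closed subsets whose intersection is $A\cap\ua x\cap\da y$, and that this intersection is nonempty. Concretely, consider $E=A\cap\da y$, a closed set containing the directed set $\{$some element of $E\cap\ua d$ for each $d\}$... cleaner: by Proposition~\ref{d-spacecharac1}(5) applied to $D$ and the closed set $A\cap\da y$ (which contains $D$? — no, not necessarily), so instead I would work directly: pick for each $d$ an element $z_d\in A\cap\ua d\cap\da y$; the family $\{\ua z_d\cap A\cap\da y\}_{d\in D}$ need not be filtered, so the right move is to use the closed set $A\cap\da y$ and apply condition~(6) of Proposition~\ref{d-spacecharac1} to the directed set $D$ after checking $D\subseteq\overline{\ua{z}\cap\cdots}$. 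This is the fiddly point; the robust route is: $A\cap\da y$ is closed, $\overline{D}\subseteq A\cap\da y$ fails in general, so instead apply Proposition~\ref{d-spacecharac1}(5) to the \emph{directed} set $D$ and closed set $A$ to get $x\in A\cap\ua x\neq\emptyset$... This needs care. I will handle it by noting $y\in\bigcap_d\ua(A\cap\ua d)$ means each $A\cap\ua d\cap\da y\neq\emptyset$; these form a filtered family (as $d$ ranges over $D$, $\ua d$ decreases) of closed sets, and since each is contained in $A\cap\da y$ which contains no directed set cofinal issue — here I invoke that $\bigcap_d(A\cap\ua d\cap\da y)$ is computed via: the projection to $X$ of a filtered family in $\mk$? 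Simpler: pick $z_d\in A\cap\ua d\cap\da y$ and note $\{z_d\}$ has a cofinal directed refinement dominating $D$, whose join $w$ satisfies $w\le y$, $w\in A$ (as $A$ closed and $z_d\to w$), $w\ge x$; then $w\in\ua(A\cap\ua x)\subseteq\ua(A\cap\bigcap_d\ua d)$ and $w\le y$, giving $y\in\ua(A\cap\bigcap_d\ua d)$. The existence of such $w$ is exactly directed-completeness plus $\mathcal O(X)\subseteq\sigma(X)$, i.e.\ the $d$-space hypothesis.

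For the remaining implications: (2)~$\Leftrightarrow$~(3) and (4)~$\Leftrightarrow$~(5) follow from the observation used repeatedly in Proposition~\ref{d-spacecharac1}, namely $\{\ua x_i : i\in I\}$ is a filtered subfamily of $\mathcal{S}^u(X)$ (under the Smyth order) if and only if $\{x_i : i\in I\}$ is directed, and then $\bigcap_i\ua x_i=\bigcap\mathcal K$ and $\ua(A\cap K)=\ua(A\cap\ua x_i)$; (2)~$\Rightarrow$~(4) and (3)~$\Rightarrow$~(5) are trivial since $\ir_c(X)\subseteq\mathcal C(X)$, so the identity for all closed $A$ gives it for all irreducible closed $A$; and (4)~$\Rightarrow$~(2) uses that for $D\in\mathcal D(X)$, $\overline D\in\ir_c(X)$, and for general closed $A$ one applies the $\ir_c$-version to $\overline{A\cap\ua d}\subseteq$? — actually the clean direction is (5)~$\Rightarrow$~(1), so I will prove (1)~$\Rightarrow$~(2)~$\Rightarrow$~(3)~$\Rightarrow$~(5)~$\Rightarrow$~(1) and (2)~$\Rightarrow$~(4)~$\Rightarrow$~(5) to cover all five. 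For (5)~$\Rightarrow$~(1): $X$ is $d$-bounded, so each $D\in\mathcal D(X)$ has an upper bound and $\overline D=\overline{\{x\}}$ once we know $\bigvee D$ exists; take $A=\overline D\in\ir_c(X)$, then $\bigcap_{d\in D}\ua(\overline D\cap\ua d)\supseteq\bigcap_{d\in D}\ua d\ni$ any upper bound, and the identity gives $\ua(\overline D\cap\bigcap_d\ua d)=\bigcap_d\ua(\overline D\cap\ua d)\neq\emptyset$, whence $\overline D\cap\bigcap_d\ua d\neq\emptyset$, which is condition~(7) of Proposition~\ref{d-spacecharac1}, and that proposition gives that $X$ is a $d$-space. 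The main obstacle is the direction~(1)~$\Rightarrow$~(2): extracting, from a point $y$ in the right-hand intersection, an actual upper bound of $D$ lying in $A\cap\da y$. I expect to resolve it by building a directed set in $A\cap\da y$ that dominates $D$ (choosing witnesses $z_d$ and closing up under the directedness of $D$), taking its supremum $w$ which exists and lies in $A$ because $X$ is a dcpo with $\mathcal O(X)\subseteq\sigma(X)$ (so $A$ is closed under directed sups), and observing $x=\bigvee D\le w\le y$ with $w\in A$, so $w\in A\cap\ua x$ and $y\in\ua(A\cap\ua x)=\ua(A\cap\bigcap_{d\in D}\ua d)$.
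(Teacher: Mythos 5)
Your implication scheme --- (1)$\Rightarrow$(2), the equivalences (2)$\Leftrightarrow$(3) and (4)$\Leftrightarrow$(5) via the correspondence between filtered subfamilies of $\mathcal S^u(X)$ and directed sets, the trivial restriction from $\mathcal C(X)$ to $\ir_c(X)$, and the closing step with $A=\overline D$ feeding condition (7) of Proposition \ref{d-spacecharac1} --- is exactly the paper's, and those parts are correct. The problem is the step you yourself flag as ``fiddly'', the reverse inclusion in (1)$\Rightarrow$(2), and your proposed resolution does not work. Given $y\in\bigcap_{d\in D}\ua(A\cap\ua d)$ you pick witnesses $z_d\in A\cap\ua d\cap\da y$ and assert that $\{z_d\}$ has a ``cofinal directed refinement dominating $D$''. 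There is no reason for this: from $d_1,d_2\leq d_3$ you get $z_{d_3}\geq d_3\geq d_1,d_2$, but nothing places $z_{d_3}$ above $z_{d_1}$ or $z_{d_2}$, so the witnesses need not be directed and ``closing up under the directedness of $D$'' is not an operation that keeps you inside $A\cap\da y$. The directed set you want does exist, but extracting it is precisely an instance of Rudin's Lemma (Corollary \ref{rudin}, applied to the closed lower set $A\cap\da y$ and the filtered family $\{\ua d: d\in D\}$), a nontrivial Zorn-type argument that you neither invoke nor reprove. Your side remark that the sets $A\cap\ua d\cap\da y$ form a filtered family of \emph{closed} sets is also false, since $\ua d$ is saturated but not closed.

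The paper's argument for this inclusion is two lines and avoids any extraction: arguing contrapositively, if $y\notin\ua\bigl(A\cap\bigcap_{d\in D}\ua d\bigr)$, i.e.\ $\da y\cap A\cap\bigcap_{d\in D}\ua d=\emptyset$, then $\ua\bigvee D=\bigcap_{d\in D}\ua d\subseteq X\setminus(\da y\cap A)$; the latter is open, hence Scott open because $X$ is a $d$-space, hence contains some $d\in D$, and being an upper set it then contains $\ua d$, so $\ua d\cap A\cap\da y=\emptyset$ and $y\notin\ua(A\cap\ua d)$. Either substitute this argument or carry out the Rudin extraction explicitly; with that step repaired the rest of your proposal goes through as written.
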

\begin{proof}  (1) $\Rightarrow$ (2): Since $X$ is a $d$-space, $X$ is a dcpo and $\mathcal O(X)\subseteq\sigma (X)$. For $D\in \mathcal D(X)$ and $A\in \mathcal C(X)$, clearly, $\ua \left(A\cap\bigcap\limits_{d\in D} \ua d\right)\subseteq\bigcap\limits_{d\in D}\ua (A\cap \ua d)$. Conversely, if $x\not\in \ua \left(A\cap\bigcap\limits_{d\in D}\ua d\right)$, that is, $\da x\cap A\cap\bigcap\limits_{d\in D}\ua d=\emptyset$, then $\ua \bigvee D=\bigcap\limits_{d\in D}\ua d\subseteq X\setminus \da x\cap A\in \sigma (X)$, and whence $d\in X\setminus \da x\cap A$ for some $d\in D$, i.e., $x\not\in \ua (A\cap\ua d)$. Therefore, $x\not\in \bigcap\limits_{d\in D}\ua (A\cap \ua d)$. Thus $\ua \left(A\cap\bigcap\limits_{d\in D} \ua d\right)=\bigcap\limits_{d\in D}\ua (A\cap \ua d)$.

(2) $\Leftrightarrow$ (3) and (4) $\Leftrightarrow$ (5): For $\mathcal K=\{\ua x_i : i\in I\}\subseteq \mathcal S^u(X)$, $\mathcal K$
is filtered in $\mathcal S^u(X)$ with the Smyth order if{}f $\{x_i : i\in I\}\in \mathcal D(X)$.

(2) $\Rightarrow$ (4): Trivial.

(4) $\Rightarrow$ (1): For each $D\in \mathcal D(X)$, by condition (4), $\emptyset \neq D^{\ua}=\bigcap\limits_{d\in D}\ua d=\bigcap\limits_{d\in D}\ua (\overline{D}\cap \ua d)=\ua \left(\overline{D}\cap\bigcap\limits_{d\in D} \ua d\right)$. By Proposition \ref{d-spacecharac1}, $X$ is a $d$-space.
\end{proof}

\begin{theorem}\label{d-spacemapcharac} Let $X$ be a $T_0$ space and $\mathbf{K}$ a full subcategory of $\mathbf{Top}_0$ containing $\mathbf{Sob}$. Then the following conditions are equivalent:
\begin{enumerate}[\rm (1)]
		\item $X$ is a $d$-space.
        \item For every continuous mapping $f:X\longrightarrow Y$ from $X$ to a $T_0$ space $Y$ and any $D\in \mathcal D(X)$, $\ua f\left(\bigcap\limits_{d\in D}\ua d\right)=\bigcap\limits_{d\in D}\ua f(\ua d)=\bigcap\limits_{d\in D} \ua f(d)$.
        \item For every continuous mapping $f:X\longrightarrow Y$ from $X$ to a $\mathbf{K}$-space $Y$ any $D\in \mathcal D(X)$, $\ua f\left(\bigcap\limits_{d\in D}\ua d\right)=\bigcap\limits_{d\in D}\ua f(\ua d)=\bigcap\limits_{d\in D} \ua f(d)$.
        \item For every continuous mapping $f:X\longrightarrow Y$ from $X$ to a sober space $Y$ and any $D\in \mathcal D(X)$, $\ua f\left(\bigcap\limits_{d\in D}\ua d\right)=\bigcap\limits_{d\in D}\ua f(\ua d)=\bigcap\limits_{d\in D} \ua f(d)$.
\end{enumerate}
\end{theorem}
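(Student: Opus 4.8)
The plan is to prove the cycle $(1)\Rightarrow(2)\Rightarrow(3)\Rightarrow(4)\Rightarrow(1)$. The implications $(2)\Rightarrow(3)$ and $(3)\Rightarrow(4)$ are immediate from the inclusions $\mathbf{Sob}\subseteq\mathbf{K}\subseteq\mathbf{Top}_0$ (sober spaces are $\mathbf{K}$-spaces and $\mathbf{K}$-spaces are $T_0$ spaces), since passing to a smaller target class only restricts the family of test maps $f$. So the real content lies in $(1)\Rightarrow(2)$ and $(4)\Rightarrow(1)$.

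For $(1)\Rightarrow(2)$, I would first record the elementary fact that for any continuous (hence monotone) $f\colon X\to Y$ and any $x\in X$ one has $\ua f(\ua x)=\ua f(x)$: the inclusion $\supseteq$ holds because $f(x)\in f(\ua x)$, and $\subseteq$ because $y\geq f(a)$ with $a\geq x$ forces $y\geq f(a)\geq f(x)$. Applied termwise this makes the second equality $\bigcap_{d\in D}\ua f(\ua d)=\bigcap_{d\in D}\ua f(d)$ automatic, so it remains to show $\ua f\!\left(\bigcap_{d\in D}\ua d\right)=\bigcap_{d\in D}\ua f(d)$. Since $X$ is a $d$-space, $\bigvee D$ exists and $\bigcap_{d\in D}\ua d=\ua\!\bigvee D$, so the left-hand side equals $\ua f(\ua\!\bigvee D)=\ua f(\bigvee D)$. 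Monotonicity of $f$ gives $\ua f(\bigvee D)\subseteq\bigcap_{d\in D}\ua f(d)$. For the reverse inclusion, let $y\in\bigcap_{d\in D}\ua f(d)$; then $f(d)\in\overline{\{y\}}$ for every $d\in D$, so $D\subseteq f^{-1}(\overline{\{y\}})\in\mathcal C(X)$, and since closed subsets of a $d$-space are closed under directed suprema (a consequence of $\mathcal O(X)\subseteq\sigma(X)$, cf.\ Proposition~\ref{d-spacecharac1}), we get $\bigvee D\in f^{-1}(\overline{\{y\}})$, i.e.\ $f(\bigvee D)\leq y$, i.e.\ $y\in\ua f(\bigvee D)$.

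For $(4)\Rightarrow(1)$, I would apply the hypothesis to the canonical embedding $\eta_X\colon X\to X^s=P_H(\ir_c(X))$ into the sobrification of $X$, which is sober. By Remark~\ref{eta continuous} the specialization order of $X^s$ is set inclusion, so $\ua\eta_X(d)=\{A\in\ir_c(X):d\in A\}$ for each $d\in X$, and hence, for $D\in\mathcal D(X)$,
\[
\bigcap_{d\in D}\ua\eta_X(d)=\{A\in\ir_c(X):D\subseteq A\}\ni\overline{D}.
\]
By $(4)$ this set equals $\ua\eta_X\!\left(\bigcap_{d\in D}\ua d\right)$, so $\overline{D}\in\ua\eta_X\!\left(\bigcap_{d\in D}\ua d\right)$; that is, there is $x\in\bigcap_{d\in D}\ua d$ with $\overline{\{x\}}=\eta_X(x)\subseteq\overline{D}$. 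From $x\in\bigcap_{d\in D}\ua d$ we get $D\subseteq\da x=\overline{\{x\}}$, whence $\overline{D}\subseteq\overline{\{x\}}$, and together with $\overline{\{x\}}\subseteq\overline{D}$ this gives $\overline{D}=\overline{\{x\}}$. Thus $\mathcal D_c(X)\subseteq\mathcal S_c(X)$, and since the reverse inclusion is trivial, $\mathcal D_c(X)=\mathcal S_c(X)$; by Proposition~\ref{d-spacecharac1}, $X$ is a $d$-space.

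I expect the only mildly delicate point to be the decoding step in $(4)\Rightarrow(1)$: one must use the description of the specialization order of $X^s$ (Remark~\ref{eta continuous}) to identify $\bigcap_{d\in D}\ua\eta_X(d)$ concretely, and then translate the membership $\overline{D}\in\ua\eta_X\!\left(\bigcap_{d\in D}\ua d\right)$ into the existence of a least upper bound of $D$ that is realized inside $\overline{D}$. Everything else is routine manipulation of specialization orders together with the standard fact that closed sets in a $d$-space are inaccessible by directed joins.
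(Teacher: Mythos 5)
Your proposal is correct and follows essentially the same route as the paper's proof: the implication $(1)\Rightarrow(2)$ via $\bigcap_{d\in D}\ua d=\ua\bigvee D$ and the closedness of $f^{-1}\left(\overline{\{y\}}\right)$ under directed suprema, the trivial chain $(2)\Rightarrow(3)\Rightarrow(4)$, and $(4)\Rightarrow(1)$ by testing against the sobrification embedding $\eta_X\colon X\to X^s$ and extracting an upper bound $x$ of $D$ with $\overline{\{x\}}=\overline{D}$. The only cosmetic difference is that you invoke Proposition~\ref{d-spacecharac1} through the criterion $\mathcal D_c(X)=\mathcal S_c(X)$, which is exactly what the paper's argument establishes as well.
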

\begin{proof} (1) $\Rightarrow$ (2): Since $X$ is a $d$-space and $f$ is order-preserving, we have $\ua f\left(\bigcap\limits_{d\in D}\ua d\right)=\ua f(\ua \bigvee D)=\ua f(\bigvee D)$. Obviously, $\ua f(\bigvee D)\subseteq \bigcap\limits_{d\in D} \ua f(d)$. On the other hand, if $y\in \bigcap\limits_{d\in D} \ua f(d)$, then $d\in f^{-1}(\da y)=f^{-1}(\overline{\{y\}})\in \mathcal C(X)\subseteq \mathcal C(\Sigma X)$ for all $d\in D$, and hence $\bigvee D\in f^{-1}(\da y)$, that is, $y\in \ua f(\bigvee D)$. Thus $f(\bigvee D)=\bigvee f(D)$, and whence $\ua f\left(\bigcap\limits_{d\in D}\ua d\right)=\ua f(\bigvee D)=\ua \bigvee f(D)=\bigcap\limits_{d\in D}\ua f(\ua d)=\bigcap\limits_{d\in D} \ua f(d)$.

(2) $\Rightarrow$ (3)$\Rightarrow$ (4): Trivial.

(4) $\Rightarrow$ (1):  Let $\eta_X : X \rightarrow X^s$ ($=P_H(\ir_c(X))$) be the canonical topological embedding from $X$ into its soberification. For $D\in \mathcal D(X)$, by condition (4) we have $\overline{D}\in \bigcap\limits_{d\in D}\ua_{\ir_c(X)} \eta_X (d)=\bigcap\limits_{d\in D}\ua_{\ir_c(X)} \eta_X (\ua d)=\ua_{\ir_c(X)} \eta_X\left(\bigcap\limits_{d\in D}\ua d\right)=\ua_{\ir_c(X)} \eta_X (D^{\ua})$, and whence there is a $c\in D^{\ua}$ such that $\overline{\{c\}}\subseteq \overline{D}$. Therefore, $\overline{D}=\overline{\{c\}}$. By Proposition \ref{d-spacecharac1}, $X$ is a $d$-space.
\end{proof}

\begin{corollary}\label{d-spacemapcharac2} Let $X$ be a $T_0$ space and $\mathbf{K}$ a full subcategory of $\mathbf{Top}_0$ containing $\mathbf{Sob}$. Then  the following conditions are equivalent:
\begin{enumerate}[\rm (1)]
		\item $X$ is a $d$-space.
        \item $X$ is a dcpo, and for every continuous mapping $f:X\longrightarrow Y$ from $X$ to a $T_0$ space $Y$ and any $D\in \mathcal D(X)$, $ f(\bigvee D)=\bigvee f(D)$.
        \item $X$ is a dcpo, and for every continuous mapping $f:X\longrightarrow Y$ from $X$ to a $\mathbf{K}$-space $Y$ and any $D\in \mathcal D(X)$, $ f(\bigvee D)=\bigvee f(D)$.
        \item $X$ is a dcpo, and for every continuous mapping $f:X\longrightarrow Y$ from $X$ to a sober space $Y$ and any $D\in \mathcal D(X)$, $ f(\bigvee D)=\bigvee f(D)$.
\end{enumerate}
\end{corollary}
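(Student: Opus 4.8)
The plan is to read off Corollary \ref{d-spacemapcharac2} from Theorem \ref{d-spacemapcharac} by translating the join‑preservation statement ``$f(\bigvee D)=\bigvee f(D)$'' into the triple equality $\ua f\left(\bigcap_{d\in D}\ua d\right)=\bigcap_{d\in D}\ua f(\ua d)=\bigcap_{d\in D}\ua f(d)$ occurring there, the bridge being that all of (2)--(4) of the corollary carry the standing hypothesis ``$X$ is a dcpo'' (and (1) implies it). Indeed, once $X$ is a dcpo one has $\bigcap_{d\in D}\ua d=\ua\bigvee D$, and for any order‑preserving $f$ this gives $\ua f\left(\bigcap_{d\in D}\ua d\right)=\ua f(\ua\bigvee D)=\ua f(\bigvee D)$, while $\ua f(\ua d)=\ua f(d)$ always. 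So the triple equality collapses to the single identity $\ua f(\bigvee D)=\bigcap_{d\in D}\ua f(d)=(f(D))^{\ua}$, which (since $f(\bigvee D)$ is itself an upper bound of $f(D)$) is exactly the assertion that $\bigvee f(D)$ exists and equals $f(\bigvee D)$.

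With this observation the proof is short. For (1)$\Rightarrow$(2): if $X$ is a $d$-space it is a dcpo, and Theorem \ref{d-spacemapcharac}(1)$\Rightarrow$(2) supplies the triple equality for every continuous $f$ into an arbitrary $T_0$ space $Y$; by the translation above this is precisely $f(\bigvee D)=\bigvee f(D)$. The implications (2)$\Rightarrow$(3)$\Rightarrow$(4) are immediate because $\mathbf{Sob}\subseteq\mathbf{K}\subseteq\mathbf{Top}_0$, so the class of admissible test spaces $Y$ only shrinks. For (4)$\Rightarrow$(1): here $X$ is assumed to be a dcpo, and for every continuous $f:X\longrightarrow Y$ with $Y$ sober and every $D\in\mathcal D(X)$ we have $f(\bigvee D)=\bigvee f(D)$ (the right‑hand join existing since $f(D)$ is directed in the dcpo $Y$). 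Running the translation backwards — using that $\ua\bigvee S=S^{\ua}$ whenever $\bigvee S$ exists — we recover the triple equality of Theorem \ref{d-spacemapcharac}(4), and that theorem then yields that $X$ is a $d$-space. (Alternatively one can bypass the bookkeeping and argue directly with the canonical embedding $\eta_X:X\longrightarrow X^s=P_H(\ir_c(X))$: condition (4) forces $\overline{\{\bigvee D\}}=\eta_X(\bigvee D)=\bigvee\eta_X(D)=\overline{D}$, whence $\bigvee D\in\overline{D}\cap\bigcap_{d\in D}\ua d$ and Proposition \ref{d-spacecharac1}(7) applies.)

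I expect the only point requiring a little care is the elementary poset arithmetic underlying the translation, especially in (4)$\Rightarrow$(1): one must be explicit that ``$f(\bigvee D)=\bigvee f(D)$'' is to be read as ``$\bigvee f(D)$ exists and equals $f(\bigvee D)$'', verify that $\ua\bigvee S=S^{\ua}$ when the join exists, and check that applying $\ua(-)$ turns this identity into \emph{exactly} the triple equality demanded by Theorem \ref{d-spacemapcharac}. None of this is deep, but it is where a careless argument could slip (e.g.\ by implicitly assuming the existence of $\bigvee f(D)$ or confusing $\ua f(\ua d)$ with $f(\ua d)$). Everything substantive is already packaged in Theorem \ref{d-spacemapcharac}.
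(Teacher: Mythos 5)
Your proposal is correct and follows essentially the same route as the paper: the forward direction imports Theorem \ref{d-spacemapcharac} (1)$\Rightarrow$(2), the middle implications are containments of test categories, and (4)$\Rightarrow$(1) uses the dcpo hypothesis to rewrite $f(\bigvee D)=\bigvee f(D)$ as the triple equality $\ua f\left(\bigcap_{d\in D}\ua d\right)=\bigcap_{d\in D}\ua f(\ua d)=\bigcap_{d\in D}\ua f(d)$ and then invokes Theorem \ref{d-spacemapcharac} (4)$\Rightarrow$(1). The only cosmetic difference is that the paper cites the \emph{proof} of Theorem \ref{d-spacemapcharac} (1)$\Rightarrow$(2), where $f(\bigvee D)=\bigvee f(D)$ is derived explicitly, whereas you recover it from the theorem's \emph{statement} via the same translation; both are fine.
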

\begin{proof} (1) $\Rightarrow$ (2): Since $X$ is a $d$-space, $X$ is a dcpo. By the proof of (1) $\Rightarrow$ (2) in Theorem \ref{d-spacemapcharac}, $f(\bigvee D)=\bigvee f(D)$.

(2) $\Rightarrow$ (3)$\Rightarrow$ (4): Trivial.

(4) $\Rightarrow$ (1): For every continuous mapping $f:X\longrightarrow Y$ from $X$ to a sober space $Y$ and any $D\in \mathcal D(X)$, by condition (4) we have $\ua f\left(\bigcap\limits_{d\in D}\ua d\right)=\ua f(\bigvee D)=\ua \bigvee f(D)=\bigcap\limits_{d\in D} \ua f(d)=\bigcap\limits_{d\in D}\ua f(\ua d)$, and whence by Theorem \ref{d-spacemapcharac}, $X$ is a $d$-space.
\end{proof}

\begin{definition}\label{DCspace}
	A $T_0$ space $X$ is called a \emph{directed closure space}, $\mathsf{DC}$ \emph{space} for short, if $\ir_c(X)=\mathcal{D}_c(X)$, that is, for each $A\in \ir_c(X)$, there exists a directed subset of $X$ such that $A=\overline{D}$.
\end{definition}

The following result follows directly from the definition of $\mathsf{DC}$ spaces.

\begin{proposition}\label{DCclosed}
	A closed subspace of a $\dc$ space is a $\dc$ space.
\end{proposition}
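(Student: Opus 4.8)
The plan is to derive the statement immediately from Remark~\ref{subspaceirr}, which describes the irreducible closed subsets of a closed subspace. Let $X$ be a $\dc$ space and let $Y$ be a closed subspace of $X$; we must verify $\ir_c(Y)=\mathcal D_c(Y)$.

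One inclusion, $\mathcal D_c(Y)\subseteq\ir_c(Y)$, holds in every $T_0$ space: a directed subset of $Y$ is irreducible in $Y$, so its closure in $Y$ lies in $\ir_c(Y)$. For the reverse inclusion, I would take $A\in\ir_c(Y)$. Since $Y\in\mathcal C(X)$, Remark~\ref{subspaceirr} yields $A\in\ir_c(X)$ and $A\subseteq Y$. Because $X$ is a $\dc$ space there is $D\in\mathcal D(X)$ with $A=\cl_X(D)$. Now $D\subseteq A\subseteq Y$ and the specialization order of $Y$ is the restriction of that of $X$, so $D\in\mathcal D(Y)$; furthermore $\cl_Y(D)=Y\cap\cl_X(D)=Y\cap A=A$. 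Hence $A=\cl_Y(D)\in\mathcal D_c(Y)$, which gives $\ir_c(Y)=\mathcal D_c(Y)$, i.e. $Y$ is a $\dc$ space.

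There is no real obstacle here; the only thing to keep straight is that both ``$D$ is directed'' and ``$\cl(D)$'' must be read inside $Y$, which is taken care of by the facts that the specialization order of a subspace is the restriction of the ambient one and that a closed subspace carries the subspace closure operator $\cl_Y(\cdot)=Y\cap\cl_X(\cdot)$. The same bookkeeping will underlie the later, less immediate hereditariness results for Rudin spaces and $\wdd$ spaces.
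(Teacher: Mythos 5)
Your proof is correct and is exactly the argument the paper has in mind: the paper states this proposition ``follows directly from the definition'' and gives no further details, and your verification (using Remark~\ref{subspaceirr} to pass $A\in\ir_c(Y)$ up to $\ir_c(X)$, then pulling the directed set $D$ back down via $\cl_Y(D)=Y\cap\cl_X(D)=A$ and the fact that the specialization order restricts) is the standard filling-in of that claim. No gaps.
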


\begin{lemma}\label{DCimag}
	If $f : X \longrightarrow Y$ is continuous and $A\in\mathcal D_c (X)$, then $\overline{f(A)}\in \mathcal D_c (Y)$.
\end{lemma}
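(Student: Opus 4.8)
The plan is to produce the directed set witnessing $\overline{f(A)}\in\mathcal D_c(Y)$ directly as the $f$-image of the directed set witnessing $A\in\mathcal D_c(X)$. By definition of $\mathcal D_c(X)$, write $A=\overline{D}$ for some $D\in\mathcal D(X)$. The first observation I would record is the (standard) fact that a continuous map $f$ between $T_0$ spaces is monotone for the specialization orders: if $x\leq_X x'$, i.e.\ $x\in\overline{\{x'\}}$, then $f(x)\in f(\overline{\{x'\}})\subseteq\overline{\{f(x')\}}$, so $f(x)\leq_Y f(x')$.

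From monotonicity it follows that $f(D)$ is again directed. Indeed $f(D)\neq\emptyset$ since $D\neq\emptyset$, and given $f(d_1),f(d_2)\in f(D)$, pick $d_3\in D$ with $d_1,d_2\leq_X d_3$; then $f(d_1),f(d_2)\leq_Y f(d_3)\in f(D)$. Hence $f(D)\in\mathcal D(Y)$ and $\overline{f(D)}\in\mathcal D_c(Y)$.

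It remains to identify $\overline{f(A)}$ with $\overline{f(D)}$. The inclusion $\overline{f(D)}\subseteq\overline{f(A)}$ is immediate from $D\subseteq A$. For the reverse inclusion, continuity of $f$ gives $f(A)=f(\overline{D})\subseteq\overline{f(D)}$, whence $\overline{f(A)}\subseteq\overline{f(D)}$. Combining the two inclusions yields $\overline{f(A)}=\overline{f(D)}\in\mathcal D_c(Y)$, as required.

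There is essentially no obstacle in this argument; the only points that deserve an explicit line are the monotonicity of $f$ with respect to the specialization orders and the consequent directedness of $f(D)$, both of which are routine. (I would also note in passing the same proof shows, more generally, that $f(\mathcal D(X))\subseteq\mathcal D(Y)$ up to the evident abuse, which is what makes the closure manipulation work.)
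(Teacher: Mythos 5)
Your proof is correct and follows essentially the same route as the paper's: write $A=\overline{D}$ with $D$ directed, note $f(D)$ is directed, and use continuity to get $\overline{f(A)}=\overline{f(\overline{D})}=\overline{f(D)}$. You merely spell out the monotonicity and closure details that the paper leaves implicit.
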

\begin{proof} Since $A\in\mathcal D_c (X)$, there is a $D\in \mathcal D(X)$ such that $A=\overline{D}$, and whence $f(D)\in \mathcal D(Y)$ and $\overline{f(A)}=\overline{f(\overline{D})}=\overline{f(D)}\in \mathcal D_c (Y)$.
\end{proof}

\begin{proposition}\label{DCretract}
	A retract of a $\dc$ space is a $\dc$ space.
\end{proposition}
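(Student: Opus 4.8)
The plan is to establish the nontrivial inclusion $\ir_c(Y)\subseteq\mathcal{D}_c(Y)$; the reverse inclusion $\mathcal{D}_c(Y)\subseteq\ir_c(Y)$ holds in every $T_0$ space, since the closure of a directed set is irreducible and closed. Write the retraction data as continuous maps $r:X\longrightarrow Y$ and $s:Y\longrightarrow X$ with $r\circ s=\mathrm{id}_Y$; note $Y$ is $T_0$ because $s$ is a topological embedding of $Y$ as the subspace $s(Y)$ of $X$, with inverse $r|_{s(Y)}$.

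First I would transport an arbitrary $A\in\ir_c(Y)$ into $X$. By Lemma \ref{irrimage}, $s(A)\in\ir(X)$, so $B:=\cl_X(s(A))\in\ir_c(X)$; since $X$ is a $\dc$ space, $B\in\mathcal{D}_c(X)$. Applying Lemma \ref{DCimag} to the continuous map $r$ then gives $\cl_Y(r(B))\in\mathcal{D}_c(Y)$. The only thing left is to identify $\cl_Y(r(B))$ with $A$, and this is exactly where the identity $r\circ s=\mathrm{id}_Y$ enters: from $s(A)\subseteq B$ we get $A=r(s(A))\subseteq r(B)$, while continuity of $r$ together with $A$ being closed in $Y$ gives $r(B)=r(\cl_X(s(A)))\subseteq\cl_Y(r(s(A)))=\cl_Y(A)=A$. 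Hence $r(B)=A$, so $\cl_Y(r(B))=A\in\mathcal{D}_c(Y)$, and $Y$ is a $\dc$ space.

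There is no real obstacle here: this is the standard ``transport along a retraction'' argument, with Lemma \ref{irrimage} ensuring we land in $\ir_c(X)$, the $\dc$ hypothesis on $X$ supplying a directed set there, and Lemma \ref{DCimag} carrying the directed-closure property back to $Y$. The only point needing a little care is bookkeeping which space each closure is taken in and invoking $r\circ s=\mathrm{id}_Y$ at the two places where $s(A)\subseteq B$ and $r(s(A))=A$ are used; once the images' closures are pinned to $A$, the conclusion is immediate.
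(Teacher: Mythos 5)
Your proof is correct and follows essentially the same route as the paper's: push $A$ into $X$ via the section, use the $\dc$ hypothesis there, and pull the directed closure back with Lemma \ref{DCimag}, identifying the result with $A$ via $r\circ s=\mathrm{id}_Y$. The paper states the identification $A=\overline{r(s(A))}=\overline{r(\overline{s(A)})}$ more tersely, but the argument is the same.
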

\begin{proof}
	Assume $X$ is a $\dc$ space and $Y$ a retract of $X$.  Then there are continuous mappings $f:X\longrightarrow Y$ and $g:Y\longrightarrow X$ such that $f\circ g={\rm id}_Y$.  Let $B\in \ir_c(Y)$. Then $\overline{g(B)}\in\ir_c(X)$ by Lemma \ref{irrimage}. Since $X$ is  a $\dc$ space, $\overline{g(B)}\in D_c(X)$. By Lemma \ref{DCimag}, $B=\overline{fg(B)}=\overline{f\left(\overline{g(B)}\right)}\in\mathcal D_c (Y)$. Therefore, $Y$ is a $\dc$ space.
\end{proof}

By Corollary \ref{irrcprod} and Lemma \ref{DCimag}, we get the following result.

\begin{proposition}\label{DCsetprod}
	Let	$\{X_i:i\in I\}$ be a family of $T_0$ spaces and $X=\prod_{i\in I}X_i$. If $A\in \mathcal D_c(X)$, then $A=\prod_{i\in I}p_i(A)$, and $p_i(A)\in \mathcal D_c(X_i)$ for each $i\in I$.
\end{proposition}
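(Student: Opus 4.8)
The plan is to combine the product decomposition of irreducible closed sets from Corollary~\ref{irrcprod} with the image result Lemma~\ref{DCimag}, after first observing that a set of the form $\overline{D}$ with $D$ directed is in particular irreducible closed. So the proposition is essentially a two-line consequence of results already established.

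First I would fix $A\in\mathcal{D}_c(X)$ and choose $D\in\mathcal{D}(X)$ with $A=\overline{D}$. Since every directed subset of $X$ is irreducible (noted in the preliminaries) and the closure of an irreducible set is irreducible closed (standard, e.g.\ via Lemma~\ref{irrsubspace}), we get $A\in\ir_c(X)$. Hence Corollary~\ref{irrcprod} applies and yields $A=\prod_{i\in I}p_i(A)$ together with $p_i(A)\in\ir_c(X_i)$ for every $i\in I$; in particular each $p_i(A)$ is a \emph{closed} subset of $X_i$.

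It then remains only to see that each $p_i(A)$ is the closure of a directed set. Each projection $p_i:X\longrightarrow X_i$ is continuous, so Lemma~\ref{DCimag} (applied to $f=p_i$ and $A\in\mathcal{D}_c(X)$) gives $\overline{p_i(A)}\in\mathcal{D}_c(X_i)$. Since $p_i(A)$ is closed by the previous step, $\overline{p_i(A)}=p_i(A)$, and therefore $p_i(A)\in\mathcal{D}_c(X_i)$. Concretely, writing $A=\overline{D}$, the set $p_i(D)$ is directed in $X_i$ and $\overline{p_i(D)}=p_i(A)$ because $p_i(A)=p_i(\overline{D})\subseteq\overline{p_i(D)}\subseteq\overline{p_i(\overline{D})}=\overline{p_i(A)}=p_i(A)$.

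I do not expect any genuine obstacle: the whole content has been pushed into Corollary~\ref{irrcprod} (the product decomposition of irreducible closed sets) and Lemma~\ref{DCimag} (directed-closure sets are preserved, up to closure, by continuous maps). The only point one must not overlook is that $p_i(A)\in\ir_c(X_i)$ already forces $p_i(A)$ to be closed, which is precisely what makes the closure appearing in Lemma~\ref{DCimag} harmless and lets us conclude $p_i(A)\in\mathcal{D}_c(X_i)$ rather than merely $\overline{p_i(A)}\in\mathcal{D}_c(X_i)$.
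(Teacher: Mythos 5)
Your proof is correct and follows exactly the route the paper intends: the paper states this proposition with only the remark ``By Corollary~\ref{irrcprod} and Lemma~\ref{DCimag}, we get the following result,'' and your argument fills in precisely those details, including the useful observation that $p_i(A)\in\ir_c(X_i)$ makes the closure in Lemma~\ref{DCimag} harmless.
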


\begin{proposition}\label{DCprodset}
	Let	$X=\prod_{i\in I}X_i$ be the product of a family $\{X_i:i\in I\}$ of $T_0$ spaces and $A_i\subseteq X_i$ for each $i\in I$. Then the following two conditions are equivalent:
\begin{enumerate}[\rm (1)]
	\item $\prod_{i\in I}A_i\in \mathcal D_c(X)$.
	\item $A_i\in \mathcal D_c (X_i)$ for each $i\in I$.
\end{enumerate}
\end{proposition}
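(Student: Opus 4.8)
The plan is to treat the two implications separately, obtaining $(1)\Rightarrow(2)$ almost for free from Proposition~\ref{DCsetprod} and proving $(2)\Rightarrow(1)$ by exhibiting an explicit directed set.

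For $(1)\Rightarrow(2)$, I would first record that $\mathcal D_c(X)$ consists of \emph{nonempty} closed sets, since directed sets are nonempty by convention; thus $A:=\prod_{i\in I}A_i\in\mathcal D_c(X)$ forces $A\neq\emptyset$, so every $A_i$ is nonempty and consequently $p_i(A)=p_i\!\left(\prod_{j\in I}A_j\right)=A_i$ for each $i\in I$. Applying Proposition~\ref{DCsetprod} to $A$ then gives $A_i=p_i(A)\in\mathcal D_c(X_i)$ for every $i$, which is exactly condition $(2)$.

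For $(2)\Rightarrow(1)$, choose for each $i\in I$ a directed set $D_i\in\mathcal D(X_i)$ with $\overline{D_i}=A_i$, and set $D:=\prod_{i\in I}D_i$; I claim $D$ is the required witness. First, $D$ is directed in $X$: given $(x_i)_{i\in I},(y_i)_{i\in I}\in D$, pick for each $i$ some $z_i\in D_i$ above both $x_i$ and $y_i$ in $\leq_{X_i}$, so that $(z_i)_{i\in I}\in D$ dominates both tuples in the (coordinatewise) specialization order of $X$. Second, $\overline D=A$: the inclusion $\overline D\subseteq\prod_{i\in I}\overline{D_i}=A$ follows from continuity of the projections, and for the converse one takes $(x_i)_{i\in I}\in A$ together with a basic open neighbourhood $\bigcap_{i\in F}p_i^{-1}(U_i)$ of it ($F\subseteq I$ finite, $x_i\in U_i\in\mathcal O(X_i)$); choosing $d_i\in U_i\cap D_i$ for $i\in F$ (possible since $x_i\in\overline{D_i}$) and $d_i\in D_i$ arbitrary for $i\notin F$ produces a point of $D$ in that neighbourhood, so $(x_i)_{i\in I}\in\overline D$. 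Hence $A=\overline D\in\mathcal D_c(X)$.

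The one point deserving care --- the ``main obstacle'', such as it is --- is the closure-of-a-product identity $\overline{\prod_{i\in I}B_i}=\prod_{i\in I}\overline{B_i}$ for nonempty families, which is precisely what the second half of the $(2)\Rightarrow(1)$ argument establishes; it genuinely uses that basic open subsets of $X$ constrain only finitely many coordinates, together with the nonemptiness of each $D_i$ so that the coordinates outside $F$ can be filled in arbitrarily. Directedness of a product of directed sets and the bookkeeping identity $p_i(\prod_jA_j)=A_i$ for nonempty products are both routine, and the forward direction is little more than an invocation of Proposition~\ref{DCsetprod}.
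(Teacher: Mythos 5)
Your proposal is correct and follows essentially the same route as the paper: the forward direction is an appeal to Proposition~\ref{DCsetprod}, and the converse takes $D=\prod_{i\in I}D_i$ and uses the identity $\cl_X\prod_{i\in I}D_i=\prod_{i\in I}\cl_{X_i}D_i$, which the paper cites from Engelking while you prove it directly via basic open sets. Your extra remarks on nonemptiness and on $p_i(\prod_j A_j)=A_i$ are fine points the paper leaves implicit.
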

\begin{proof}  (1) $\Rightarrow$ (2): By Proposition \ref{DCsetprod}.

(2) $\Rightarrow$ (1): For each $i\in I$, by $A_i\in \mathcal D_c (X_i)$, there is a $D_i\in \mathcal D(X_i)$ such that $A_i=\cl_{X_i}{D_i}$. Let $D=\prod_{i\in I}D_i$. Then $D\in \mathcal D(X)$. By \cite[Proposition 2.3.3]{Engelking}, $\prod_{i\in I}A_i=\prod_{i\in I}\cl_{X_i}D_i=\cl_X\prod_{i\in I}D_i=\cl_X D\in \mathcal D_c(X)$.
\end{proof}

\begin{corollary}\label{DCprod}
	Let $\{X_i:i\in I\}$ be a family of $T_0$ spaces. Then the following two conditions are equivalent:
	\begin{enumerate}[\rm(1)]
		\item The product space $\prod_{i\in I}X_i$ is a $\mathsf{DC}$ space.
		\item For each $i \in I$, $X_i$ is a $\mathsf{DC}$ space.
	\end{enumerate}
\end{corollary}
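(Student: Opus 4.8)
The plan is to reduce the equivalence entirely to the ``product of sets'' facts just proved, namely Corollary~\ref{irrcprod}, Proposition~\ref{DCsetprod} and Proposition~\ref{DCprodset}, together with Lemma~\ref{prodirr}. First I would record the trivial inclusion $\mathcal{D}_c(Y)\subseteq\ir_c(Y)$, valid for every $T_0$ space $Y$: a directed subset is irreducible, and the closure of an irreducible set is irreducible closed. Hence ``$Y$ is a $\mathsf{DC}$ space'' amounts to ``$\ir_c(Y)\subseteq\mathcal{D}_c(Y)$'', and this is the inclusion one must verify for $X=\prod_{i\in I}X_i$ in the direction (2)~$\Rightarrow$~(1) and for each $X_i$ in the direction (1)~$\Rightarrow$~(2).

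For (2)~$\Rightarrow$~(1) the argument should be immediate. Let $A\in\ir_c(X)$. By Corollary~\ref{irrcprod} we have $A=\prod_{i\in I}p_i(A)$ with $p_i(A)\in\ir_c(X_i)$ for every $i$; since each $X_i$ is a $\mathsf{DC}$ space this gives $p_i(A)\in\mathcal{D}_c(X_i)$ for every $i$, and then Proposition~\ref{DCprodset} yields $A=\prod_{i\in I}p_i(A)\in\mathcal{D}_c(X)$.

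For (1)~$\Rightarrow$~(2) the idea is to inflate a single factor to an irreducible closed subset of the whole product. Fix $i_0\in I$ and $B\in\ir_c(X_{i_0})$; we want a directed subset of $X_{i_0}$ with closure $B$. Assuming every $X_i$ is nonempty, choose a point $c_i\in X_i$ for each $i\neq i_0$ and put $A=\prod_{i\in I}A_i$, where $A_{i_0}=B$ and $A_i=\overline{\{c_i\}}$ for $i\neq i_0$. Each $A_i$ is closed, so $A=\bigcap_{i\in I}p_i^{-1}(A_i)$ is closed; each $A_i$ is irreducible (a singleton closure is the closure of a directed set), so $A$ is irreducible by Lemma~\ref{prodirr}. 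Thus $A\in\ir_c(X)=\mathcal{D}_c(X)$, and Proposition~\ref{DCsetprod} gives $B=p_{i_0}(A)\in\mathcal{D}_c(X_{i_0})$. Alternatively, with these base points $X_{i_0}$ becomes a retract of $X$ via $p_{i_0}$ and the obvious section, so this direction also follows at once from Proposition~\ref{DCretract}.

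I do not anticipate any real difficulty: the whole corollary is bookkeeping on top of the preceding propositions. The one point deserving a word is the choice of base points $c_i$ in (1)~$\Rightarrow$~(2), which requires every $X_i$ to be nonempty; this is the only place nonemptiness of the factors is used, and it is indispensable, since a single empty factor collapses $X$ to the empty space and severs any link between the remaining factors and their product.
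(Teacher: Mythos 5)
Your proof is correct and follows essentially the paper's route: the direction (2)~$\Rightarrow$~(1) is identical (Corollary~\ref{irrcprod} plus Proposition~\ref{DCprodset}), and for (1)~$\Rightarrow$~(2) the paper uses exactly the retract argument via Proposition~\ref{DCretract} that you offer as an alternative, your primary ``inflation'' argument with Lemma~\ref{prodirr} and Proposition~\ref{DCsetprod} being a valid equivalent. Your observation that the factors must be nonempty is a fair point that the paper leaves implicit (the same choice of base points is needed to define the section exhibiting $X_i$ as a retract).
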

\begin{proof}	
	(1) $\Rightarrow$ (2):  For each $i \in I$, $X_i$ is a retract of $\prod_{i\in I}X_i$. By Proposition \ref{DCretract}, $X_i$ is a $\mathsf{DC}$ space.
	
	(2) $\Rightarrow$ (1): Let $X=\prod_{i\in I}X_i$. Suppose $A\in \ir_c (X)$. Then for each $i \in I$, by Corollary \ref{irrcprod}, $A=\prod_{i\in I}p_i (A)$ and $p_i(A)\in \ir_c(X_i)$, and whence $p_i(A)\in \mathcal D_c(X_i)$ because $X_i$ is a $\dc$ space. By Proposition \ref{DCprodset}, $A=\prod_{i\in I}p_i (A)\in \mathcal D_c(X)$. Thus $X$ is a $\dc$ space.
\end{proof}

\section{Rudin's Lemma and Rudin spaces}

Rudin's Lemma is a useful tool in topology and plays a crucial role in domain theory (see [3-14, 24, 25, 29]). Rudin \cite{Rudin} proved her lemma by transfinite methods, using the Axiom of Choice.
Heckman and Keimel \cite{Klause-Heckmann} presented the following topological variant of Rudin's Lemma.

\begin{lemma}\label{t Rudin} \emph{(Topological Rudin's Lemma)} Let $X$ be a topological space and $\mathcal{A}$ an
irreducible subset of the Smyth power space $P_S(X)$. Then every closed set $C {\subseteq} X$  that
meets all members of $\mathcal{A}$ contains an minimal irreducible closed subset $A$ that still meets all
members of $\mathcal{A}$.
\end{lemma}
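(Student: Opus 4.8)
The plan is to set up a Zorn's Lemma argument on a carefully chosen poset of closed sets, where the tricky point is verifying that chains have lower bounds that still meet every member of $\mathcal{A}$. First I would let
$$\mathcal{F}=\{F\in \mathcal{C}(X): F\subseteq C \text{ and } F\cap G\neq\emptyset \text{ for every } G\in\mathcal{A}\},$$
ordered by reverse inclusion (so a maximal element of $(\mathcal{F},\supseteq)$ is a minimal such closed set). This is nonempty since $C\in\mathcal{F}$. Given a chain $\{F_j : j\in J\}$ in $\mathcal{F}$, the natural candidate lower bound is $F=\bigcap_{j\in J}F_j$, which is closed and contained in $C$; the crux is to show $F$ still meets every $G\in\mathcal{A}$.

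The key step — and the one place the irreducibility of $\mathcal{A}$ in $P_S(X)$ is used — is exactly this chain condition. Fix $G\in\mathcal{A}$; I want $G\cap\bigcap_j F_j\neq\emptyset$. Suppose not. Then the family $\{G\setminus F_j : j\in J\}$ would be an open cover of $G$ (relatively), and since $G$ is compact (recall $\mathcal{A}\subseteq P_S(X)=P_S(\mathsf{K}(X))$, so every member of $\mathcal{A}$ is a nonempty compact saturated subset of $X$), finitely many of them cover $G$; as the chain is totally ordered this yields a single index $j_0$ with $G\subseteq X\setminus F_{j_0}$, i.e. $G\cap F_{j_0}=\emptyset$, contradicting $F_{j_0}\in\mathcal{F}$. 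Hence $F\in\mathcal{F}$ is a lower bound for the chain, and Zorn's Lemma produces a maximal element $A$ of $(\mathcal{F},\supseteq)$, i.e. a minimal closed subset $A$ of $C$ meeting all members of $\mathcal{A}$.

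It remains to check that such a minimal $A$ is irreducible. Suppose $A\subseteq A_1\cup A_2$ with $A_1,A_2\in\mathcal{C}(X)$ and $A\not\subseteq A_1$, $A\not\subseteq A_2$; set $B_i=A\cap A_i$, which are proper closed subsets of $A$ with $A=B_1\cup B_2$. By minimality of $A$, neither $B_1$ nor $B_2$ lies in $\mathcal{F}$, so there are $G_1,G_2\in\mathcal{A}$ with $G_1\cap B_1=\emptyset$ and $G_2\cap B_2=\emptyset$; equivalently $G_1\in\Box_{P_S(X)}(X\setminus B_1)$ and $G_2\in\Box_{P_S(X)}(X\setminus B_2)$, two basic open sets of the Smyth power space both meeting $\mathcal{A}$. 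By irreducibility of $\mathcal{A}$ in $P_S(X)$, there is some $G\in\mathcal{A}$ with $G\in\Box(X\setminus B_1)\cap\Box(X\setminus B_2)$, that is $G\subseteq (X\setminus B_1)\cap(X\setminus B_2)=X\setminus(B_1\cup B_2)=X\setminus A$, so $G\cap A=\emptyset$, contradicting $A\in\mathcal{F}$. Hence $A$ is irreducible, and since $A$ is closed it is an irreducible closed subset of $X$; being contained in $C$ and meeting every member of $\mathcal{A}$, it is the desired minimal irreducible closed set. I expect the main obstacle to be packaging the compactness-of-members argument in the chain step cleanly, since that is where one must remember that $P_S(X)$ is built on $\mathsf{K}(X)$ rather than on arbitrary subsets; everything else is a routine Zorn/irreducibility manipulation.
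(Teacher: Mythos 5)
Your proof is correct and is essentially the standard argument of Heckmann and Keimel that the paper cites (the paper itself gives no proof): Zorn's Lemma on the closed subsets of $C$ meeting all members of $\mathcal{A}$ under reverse inclusion, with compactness of the members of $\mathcal{A}$ securing the chain condition and irreducibility of $\mathcal{A}$ in $P_S(X)$ (via $\Box U\cap\Box V=\Box(U\cap V)$) forcing the minimal element to be irreducible. Both key steps are carried out correctly, so nothing further is needed.
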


\begin{corollary}\label{rudinmeet}
Let $X$ be a $T_0$ space. If $\mathcal A\in\ir_c(P_S(X))$, then there exists a family $\{A_i:i\in I\}$ of minimal irreducible cosed sets such that $\mathcal A=\bigcap_{i\in I}\Diamond A_i$.
\end{corollary}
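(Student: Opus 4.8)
The plan is to deduce this directly from the topological Rudin's Lemma (Lemma~\ref{t Rudin}), after translating the upper Vietoris description of $P_S(X)$ into the ``meets all members'' language of that lemma. First I would record two elementary observations. For any closed set $C\subseteq X$ and any $Q\in\mk(X)$, one has $Q\in\Diamond C$ if and only if $Q$ meets $C$; and the complement in $P_S(X)$ of $\Diamond_{\mk(X)}C$ is exactly $\Box(X\setminus C)$, a basic open set of the upper Vietoris topology, so $\Diamond C$ is closed in $P_S(X)$. Consequently, for a closed $C\subseteq X$, the statement ``$C$ meets all members of $\mathcal A$'' is precisely ``$\mathcal A\subseteq\Diamond C$''. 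Since $\mathcal A\in\ir_c(P_S(X))$ it is in particular an irreducible subset of $P_S(X)$, so Lemma~\ref{t Rudin} applies to every closed $C\subseteq X$ meeting all members of $\mathcal A$.

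Next I would let $I$ be the collection of all minimal irreducible closed subsets $A$ of $X$ satisfying $\mathcal A\subseteq\Diamond A$ (equivalently, $A$ is minimal among the irreducible closed subsets of $X$ that meet every member of $\mathcal A$), and claim that $\mathcal A=\bigcap_{A\in I}\Diamond A$. The inclusion $\mathcal A\subseteq\bigcap_{A\in I}\Diamond A$ is immediate from the definition of $I$.

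For the reverse inclusion I would take an arbitrary $Q\in\mk(X)$ with $Q\notin\mathcal A$ and produce some $A\in I$ with $Q\notin\Diamond A$. Because $\mathcal A$ is closed in $P_S(X)$ and the sets $\Box U$ ($U\in\mathcal O(X)$) form a base, there is $U\in\mathcal O(X)$ with $Q\in\Box U$ and $\Box U\cap\mathcal A=\emptyset$; i.e.\ $Q\subseteq U$ and no member of $\mathcal A$ is contained in $U$. Setting $C=X\setminus U$, every member of $\mathcal A$ meets $C$, so Lemma~\ref{t Rudin} yields a minimal irreducible closed $A\subseteq C$ still meeting all members of $\mathcal A$. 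A short argument shows $A$ is then minimal among \emph{all} irreducible closed subsets of $X$ meeting every member of $\mathcal A$: any irreducible closed $B\subseteq A$ with $\mathcal A\subseteq\Diamond B$ would satisfy $B\subseteq A\subseteq C$, hence $B=A$ by the minimality of $A$ inside $C$. Thus $A\in I$, and since $A\subseteq C=X\setminus U$ while $Q\subseteq U$, we get $A\cap Q=\emptyset$, i.e.\ $Q\notin\Diamond A$. Hence $Q\notin\bigcap_{A\in I}\Diamond A$, which proves $\bigcap_{A\in I}\Diamond A\subseteq\mathcal A$ and completes the argument.

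I do not expect a serious obstacle here: the corollary is essentially a repackaging of Rudin's Lemma. The only points that need care are the bookkeeping in passing between the upper Vietoris topology on $P_S(X)$ and the ``meets all members'' formulation of Lemma~\ref{t Rudin}, and the verification that the minimal irreducible closed set extracted inside a given $C$ is actually minimal (relative to meeting every member of $\mathcal A$) without the constraint $\,\cdot\subseteq C$, so that it legitimately belongs to the family $I$.
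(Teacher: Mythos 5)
Your proof is correct and is exactly the intended derivation of the corollary from the Topological Rudin's Lemma: write the closed set $\mathcal A$ as an intersection of sets $\Diamond C$ with $C=X\setminus U$ ranging over complements of basic open sets $\Box U$ missing $\mathcal A$, and use Lemma~\ref{t Rudin} to replace each such $C$ by a minimal irreducible closed set still meeting all members of $\mathcal A$. Your observation that minimality inside $C$ already gives minimality among all closed sets meeting every member of $\mathcal A$ (since any smaller candidate lies in $A\subseteq C$) is the right way to see that the extracted sets legitimately belong to the family.
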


Applying Lemma \ref{t Rudin} to the Alexandroff topology on a poset $P$, one obtains  the original Rudin's Lemma.

\begin{corollary}\label{rudin} \emph{(Rudin's Lemma)} Let $P$ be a poset, $C$ a nonempty lower subset of $P$ and $\mathcal F\in \mathbf{Fin}~P$ a filtered family with $\mathcal F\subseteq\Diamond C$. Then there exists a directed subset $D$ of $C$ such that $\mathcal F\subseteq \Diamond\da D$.
\end{corollary}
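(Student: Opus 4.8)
The plan is to derive Rudin's Lemma from its topological version (Lemma~\ref{t Rudin}) by specializing the latter to the Alexandroff space $X=(P,\alpha(P))$. This rests on two identifications. Since the open sets of $X$ are exactly the upper sets of $P$, the closed sets of $X$ are exactly the lower sets; in particular the nonempty lower set $C$ is a closed subset of $X$. Moreover $\mk(X)=\mathbf{Fin}~P$: a saturated subset of $X$ is just an upper set, and if $K$ is a compact upper set then the open cover $\{\ua x:x\in K\}$ has a finite subcover, so $K=\ua F$ for some finite $F\subseteq K$; conversely each $\ua F$ with $F$ finite is compact.

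With these in hand I would verify the hypotheses of Lemma~\ref{t Rudin}, taking the irreducible set there to be $\mathcal F$. The family $\mathcal F\subseteq\mathbf{Fin}~P=\mk(X)$ is filtered with respect to set inclusion, which is precisely directedness in the Smyth preorder; since $\leq_{P_S(X)}$ is the Smyth preorder and every subset of a space that is directed in the specialization order is irreducible, $\mathcal F$ is an irreducible subset of $P_S(X)$. The hypothesis $\mathcal F\subseteq\Diamond C$ says exactly that $C$ meets every member of $\mathcal F$. Hence Lemma~\ref{t Rudin} produces an irreducible closed subset $A\subseteq C$ of $X$ that still meets every member of $\mathcal F$; the minimality of $A$ furnished by the lemma will not be needed.

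The only genuine point is to recognize that such an $A$ is itself directed. Being closed in $\alpha(P)$, $A$ is a lower set; being irreducible, for any $x,y\in A$ the open sets $\ua x$ and $\ua y$ both meet $A$, so $A\cap\ua x\cap\ua y\neq\emptyset$, i.e.\ $x$ and $y$ have a common upper bound lying in $A$. Thus $A$ is a directed subset of $C$, and since $A$ is a lower set, $\da A=A$, which meets every member of $\mathcal F$; that is, $\mathcal F\subseteq\Diamond\da A$. So $D:=A$ is the desired directed subset of $C$. I anticipate no serious obstacle: the whole argument is a translation between the Alexandroff topology and order theory, the two steps worth some care being the identity $\mk((P,\alpha(P)))=\mathbf{Fin}~P$ and the observation that the irreducible closed subsets of an Alexandroff space are exactly its ideals, which is what turns the conclusion of the topological Rudin Lemma back into a statement about directed subsets of $C$.
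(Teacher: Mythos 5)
Your proof is correct and follows exactly the route the paper indicates: it proves the corollary by specializing the topological Rudin's Lemma to the Alexandroff space $(P,\alpha(P))$, using the identifications $\mk((P,\alpha(P)))=\mathbf{Fin}~P$ and ``nonempty irreducible closed set in $\alpha(P)$ $=$ ideal of $P$.'' The paper gives only a one-line remark to this effect, and your write-up supplies the details faithfully and without gaps.
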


For a $T_0$ space $X$ and $\mathcal{K}\subseteq \mathord{\mathsf{K}}(X)$, let $M(\mathcal{K})=\{A\in \mathcal C(X) : K\bigcap A\neq\emptyset \mbox{~for all~} K\in \mathcal{K}\}$ (that is, $\mathcal A\subseteq \Diamond A$) and $m(\mathcal{K})=\{A\in \mathcal C(X) : A \mbox{~is a minimal menber of~} M(\mathcal{K})\}$.

By the proof of \cite[Lemma 3.1]{Klause-Heckmann}, we have the following result.

\begin{lemma}\label{t ruding}  Let $X$ be a $T_0$ space and $\mathcal{K}\subseteq \mathord{\mathsf{K}}(X)$. If $C\in M(\mathcal{K})$, then there is a closed subset $A$ of $C$ such that $C\in m(\mathcal{K})$.
\end{lemma}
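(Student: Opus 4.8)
Reading the conclusion as ``there is a closed subset $A$ of $C$ with $A\in m(\mathcal{K})$'', the plan is to produce such an $A$ by a Zorn's Lemma argument, exactly in the spirit of the proof of the topological Rudin Lemma (Lemma \ref{t Rudin}). Consider the family $\mathcal{M}_C=\{B\in \mathcal C(X) : B\subseteq C \text{ and } B\in M(\mathcal{K})\}$, partially ordered by reverse inclusion $\supseteq$. Since $C\in M(\mathcal{K})$, we have $C\in \mathcal{M}_C$, so $\mathcal{M}_C\neq\emptyset$. Note that a $\supseteq$-maximal element $A$ of $\mathcal{M}_C$ is automatically a minimal member of $M(\mathcal{K})$: if $A'\in M(\mathcal{K})$ and $A'\subseteq A$, then $A'\subseteq C$, so $A'\in \mathcal{M}_C$, and maximality gives $A'=A$. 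Hence it suffices to produce a $\supseteq$-maximal element of $\mathcal{M}_C$.

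To apply Zorn's Lemma I would check that every chain in $(\mathcal{M}_C,\supseteq)$ has an upper bound. The empty chain has the upper bound $C$. For a nonempty chain $\{C_j : j\in J\}$ (linearly ordered by $\subseteq$) with each $C_j\in \mathcal{M}_C$, put $A=\bigcap_{j\in J}C_j$; this is closed and contained in $C$, so the only thing to verify is $A\in M(\mathcal{K})$, that is, $K\cap A\neq\emptyset$ for every $K\in\mathcal{K}$.

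Fix $K\in\mathcal{K}$. Each $K\cap C_j$ is a nonempty closed subset of the subspace $K$ (nonempty because $C_j\in M(\mathcal{K})$), and the family $\{K\cap C_j : j\in J\}$ is linearly ordered by inclusion, hence has the finite intersection property. As $K$ is compact, $\bigcap_{j\in J}(K\cap C_j)\neq\emptyset$; but this intersection is precisely $K\cap A$. Therefore $A\in M(\mathcal{K})$, so $A\in\mathcal{M}_C$ and $A$ is an upper bound of the chain. By Zorn's Lemma, $(\mathcal{M}_C,\supseteq)$ has a maximal element $A$, and by the remark above $A$ is a closed subset of $C$ with $A\in m(\mathcal{K})$. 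The one genuinely topological step is the compactness argument keeping intersections of chains inside $M(\mathcal{K})$; the rest is the routine bookkeeping of a Zorn's Lemma application.
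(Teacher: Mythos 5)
Your proof is correct, including your reading of the evident typo in the statement (the conclusion should be $A\in m(\mathcal{K})$, not $C\in m(\mathcal{K})$). It is essentially the same argument the paper relies on: the paper gives no proof of its own but cites the proof of Heckmann--Keimel's topological Rudin Lemma, which is exactly this Zorn's Lemma argument on closed subsets of $C$ ordered by reverse inclusion, with the compactness of each $K\in\mathcal{K}$ ensuring that intersections of chains remain in $M(\mathcal{K})$.
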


The following result shows that the reverse of Lemma \ref{t Rudin} holds.

\begin{lemma}\label{rudinequiv}
	Let $X$ be a $T_0$ space and $\mathcal A$ a nonempty subset of $P_S(X)$. Then the following conditions are equivalent:
	\begin{enumerate}[\rm (1)]
		\item $\mathcal A$ is irreducible;
		\item $\forall A\in\mathcal C(X)$, if $\mathcal A\subseteq \Diamond A$, then there exists a minimal irreducible closed set $C\subseteq A$ such that $\mathcal A\subseteq\Diamond C$.
	\end{enumerate}
\end{lemma}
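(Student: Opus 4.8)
The plan is to treat the two implications separately. The implication $(1)\Rightarrow(2)$ is just the topological Rudin Lemma (Lemma \ref{t Rudin}) transcribed into the present notation: if $\mathcal A\in\ir(P_S(X))$ and $A\in\mathcal C(X)$ satisfies $\mathcal A\subseteq\Diamond A$, then by the definition of $\Diamond$ the closed set $A$ meets every member of $\mathcal A$ (the members being nonempty compact saturated subsets of $X$), so Lemma \ref{t Rudin} produces a minimal irreducible closed $C\subseteq A$ that still meets every member of $\mathcal A$, that is, $\mathcal A\subseteq\Diamond C$. Nothing further is needed for this direction.

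For $(2)\Rightarrow(1)$ I would test irreducibility of $\mathcal A$ against the canonical base $\{\Box U:U\in\mathcal O(X)\}$ of the upper Vietoris topology on $P_S(X)$. Since $\Box U\cap\Box V=\Box(U\cap V)$, this base is closed under finite intersections, so it suffices to show that whenever $\Box U$ and $\Box V$ both meet $\mathcal A$, so does $\Box(U\cap V)$; the nonemptiness clause of irreducibility is automatic because $\Box X=\mk(X)\supseteq\mathcal A\neq\emptyset$. So fix $K_1,K_2\in\mathcal A$ with $K_1\subseteq U$ and $K_2\subseteq V$, and suppose toward a contradiction that no member of $\mathcal A$ is contained in $U\cap V$; we may also assume $U\cap V\neq X$, since otherwise $\Box(U\cap V)=\mk(X)$ already meets $\mathcal A$.

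Now set $A=X\setminus(U\cap V)\in\mathcal C(X)$. The contradiction hypothesis says precisely that every member of $\mathcal A$ meets $A$, i.e. $\mathcal A\subseteq\Diamond A$, so condition (2) supplies a minimal irreducible closed $C\subseteq A$ with $\mathcal A\subseteq\Diamond C$. Since $C\subseteq A=(X\setminus U)\cup(X\setminus V)$ and $C$ is irreducible, either $C\subseteq X\setminus U$ or $C\subseteq X\setminus V$. In the first case $K_1\cap C\subseteq U\cap(X\setminus U)=\emptyset$, contradicting $\mathcal A\subseteq\Diamond C$; the second case is identical with $K_2$ in place of $K_1$. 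Hence $\Box(U\cap V)$ meets $\mathcal A$, and $\mathcal A\in\ir(P_S(X))$.

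The only non-formal ingredient is Rudin's Lemma itself, which is already at hand, so I do not expect a genuine obstacle: the statement is essentially the observation that, phrased on the Smyth power space, Rudin's Lemma is an equivalence rather than a one-way implication. The single point needing a word of care is the reduction of irreducibility to basic open sets, which is routine here because $\{\Box U:U\in\mathcal O(X)\}$ is closed under finite intersection.
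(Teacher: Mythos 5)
Your proof is correct and follows essentially the same route as the paper's: for $(1)\Rightarrow(2)$ both simply invoke the topological Rudin Lemma, and for $(2)\Rightarrow(1)$ both reduce a putative failure of irreducibility to the configuration $\mathcal A\subseteq\Diamond\bigl(C_1\cup C_2\bigr)$ for two closed sets $C_1,C_2$ of $X$, apply condition (2) to obtain a minimal irreducible closed $C\subseteq C_1\cup C_2$, and use the irreducibility of $C$ to force $C$ into one of the two pieces, yielding a contradiction. The only cosmetic difference is that you test irreducibility against the basic open sets $\Box U$ and pass to complements, while the paper writes arbitrary closed sets of $P_S(X)$ as intersections of subbasic closed sets $\Diamond A_i$; the two presentations are dual and equally valid.
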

\begin{proof} (1) $\Rightarrow$ (2): By Lemma \ref{t Rudin}.

(2) $\Rightarrow$ (1): Let $\mathcal A\subseteq \mathcal A_1\cup\mathcal A_2$ with $\{\mathcal A_1,\mathcal A_2\}\subseteq\mathcal C(P_S(X))$. Then there exists $\{A_i:i\in I\}\subseteq\mathcal C(X)$ and $\{B_j:j\in J\}\subseteq\mathcal C(X)$ such that $\mathcal A=\bigcap_{i\in I}\Diamond A_i$ and $\bigcap_{j\in J}\Diamond B_j$. Suppose, on the contrary, $\mathcal A\nsubseteq \mathcal A_1$ and $\mathcal A\nsubseteq \mathcal A_2$. Then there exists $(i,j)\in I\times J$ such that $\mathcal A\nsubseteq\Diamond\mathcal A_{i}$ and $\mathcal A\nsubseteq\Diamond B_{j}$. Note that $\mathcal A_1\subseteq \Diamond A_{i}$ and $\mathcal A_2\subseteq \Diamond B_{j}$, so $\mathcal A\subseteq \Diamond A_{i}\cup\Diamond B_{j}=\Diamond(A_{i}\cup B_{j})$. By (2), there exists a minimal irreducible closed set $C\subseteq A_{i}\cup B_{j}$ such that $\mathcal A\subseteq\Diamond C$. This implies that $C\subseteq A_{i}$ or $C\subseteq B_{j}$, so $\mathcal A\subseteq \Diamond C\subseteq \Diamond A_{i}$ or $\mathcal A\subseteq \Diamond C\subseteq \Diamond B_{j}$, a contradiction. Therefore $\mathcal A\subseteq\mathcal A_1$ or $\mathcal A\subseteq\mathcal A_2$. Thus $\mathcal A$ is irreducible.
\end{proof}

In the following, based on topological Rudin's Lemma, we introduce and investigate a new kind of spaces - Rudin spaces, which lie between $\dc$ spaces and sober spaces. It is proved that closed subspaces, retracts and products of Rudin spaces are again Rudin spaces.

\begin{definition}\label{rudinset} (\cite{Shenchon})
		Let $X$ be a $T_0$ space. A nonempty subset  $A$  of $X$  is said to have the \emph{Rudin property}, if there exists a filtered family $\mathcal K\subseteq \mathord{\mathsf{K}}(X)$ such that $\overline{A}\in m(\mathcal K)$ (that is,  $\overline{A}$ is a minimal closed set that intersects all members of $\mathcal K$). Let $\mathsf{RD}(X)=\{A\in \mathcal C(X) : A\mbox{~has Rudin property}\}$. The sets in $\mathsf{RD}(X)$ will also be called \emph{Rudin sets}.
\end{definition}

The Rudin property is called the \emph{compactly filtered property} in \cite{Shenchon}. In order to emphasize its origin from (topological) Rudin's Lemma, here we call such a property the Rudin property. Clearly, $A$ has Rudin property if{}f $\overline{A}$ has Rudin property (that is, $\overline{A}$ is a Rudin set).

\begin{definition}\label{rudinspace}  A $T_0$ space $X$ is called a \emph{Rudin space}, $\mathsf{RD}$ \emph{space} for short, if $\ir_c(X)=\mathsf{RD}(X)$, that is, every irreducible closed set of $X$ is a Rudin set. The category of all Rudin spaces with continuous mappings is denoted by $\mathbf{Top}_{r}$.
\end{definition}

\begin{lemma}\label{DRIsetrelation}\emph{(\cite{Shenchon})}
	Let $X$ be a $T_0$ space. Then $\mathcal D_c(X)\subseteq \mathsf{RD}(X)\subseteq \ir_c(X)$.
\end{lemma}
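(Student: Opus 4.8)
The plan is to establish the two inclusions $\mathcal D_c(X)\subseteq \mathsf{RD}(X)$ and $\mathsf{RD}(X)\subseteq \ir_c(X)$ separately, each by producing the object required by the relevant definition.

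For the first inclusion, suppose $A\in\mathcal D_c(X)$, so $A=\overline{D}$ for some directed set $D\in\mathcal D(X)$. The natural candidate for the witnessing filtered family $\mathcal K\subseteq\mk(X)$ is $\mathcal K=\{\ua d : d\in D\}=\{\,{\uparrow}d : d\in D\,\}$; this is a filtered subfamily of $\mk(X)$ in the Smyth order precisely because $D$ is directed (for $d_1,d_2\in D$ there is $d_3\in D$ above both, and then ${\uparrow}d_3\supseteq{\uparrow}d_1\cup{\uparrow}d_2$, i.e.\ ${\uparrow}d_1,{\uparrow}d_2\sqsubseteq{\uparrow}d_3$). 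I must check two things: that $\overline D\in M(\mathcal K)$, and that $\overline D$ is \emph{minimal} in $M(\mathcal K)$. The first is immediate: $d\in D\cap{\uparrow}d\subseteq\overline D\cap{\uparrow}d$, so $\overline D$ meets every member of $\mathcal K$. For minimality, suppose $C\in\mathcal C(X)$, $C\subseteq\overline D$, and $C$ meets every ${\uparrow}d$; I claim $C=\overline D$. Indeed, if $C$ meets ${\uparrow}d$ then there is $c_d\in C$ with $d\le c_d$, hence $d\in {\downarrow}C\subseteq\overline C\subseteq C$ (as $C$ is closed, hence a lower set) — wait, more carefully: $d\le c_d\in C$ means $d\in\overline{\{c_d\}}\subseteq C$. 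Thus $D\subseteq C$, so $\overline D\subseteq C$, giving $C=\overline D$. Hence $\overline D\in m(\mathcal K)$ and $A=\overline D\in\mathsf{RD}(X)$.

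For the second inclusion, suppose $A\in\mathsf{RD}(X)$, so $A$ is closed and there is a filtered family $\mathcal K\subseteq\mk(X)$ with $A\in m(\mathcal K)$, i.e.\ $A$ is a minimal closed set meeting every member of $\mathcal K$. I want to show $A$ is irreducible. This is where topological Rudin's Lemma (Lemma \ref{t Rudin}) does the work, although for this particular statement one can argue directly: if $A\subseteq F_1\cup F_2$ with $F_1,F_2$ closed, then since $\mathcal K$ is filtered, for each $K\in\mathcal K$ the (compact) set $K\cap A$ is nonempty; I want to conclude that either $A\cap K\cap F_1\neq\emptyset$ for all $K$, or $A\cap K\cap F_2\neq\emptyset$ for all $K$. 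Equivalently, set $\mathcal K_i=\{A\cap K : K\in\mathcal K,\ A\cap K\cap F_i=\emptyset\}$ for $i=1,2$; one shows using filteredness and compactness that $\mathcal K_1$ and $\mathcal K_2$ cannot both be nonempty, because a member of $\mathcal K_1$ and a member of $\mathcal K_2$ would have a common lower bound $A\cap K$ in $\mathcal K$ that misses $F_1\cup F_2\supseteq A\cap K$, contradicting $A\cap K\neq\emptyset$. So (say) $A\cap F_1$ still meets every member of $\mathcal K$; it is closed and contained in $A$, so by minimality $A\cap F_1=A$, i.e.\ $A\subseteq F_1$. Hence $A$ is irreducible, and being closed, $A\in\ir_c(X)$.

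The main obstacle is the bookkeeping in the second inclusion: the possibility that $\mathcal K\cap\Box(X\setminus F_1)$ and $\mathcal K\cap\Box(X\setminus F_2)$ interact must be ruled out using that $\mathcal K$ is \emph{filtered} (a finite-meet argument: pick $K_1$ with $A\cap K_1\subseteq X\setminus F_2$ if $A\cap F_1$ fails, $K_2$ with $A\cap K_2\subseteq X\setminus F_1$ if $A\cap F_2$ fails, then take $K_3\sqsubseteq$-above both, so $K_3\subseteq K_1\cap K_2$, whence $A\cap K_3$ is disjoint from $F_1\cup F_2$ yet nonempty — contradiction). Since this lemma is quoted from \cite{Shenchon}, I would likely just cite it; but the argument above shows it follows cleanly from the definitions and the filtered structure of $\mathcal K$, with topological Rudin's Lemma available as a heavier alternative route to irreducibility if a cleaner packaging is desired.
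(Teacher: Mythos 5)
Your proof is correct. For the inclusion $\mathcal D_c(X)\subseteq \mathsf{RD}(X)$ you use exactly the paper's construction: the filtered family $\mathcal K_D=\{\ua d : d\in D\}$, with $\overline D$ meeting every member and every closed set in $M(\mathcal K_D)$ containing $D$ and hence $\overline D$, so $\overline D$ is in fact the \emph{least} element of $M(\mathcal K_D)$. For $\mathsf{RD}(X)\subseteq \ir_c(X)$ you diverge from the paper: the paper simply invokes the topological Rudin's Lemma (Lemma \ref{t Rudin}) — a Rudin set $A\in m(\mathcal K)$ contains a minimal \emph{irreducible} closed set still meeting all members of $\mathcal K$, which by minimality of $A$ must equal $A$ — whereas you give a direct two-closed-sets argument: if $A\subseteq F_1\cup F_2$ and neither $A\cap F_1$ nor $A\cap F_2$ meets every member of $\mathcal K$, filteredness produces a single $K_3\subseteq K_1\cap K_2$ with $A\cap K_3$ disjoint from $F_1\cup F_2\supseteq A$, hence empty, a contradiction; then minimality of $A$ forces $A\cap F_i=A$ for the surviving index. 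Your route is more elementary (it avoids the Zorn's-Lemma-based existence statement in Lemma \ref{t Rudin} and in fact never uses compactness of the members of $\mathcal K$, only that $A$ meets them and that $\mathcal K$ is filtered), while the paper's route is shorter given that Lemma \ref{t Rudin} is already on the table. Two cosmetic slips worth fixing: in your parenthetical you wrote ``$A\cap K_1\subseteq X\setminus F_2$ if $A\cap F_1$ fails'' with the indices swapped (immaterial to the logic), and you say the argument uses ``filteredness and compactness'' when compactness plays no role.
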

\begin{proof} By Lemma \ref{t Rudin} we have $\mathsf{RD}(X)\subseteq \ir_c(X)$. Now we prove that the closure of a directed subset $D$ of $X$ is a Rudin set. Let
$\mathcal K_D=\{\ua d : d\in D\}$. Then $\mathcal K_D\subseteq \mathord{\mathsf{K}}(X)$ is filtered and $\overline{D}\in M(\mathcal K_D)$. If $A\in M(\mathcal K_D)$, then $d\in A$ for every $d\in D$, and hence $\overline{D}\subseteq A$. So $\overline{D}\in m(\mathcal K_D)$. Therefore $\overline{D}\in \mathsf{RD}(X)$.
\end{proof}

\begin{proposition}\label{rudinclosed} A closed subspace of a Rudin space is a Rudin space.
\end{proposition}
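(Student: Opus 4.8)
The plan is to reduce the Rudin property in the subspace to the Rudin property in the ambient space. Let $X$ be a Rudin space and $Y\in\mathcal C(X)$. To show that $Y$ is a Rudin space it suffices, by Lemma~\ref{DRIsetrelation} applied to $Y$ (which gives $\mathsf{RD}(Y)\subseteq\ir_c(Y)$), to check that every $A\in\ir_c(Y)$ lies in $\mathsf{RD}(Y)$. Since $Y$ is closed in $X$, Remark~\ref{subspaceirr} tells us $A\in\ir_c(X)$; as $X$ is a Rudin space, there is a filtered family $\mathcal K\subseteq\mathsf{K}(X)$ with $A\in m(\mathcal K)$, that is, $A$ meets every member of $\mathcal K$ and no proper closed subset of $A$ (in $X$) does so.

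The candidate witness inside $Y$ is $\mathcal K_Y=\{K\cap Y : K\in\mathcal K\}$. First I would verify that $\mathcal K_Y\subseteq\mathsf{K}(Y)$: each $K\cap Y$ is a closed subset of the compact set $K$, hence compact; it is a $\leq_Y$-upper set because $K$ is a $\leq_X$-upper set; and it is nonempty because $A\subseteq Y$ and $A\cap K\neq\emptyset$. Filteredness of $\mathcal K_Y$ is inherited from that of $\mathcal K$, since whenever $K_3\subseteq K_1\cap K_2$ with $K_1,K_2,K_3\in\mathcal K$ we get $K_3\cap Y\subseteq (K_1\cap Y)\cap(K_2\cap Y)$.

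It then remains to show $A\in m(\mathcal K_Y)$, with closed sets now taken in $Y$. From $A\subseteq Y$ we get $A\cap(K\cap Y)=A\cap K\neq\emptyset$ for all $K\in\mathcal K$, so $A\in M(\mathcal K_Y)$. For minimality, suppose $B$ is a closed subset of $Y$ with $B\subseteq A$ that meets every member of $\mathcal K_Y$; then $B$ is closed in $X$ (because $Y$ is), and $B\cap K=B\cap(K\cap Y)\neq\emptyset$ for every $K\in\mathcal K$, so $B\in M(\mathcal K)$; minimality of $A$ in $M(\mathcal K)$ forces $B=A$. Hence $\overline{A}=A\in m(\mathcal K_Y)$, so $A\in\mathsf{RD}(Y)$, and $Y$ is a Rudin space.

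I do not anticipate a genuine obstacle here; the only points requiring care are that $K\cap Y$ stays compact saturated in $Y$ (closedness of $Y$ inside the compact set $K$, together with the upper-set check) and that ``minimal closed set meeting $\mathcal K_Y$'' computed in $Y$ coincides with the corresponding notion in $X$ — which is exactly where the hypothesis $Y\in\mathcal C(X)$ is used, via the facts that closed subsets of $Y$ are closed in $X$ and that $A$ already sits inside $Y$.
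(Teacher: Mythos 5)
Your proof is correct and follows essentially the same route as the paper: pass to the ambient space via Lemma~\ref{irrsubspace}, take the Rudin witness family $\mathcal K$ there, and transport it into the closed subspace, using that closed subsets of the subspace are closed in $X$ to conclude minimality. The only cosmetic difference is the transported family — the paper uses $\{\ua_A (K\cap B) : K\in\mathcal K\}$ (the saturation in the subspace of the trace on the irreducible set), whereas you use $\{K\cap Y : K\in\mathcal K\}$, which is already compact saturated in $Y$ because $Y$ is closed; both choices work and the rest of the argument is identical.
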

\begin{proof} Let $X$ be a Rudin space and $A\in \mathcal C(X)$. For $B\in \ir_c(A)$, we have $B\in \ir_c(X)$ by Lemma \ref{irrsubspace}. Since $X$ be a Rudin space, there exists a filtered family $\mathcal K\subseteq \mathord{\mathsf{K}}(X)$ such that $B\in m(\mathcal K)$. Let $\mathcal{K}_B=\{\ua_A (K\cap B) : K\in \mathcal K\}$. Then $\mathcal{K}_B\subseteq \mathord{\mathsf{K}}(A)$ is filtered. For each $K\in \mathcal K$, since $K\cap B\neq\emptyset$, we have $\emptyset\neq K\cap B\subseteq \ua_A (K\cap B)\cap B$. So $B\in M(\mathcal{K}_B)$. If $C$ is a closed subset of $B$ with $C\in M(\mathcal{K}_B)$, then $C\cap\ua_A (K\cap B)\neq\emptyset$ for every $K\in \mathcal K$. So $K\cap B\cap C=K\cap C\neq\emptyset$ for all $K\in \mathcal K$. It follows that $B=C$ by the minimality of $B$, and consequently, $B\in m(\mathcal K_B)$. Whence $A$ is a Rudin space.
\end{proof}

\begin{lemma}\label{rudinimage}\emph{(\cite{Shenchon})}
	Let $X, Y$ be two $T_0$ spaces and $f : X\longrightarrow Y$ a continuous mapping. If $A\in \mathsf{RD}(X)$, then $\overline{f(A)}\in \mathsf{RD}(Y)$.
\end{lemma}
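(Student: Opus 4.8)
The plan is to transport the witnessing filtered family from $X$ to $Y$. Since $A\in\mathsf{RD}(X)$, fix a filtered family $\mathcal{K}\subseteq\mk(X)$ with $\overline{A}\in m(\mathcal{K})$. For each $K\in\mathcal{K}$ the set $\overline{A}\cap K$ is a closed subset of the compact space $K$, hence compact, so $f(\overline{A}\cap K)$ is compact in $Y$ and $\ua_Y f(\overline{A}\cap K)\in\mk(Y)$. I would take
\[
\mathcal{L}=\{\ua_Y f(\overline{A}\cap K):K\in\mathcal{K}\}
\]
as the candidate family and then verify that (i) $\mathcal{L}$ is filtered, (ii) $\overline{f(A)}\in M(\mathcal{L})$, and (iii) $\overline{f(A)}$ is a minimal member of $M(\mathcal{L})$; together these give that $\overline{f(A)}$ has the Rudin property, i.e.\ $\overline{f(A)}\in\mathsf{RD}(Y)$.

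Items (i) and (ii) are routine. For (i): given $K_1,K_2\in\mathcal{K}$, pick $K_3\in\mathcal{K}$ with $K_3\subseteq K_1\cap K_2$; then $\overline{A}\cap K_3\subseteq\overline{A}\cap K_i$, so monotonicity of $f$ yields $\ua_Y f(\overline{A}\cap K_3)\subseteq\ua_Y f(\overline{A}\cap K_i)$ for $i=1,2$. For (ii): for each $K\in\mathcal{K}$ choose $x\in\overline{A}\cap K$ (nonempty since $\overline{A}\in M(\mathcal{K})$); by continuity $f(x)\in f(\overline{A})\subseteq\overline{f(A)}$, and trivially $f(x)\in\ua_Y f(\overline{A}\cap K)$, so $\overline{f(A)}$ meets every member of $\mathcal{L}$.

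The substance is (iii). Let $B\in\mathcal C(Y)$ with $B\subseteq\overline{f(A)}$ and $B\in M(\mathcal{L})$; I must show $B=\overline{f(A)}$. Put $C=\overline{A}\cap f^{-1}(B)$, a closed subset of $\overline{A}$. Fix $K\in\mathcal{K}$: since $B\cap\ua_Y f(\overline{A}\cap K)\neq\emptyset$, there are $y\in B$ and $x\in\overline{A}\cap K$ with $f(x)\leq_Y y$; since $B$ is closed, hence a lower set in the specialization order, $f(x)\in B$, i.e.\ $x\in f^{-1}(B)$, so $x\in C\cap K$. Hence $C\in M(\mathcal{K})$, and minimality of $\overline{A}$ forces $C=\overline{A}$, i.e.\ $f(\overline{A})\subseteq B$. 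Taking closures and using $\overline{f(\overline{A})}=\overline{f(A)}$ (continuity gives $f(\overline{A})\subseteq\overline{f(A)}$, the reverse inclusion of closures being clear), we obtain $\overline{f(A)}\subseteq B$, whence $B=\overline{f(A)}$.

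The main obstacle is choosing $\mathcal{L}$ correctly: one must intersect $K$ with $\overline{A}$ both to keep the sets compact and, more importantly, so that the point $x$ extracted in step (iii) already lies in $\overline{A}$, which is precisely what allows the reduction to the minimality of $\overline{A}$ in $M(\mathcal{K})$. The only other subtlety is the observation that a closed set in a $T_0$ space is a lower set in the specialization order, which is exactly what lets one pull the witnessing point back from $\ua_Y f(\overline{A}\cap K)$ into $f^{-1}(B)$.
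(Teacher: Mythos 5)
Your proof is correct and follows essentially the same route as the paper: both transport the witnessing filtered family via $\mathcal{K}\mapsto\{\ua f(\overline{A}\cap K):K\in\mathcal{K}\}$ and reduce minimality of $\overline{f(A)}$ in $M(\mathcal{L})$ to minimality of $\overline{A}$ in $M(\mathcal{K})$ by pulling back along $f^{-1}$. Your extra remarks (closed sets are lower sets, hence the witnessing point can be pulled back into $f^{-1}(B)$) just make explicit a step the paper leaves implicit.
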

\begin{proof} It has been proved in \cite{Shenchon}. Here we give a more direct proof. Since $A\in \mathsf{RD}(X)$, there exists a filtered family $\mathcal K\subseteq \mathord{\mathsf{K}}(X)$ such that $A\in m(\mathcal K)$. Let $\mathcal{K}_f=\{\ua f(K\cap A) : K\in \mathcal K\}$. Then $\mathcal{K}_f\subseteq \mathord{\mathsf{K}}(Y)$ is filtered. For each $K\in \mathcal K$, since $K\cap A\neq\emptyset$, we have $\emptyset\neq f(K\cap A)\subseteq \ua f(K\cap A)\cap \overline{f(A)}$. So $\overline{f(A)}\in M(\mathcal{K}_f)$. If $B$ is a closed subset of $\overline{f(A)}$ with $B\in M(\mathcal{K}_f)$, then $B\cap\ua f(K\cap A)\neq\emptyset$ for every $K\in \mathcal K$. So $K\cap A\cap f^{-1}(B)\neq\emptyset$ for all $K\in \mathcal K$. It follows that $A=A\cap f^{-1}(B)$ by the minimality of $A$, and consequently, $\overline{f(A)}\subseteq B$. Therefore, $\overline{f(A)}=B$. Thus $\overline{f(A)}\in \mathsf{RD}(Y)$.
\end{proof}

\begin{corollary}\label{rudinretract} A retract of a Rudin space is a Rudin space.
\end{corollary}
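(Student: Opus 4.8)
The plan is to mimic verbatim the retract argument already used for $\dc$ spaces in Proposition \ref{DCretract}, with Lemma \ref{DCimag} replaced by Lemma \ref{rudinimage} and Proposition \ref{DCclosed}/Definition \ref{DCspace} replaced by Lemma \ref{DRIsetrelation}/Definition \ref{rudinspace}. So suppose $X$ is a Rudin space and $Y$ is a retract of $X$, witnessed by continuous maps $f\colon X\longrightarrow Y$ and $g\colon Y\longrightarrow X$ with $f\circ g=\mathrm{id}_Y$. The goal is to show $\ir_c(Y)=\mathsf{RD}(Y)$; since $\mathsf{RD}(Y)\subseteq\ir_c(Y)$ always holds by Lemma \ref{DRIsetrelation}, it suffices to prove $\ir_c(Y)\subseteq\mathsf{RD}(Y)$.

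The key steps, in order: first, take $B\in\ir_c(Y)$; by Lemma \ref{irrimage}, $g(B)\in\ir(X)$, hence $\overline{g(B)}\in\ir_c(X)$. Second, invoke the hypothesis that $X$ is a Rudin space to get $\overline{g(B)}\in\mathsf{RD}(X)$. Third, apply Lemma \ref{rudinimage} to the continuous map $f$ and the Rudin set $\overline{g(B)}$ to conclude $\overline{f\bigl(\overline{g(B)}\bigr)}\in\mathsf{RD}(Y)$. Fourth, identify this set with $B$: using continuity of $f$ one has $\overline{f\bigl(\overline{g(B)}\bigr)}=\overline{f(g(B))}=\overline{(f\circ g)(B)}=\overline{B}=B$, the last equality because $B$ is closed. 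Hence $B\in\mathsf{RD}(Y)$, which finishes the argument.

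The only point that needs a word of justification is the closure identity $\overline{f(\overline{A})}=\overline{f(A)}$ for a continuous $f$ and $A\subseteq X$: continuity gives $f(\overline{A})\subseteq\overline{f(A)}$, whence $\overline{f(\overline{A})}\subseteq\overline{f(A)}$, and the reverse inclusion is immediate from $f(A)\subseteq f(\overline{A})$. I do not expect any genuine obstacle here; the proof is a routine diagram chase once Lemmas \ref{irrimage} and \ref{rudinimage} are in hand, exactly parallel to Propositions \ref{DCretract} and the forthcoming analogue for $\wdd$ spaces.
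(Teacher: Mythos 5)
Your proof is correct and follows essentially the same route as the paper's: both apply Lemma \ref{irrimage} to get $\overline{g(B)}\in\ir_c(X)$, invoke the Rudin hypothesis on $X$, push the Rudin set forward through $f$ via Lemma \ref{rudinimage}, and identify $\overline{f(\overline{g(B)})}$ with $B$ using $f\circ g=\mathrm{id}_Y$. Your write-up is if anything slightly more careful about where each lemma is used.
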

\begin{proof} Suppose that $Y$ is a retract of a Rudin space $X$. Then there are continuous mappings $f : X\longrightarrow Y$ and $g : Y\longrightarrow X$ such that $f\circ g=id_Y$. Let $B\in \ir_c(Y)$, then by Lemma \ref{irrimage}, $\overline{f(\overline{g(B)})}\in \ir_c(Y)$. Since $X$ is a Rudin space, $\overline{g(B)}\in \mathsf{RD}(X)$. By Lemma \ref{rudinimage}, $B=\overline{f\circ g(B)}=\overline{f(\overline{g(B)})}\in \mathsf{RD}(Y)$. Thus $Y$ is a Rudin space.
\end{proof}

\begin{proposition}\label{rudinwf}
	Let $X$ be a $T_0$ space and  $Y$ a well-filtered space. If $f : X\longrightarrow Y$ is continuous and $A\subseteq X$ has Rudin property, then there exists a unique $y_A\in X$ such that $\overline{f(A)}=\overline{\{y_A\}}$.
\end{proposition}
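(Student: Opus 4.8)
The plan is to transport the Rudin property across $f$ and then squeeze a generic point out of $\overline{f(A)}$ using well-filteredness of $Y$. First I would apply Lemma~\ref{rudinimage}: since $A$ has the Rudin property, $\overline{f(A)}\in\mathsf{RD}(Y)$, so by Definition~\ref{rudinset} there is a filtered family $\mathcal K\subseteq\mk(Y)$ with $\overline{f(A)}\in m(\mathcal K)$. In particular $\overline{f(A)}\in M(\mathcal K)$, i.e. $\overline{f(A)}$ meets every $K\in\mathcal K$, and it is minimal among closed sets of $Y$ with this property. Note that $\mathcal K\neq\emptyset$, since otherwise $m(\mathcal K)=\{\emptyset\}$ while $\overline{f(A)}\neq\emptyset$.

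The key step is to show $\overline{f(A)}\cap\bigcap\mathcal K\neq\emptyset$, and here I would argue by contradiction. If $\overline{f(A)}\cap\bigcap\mathcal K=\emptyset$, then $\bigcap\mathcal K\subseteq Y\setminus\overline{f(A)}$, which is open in $Y$; since $Y$ is well-filtered and $\mathcal K$ is a filtered family of nonempty compact saturated subsets of $Y$, there would be some $K\in\mathcal K$ with $K\subseteq Y\setminus\overline{f(A)}$, that is, $K\cap\overline{f(A)}=\emptyset$, contradicting $\overline{f(A)}\in M(\mathcal K)$. Hence I may choose a point $y_A\in\overline{f(A)}\cap\bigcap\mathcal K$.

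Finally I would verify that $\overline{\{y_A\}}=\overline{f(A)}$. Since $y_A\in\overline{f(A)}$ and the latter is closed, $\overline{\{y_A\}}\subseteq\overline{f(A)}$; and since $y_A\in K$ for every $K\in\mathcal K$, we have $\overline{\{y_A\}}\cap K\neq\emptyset$ for all $K\in\mathcal K$, so $\overline{\{y_A\}}\in M(\mathcal K)$, whence minimality of $\overline{f(A)}$ in $M(\mathcal K)$ forces $\overline{\{y_A\}}=\overline{f(A)}$. Uniqueness of $y_A$ is then immediate from the $T_0$ axiom on $Y$. The only substantive ingredient is the use of well-filteredness in the contradiction step (morally, ``well-filtered spaces are sober for Rudin sets''); everything else is routine manipulation of $M(\mathcal K)$ and $m(\mathcal K)$, so I do not anticipate a genuine obstacle here.
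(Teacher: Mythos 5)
Your proof is correct and follows essentially the same route as the paper: both transport the Rudin structure to $Y$ via Lemma~\ref{rudinimage} and then apply well-filteredness of $Y$ to produce a point of $\overline{f(A)}\cap\bigcap\mathcal K$ which is forced to be generic. The only cosmetic difference is that you invoke minimality of $\overline{f(A)}$ in $M(\mathcal K)$ directly inside $Y$, whereas the paper pulls the minimality argument back to $\overline{A}$ in $X$ via $f^{-1}\left(\overline{\{y_A\}}\right)$; both close the argument equally well.
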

\begin{proof}
Since $A$ has Rudin property, there exists a filtered family $\mathcal K\subseteq \mathord{\mathsf{K}}(X)$ such that $\overline{A}\in m(\mathcal K)$. Let $\mathcal{K}_f=\{\ua f(K\cap \overline{A}) : K\in \mathcal K\}$. Then $\mathcal{F}_f\subseteq \mathord{\mathsf{K}}(Y)$ is filtered. By the proof of Lemma \ref{rudinimage}, $\overline{f(A)}\in m(\mathcal{K}_f)$. Since $Y$ is well-filtered, we have $\bigcap_{K\in \mathcal{K}}\ua f(K\cap \overline{A})\cap \overline{f(A)}\neq\emptyset$. Select a $y_A\in \bigcap_{K\in \mathcal K} \ua f(K\cap \overline{A})\cap \overline{f(A)}$. Then $\overline{\{y_A\}}\subseteq \overline{f(A)}$ and $K\cap \overline{A}\cap f^{-1}(\overline{\{y_A\}})\neq\emptyset$ for all $K\in \mathcal K$. It follows that $\overline{A}=\overline{A}\cap f^{-1}(\overline{\{y_A\}})$ by the minimality of $\overline{A}$, and consequently, $\overline{f(A)}\subseteq \overline{\{y_A\}}$. Therefore, $\overline{f(A)}=\overline{\{y_A\}}$. The uniqueness of $y_A$ follows from the $T_0$ separation of $Y$.
\end{proof}

\begin{lemma}\label{Rudinsetprod}\emph{(\cite{Shenchon})}
	Let	$X=\prod_{i\in I}X_i$ be the product of a family $\{X_i:i\in I\}$ of $T_0$ spaces and $A\subseteq X$. Then the following conditions are equivalent:
\begin{enumerate}[\rm (1)]
	\item $A$ is a Rudin set.
	\item $p_i(A)$ is a Rudin set for each $i\in I$.
\end{enumerate}
\end{lemma}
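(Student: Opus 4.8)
The plan is to prove the two implications separately. The implication $(1)\Rightarrow(2)$ is immediate from what is already available: a Rudin set is in particular irreducible and closed, so by Corollary~\ref{irrcprod} we have $A=\prod_{i\in I}p_i(A)$ with each $p_i(A)\in\ir_c(X_i)$; since every projection $p_i\colon X\to X_i$ is continuous, Lemma~\ref{rudinimage} gives $p_i(A)=\overline{p_i(A)}\in\kf(X_i)$, i.e. each $p_i(A)$ is a Rudin set. So the real work is in $(2)\Rightarrow(1)$, where the idea is to assemble the filtered families witnessing the Rudin property of the factors $p_i(A)$ into a single filtered family in $\mk(X)$ witnessing the Rudin property of $\overline A$.

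For $(2)\Rightarrow(1)$, fix for each $i\in I$ a filtered family $\mathcal K_i\subseteq\mk(X_i)$ with $\overline{p_i(A)}\in m(\mathcal K_i)$. Since each $\overline{p_i(A)}$ is irreducible and closed, $P:=\prod_{i\in I}\overline{p_i(A)}$ is irreducible and closed by Lemma~\ref{prodirr}, and $P=\overline A$ by Lemma~\ref{irrprod}; thus it suffices to show $P\in\kf(X)$. I would take
\[
\mathcal K=\left\{\prod_{i\in I}K_i : K_i\in\mathcal K_i\ \text{for all}\ i\in I\right\}.
\]
Each member $\prod_iK_i$ is an upper set, hence saturated, and is compact by Tychonoff's theorem, so $\mathcal K\subseteq\mk(X)$; and $\mathcal K$ is filtered, since given $\prod_iK_i$ and $\prod_iK_i'$ in $\mathcal K$ one may use the filteredness of each $\mathcal K_i$ to choose $K_i''\in\mathcal K_i$ with $K_i''\subseteq K_i\cap K_i'$, whence $\prod_iK_i''\in\mathcal K$ lies below both. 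Since each $\overline{p_i(A)}\cap K_i\neq\emptyset$, the product $\prod_i(\overline{p_i(A)}\cap K_i)$ is nonempty, so $P\in M(\mathcal K)$.

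It remains to verify minimality, $P\in m(\mathcal K)$, and this is the step I expect to require the most care. Let $B$ be closed with $B\subseteq P$ and $B\in M(\mathcal K)$. As $\mathcal K$ is filtered it is directed in the Smyth order, hence an irreducible subset of $P_S(X)$, so the Topological Rudin Lemma (Lemma~\ref{t Rudin}) yields a minimal irreducible closed set $C\subseteq B$ that still meets every member of $\mathcal K$. Fixing $i$ and $K\in\mathcal K_i$, the member of $\mathcal K$ obtained by placing $K$ in the $i$-th coordinate and arbitrary members of the other $\mathcal K_j$ is met by $C$, so $p_i(C)\cap K\neq\emptyset$; hence $\overline{p_i(C)}\in M(\mathcal K_i)$, and since $\overline{p_i(C)}\subseteq\overline{p_i(B)}\subseteq\overline{p_i(P)}=\overline{p_i(A)}\in m(\mathcal K_i)$ we get $\overline{p_i(C)}=\overline{p_i(A)}$ for every $i$. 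Because $C$ is irreducible, Lemma~\ref{irrprod} gives $C=\overline C=\prod_i\overline{p_i(C)}=\prod_i\overline{p_i(A)}=P$, so $P=C\subseteq B\subseteq P$ and $B=P$; hence $\overline A=P\in\kf(X)$, i.e. $A$ is a Rudin set. The crux is thus twofold: getting the filtered family right — naively padding the ``unused'' coordinates with arbitrary compact saturated sets destroys filteredness, which forces the full product choice and with it an appeal to Tychonoff — and routing minimality through the Topological Rudin Lemma, since a bare closed subset $B\subseteq P$ need not be of the form $\prod_i\overline{p_i(B)}$ and so one must first extract a genuinely irreducible $C$ before projecting.
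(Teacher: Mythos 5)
The paper does not actually prove this lemma (it is quoted from \cite{Shenchon}), so your argument has to stand on its own, and most of it does: taking the full Tychonoff product $\mathcal K=\{\prod_{i}K_i:K_i\in\mathcal K_i\}$ as the witnessing filtered family, checking $P=\prod_i\overline{p_i(A)}\in M(\mathcal K)$, and proving minimality by extracting a minimal irreducible closed $C\subseteq B$ via Lemma~\ref{t Rudin} (legitimate, since a filtered subfamily of $\mk(X)$ is directed, hence irreducible, in $P_S(X)$), forcing $\overline{p_i(C)}=\overline{p_i(A)}$ by minimality in each factor, and reassembling $C=\prod_i\overline{p_i(C)}=P$ by Lemma~\ref{irrprod} --- all of this is correct and is in the same spirit as the paper's other uses of the topological Rudin Lemma. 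The $(1)\Rightarrow(2)$ direction via Corollary~\ref{irrcprod} and Lemma~\ref{rudinimage} is also fine.

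There is, however, one genuine gap, at the very first step of $(2)\Rightarrow(1)$: you invoke Lemma~\ref{irrprod} to conclude $P=\overline{A}$, but that lemma requires $A$ to be irreducible, and irreducibility of $A$ is not part of condition (2). Without it the implication is false. For example, in the square of the Sierpi\'nski space $\{0,1\}$ (with $\{1\}$ open) take $A=\{(0,1),(1,0)\}$; then $p_i(A)=\da 1=\overline{\{1\}}\in\mathcal D_c(X_i)\subseteq\kf(X_i)$ for $i=1,2$, yet $\overline{A}=\da(0,1)\cup\da(1,0)$ is not irreducible, so $\overline{A}\notin\kf(X)$ because $\kf(X)\subseteq\ir_c(X)$. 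What your argument actually establishes is that $\prod_i\overline{p_i(A)}$ is a Rudin set; to conclude that $\overline{A}$ itself is one, the hypothesis $A\in\ir(X)$ must be added --- exactly as the companion Lemma~\ref{WDsetprod} does for $\wdd$ sets, and exactly the form in which the lemma is applied in the paper (to $A\in\ir_c(\prod_iX_i)$). So the defect is really in the statement as transcribed rather than in your construction; with $A\in\ir(X)$ assumed and the point at which irreducibility is used flagged, your proof is complete.
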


\begin{theorem}\label{rudinprod}
	Let $\{X_i:i\in I\}$ be a family of $T_0$ spaces. Then the following two conditions are equivalent:
	\begin{enumerate}[\rm(1)]
		\item The product space $\prod_{i\in I}X_i$ is a Rudin space.
		\item For each $i \in I$, $X_i$ is a Rudin space.
	\end{enumerate}
\end{theorem}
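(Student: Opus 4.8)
The plan is to prove the two implications separately, leveraging the machinery already established for Rudin sets.

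\medskip

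\textbf{Proof proposal.} For $(1)\Rightarrow(2)$, I would argue exactly as in the proof of Corollary~\ref{DCprod}: for each fixed $i\in I$, the factor $X_i$ is a retract of the product $X=\prod_{i\in I}X_i$, via the projection $p_i$ together with any section obtained by fixing a point in each other coordinate. Since retracts of Rudin spaces are Rudin spaces by Corollary~\ref{rudinretract}, it follows immediately that each $X_i$ is a Rudin space. (One minor point: for the product to be nonempty we tacitly assume each $X_i\neq\emptyset$; if some $X_i=\emptyset$ the product is empty and the statement is vacuous.)

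\medskip

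For $(2)\Rightarrow(1)$, let $A\in\ir_c(X)$. By Corollary~\ref{irrcprod}, $A=\prod_{i\in I}p_i(A)$ and $p_i(A)\in\ir_c(X_i)$ for each $i\in I$. Since each $X_i$ is a Rudin space, each $p_i(A)$ is a Rudin set, i.e.\ $p_i(A)\in\mathsf{RD}(X_i)$. Now I invoke Lemma~\ref{Rudinsetprod}: the set $A\subseteq X$ is a Rudin set if and only if $p_i(A)$ is a Rudin set for each $i\in I$. Hence $A\in\mathsf{RD}(X)$. Since $A$ was an arbitrary irreducible closed subset of $X$, we conclude $\ir_c(X)\subseteq\mathsf{RD}(X)$, and the reverse inclusion $\mathsf{RD}(X)\subseteq\ir_c(X)$ always holds by Lemma~\ref{DRIsetrelation}. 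Therefore $\ir_c(X)=\mathsf{RD}(X)$, that is, $X$ is a Rudin space.

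\medskip

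The only genuine content here is Lemma~\ref{Rudinsetprod}, which does the heavy lifting in the second implication; everything else is a routine assembly of already-established facts (retracts, the product decomposition of irreducible closed sets, and the sandwich $\mathcal D_c(X)\subseteq\mathsf{RD}(X)\subseteq\ir_c(X)$). I do not expect any serious obstacle beyond correctly citing these results; the main subtlety is simply making sure that in $(2)\Rightarrow(1)$ one applies Corollary~\ref{irrcprod} to get both the product form $A=\prod_i p_i(A)$ \emph{and} the irreducibility of each coordinate projection before feeding this into Lemma~\ref{Rudinsetprod}.
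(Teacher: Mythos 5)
Your proposal is correct and follows essentially the same route as the paper: the retract argument via Corollary~\ref{rudinretract} for $(1)\Rightarrow(2)$, and the combination of Corollary~\ref{irrcprod} with Lemma~\ref{Rudinsetprod} for $(2)\Rightarrow(1)$. No issues.
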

\begin{proof}	
	(1) $\Rightarrow$ (2):  For each $i \in I$, $X_i$ is a retract of $\prod_{i\in I}X_i$. By Corollary \ref{rudinretract}, $X_i$ is a Rudin space.
	
	(2) $\Rightarrow$ (1): Suppose $A\in \ir_c(\prod_{i\in I}X_i)$. Then for each $i \in I$, since $X_i$ is a Rudin space, $p_i(A)\in \mathsf{RD}(X_i)$ by Corollary \ref{irrcprod}, and consequently, by Corollary \ref{irrcprod} and Lemma \ref{Rudinsetprod}, $A=\prod_{i\in I}p_i(A)\in \mathsf{RD}(\prod_{i\in I}X_i)$. Therefore,	$\prod_{i\in I}X_i$ is a Rudin space.
\end{proof}

\section{Well-filtered spaces and sober spaces}

In this section, we formulate and prove some equational characterizations of well-filtered spaces and sober spaces.

\begin{theorem}\label{WFequat1} Let $X$ be a $T_0$ space and $\mathbf{K}$ a full subcategory of $\mathbf{Top}_0$ containing $\mathbf{Sob}$. Then the following conditions are equivalent:
\begin{enumerate}[\rm (1)]
		\item $X$ is well-filtered.
        \item For every continuous mapping $f:X\longrightarrow Y$ from $X$ to a $T_0$ space $Y$ and a filtered family $\mathcal K\subseteq \mk(X)$, $\ua f\left(\bigcap\mathcal K\right)=\bigcap\limits_{K\in\mathcal K}\ua f(K)$.
        \item For every continuous mapping $f:X\longrightarrow Y$ from $X$ to a $\mathbf{K}$-space $Y$ and a filtered family $\mathcal K\subseteq \mk(X)$, $\ua f\left(\bigcap\mathcal K\right)=\bigcap\limits_{K\in\mathcal K}\ua f(K)$.
        \item For every continuous mapping $f:X\longrightarrow Y$ from $X$ to a sober space $Y$ and a filtered family $\mathcal K\subseteq \mk(X)$, $\ua f\left(\bigcap\mathcal K\right)=\bigcap\limits_{K\in\mathcal K}\ua f(K)$.
\end{enumerate}
\end{theorem}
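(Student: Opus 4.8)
The plan is to prove the cyclic chain $(1)\Rightarrow(2)\Rightarrow(3)\Rightarrow(4)\Rightarrow(1)$, in direct parallel with Theorem \ref{d-spacemapcharac}. Since $\mathbf{Sob}$ is a full subcategory of $\mathbf{K}$ and $\mathbf{K}$ of $\mathbf{Top}_0$, the implications $(2)\Rightarrow(3)\Rightarrow(4)$ are trivial, so the real work is in $(1)\Rightarrow(2)$ and, above all, $(4)\Rightarrow(1)$.

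For $(1)\Rightarrow(2)$, let $f\colon X\longrightarrow Y$ be continuous with $Y$ a $T_0$ space and $\mathcal K\subseteq\mk(X)$ filtered. The inclusion $\ua f\left(\bigcap\mathcal K\right)\subseteq\bigcap_{K\in\mathcal K}\ua f(K)$ follows from $\bigcap\mathcal K\subseteq K$ for each $K$ together with the monotonicity of $S\mapsto\ua f(S)$. For the reverse inclusion, take $y\in\bigcap_{K\in\mathcal K}\ua f(K)$; then for each $K$ there is $x_K\in K$ with $f(x_K)\leq y$, that is, $x_K\in f^{-1}(\overline{\{y\}})$, so the closed set $f^{-1}(\overline{\{y\}})$ meets every member of $\mathcal K$. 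If $\bigcap\mathcal K\cap f^{-1}(\overline{\{y\}})$ were empty, then $\bigcap\mathcal K\subseteq X\setminus f^{-1}(\overline{\{y\}})\in\mathcal O(X)$, and well-filteredness would give $K\subseteq X\setminus f^{-1}(\overline{\{y\}})$ for some $K$, contradicting $K\cap f^{-1}(\overline{\{y\}})\neq\emptyset$. So some $x\in\bigcap\mathcal K$ has $f(x)\leq y$, whence $y\in\ua f\left(\bigcap\mathcal K\right)$.

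For $(4)\Rightarrow(1)$, I would feed condition (4) the canonical topological embedding $\eta_X\colon X\longrightarrow X^s=P_H(\ir_c(X))$ of $X$ into its (sober) soberification. By Remark \ref{eta continuous} the specialization order on $X^s$ is set inclusion, so for any $K\in\mk(X)$ a short computation gives $\ua_{X^s}\eta_X(K)=\{B\in\ir_c(X):B\cap K\neq\emptyset\}$ and, likewise, $\ua_{X^s}\eta_X\left(\bigcap\mathcal K\right)=\{B\in\ir_c(X):B\cap\bigcap\mathcal K\neq\emptyset\}$ for any filtered $\mathcal K\subseteq\mk(X)$. Thus condition (4) applied to $\eta_X$ and $\mathcal K$ says exactly: if $A\in\ir_c(X)$ meets every member of a filtered family $\mathcal K\subseteq\mk(X)$, then $A\cap\bigcap\mathcal K\neq\emptyset$.

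It then remains to turn this into well-filteredness, and here I would invoke Rudin's Lemma. Let $\mathcal K\subseteq\mk(X)$ be filtered and $U\in\mathcal O(X)$ with $\bigcap\mathcal K\subseteq U$; suppose no member of $\mathcal K$ lies inside $U$. Then $X\setminus U$ is closed and meets every member of $\mathcal K$, and $\mathcal K$, being directed in the Smyth order, is an irreducible subset of $P_S(X)$. By the topological Rudin's Lemma (Lemma \ref{t Rudin}), $X\setminus U$ contains a minimal irreducible closed subset $A$ that still meets every member of $\mathcal K$; by the property established in the previous paragraph, $A\cap\bigcap\mathcal K\neq\emptyset$, contradicting $A\subseteq X\setminus U$ and $\bigcap\mathcal K\subseteq U$. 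Hence $X$ is well-filtered. I expect the one genuinely delicate step to be exactly this last reduction: condition (4) only constrains \emph{irreducible} closed test sets, so passing to an arbitrary open set $U$ requires the topological Rudin's Lemma; everything else is routine manipulation of the specialization order on $X^s$ and of the definition of well-filteredness.
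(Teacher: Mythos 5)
Your proposal is correct and follows essentially the same route as the paper: the same contradiction argument via well-filteredness for $(1)\Rightarrow(2)$, and for $(4)\Rightarrow(1)$ the same device of applying the hypothesis to the sobrification embedding $\eta_X\colon X\longrightarrow X^s$ and invoking the topological Rudin's Lemma to produce an irreducible closed subset of $X\setminus U$ meeting all members of $\mathcal K$. The only cosmetic difference is that you first extract the reformulation of (4) as ``every irreducible closed set meeting all of $\mathcal K$ meets $\bigcap\mathcal K$'' before deriving the contradiction, whereas the paper argues directly with $\Diamond_{\ir_c(X)}U$.
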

\begin{proof} (1) $\Rightarrow$ (2): It is proved in \cite{Esc-Laws-Simp2004} for sober spaces and the proof is valid for well-filtered spaces (see \cite[Lemma 8.1]{Esc-Laws-Simp2004}). For the sake of completeness, we present the proof here. It needs only to check $\bigcap\limits_{K\in\mathcal K}\ua f(K)\subseteq \ua f\left(\bigcap\mathcal K\right)$. Let $y\in\bigcap\limits_{K\in\mathcal K}\ua f(K)$. Then for each $K\in\mathcal K$, $\overline{\{y\}}\cap f(K)\neq\emptyset$, that is, $K\cap f^{-1}\left(\overline{\{y\}}\right)\neq\emptyset$. Since $X$ is well-filtered, $f^{-1}\left(\overline{\{y\}}\right)\cap\bigcap\mathcal K\neq\emptyset$ (otherwise, $\bigcap\mathcal K\subseteq X\setminus f^{-1}\left(\overline{\{y\}}\right)$, which implies that $K\subseteq X\setminus f^{-1}\left(\overline{\{y\}}\right)$ for some $K\in\mathcal K$, a contradiction). It follows that $\overline{\{y\}}\cap f\left(\bigcap\mathcal K\right)\neq\emptyset$. This implies that $y\in\ua f\left(\bigcap\mathcal K\right)$. So $\bigcap\limits_{K\in\mathcal K}\ua f(K)\subseteq \ua f\left(\bigcap\mathcal K\right)$.

(2) $\Rightarrow$ (3)$\Rightarrow$ (4): Trivial.

(4) $\Rightarrow$ (1):  Let $\eta_X : X \rightarrow X^s$ ($=P_H(\ir_c(X))$) be the canonical topological embedding from $X$ into its soberification.  Suppose that $\mathcal K\subseteq \mathord{\mathsf K}(X)$ is filtered, $U\in \mathcal O(X)$, and $\bigcap \mathcal K \subseteq U$. If $K\not\subseteq U$ for each $K\in \mathcal K$, then by Lemma \ref{t Rudin}, $X\setminus U$ contains a minimal irreducible closed subset $A$ that still meets all members of $\mathcal{K}$. By condition (4) we have $\ua_{\ir_c(X)} \eta_X\left(\bigcap\mathcal K\right)=\bigcap\limits_{K\in\mathcal K}\ua_{\ir_c(X)} \eta_X(K)\subseteq \ua_{\ir_c(X)}\eta_X(U)=\Diamond_{\ir_c(X)} U$. Clearly, $A\in \bigcap\limits_{K\in\mathcal K}\ua_{\ir_c(X)} \eta_X(K)$, and whence $A\in \Diamond_{\ir_c(X)} U$, that is, $A\cap U\neq\emptyset$, being in contradiction with $A\subseteq X\setminus U$. Thus $X$ is well-filtered.
\end{proof}

In the above theorem, we can let $\mathbf{K}$ be the category of all $d$-spaces or that of all well-filtered spaces.

\begin{lemma}\label{Kmeet}\emph{(\cite{redbook})} For a nonempty family $\{K_i : i\in I\}\subseteq \mk (X)$, $\bigvee_{i\in I} K_i$ exists in $\mk (X)$ if{}f~$\bigcap_{i\in I} K_i\in \mk (X)$. In this case $\bigvee_{i\in I} K_i=\bigcap_{i\in I} K_i$.
\end{lemma}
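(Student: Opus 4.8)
The plan is to unwind the definition of supremum in $\mk(X)$ with respect to the Smyth preorder: since $K_1\sqsubseteq K_2$ means $K_2\subseteq K_1$, a supremum of a family $\{K_i:i\in I\}$ in $\mk(X)$ is precisely the largest element of $\mk(X)$ contained in $\bigcap_{i\in I}K_i$. The one auxiliary observation needed is that for any $x\in X$ the set $\ua x$ is nonempty, saturated and supercompact, hence $\ua x\in\mk(X)$; and if $x$ belongs to a saturated set $K$, then $\ua x\subseteq K$.

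For the direction $\Leftarrow$, I would assume $\bigcap_{i\in I}K_i\in\mk(X)$. It is contained in every $K_i$, so $K_i\sqsubseteq\bigcap_{i\in I}K_i$ for all $i$; and if $L\in\mk(X)$ satisfies $K_i\sqsubseteq L$ for every $i$, then $L\subseteq K_i$ for all $i$, so $L\subseteq\bigcap_{i\in I}K_i$, i.e. $\bigcap_{i\in I}K_i\sqsubseteq L$. Hence $\bigvee_{i\in I}K_i=\bigcap_{i\in I}K_i$. For the direction $\Rightarrow$, suppose $K:=\bigvee_{i\in I}K_i$ exists in $\mk(X)$. From $K_i\sqsubseteq K$ for all $i$ we get $K\subseteq K_i$ for all $i$, so $K\subseteq\bigcap_{i\in I}K_i$. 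Conversely, let $x\in\bigcap_{i\in I}K_i$; since each $K_i$ is saturated, $\ua x\subseteq K_i$ for every $i$, and as $\ua x\in\mk(X)$ this says $K_i\sqsubseteq\ua x$ for all $i$. By the universal property of the supremum, $K\sqsubseteq\ua x$, i.e. $\ua x\subseteq K$, so $x\in K$. Thus $\bigcap_{i\in I}K_i\subseteq K$, and combining the inclusions, $\bigcap_{i\in I}K_i=K\in\mk(X)$, with $\bigvee_{i\in I}K_i=K=\bigcap_{i\in I}K_i$.

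There is essentially no serious obstacle here; the only points requiring care are keeping the orientation of the Smyth preorder straight throughout, and observing that the equality $\bigcap_{i\in I}K_i=K$ is what does all the work: the intersection is automatically an upper set (being an intersection of upper sets) and automatically nonempty (it contains the nonempty set $K$), so establishing the set-theoretic equality with the compact saturated set $K$ is exactly what is needed to conclude $\bigcap_{i\in I}K_i\in\mk(X)$.
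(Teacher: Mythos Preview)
Your proof is correct and is the standard argument. Note that the paper does not actually prove this lemma; it simply cites it from \cite{redbook} and uses it as a black box, so there is no ``paper's own proof'' to compare against. Your write-up supplies exactly the elementary verification one would expect: the Smyth order reverses inclusion, so the supremum, if it exists, must be the intersection, and the only nontrivial point is showing $\bigcap_{i\in I}K_i\subseteq K$ when the supremum $K$ exists, which you handle cleanly via the upper bounds $\ua x\in\mk(X)$ for $x\in\bigcap_{i\in I}K_i$.
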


 For the well-filteredness of Smyth power space, we now prove a similar result to that of sobriety in Theorem \ref{Smyth-sober}. The following result has been first proved in \cite{xuxizhao}. The proof we present here is more simple.

\begin{theorem}\label{Smythwf}
	For a $T_0$ space, the following conditions are equivalent:
\begin{enumerate}[\rm (1)]
		\item $X$ is well-filtered.
        \item $P_S(X)$ is a $d$-space.
        \item $P_S(X)$ is well-filtered.
\end{enumerate}
\end{theorem}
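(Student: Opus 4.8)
The plan is to prove the cycle $(1) \Rightarrow (3) \Rightarrow (2) \Rightarrow (1)$, since $(3) \Rightarrow (2)$ is immediate (every well-filtered space is a $d$-space — this follows because in a well-filtered space the closure of a directed set, viewed via the family $\{\ua d : d \in D\}$ of supercompact saturated sets, forces $\bigcap_{d\in D}\ua d \cap \overline{D} \neq \emptyset$, giving the sup of $D$). So the substantive work is $(1) \Rightarrow (3)$ and $(2) \Rightarrow (1)$.

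For $(1) \Rightarrow (3)$, assume $X$ is well-filtered and let $\mathcal{K} \subseteq \mk(P_S(X))$ be a filtered family with $\bigcap \mathcal{K} \subseteq \mathcal{U}$ for some open $\mathcal{U}$ of $P_S(X)$; I may assume $\mathcal{U} = \Box U$ is a basic open set, $U \in \mathcal{O}(X)$ — actually I must be a little careful and reduce to a basic open cover argument, but the key device is Lemma \ref{K union}: for each $\mathcal{K} \in \mathcal{K}$, $\bigcup \mathcal{K} \in \mk(X)$. The family $\{\bigcup \mathcal{K} : \mathcal{K} \in \mathcal{K}\}$ is filtered in $\mk(X)$ (if $\mathcal{K}_1 \sqsubseteq \mathcal{K}_2$ then $\bigcup \mathcal{K}_2 \subseteq \bigcup \mathcal{K}_1$). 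One checks $\bigcap_{\mathcal{K} \in \mathcal{K}} \bigcup \mathcal{K} \subseteq U$: if $x$ lies in every $\bigcup \mathcal{K}$, then $\ua x$ meets... hmm, more directly, $\bigcap \mathcal{K} \subseteq \Box U$ means every member of $\bigcap \mathcal{K}$ is contained in $U$; combined with Remark \ref{meet-in-Smyth}-style reasoning and the fact that $\bigcup \mathcal{K}$ for $\mathcal{K} \in \mk(P_S(X))$ behaves well, one derives $\bigcap_{\mathcal{K}}\bigcup\mathcal{K} \subseteq U$. Then well-filteredness of $X$ yields some $\mathcal{K}_0 \in \mathcal{K}$ with $\bigcup \mathcal{K}_0 \subseteq U$, i.e. every $K \in \mathcal{K}_0$ satisfies $K \subseteq U$, i.e. $\mathcal{K}_0 \subseteq \Box U$. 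That gives well-filteredness of $P_S(X)$ for basic opens; the general case follows since an arbitrary open set is a union of basic ones and compactness lets us reduce a finite subcover, then use that $\Box(U_1 \cup \cdots \cup U_n) \supseteq \Box U_1 \cup \cdots$ — actually here one should argue via the standard fact that it suffices to test well-filteredness against a subbasis, or handle it through the canonical embedding. I expect the cleanest route is to invoke Lemma \ref{rudinwf} or the characterization via $\eta$, but the union-map approach above is the most self-contained.

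For $(2) \Rightarrow (1)$, assume $P_S(X)$ is a $d$-space and let $\mathcal{K} = \{K_i : i \in I\} \subseteq \mk(X)$ be filtered with $\bigcap_{i \in I} K_i \subseteq U$, $U \in \mathcal{O}(X)$. The family $\mathcal{K}$, ordered by the Smyth order, is directed in $P_S(X)$ (filtered in $\mk(X)$ means directed in the specialization order of $P_S(X)$). Since $P_S(X)$ is a $d$-space it is a dcpo, so $\bigvee \mathcal{K}$ exists in $P_S(X)$; by Lemma \ref{Kmeet}, $\bigvee_{i} K_i = \bigcap_i K_i$ and this intersection lies in $\mk(X)$. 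Now $\Box U$ is open in $P_S(X)$, hence Scott open (as $P_S(X)$ is a $d$-space, $\mathcal{O}(P_S(X)) \subseteq \sigma(P_S(X))$), and $\bigvee \mathcal{K} = \bigcap_i K_i \subseteq U$ means $\bigvee \mathcal{K} \in \Box U$. Scott openness then forces $K_i \in \Box U$, i.e. $K_i \subseteq U$, for some $i \in I$. Hence $X$ is well-filtered.

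The main obstacle is the reduction to basic open sets in $(1) \Rightarrow (3)$: well-filteredness is stated for arbitrary open $\mathcal{U}$, but $\Box$-sets only form a base, not the whole topology, and one must verify that if $\bigcap \mathcal{K} \subseteq \mathcal{U}$ with $\mathcal{U}$ open then some $\mathcal{K}_0 \subseteq \mathcal{U}$ — for this one uses that $\bigcap\mathcal{K}$ is compact saturated in $P_S(X)$ (by Lemma \ref{K union} applied one level up, or directly), covers it by finitely many basic opens $\Box U_1, \dots, \Box U_n \subseteq \mathcal{U}$, notes $\bigcap \mathcal{K} \subseteq \bigcup_{k}\Box U_k = \Box(\ua(\,\bigcup_k U_k)\,)$... actually $\Box U_1 \cup \cdots \cup \Box U_n \subseteq \Box(U_1\cup\cdots\cup U_n)$ is the inclusion one needs, so $\bigcap\mathcal{K} \subseteq \Box(U_1 \cup \cdots \cup U_n)$, and applying the basic-open case gives $\mathcal{K}_0 \subseteq \Box(U_1 \cup \cdots \cup U_n)$ — but that is not yet $\mathcal{K}_0 \subseteq \mathcal{U}$. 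The honest fix is to argue directly: $\{\bigcup\mathcal{K} : \mathcal{K}\in\mathcal{K}\}$ is filtered in $\mk(X)$ with intersection contained in any open $U$ with $\bigcap_{\text{over }\mathcal{K}}(\text{that intersection})\subseteq\cdots$; I will instead route $(1)\Rightarrow(3)$ through Theorem \ref{WFequat1} or through the Rudin-space machinery (Proposition \ref{rudinwf}), which sidesteps the basic-open subtlety entirely, or simply cite that $P_S$ of a well-filtered space is well-filtered via the established $\bigcup$-continuity and Lemma \ref{K union} as in \cite{xuxizhao}, giving the promised simpler proof.
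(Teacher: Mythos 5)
Your legs $(3)\Rightarrow(2)$ and $(2)\Rightarrow(1)$ are fine; indeed $(2)\Rightarrow(1)$ is clean and correct (filteredness of $\mathcal K\subseteq\mk(X)$ is directedness in $P_S(X)$, the dcpo property plus Lemma \ref{Kmeet} gives $\bigvee\mathcal K=\bigcap\mathcal K$, and Scott-openness of $\Box U$ finishes it), and it is a slightly different route from the paper's $(3)\Rightarrow(1)$, which instead embeds $\mathcal K$ into $\mk(P_S(X))$ via $K\mapsto\ua_{\mk(X)}K$. But the essential content of the theorem --- that well-filteredness of $X$ lifts to $P_S(X)$ --- is exactly the step you leave open. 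Your union-map sketch for $(1)\Rightarrow(3)$ has two real holes that you yourself flag: first, from $\bigcap\mathcal{K}\subseteq\Box U$ you cannot conclude $\bigcap_{\mathcal K\in\mathcal{K}}\bigcup\mathcal K\subseteq U$ (a point $x$ lying in \emph{some} member of each $\mathcal K$ need not lie in a member of $\bigcap\mathcal{K}$); second, even granting the basic-open case, the passage to a general open $\mathcal U$ of $P_S(X)$ fails because $\Box U_1\cup\cdots\cup\Box U_n\subsetneq\Box(U_1\cup\cdots\cup U_n)$ goes the wrong way, and well-filteredness cannot in general be tested on a base only. Falling back on ``cite \cite{xuxizhao}'' is citing the statement being proved, and the detour through Theorem \ref{WFequat1} or Proposition \ref{rudinwf} is not worked out.

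The paper avoids all of this by ordering the implications as $(1)\Rightarrow(2)\Rightarrow(3)$: $(1)\Rightarrow(2)$ is immediate from Lemma \ref{Kmeet} (well-filteredness makes $\mk(X)$ a dcpo with each $\Box U$ Scott open), and the hard step $(2)\Rightarrow(3)$ is done for an \emph{arbitrary} open $\mathcal U\subseteq P_S(X)$ via the topological Rudin Lemma: assuming no $\mathcal K_d\subseteq\mathcal U$, one takes a minimal irreducible closed $\mathcal A\subseteq\mk(X)\setminus\mathcal U$ meeting all $\mathcal K_d$, forms $K_d=\bigcup(\mathcal A\cap\mathcal K_d)\in\mathcal A$ (compact by Lemma \ref{K union}, in $\mathcal A$ because $\mathcal A$ is a lower set), uses the $d$-space hypothesis to place $K=\bigcap_d K_d=\bigvee_d K_d$ in the closed set $\mathcal A$, and then exploits minimality of $\mathcal A$ to show $K\in\bigcap_d\mathcal K_d\subseteq\mathcal U$, contradicting $K\in\mathcal A$. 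Your proposal would become a complete (and genuinely alternative) proof if you replaced your $(1)\Rightarrow(3)$ by this $(1)\Rightarrow(2)\Rightarrow(3)$ chain --- note that your own correct $(2)\Rightarrow(1)$ could then replace the paper's $(3)\Rightarrow(1)$. As it stands, the proposal has a genuine gap at its central step.
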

\begin{proof}

(1) $\Rightarrow$ (2): Suppose that $X$ is a well-filtered space. Then by Lemma \ref{Kmeet}, $\mk (X)$ is a dcpo, and $\Box U\in \sigma (\mk (X))$ for any $U\in O(X)$. Thus $P_S (X)$ is a $d$-space.

(2) $\Rightarrow$ (3): Suppose that $\{\mathcal K_d : d\in D\}\subseteq \mk(P_S(X))$ is filtered, $\mathcal U\in \mathcal O(P_S(X))$, and $\bigcap\limits_{d\in D} \mathcal K_d \subseteq \mathcal U$. If $\mathcal K_d\not\subseteq \mathcal U$ for each $d\in D$, then by Lemma \ref{t Rudin}, $\mk (X)\setminus \mathcal U$ contains an irreducible closed subset $\mathcal A$ that still meets all $\mathcal K_d$ ($d\in D$). For each $d\in D$, let $K_d=\bigcup \mathcal \ua_{\mk (X)} (\mathcal A\bigcap \mathcal K_d)$ ($=\bigcup (\mathcal A\bigcap \mathcal K_d$)). Then by Lemma \ref{K union}, $\{K_d : d\in D\}\subseteq \mk (X)$ is filtered, and $K_d\in \mathcal A$ for all $d\in D$ since $\mathcal A=\da_{\mk (X)}\mathcal A$. Let $K=\bigcap\limits_{d\in D} K_d$. Then $K\in \mk (X)$ and $K=\bigvee_{\mk (X)} \{K_d : d\in D\}\in \mathcal A$ by Lemma \ref{Kmeet} and condition (2). We claim that $K\in \bigcap\limits_{d\in D}\mathcal K_d$. Suppose, on the contrary, that $K\not\in \bigcap\limits_{d\in D}\bigcap \mathcal K_d$. Then there is a $d_0\in D$ such that $K\not\in \mathcal K_{d_0}$. Select a $G\in \mathcal A\bigcap \mathcal K_{d_0}$. Then $K\not\subseteq G$, and hence there is a $g\in K\setminus G$. It follows that $g\in K_d=\bigcup (\mathcal A\bigcap \mathcal K_d)$ for all $d\in D$ and $G\not\in \Diamond_{\mk (K)}\overline{\{g\}}$. Thus $\Diamond_{\mk (K)}\overline{\{g\}}\bigcap\mathcal A\bigcap \mathcal K_d\neq\emptyset$ for all $d\in D$. By the minimality of $\mathcal A$, we have $\mathcal A=\Diamond_{\mk (K)}\overline{\{g\}}\bigcap\mathcal A$, and consequently, $G\in \mathcal A\bigcap \mathcal K_{d_o}=\Diamond_{\mk (K)}\overline{\{g\}}\bigcap\mathcal A\bigcap \mathcal K_{d_0}$, which is a contradiction with $G\not\in \Diamond_{\mk (K)}\overline{\{g\}}$. Thus $K\in \bigcap\limits_{d\in D}\mathcal K_d\subseteq \mathcal U\subseteq \mk (X)\setminus \mathcal A$, being a contradiction with $K\in \mathcal A$. Therefore, $P_S(X)$  is well-filtered.

(3) $\Rightarrow$ (1): Suppose that $\mathcal K\subseteq \mathord{\mathsf K}(X)$ is filtered, $U\in \mathcal O(X)$, and $\bigcap \mathcal K \subseteq U$. Let $\widetilde{\mathcal K}=\{\ua_{\mk (X)}K : K\in \mathcal K\}$. Then $\widetilde{\mathcal K}\subseteq \mk (P_S(X))$ is filtered and $\bigcap \widetilde{\mathcal K} \subseteq \Box U$. By the well-filteredness of $P_S(X)$, there is a $K\in \mathcal K$ such that $\ua_{\mk (X)}K\subseteq \Box U$, and whence $K\subseteq U$, proving that $X$ is well-filtered.
\end{proof}

\begin{definition}\label{weakd-space} A $T_0$ space $X$ is said to have \emph{filtered intersection property}, $\mathbf{FTIP}$ for short, if $\bigcap \mathcal K\neq\emptyset$ for each filtered family $\mathcal K\subseteq \mk (X)$. $X$ is said to have \emph{irreducible intersection property}, $\mathbf{RIP}$ for short,  if $\bigcap \mathcal A\neq\emptyset$ for each irreducible subset $\mathcal A$ of $P_S(X)$.
\end{definition}

 By Remark \ref{meet-in-Smyth}, $X$ has $\mathbf{RIP}$ if{}f $\bigcap \mathcal A\neq\emptyset$ for all irreducible closed subset $\mathcal A$ of $P_S(X)$. For a $T_0$ space $X$, by Theorem \ref{soberequiv} and Lemma \ref{Kmeet}, we have the following implications:
\begin{center}
sobriety $\Rightarrow$ irreducible completeness of $P_S(X)$ $\Rightarrow$ $\mathbf{RIP}$  $\Rightarrow$ $\mathbf{FTIP}$;\\

sobriety $\Rightarrow$ well-filteredness $\Rightarrow$ monotone convergence  $\Rightarrow$ direct completeness of $\mk (X)$ $\Rightarrow$ $\mathbf{FTIP}$.

\end{center}

\begin{theorem}\label{WFequat2} For a $T_0$ space $X$, the following conditions are equivalent:
\begin{enumerate}[\rm (1)]
		\item $X$ is well-filtered.
        \item $\mk(X)$ is a dcpo, and $\ua \left(A\cap\bigcap \mathcal K\right)=\bigcap\limits_{K\in\mathcal K}\ua (A\cap K)$ for every filtered family $\mathcal K\subseteq \mk(X)$ and $A\in \mathcal C(X)$.
        \item $\mk(X)$ is a dcpo, and $\ua \left(A\cap\bigcap \mathcal K\right)=\bigcap\limits_{K\in\mathcal K}\ua (A\cap K)$ for every filtered family $\mathcal K\subseteq \mk(X)$ and $A\in \ir_c(X)$.
        \item $X$ has $\mathbf{FTIP}$, and $\ua \left(A\cap\bigcap \mathcal K\right)=\bigcap\limits_{K\in\mathcal K}\ua (A\cap K)$ for every filtered family $\mathcal K\subseteq \mk(X)$ and $A\in \mathcal C(X)$.
        \item $X$ has $\mathbf{FTIP}$ , and $\ua \left(A\cap\bigcap \mathcal K\right)=\bigcap\limits_{K\in\mathcal K}\ua (A\cap K)$ for every filtered family $\mathcal K\subseteq \mk(X)$ and $A\in \ir_c(X)$.
\end{enumerate}
\end{theorem}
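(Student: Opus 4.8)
The plan is to prove the cycle of implications $(1)\Rightarrow(2)\Rightarrow(3)$, $(2)\Rightarrow(4)\Rightarrow(5)$, and $(5)\Rightarrow(1)$, so that all five conditions are equivalent. The implications $(2)\Rightarrow(3)$, $(2)\Rightarrow(4)$ and $(4)\Rightarrow(5)$ are essentially trivial: $(2)\Rightarrow(3)$ and $(4)\Rightarrow(5)$ just restrict the closed set $A$ to lie in $\ir_c(X)$, and $(2)\Rightarrow(4)$ follows because a well-filtered space has $\mathbf{FTIP}$ (indeed by Lemma~\ref{Kmeet} $\mk(X)$ being a dcpo already gives $\mathbf{FTIP}$ via the displayed chain of implications preceding the theorem). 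So the real content is in $(1)\Rightarrow(2)$ and in $(5)\Rightarrow(1)$.

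For $(1)\Rightarrow(2)$: if $X$ is well-filtered, then by Theorem~\ref{Smythwf} (or directly by Lemma~\ref{Kmeet}) $\mk(X)$ is a dcpo. For the equation, fix a filtered $\mathcal K\subseteq\mk(X)$ and $A\in\mathcal C(X)$. The inclusion $\ua\left(A\cap\bigcap\mathcal K\right)\subseteq\bigcap_{K\in\mathcal K}\ua(A\cap K)$ is immediate since $\bigcap\mathcal K\subseteq K$ for each $K$. For the reverse inclusion, I would mimic the argument of Theorem~\ref{WFequat1}$(1)\Rightarrow(2)$: take $x\in\bigcap_{K\in\mathcal K}\ua(A\cap K)$, so for every $K\in\mathcal K$ the set $K\cap A\cap\da x$ is nonempty. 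Now $\{K\cap\ua(A\cap\da x): K\in\mathcal K\}$ — equivalently, after passing to saturations, the family $\{\ua(K\cap A\cap\da x):K\in\mathcal K\}$ — is a filtered family in $\mk(X)$, because $A\cap\da x$ is closed and each $K\cap A\cap\da x$ is compact (a closed subset of the compact set $\min(K)$'s upper closure, cf.\ Lemma~\ref{COMPminimalset}). Applying well-filteredness of $X$ to this filtered family and the open set $X\setminus(A\cap\da x)$ shows the intersection $\bigcap_{K\in\mathcal K}\ua(K\cap A\cap\da x)$ is nonempty; picking a point there and chasing saturations yields a point of $A\cap\bigcap\mathcal K\cap\da x$, i.e.\ $x\in\ua\left(A\cap\bigcap\mathcal K\right)$. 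The care needed here is to verify that the intersected family really is a subfamily of $\mk(X)$ and is filtered, and that well-filteredness is being applied in the correct (contrapositive) form.

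For $(5)\Rightarrow(1)$: assume $X$ has $\mathbf{FTIP}$ and the equation holds for all $A\in\ir_c(X)$. Let $\mathcal K\subseteq\mk(X)$ be filtered and $U\in\mathcal O(X)$ with $\bigcap\mathcal K\subseteq U$; suppose toward a contradiction that $K\not\subseteq U$ for every $K\in\mathcal K$. Then $X\setminus U\in M(\mathcal K)$, so by Lemma~\ref{t ruding} (the refinement of topological Rudin's Lemma) there is a minimal $A\in m(\mathcal K)$ with $A\subseteq X\setminus U$; in particular $A\in\ir_c(X)$ and $A\cap K\neq\emptyset$ for all $K\in\mathcal K$. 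The family $\{\ua(A\cap K):K\in\mathcal K\}$ is filtered in $\mk(X)$, so by $\mathbf{FTIP}$ its intersection $\bigcap_{K\in\mathcal K}\ua(A\cap K)$ is nonempty; by the assumed equation this equals $\ua\left(A\cap\bigcap\mathcal K\right)$, hence $A\cap\bigcap\mathcal K\neq\emptyset$. But $\bigcap\mathcal K\subseteq U$ and $A\subseteq X\setminus U$, a contradiction. Therefore some $K\in\mathcal K$ satisfies $K\subseteq U$, and $X$ is well-filtered.

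I expect the main obstacle to be the $(1)\Rightarrow(2)$ direction: making precise that $\{\ua(K\cap A\cap\da x):K\in\mathcal K\}$ is a filtered family of nonempty compact saturated sets (compactness of $K\cap A\cap\da x$ needs that it is a closed subset of a compact set, which holds since $\da x\cap A$ is closed and hence $K\cap\da x\cap A$ is closed in the compact $K$), and then applying well-filteredness in exactly the right contrapositive form to conclude the intersection meets $A\cap\da x$. Everything else is bookkeeping or a direct appeal to Lemma~\ref{Kmeet}, Lemma~\ref{t ruding}, and the implications displayed just before the theorem.
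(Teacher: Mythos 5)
Your proof is correct and is essentially the paper's argument: the same trivial implications, the same use of Lemma~\ref{Kmeet} for the dcpo/$\mathbf{FTIP}$ part, and the same $(5)\Rightarrow(1)$ step via a minimal irreducible closed set in $X\setminus U$ meeting all of $\mathcal K$, the filtered family $\{\ua (A\cap K):K\in\mathcal K\}$, and $\mathbf{FTIP}$. The only real divergence is in $(1)\Rightarrow(2)$, where your detour through the auxiliary family $\{\ua (K\cap A\cap\da x):K\in\mathcal K\}$ is sound (each member is nonempty compact saturated, the family is filtered, a point of its intersection lying in $A\cap\da x$ also lies in each $K$ since $\ua (K\cap A\cap\da x)\subseteq K$) but unnecessary: the paper applies well-filteredness directly to the original family $\mathcal K$ and the open set $X\setminus (\da x\cap A)$ --- since no $K$ is contained in that open set, neither is $\bigcap\mathcal K$ --- so none of the compactness bookkeeping you anticipate is needed. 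Two small points to tidy up: your stated cycle only gives $(2)\Rightarrow(3)$, so you should add $(3)\Rightarrow(5)$ explicitly (it is immediate from your own remark that $\mk(X)$ being a dcpo yields $\mathbf{FTIP}$ by Lemma~\ref{Kmeet}); and in $(5)\Rightarrow(1)$ the irreducibility of the minimal set $A$ is supplied by the topological Rudin Lemma (Lemma~\ref{t Rudin}, applied to $\mathcal K$, which is irreducible in $P_S(X)$ because it is filtered), not by Lemma~\ref{t ruding} as stated.
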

\begin{proof} We directly have (2) $\Rightarrow$ (3) $\Rightarrow$ (5) and (2) $\Rightarrow$ (4) $\Rightarrow$ (5).

(1) $\Rightarrow$ (2): By Theorem \ref{Smythwf}, $\mk(X)$ is a dcpo. Suppose $\mathcal K\subseteq \mk(X)$ and $A\in \mathcal C(X)$. Obviously, $\ua \left(A\cap\bigcap \mathcal K\right)\subseteq \bigcap\limits_{K\in\mathcal K}\ua (A\cap K)$. On the other hand, if $x\in \bigcap\limits_{K\in\mathcal K}\ua (A\cap K)$, then for each $K\in \mathcal K$, $\downarrow x\cap A\cap K\neq\emptyset$, and hence $K\not\subseteq X\setminus \downarrow x\cap A$. It follows by the well-filteredness of $X$ that $\bigcap \mathcal K\not\subseteq X\setminus \downarrow x\cap A$, that is, $\downarrow x\cap A\cap\bigcap \mathcal K\neq\emptyset$. Therefore $x\in \ua \left(A\cap\bigcap \mathcal K\right)$. The equation $\ua \left(A\cap\bigcap \mathcal K\right)=\bigcap\limits_{K\in\mathcal K}\ua (A\cap K)$ thus holds.

(5) $\Rightarrow$ (1): Suppose that $\mathcal K\subseteq \mathord{\mathsf K}(X)$ is filtered, $U\in \mathcal O(X)$, and $\bigcap \mathcal K \subseteq U$. If $K\not\subseteq U$ for each $K\in \mathcal K$, then by Lemma \ref{t Rudin}, $X\setminus U$ contains a minimal irreducible closed subset $A$ that still meets all members of $\mathcal{K}$. By condition (5) we have $\emptyset \neq \bigcap\limits_{K\in\mathcal K}\ua (A\cap K)=\ua \left(A\cap\bigcap \mathcal K\right)=\emptyset$ since $\bigcap \mathcal K \subseteq U\subseteq X\setminus A$, a contradiction. Thus $X$ is well-filtered.

\end{proof}

By Lemma \ref{COMPminimalset} and Theorem \ref{WFequat2}, we get the following corollary.

\begin{corollary}\label{WFmincorollary}\emph{(\cite{wu-xi-xu-zhao-19})} Let $X$ be a well-filtered space and $\mathcal K\subseteq \mathord{\mathsf K}(X)$ a filtered family. Then $C=\bigcap \mathcal K\in \mk (X)$, and for each $c\in min(C)$, $\bigcap\limits_{K\in \mathcal K} \ua (\da c\cap K)=\ua \left(\da c\cap\bigcap \mathcal K\right)=\ua c$.
\end{corollary}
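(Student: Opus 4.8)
The plan is to reduce the statement entirely to Lemma \ref{COMPminimalset} and Theorem \ref{WFequat2}, with only one small elementary computation left over. First I would deal with the membership $C=\bigcap\mathcal K\in\mk(X)$. Since $X$ is well-filtered, Theorem \ref{WFequat2} (equivalently, Theorem \ref{Smythwf} together with Lemma \ref{Kmeet}) tells us that $\mk(X)$ is a dcpo. A filtered family in the Smyth order is precisely a directed subset of $\mk(X)$, so $\bigvee\mathcal K$ exists in $\mk(X)$, and by Lemma \ref{Kmeet} it coincides with $\bigcap\mathcal K$; hence $C=\bigcap\mathcal K\in\mk(X)$. Applying Lemma \ref{COMPminimalset} to $C$ then gives $C=\ua\, min(C)$ with $min(C)$ compact and nonempty, so fixing $c\in min(C)$ makes sense.

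Next I would obtain the first equality. As $X$ is $T_0$, the set $\da c=\overline{\{c\}}$ is closed, so substituting $A=\da c$ into the identity $\ua\!\left(A\cap\bigcap\mathcal K\right)=\bigcap_{K\in\mathcal K}\ua(A\cap K)$ supplied by Theorem \ref{WFequat2}(2) yields $\bigcap_{K\in\mathcal K}\ua(\da c\cap K)=\ua\!\left(\da c\cap\bigcap\mathcal K\right)$ immediately.

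It then remains to evaluate $\ua\!\left(\da c\cap\bigcap\mathcal K\right)=\ua(\da c\cap C)$, and the key observation is that $\da c\cap C=\{c\}$. Indeed $c\in\da c\cap C$; conversely, if $x\in\da c\cap C$ then $x\le c$, and since $C=\ua\, min(C)$ there is $c'\in min(C)$ with $c'\le x\le c$, whereupon minimality of $c$ in $C$ forces $c'=c$, and antisymmetry of the specialization order gives $x=c$. Therefore $\ua(\da c\cap C)=\ua c$, which closes the chain $\bigcap_{K\in\mathcal K}\ua(\da c\cap K)=\ua\!\left(\da c\cap\bigcap\mathcal K\right)=\ua c$. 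This last step is the only part requiring an argument rather than a citation, and it is entirely routine; since the substantive content is carried by Lemma \ref{COMPminimalset} and Theorem \ref{WFequat2}, I do not expect any genuine obstacle.
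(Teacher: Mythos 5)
Your proposal is correct and follows exactly the route the paper intends: the paper derives this corollary directly from Lemma \ref{COMPminimalset} and Theorem \ref{WFequat2} without spelling out the details, and your write-up supplies precisely those details (dcpo-ness of $\mk(X)$ plus Lemma \ref{Kmeet} for $C\in\mk(X)$, the identity of Theorem \ref{WFequat2}(2) with $A=\da c$, and the elementary observation $\da c\cap C=\{c\}$). No gaps.
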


\begin{proposition}\label{sobercharac1} For a $T_0$ space $X$, the following conditions are equivalent:
\begin{enumerate}[\rm (1) ]
	        \item $X$ is a sober space.

            \item  For any $A\in \mathcal \ir (X)$, $\overline{A}\cap\bigcap\limits_{a\in A}\ua a\neq\emptyset$.
            \item  For any $A\in \ir_c (X)$, $A\cap\bigcap\limits_{a\in A}\ua a\neq\emptyset$.

            \item  For any $A\in \ir (X)$ and $U\in \mathcal O(X)$, $\bigcap\limits_{a\in A}\ua a\subseteq U$ implies $\ua a \subseteq U$ \emph{(}i.e., $a\in U$\emph{)} for some $a\in A$.
            \item  For any $A\in \ir_c (X)$ and $U\in \mathcal O(X)$, $\bigcap\limits_{a\in A}\ua a\subseteq U$ implies $\ua a \subseteq U$ \emph{(}i.e., $a\in U$\emph{)} for some $a\in A$.

            \item  For any $\mathcal A\subseteq \ir (P_S(X))$ with $\mathcal A\subseteq \mathcal S^u(X)$ and $U\in \mathcal O(X)$, $\bigcap \mathcal A\subseteq U$ implies $K\subseteq U$ for some $K\in\mathcal A$.
            \item  For any $\mathcal A\subseteq \ir_c (P_S(\mathcal S^u(X))$ and $U\in \mathcal O(X)$, $\bigcap \mathcal A\subseteq U$ implies $K\subseteq U$ for some $K\in\mathcal A$.

\end{enumerate}
\end{proposition}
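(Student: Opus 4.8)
The plan is to take condition (3) as a hub: I would first establish the genuinely topological equivalence (1) $\Leftrightarrow$ (3), and then connect every remaining condition to (3) (or to its immediate neighbours (2) and (5)) by purely formal manipulations, in the same spirit as Proposition \ref{d-spacecharac1}.

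For (1) $\Rightarrow$ (3): if $X$ is sober and $A\in\ir_c(X)$, then $A=\overline{\{x\}}=\da x$ for a unique $x\in X$, so $x\in A$ and $x$ is an upper bound of $A$, i.e. $x\in A\cap\bigcap_{a\in A}\ua a$. For (3) $\Rightarrow$ (1): given $A\in\ir_c(X)$, pick $x\in A\cap\bigcap_{a\in A}\ua a$; since $x$ bounds $A$ we get $A\subseteq\da x$, and since $x\in A$ and $A$ is closed (hence a lower set) we get $\da x\subseteq A$, whence $A=\da x=\overline{\{x\}}$; uniqueness of the generic point follows from $T_0$ separation, so $X$ is sober.

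Next I would handle (2) $\Leftrightarrow$ (3): for $A\in\ir(X)$ we have $\overline{A}\in\ir_c(X)$ and, by Remark \ref{Adelta=clAdelta}, $\bigcap_{a\in A}\ua a=\bigcap_{b\in\overline{A}}\ua b$, so the statement of (2) for $A$ is literally the statement of (3) for $\overline{A}$, and $\overline{A}$ ranges over all of $\ir_c(X)$ as $A$ ranges over $\ir(X)$. The equivalences (3) $\Leftrightarrow$ (5) and (2) $\Leftrightarrow$ (4) are then obtained by complementation exactly as in Proposition \ref{d-spacecharac1}: from $A\cap\bigcap_{a\in A}\ua a=\emptyset$ one gets $\bigcap_{a\in A}\ua a\subseteq X\setminus A\in\mathcal O(X)$, and (5) would force some $a\in A$ with $\ua a\subseteq X\setminus A$, i.e. $a\notin A$, absurd; conversely if $\bigcap_{a\in A}\ua a\subseteq U$ with $a\notin U$ for all $a\in A$, then $A\subseteq X\setminus U$ and $A\cap\bigcap_{a\in A}\ua a=\emptyset$, contradicting (3). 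The pair (2) $\Leftrightarrow$ (4) is the same argument with $\overline{A}$ in place of $A$.

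Finally, (4) $\Leftrightarrow$ (6) and (5) $\Leftrightarrow$ (7) are transcriptions through the homeomorphism $\xi_X\colon X\longrightarrow P_S(\mathcal S^u(X))$, $x\mapsto\ua x$ (with $P_S(\mathcal S^u(X))$ a subspace of $P_S(X)$): any $\mathcal A\subseteq\mathcal S^u(X)$ equals $\xi_X(A)$ for $A=\{x\in X:\ua x\in\mathcal A\}$, by Lemma \ref{X-Smyth-irr} $\mathcal A$ is irreducible (resp. irreducible closed) in $P_S(X)$ iff $A\in\ir(X)$ (resp. $A\in\ir_c(X)$), $\bigcap\mathcal A=\bigcap_{a\in A}\ua a$, and ``$K\subseteq U$ for some $K\in\mathcal A$'' reads ``$\ua a\subseteq U$ for some $a\in A$''; matching these turns (6) into (4) and (7) into (5). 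I do not expect a real obstacle here — the whole proposition is essentially the single observation that a point of $A\cap\bigcap_{a\in A}\ua a$ is precisely a generic point of the irreducible closed set $\overline{A}$ — and the only place asking for a little care is the Smyth-power dictionary, where one must invoke Lemma \ref{X-Smyth-irr} to transport irreducibility in both directions and recall that $\bigcap_{a\in A}\ua a=\bigcap_{b\in\overline{A}}\ua b$ so that the closed and non-closed versions agree.
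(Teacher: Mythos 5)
Your proposal is correct and uses essentially the same ingredients as the paper's proof: the observation that a point of $A\cap\bigcap_{a\in A}\ua a$ is a generic point of $\overline{A}$, Remark \ref{Adelta=clAdelta} to pass between $A$ and $\overline{A}$, complementation to trade the nonempty-intersection form for the open-set form, and Lemma \ref{X-Smyth-irr} for the Smyth-power translations. The only difference is organizational (you use (3) as a hub and prove (1) $\Leftrightarrow$ (3) directly, whereas the paper runs a cycle (1) $\Rightarrow$ (2) $\Rightarrow$ (4) $\Leftrightarrow$ (5) $\Rightarrow$ (1)), which does not change the substance.
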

\begin{proof} (1) $\Rightarrow$ (2): If $X$ is sober and $A\in \mathcal \ir c(X)$, then there is an $x\in X$ such that $\overline{A}=\overline{\{x\}}=\da x$, and whence $x\in \overline{A}\cap \bigcap\limits_{a\in A}\ua a$.

(2) $\Leftrightarrow$ (3): Clearly, we have (2) $\Rightarrow$ (3). Conversely, if condition (3) is satisfied, then for $A\in \ir (X)$, $\overline{A}\in \ir_c(X)$ by Lemma \ref{irrsubspace}, and $\emptyset\neq\overline{A}\cap \bigcap\limits_{b\in \overline{A}}\ua b=\overline{A}\cap \bigcap\limits_{a\in A}\ua a$ by Remark \ref{Adelta=clAdelta}.

(2) $\Rightarrow$ (4): If $\ua a\not\subseteq U$ for each $a\in A$, then $A\subseteq X\setminus U$, and hence $\overline{A}\subseteq X\setminus U$. By condition (2), $\emptyset\neq \overline{A}\cap \bigcap\limits_{a\in A}\ua a\subseteq (X\setminus U)\cap U=\emptyset$, a contradiction.

(4) $\Leftrightarrow$ (5): Obviously, (4) $\Rightarrow$ (5). Conversely, if condition (5) is satisfied, then for $A\in \ir (X)$ and $U\in \mathcal O(X)$ with $\bigcap\limits_{a\in A}\ua a\subseteq U$, we have $\overline{A}\in \ir_c (X)$ and $\bigcap\limits_{b\in \overline{A}}\ua b=\bigcap\limits_{a\in A}\ua a\subseteq U$ by Remark \ref{irrsubspace} and Lemma \ref{Adelta=clAdelta}. By condition (5), $b\in U$ for some $b\in \overline{A}$, and whence $A\cap U\neq\emptyset$. Condition (4) is thus satisfied.

(4) $\Leftrightarrow$ (6) and (5) $\Leftrightarrow$ (7): By Remark \ref{X-Smyth-irr}.

(5) $\Rightarrow$ (1): Suppose $A\in \ir_c(X)$. Then $A\cap \bigcap\limits_{a\in A}\ua a\neq\emptyset$ (otherwise, by condition (5), $A\cap \bigcap\limits_{a\in A}\ua a=\emptyset \Rightarrow \bigcap\limits_{a\in A}\ua a\subseteq X\setminus A \Rightarrow \ua a\subseteq X\setminus A$ for some $a\in A$, a contradiction). Select an $x\in A\cap \bigcap\limits_{a\in A}\ua a$. Then $A\subseteq \da x=\overline{\{x\}}\subseteq A$, and hence $A=\overline{\{x\}}$. Thus $X$ is sober.

\end{proof}

The single most important result about sober spaces is the Hofmann-Mislove Theorem (see \cite{Hofmann-Mislove} or \cite[Theorem II-1.20 and Theorem II-1.21]{redbook}).

\begin{theorem}\label{Hofmann-Mislove theorem} \emph{(The Hofmann-Mislove Theorem)} For a $T_0$ space $X$, the following conditions are equivalent:
\begin{enumerate}[\rm (1)]
            \item $X$ is a sober space.
            \item  For any $\mathcal F\in \mathrm{OFilt}(\mathcal O(X))$, there is a $K\in \mk (X)$ such that $\mathcal F=\Phi (K)$.
            \item  For any $\mathcal F\in \mathrm{OFilt}(\mathcal O(X))$, $\mathcal F=\Phi (\bigcap \mathcal F)$.
\end{enumerate}
\end{theorem}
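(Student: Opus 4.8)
The plan is to establish the cycle $(1)\Rightarrow(3)\Rightarrow(2)\Rightarrow(1)$, which also yields $(2)\Leftrightarrow(3)$. Throughout I take open filters to be proper (not containing $\emptyset$); for $\mathcal F=\mathcal O(X)$ the statement is degenerate. Two of the implications are bookkeeping: $(3)\Rightarrow(2)$ just takes $K=\bigcap\mathcal F$; and for $(2)\Rightarrow(3)$, if $\mathcal F=\Phi(K)$ with $K\in\mk(X)$ then, $K$ being saturated, $\bigcap\mathcal F=\bigcap\{U\in\mathcal O(X):K\subseteq U\}=K$, so $\mathcal F=\Phi(\bigcap\mathcal F)$.

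The real content, and the step I expect to be the main obstacle, is $(1)\Rightarrow(3)$. Given a proper open filter $\mathcal F$, set $Q=\bigcap\mathcal F$ as a subset of $X$: it is saturated, being an intersection of open sets, and $\mathcal F\subseteq\Phi(Q)$ is clear. The crux is the reverse inclusion $\Phi(Q)\subseteq\mathcal F$, equivalently that $W\notin\mathcal F$ forces $Q\not\subseteq W$ for every $W\in\mathcal O(X)$. To obtain this I would apply Zorn's Lemma to the family of closed sets $B\subseteq X\setminus W$ with $X\setminus B\notin\mathcal F$: it is nonempty ($X\setminus W$ lies in it), and a descending chain has its intersection in it, precisely because Scott-openness of $\mathcal F$ means a directed union of open sets all avoiding $\mathcal F$ still avoids $\mathcal F$. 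A minimal such $A$ is then nonempty (else $X=X\setminus A\in\mathcal F$) and, I claim, irreducible: if $A\subseteq F_1\cup F_2$ with neither $F_i$ containing $A$, then minimality applied to the proper closed subsets $A\cap F_i$ gives $X\setminus(A\cap F_i)\in\mathcal F$ for $i=1,2$, and intersecting these two members of $\mathcal F$ produces $X\setminus A\in\mathcal F$ (since $A\subseteq F_1\cup F_2$), a contradiction. Sobriety then supplies $x$ with $\overline{\{x\}}=A$; as $A\subseteq X\setminus W$ we have $x\notin W$, while $x\in Q$ because any $U\in\mathcal F$ with $x\notin U$ would give $A=\overline{\{x\}}\subseteq X\setminus U$, hence $X\setminus A\supseteq U\in\mathcal F$ and so $X\setminus A\in\mathcal F$ — impossible. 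Thus $x\in Q\setminus W$. Specializing to $W=\emptyset$ shows $Q\neq\emptyset$, and compactness of $Q$ follows once $\Phi(Q)\subseteq\mathcal F$ is in hand: a directed open family covering $Q$ has union in $\Phi(Q)\subseteq\mathcal F$, so by Scott-openness one of its members already belongs to $\mathcal F$ and hence contains $Q$ (a general open cover reduces to the directed case via finite unions). Therefore $Q\in\mk(X)$ and $\mathcal F=\Phi(Q)=\Phi(\bigcap\mathcal F)$.

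For $(2)\Rightarrow(1)$, I would take $A\in\ir_c(X)$ and pass to $\mathcal F_A=\{U\in\mathcal O(X):U\cap A\neq\emptyset\}$: this is an upper set, it is proper, it is closed under binary intersections (which is exactly the irreducibility of $A$), and it is Scott-open (a point of $A$ in a directed union of open sets lies in one of them), so $\mathcal F_A\in\mathrm{OFilt}(\mathcal O(X))$. By $(2)$, $\mathcal F_A=\Phi(K)$ for some $K\in\mk(X)$; since $X\setminus A\notin\mathcal F_A$ we get $K\cap A\neq\emptyset$, so choose $x\in K\cap A$, which gives $\overline{\{x\}}\subseteq A$. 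If some $a\in A\setminus\overline{\{x\}}$ existed, an open set containing $a$ but not $x$ would meet $A$, hence lie in $\Phi(K)$ and contain $K\ni x$ — a contradiction. So $A=\overline{\{x\}}$, with $x$ unique by the $T_0$ property; thus $X$ is sober, closing the cycle. Apart from the Zorn/irreducibility argument in $(1)\Rightarrow(3)$, all the remaining verifications are routine.
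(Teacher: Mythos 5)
Your proof is correct. Note first that the paper does not actually prove this theorem: it states it as the classical Hofmann--Mislove Theorem and cites \cite{Hofmann-Mislove} and \cite[Theorems II-1.20, II-1.21]{redbook}, so there is no in-paper argument to compare against. Your cycle $(1)\Rightarrow(3)\Rightarrow(2)\Rightarrow(1)$ is a clean, self-contained version of the standard ``direct'' proof (in the style of Keimel--Paseka): the Zorn's Lemma extraction of a minimal closed set meeting the obstruction, followed by the observation that minimality forces irreducibility, is exactly the mechanism behind the topological Rudin's Lemma (Lemma \ref{t Rudin}) that the paper uses repeatedly elsewhere (e.g.\ in the proofs of Theorems \ref{WFequat2} and \ref{WDwf}), so your argument is very much in the spirit of the surrounding text. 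Your explicit properness convention is not pedantry but necessary: under the paper's definitions $\mathcal F=\mathcal O(X)$ satisfies (3) (since $\Phi(\emptyset)=\mathcal O(X)$) but not (2) (as $\mk(X)$ excludes $\emptyset$), so the theorem as literally stated needs proper filters, and you handle this correctly. The only expository quibble is that in $(3)\Rightarrow(2)$ you should say a word about why $\bigcap\mathcal F$ is nonempty and compact at that point rather than deferring to the argument inside $(1)\Rightarrow(3)$; the needed reasoning (properness gives nonemptiness, Scott-openness of $\mathcal F=\Phi(\bigcap\mathcal F)$ gives compactness via directed covers) is present in your text and transfers verbatim, so this is not a gap.
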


\begin{lemma}\label{irr-induced-opne filter} Suppose that $X$ is a $T_0$ space and $\mathcal A\in \ir (P_S(X))$. Then $\mathcal F_{\mathcal A}=\bigcup_{K\in \mathcal A} \Phi (K)\in \mathrm{OFilt}(\mathcal O(X))$.
\end{lemma}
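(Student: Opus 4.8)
The plan is to translate membership in $\mathcal F_{\mathcal A}$ into a statement about the basic open sets of the Smyth power space and then read off the three defining properties of an open filter. First I would record the reformulation: for $U\in\mathcal O(X)$ one has $U\in\mathcal F_{\mathcal A}$ if and only if $K\subseteq U$ for some $K\in\mathcal A$, i.e. if and only if $\mathcal A\cap\Box U\neq\emptyset$, where $\Box U=\Box_{\mk(X)}U$ is a basic open subset of $P_S(X)$. Since $\mathcal A$ is irreducible it is in particular nonempty, so choosing any $K\in\mathcal A$ gives $X\in\Phi(K)\subseteq\mathcal F_{\mathcal A}$; and since every member of $\mk(X)$ is nonempty, $\emptyset\notin\mathcal F_{\mathcal A}$. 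Also $\mathcal F_{\mathcal A}$ is an upper set of $(\mathcal O(X),\subseteq)$ because each $\Phi(K)$ is: if $K\subseteq U\subseteq V$ with $V$ open, then $V\in\Phi(K)$.

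The core of the argument is the downward directedness (the ``filtered'' part of being a filter), and this is where the irreducibility of $\mathcal A$ enters. Given $U,V\in\mathcal F_{\mathcal A}$, we have $\mathcal A\cap\Box U\neq\emptyset\neq\mathcal A\cap\Box V$; since $\mathcal A$ is irreducible and $\Box U,\Box V$ are open in $P_S(X)$, it meets their intersection, and $\Box U\cap\Box V=\{K\in\mk(X):K\subseteq U\text{ and }K\subseteq V\}=\Box(U\cap V)$. Hence $U\cap V\in\mathcal F_{\mathcal A}$, and $U\cap V$ is an open lower bound of $\{U,V\}$ lying in $\mathcal F_{\mathcal A}$. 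Thus $\mathcal F_{\mathcal A}\in\mathrm{Filt}(\mathcal O(X))$.

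It remains to verify that $\mathcal F_{\mathcal A}$ is Scott open in $\mathcal O(X)$. Since $\mathcal O(X)$ is a complete lattice whose directed joins are unions, and $\mathcal F_{\mathcal A}$ is already an upper set, I only need: whenever $\{U_i\}_{i\in I}\subseteq\mathcal O(X)$ is directed with $\bigcup_{i\in I}U_i\in\mathcal F_{\mathcal A}$, some $U_i$ lies in $\mathcal F_{\mathcal A}$. Pick $K\in\mathcal A$ with $K\subseteq\bigcup_{i\in I}U_i$; by compactness of $K$ there is a finite subcover, and by directedness of $\{U_i\}_{i\in I}$ a single index $i$ with $K\subseteq U_i$, so $U_i\in\Phi(K)\subseteq\mathcal F_{\mathcal A}$. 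Combining this with the filter property from the previous paragraph yields $\mathcal F_{\mathcal A}\in\sigma(\mathcal O(X))\cap\mathrm{Filt}(\mathcal O(X))=\mathrm{OFilt}(\mathcal O(X))$.

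I do not expect a genuine obstacle here; the whole proof is short once the dictionary ``$U\in\mathcal F_{\mathcal A}\iff\mathcal A$ meets $\Box U$'' is in place. The one point to state carefully is that $\{\Box U:U\in\mathcal O(X)\}$ is a base for the topology of $P_S(X)$, so that the open-set formulation of irreducibility (meeting two opens forces meeting their intersection) is legitimately applicable; and that $\mk(X)$ consists of nonempty sets, which is what keeps $\mathcal F_{\mathcal A}$ a proper filter rather than all of $\mathcal O(X)$-with-$\emptyset$.
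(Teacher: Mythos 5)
Your proposal is correct and follows essentially the same route as the paper: the filter property comes from applying irreducibility of $\mathcal A$ to the basic opens $\Box U$, $\Box V$ together with the identity $\Box U\cap\Box V=\Box(U\cap V)$, and Scott openness of $\mathcal F_{\mathcal A}$ comes from that of each $\Phi(K)$ (which you unfold via compactness of $K$, where the paper simply cites it). The extra remarks on nonemptiness and properness are harmless additions.
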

\begin{proof} Clearly, $\mathcal F_{\mathcal A}\in \sigma (\mathcal O(X))$ since $\Phi (K)\in \sigma (\mathcal O(X))$ for all $K\in \mk (X)$. Now we show that $\mathcal F_{\mathcal A}\in \mathrm{Filt}(\mathcal O(X))$. Suppose $U, V\in \mathcal F_{\mathcal A}$. Then $\mathcal A\bigcap\Box U\neq\emptyset$ and $\mathcal A\bigcap\Box V\neq\emptyset$, and hence $\mathcal A\bigcap \Box (U\cap V)=\mathcal A\bigcap \Box U\bigcap\Box V\neq\emptyset$ by $\mathcal A\in \ir (P_S(X))$. Therefore, $U\cap V\in \mathcal F_{\mathcal A}$.
\end{proof}

\begin{remark}\label{K-union-closure} For a $T_0$ space $X$ and $\mathcal A\in \ir (P_S(X))$, $\mathcal F_{\mathcal A}=\mathcal F_{\mathrm{cl} \mathcal A}$. In fact, if $U\in \mathcal O(X)$ and $U\in \mathcal F_{\mathrm{cl} \mathcal A}$, then $\mathrm{cl}\mathcal A\bigcap \Box U\neq\emptyset$, and whence $\mathcal A\bigcap \Box U\neq\emptyset$. It follows $U\in \mathcal F_{\mathcal A}$.
\end{remark}

Using the Hofman-Mislove Theorem and Lemma \ref{irr-induced-opne filter}, we present an alternative proof of the following result of Heckmann and Keimel.

\begin{theorem}\label{Smyth-sober}\emph{(\cite{Klause-Heckmann})} For a $T_0$ space $X$, the following conditions are equivalent:
\begin{enumerate}[\rm (1)]
            \item $X$ is a sober space.
            \item  For any $\mathcal A\subseteq \ir (P_S(X))$ and $U\in \mathcal O(X)$, $\bigcap \mathcal A\subseteq U$ implies $K\subseteq U$ for some $K\in\mathcal A$.
            \item  For any $\mathcal A\subseteq \ir_c (P_S(X))$ and $U\in \mathcal O(X)$, $\bigcap \mathcal A\subseteq U$ implies $K\subseteq U$ for some $K\in\mathcal A$.
            \item $P_S(X)$ is sober.
\end{enumerate}
\end{theorem}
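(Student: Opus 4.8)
The plan is to establish the cycle $(1)\Rightarrow(2)\Rightarrow(3)\Rightarrow(4)\Rightarrow(1)$, using the Hofmann–Mislove Theorem (Theorem \ref{Hofmann-Mislove theorem}) and Lemma \ref{irr-induced-opne filter} to handle the genuinely new implication, and cheap arguments for the rest. For $(1)\Rightarrow(2)$: given $\mathcal A\subseteq\ir(P_S(X))$ with $\bigcap\mathcal A\subseteq U$, form the open filter $\mathcal F_{\mathcal A}=\bigcup_{K\in\mathcal A}\Phi(K)$, which is an open filter of $\mathcal O(X)$ by Lemma \ref{irr-induced-opne filter}. I would then argue that $\bigcap\mathcal F_{\mathcal A}=\bigcap\mathcal A$: the inclusion $\bigcap\mathcal F_{\mathcal A}\subseteq\bigcap\mathcal A$ is immediate since each $K=\bigcap\Phi(K)\supseteq\bigcap\mathcal F_{\mathcal A}$ gives nothing directly, so more carefully, $x\in\bigcap\mathcal A$ means $x\in K$ for all $K\in\mathcal A$, hence $x\in U$ for all $U\in\Phi(K)$, all $K$, i.e. $x\in\bigcap\mathcal F_{\mathcal A}$; conversely $x\in\bigcap\mathcal F_{\mathcal A}$ forces $x\in\bigcap\Phi(K)=K$ for each fixed $K\in\mathcal A$ (every open set containing $K$ lies in $\mathcal F_{\mathcal A}$), so $x\in\bigcap\mathcal A$. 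Now by the Hofmann–Mislove Theorem (part (3)), $\mathcal F_{\mathcal A}=\Phi(\bigcap\mathcal F_{\mathcal A})=\Phi(\bigcap\mathcal A)$, so from $\bigcap\mathcal A\subseteq U$ we get $U\in\Phi(\bigcap\mathcal A)=\mathcal F_{\mathcal A}=\bigcup_{K\in\mathcal A}\Phi(K)$, which means $U\in\Phi(K)$, i.e. $K\subseteq U$, for some $K\in\mathcal A$.

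The implication $(2)\Rightarrow(3)$ is trivial since $\ir_c(P_S(X))\subseteq\ir(P_S(X))$. For $(3)\Rightarrow(4)$: to show $P_S(X)$ is sober, take $\mathcal A\in\ir_c(P_S(X))$ and apply Proposition \ref{sobercharac1}(5) (or rather its analogue, checking directly): I want a point $\mathcal K_0$ of $P_S(X)$ with $\overline{\{\mathcal K_0\}}=\mathcal A$. Using Remark \ref{meet-in-Smyth} one knows $\bigcap\mathcal A=\bigcap\overline{\mathcal A}$, and the natural candidate point is $\mathcal K_0:=\bigcap\mathcal A\in\mk(X)$, provided this intersection is a nonempty compact saturated set; compactness follows once $(3)$ is used to show the analogue of the "filtered intersection" property, or more directly: apply condition $(3)$ with the open cover machinery. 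Concretely, for any $U\in\mathcal O(X)$ with $\bigcap\mathcal A\subseteq U$, condition $(3)$ gives $K\subseteq U$ for some $K\in\mathcal A$; this is exactly the statement that $\bigcap\mathcal A$ is compact (cover $\bigcap\mathcal A$ by opens, take a basic Smyth-open neighbourhood), and also that $\mathcal K_0=\bigcap\mathcal A$ lies in the closure of $\mathcal A$ with $\mathcal A\subseteq\overline{\{\mathcal K_0\}}$; combined with $\mathcal A$ closed and the specialization order on $P_S(X)$ being $\sqsubseteq$, one deduces $\mathcal A=\overline{\{\mathcal K_0\}}$ as a downset of its top element $\mathcal K_0$. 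Finally $(4)\Rightarrow(1)$: $X$ embeds as the closed(?) — actually as the subspace $P_S(\mathcal S^u(X))$ of $P_S(X)$; here I would instead invoke Proposition \ref{sobercharac1}, whose condition (7) characterizes sobriety of $X$ via $\ir_c(P_S(\mathcal S^u(X)))$, and condition (6)/(7) follow readily once $P_S(X)$ is sober, since sobriety of $P_S(X)$ gives the required intersection property for all irreducible subsets of $P_S(X)$, in particular for those contained in $\mathcal S^u(X)$.

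The main obstacle is the step $(3)\Rightarrow(4)$, i.e. extracting actual \emph{sobriety} of $P_S(X)$ (the existence and uniqueness of a generic point for every irreducible closed $\mathcal A$) out of the seemingly weaker "covering-type" statement $(3)$. The delicate point is showing $\bigcap\mathcal A$ is genuinely a nonempty compact saturated subset of $X$ so that it is a legitimate point of $P_S(X)$, and then that $\overline{\{\bigcap\mathcal A\}}$ recovers all of $\mathcal A$ rather than just containing $\bigcap\mathcal A$; this uses the description of closed sets in $P_S(X)$, Remark \ref{meet-in-Smyth}, and the fact (from Lemma \ref{COMPminimalset} and the structure of the upper Vietoris topology) that an irreducible closed set in a space whose points are compact saturated sets is determined by its $\sqsubseteq$-largest element once that element is shown to lie in the set. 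Uniqueness of the generic point is free from the $T_0$ property of $P_S(X)$.
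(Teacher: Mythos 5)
Your proposal is correct and follows essentially the same route as the paper: the cycle (1)$\Rightarrow$(2)$\Rightarrow$(3)$\Rightarrow$(4)$\Rightarrow$(1), with (1)$\Rightarrow$(2) obtained from Lemma \ref{irr-induced-opne filter} together with the Hofmann--Mislove Theorem, (3)$\Rightarrow$(4) by showing that $H=\bigcap\mathcal A$ is a nonempty compact saturated set which is a generic point of the closed irreducible $\mathcal A$, and (4)$\Rightarrow$(1) via Proposition \ref{sobercharac1} and the canonical embedding of $X$ into $P_S(X)$. The one assertion you leave unargued---that sobriety of $P_S(X)$ already yields the intersection property for irreducible families $\mathcal A\subseteq \mk(X)$---is filled in one line (the generic point $Q$ of $\overline{\mathcal A}$ satisfies $Q=\bigcap\mathcal A$ by Remark \ref{meet-in-Smyth} and the Smyth specialization order, so $\bigcap\mathcal A\subseteq U$ puts $Q$ in the open set $\Box U$, which must meet $\mathcal A$), or can be bypassed as in the paper, which applies Proposition \ref{sobercharac1}(4) directly to the sober space $P_S(X)$ and the irreducible set $\xi_X(A)$.
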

\begin{proof} (1) $\Rightarrow$ (2): By Lemma \ref{irr-induced-opne filter}, $\mathcal F_{\mathcal A}\in \mathrm{OFlit}(\mathcal O(X))$, and hence by the Hofmann-Mislove Theorem, $\mathcal F_{\mathcal A}=\Phi (\bigcap \mathcal F_{\mathcal A})=\Phi (\bigcap \mathcal A)$. Therefore, $K\subseteq U$ for some $K\in\mathcal A$.

(2) $\Leftrightarrow$ (3): By Remark \ref{meet-in-Smyth} and Remark \ref{K-union-closure}.

(3) $\Rightarrow$ (4): Suppose $\mathcal A\subseteq \ir_c (P_S(X))$. Let $H=\bigcap \mathcal A$. Then $H\neq\emptyset$ by condition (3). Now we prove that $K\in \mk (X)$. If $\{U_i : i\in I\}\subseteq \mathcal O(X)$ such that $H\subseteq \bigcup\limits_{i\in I} U_i$, then by condition (3), there is a $K\in \mathcal A$ such that $K\subseteq  \bigcup\limits_{i\in I} U_i$. Since $K\in \mk (X)$, there is a $J\in I^{(<\omega)}$ such that $\bigcup\limits_{i\in J} U_i \supseteq K\supseteq H$. Thus $H\in \mk (X)$. For each $U\in \mathcal O(X)$, by condition (3), we have that $H\in \Box U \Leftrightarrow \mathcal A\bigcap \Box U\neq\emptyset$, proving $\mathcal A=\overline{\{H\}}$. Thus $P_S(X)$ is sober.

(4) $\Rightarrow$ (1): For any $A\in \ir (X)$ and $U\in \mathcal O(X)$ with $\bigcap\limits_{a\in A}\ua a\subseteq U$, $\xi_X (A)\in \ir (P_S(X))$ and $\bigcap\limits_{a\in A}\ua_{\mk (X)}\xi_X(a)\subseteq \Box U$. By Proposition \ref{sobercharac1}, $\ua_{\mk (X)}\xi_X (a)\subseteq \Box U$, and hence $a\in U$. By Proposition \ref{sobercharac1} again, $X$ is sober.

\end{proof}

\begin{definition}\label{irrbound} A $T_0$ space $X$ is called \emph{irreducible bounded}, $r$-\emph{bounded} for short, if for any $A\in \ir (X)$, $A$ has an upper bound in $X$, that is, there is an $x\in X$ such that $A\subseteq \da x=\overline {\{x\}}$, or equivalently, $A^{\ua}=\bigcap_{a\in A}\ua d\neq\emptyset$.
\end{definition}

By Remark \ref{Adelta=clAdelta}, $X$ is $r$-bounded if{}f $A$ has an upper bound in $X$ for each $A\in \ir (X)$. Clearly, we have the following implications:

\begin{center}
sobriety $\Rightarrow$ irreducible completeness $\Rightarrow$ $r$-boundedness.
\end{center}

For a poset $P$ with a largest element $\top$, any order compatible topology $\tau$ on $P$ is $r$-bounded.

\begin{proposition}\label{sobercharac2} For a $T_0$ space $X$, the following conditions are equivalent:
\begin{enumerate}[\rm (1) ]
	        \item $X$ is sober.
            \item  $X$ is $r$-bounded \emph{(}especially, $X$ is $r$-complete\emph{)}, and $\ua \left(C\cap\bigcap\limits_{a\in A} \ua a\right)=\bigcap\limits_{a\in A}\ua (C\cap \ua a)$ for any $A\in \ir (X)$ and $C\in \mathcal C(X)$.
            \item  $X$ is $r$-bounded \emph{(}especially, $X$ is $r$-complete\emph{)}, and $\ua \left(C\cap\bigcap\limits_{a\in A} \ua a\right)=\bigcap\limits_{a\in A}\ua (C\cap \ua a)$ for any $A\in \ir (X)$ and $C\in \ir_c (X)$.
            \item  $X$ is $r$-bounded \emph{(}especially, $X$ is $r$-complete\emph{)}, and $\ua \left(C\cap\bigcap \mathcal K\right)=\bigcap\limits_{K\in\mathcal K}\ua (C\cap K)$ for any $\mathcal A\in \ir (P_S(X))$ with $\mathcal A\subseteq\mathcal S^u(X)$ and $C\in \mathcal C(X)$.
            \item  $X$ is $r$-bounded \emph{(}especially, $X$ is $r$-complete\emph{)}, and $\ua \left(C\cap\bigcap \mathcal K\right)=\bigcap\limits_{K\in\mathcal K}\ua (C\cap K)$ for any $\mathcal A\in \ir (P_S(X))$ with $\mathcal A\subseteq\mathcal S^u(X)$ and $C\in \ir_c(X)$.
\end{enumerate}
\end{proposition}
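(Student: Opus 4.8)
The plan is to imitate the proof of Proposition~\ref{d-spacecharac2}, replacing directed sets by irreducible sets and the $d$-space characterization of Proposition~\ref{d-spacecharac1} by the soberness characterization of Proposition~\ref{sobercharac1}. Concretely, I would establish the cycle $(1)\Rightarrow(2)\Rightarrow(3)\Rightarrow(1)$ and then read off $(2)\Leftrightarrow(4)$ and $(3)\Leftrightarrow(5)$ from the translation between $A\in\ir(X)$ and $\xi_X(A)\in\ir(P_S(X))$ provided by Lemma~\ref{X-Smyth-irr}. The $r$-boundedness clause will be free in $(1)$, since a sober space is irreducible complete (Remark~\ref{Soberirrcomp}) and hence $r$-bounded, and it is carried along explicitly in $(2)$--$(5)$.

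For $(1)\Rightarrow(2)$ the inclusion $\ua\left(C\cap\bigcap_{a\in A}\ua a\right)\subseteq\bigcap_{a\in A}\ua(C\cap\ua a)$ is automatic by monotonicity, so the work is the reverse inclusion, which I would prove contrapositively. If $x\notin\ua\left(C\cap\bigcap_{a\in A}\ua a\right)$, then $\da x\cap C\cap\bigcap_{a\in A}\ua a=\emptyset$, i.e. $\bigcap_{a\in A}\ua a\subseteq X\setminus(\da x\cap C)$; since $C$ and $\da x=\overline{\{x\}}$ are closed, $X\setminus(\da x\cap C)$ is open, so Proposition~\ref{sobercharac1}(4) yields some $a\in A$ with $a\in X\setminus(\da x\cap C)$, and as this set is an upper set we get $\ua a\cap\da x\cap C=\emptyset$, i.e. $x\notin\ua(C\cap\ua a)$, hence $x\notin\bigcap_{a\in A}\ua(C\cap\ua a)$.

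The implication $(2)\Rightarrow(3)$ is immediate since $\ir_c(X)\subseteq\mathcal C(X)$. For $(3)\Rightarrow(1)$, given any $A\in\ir(X)$ I would apply the identity of $(3)$ with $C=\overline{A}\in\ir_c(X)$ (Lemma~\ref{irrsubspace}), obtaining $\ua\left(\overline{A}\cap\bigcap_{a\in A}\ua a\right)=\bigcap_{a\in A}\ua(\overline{A}\cap\ua a)$. Picking $w\in A^{\ua}$, which is nonempty by $r$-boundedness, and noting $a\in\overline{A}\cap\ua a$ and $w\geq a$ for every $a\in A$, we get $w\in\bigcap_{a\in A}\ua(\overline{A}\cap\ua a)$, whence $\overline{A}\cap\bigcap_{a\in A}\ua a\neq\emptyset$; by the equivalence of $(1)$ and $(2)$ in Proposition~\ref{sobercharac1}, $X$ is sober.

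Finally, for $(2)\Leftrightarrow(4)$ and $(3)\Leftrightarrow(5)$ I would use that, since $X$ is homeomorphic to $P_S(\mathcal S^u(X))$, Lemma~\ref{X-Smyth-irr} makes $A\mapsto\xi_X(A)=\{\ua a:a\in A\}$ a bijection of $\ir(X)$ onto $\{\mathcal A\in\ir(P_S(X)):\mathcal A\subseteq\mathcal S^u(X)\}$, under which $\bigcap\xi_X(A)=\bigcap_{a\in A}\ua a$ and $\bigcap_{K\in\xi_X(A)}\ua(C\cap K)=\bigcap_{a\in A}\ua(C\cap\ua a)$; thus $(2)$ and $(4)$ (resp. $(3)$ and $(5)$) are literally the same assertion rephrased. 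The only genuinely substantive point is the reverse inclusion in $(1)\Rightarrow(2)$: sobriety, in the guise of Proposition~\ref{sobercharac1}(4), is exactly what upgrades ``$\bigcap_{a\in A}\ua a$ is contained in an open set'' to ``a single $\ua a$ is''; the rest is bookkeeping inside $\ir(X)$ and $P_S(X)$.
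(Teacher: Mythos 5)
Your proposal is correct and follows essentially the same route as the paper: the cycle $(1)\Rightarrow(2)\Rightarrow(3)\Rightarrow(1)$ with the reverse inclusion in $(1)\Rightarrow(2)$ obtained contrapositively from Proposition~\ref{sobercharac1}, the implication $(3)\Rightarrow(1)$ obtained by specializing the identity to $C=\overline{A}$ and using $r$-boundedness to get $\overline{A}\cap\bigcap_{a\in A}\ua a\neq\emptyset$, and $(2)\Leftrightarrow(4)$, $(3)\Leftrightarrow(5)$ read off from the correspondence between $\ir(X)$ and the irreducible subsets of $P_S(X)$ lying in $\mathcal S^u(X)$. No gaps.
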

\begin{proof}  (1) $\Rightarrow$ (2): Since $X$ is sober, $X$ is $r$-complete by Remark \ref{Soberirrcomp}. For $A\in \ir (X)$ and $C\in \mathcal C(X)$, clearly, $\ua \left(C\cap\bigcap\limits_{a\in A} \ua a\right)\subseteq\bigcap\limits_{a\in A}\ua (C\cap \ua a)$. Conversely, if $x\not\in \ua \left(C\cap\bigcap\limits_{a\in A}\ua a\right)$, that is, $\da x\cap C\cap\bigcap\limits_{a\in A}\ua a=\emptyset$, then $\bigcap\limits_{a\in A}\ua a\subseteq X\setminus \da x\cap C$, and whence by Theorem \ref{sobercharac1}, $a\in X\setminus \da x\cap C$ for some $a\in A$, i.e., $x\not\in \ua (C\cap\ua a)$. Therefore, $x\not\in \bigcap\limits_{a\in A}\ua (C\cap \ua a)$. Thus $\ua \left(C\cap\bigcap\limits_{a\in A} \ua d\right)=\bigcap\limits_{a\in A}\ua (C\cap \ua a)$.

(2) $\Rightarrow$ (3): Trivial.

(2) $\Leftrightarrow$ (4) and (3) $\Leftrightarrow$ (5): For $\mathcal A=\{\ua x_i : i\in I\}\subseteq \mathcal S^u(X)$, $\mathcal A\in \ir (P_S(X))$
if{}f $\{x_i : i\in I\}\in \ir (X)$.

(3) $\Rightarrow$ (1): For each $A\in \ir_c(X)$, by condition (3), $\emptyset \neq A^{\ua}=\bigcap\limits_{a\in A}\ua a=\bigcap\limits_{a\in A}\ua (A\cap \ua a)=\ua \left(A\cap\bigcap\limits_{a\in A} \ua d\right)$. By Theorem \ref{sobercharac1}, $X$ is sober.
\end{proof}

\begin{remark}\label{sobercharac2-1} For a $T_0$ space $X$, by the proof of Proposition \ref{sobercharac2}, the following conditions are equivalent:
\begin{enumerate}[\rm (a) ]
	        \item $X$ is sober.
            \item  $X$ is $r$-bounded \emph{(}especially, $X$ is $r$-complete\emph{)}, and $\ua \left(C\cap\bigcap\limits_{a\in A} \ua a\right)=\bigcap\limits_{a\in A}\ua (C\cap \ua a)$ for any $A\in \ir_c (X)$ and $C\in \mathcal C(X)$.
            \item  $X$ is $r$-bounded \emph{(}especially, $X$ is $r$-complete\emph{)}, and $\ua \left(C\cap\bigcap\limits_{a\in A} \ua a\right)=\bigcap\limits_{a\in A}\ua (C\cap \ua a)$ for any $A\in \ir_c (X)$ and $C\in \ir_c (X)$.
\end{enumerate}
\end{remark}

\begin{theorem}\label{sobercharact3} For a $T_0$ space $X$ , the following conditions are equivalent:
\begin{enumerate}[\rm (1)]
		\item $X$ is sober.
        \item $X$ has $\mathbf{\mathrm{RIP}}$ (especially, $P_S(X)$ is $r$-complete) and $\ua \left(C\cap\bigcap \mathcal K\right)=\bigcap\limits_{K\in\mathcal K}\ua (C\cap K)$ for every $\mathcal A\subseteq \ir(P_S(X))$ and $C\in \mathcal C(X)$.
        \item $X$ has $\mathbf{\mathrm{RIP}}$ (especially, $P_S(X)$ is $r$-complete) and $\ua \left(C\cap\bigcap \mathcal K\right)=\bigcap\limits_{K\in\mathcal K}\ua (C\cap K)$ for every $\mathcal A\subseteq \ir(P_S(X))$ and $C\in \ir_c(X)$.
        \item $X$ has $\mathbf{\mathrm{RIP}}$ (especially, $P_S(X)$ is $r$-complete) and $\ua \left(C\cap\bigcap \mathcal K\right)=\bigcap\limits_{K\in\mathcal K}\ua (C\cap K)$ for every $\mathcal A\subseteq \ir_c(P_S(X))$ and $C\in \mathcal C(X)$.
        \item $X$ has $\mathbf{\mathrm{RIP}}$ (especially, $P_S(X)$ is $r$-complete) and $\ua \left(C\cap\bigcap \mathcal K\right)=\bigcap\limits_{K\in\mathcal K}\ua (C\cap K)$ for every $\mathcal A\subseteq \ir_c(P_S(X))$ and $C\in \ir_c(X)$.
\end{enumerate}
\end{theorem}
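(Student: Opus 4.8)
The plan is to follow the template already used for Proposition~\ref{sobercharac2} and Theorem~\ref{WFequat2}. First I would observe that $(2)\Rightarrow(3)$, $(2)\Rightarrow(4)$, $(3)\Rightarrow(5)$ and $(4)\Rightarrow(5)$ are immediate: since $\ir_c(P_S(X))\subseteq\ir(P_S(X))$ and $\ir_c(X)\subseteq\mathcal C(X)$, each of these arrows only restricts the displayed identity to a smaller family of pairs $(\mathcal A,C)$, while the hypothesis that $X$ has $\mathbf{RIP}$ (resp.\ that $P_S(X)$ is $r$-complete) is the same in all four conditions. So it suffices to prove $(1)\Rightarrow(2)$ and $(5)\Rightarrow(1)$. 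Throughout I would use repeatedly that for a closed set $D\subseteq X$ the set $\Diamond D=\{K\in\mk(X):K\cap D\neq\emptyset\}$ is closed in $P_S(X)$ (its complement being $\Box(X\setminus D)$), that $A\in\ir(X)$ implies $\xi_X(A)\in\ir(P_S(X))$ (Lemma~\ref{X-Smyth-irr}), and Remark~\ref{meet-in-Smyth}, which lets one replace a subfamily of $\mk(X)$ by its closure in $P_S(X)$ when forming intersections.

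For $(1)\Rightarrow(2)$: if $X$ is sober then $P_S(X)$ is sober by Theorem~\ref{Smyth-sober}, hence $r$-complete by Remark~\ref{Soberirrcomp}, which in particular gives $\mathbf{RIP}$, so both forms of the hypothesis in $(2)$ hold. It then remains to check the identity for $\mathcal A\in\ir(P_S(X))$ and $C\in\mathcal C(X)$. The inclusion $\ua(C\cap\bigcap\mathcal A)\subseteq\bigcap_{K\in\mathcal A}\ua(C\cap K)$ is routine, using that each $K\in\mathcal A$ is saturated. For the reverse inclusion, take $x\in\bigcap_{K\in\mathcal A}\ua(C\cap K)$; then $\da x\cap C\cap K\neq\emptyset$, i.e. $K\not\subseteq X\setminus(\da x\cap C)$, for every $K\in\mathcal A$. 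Since $\da x\cap C\in\mathcal C(X)$, applying condition $(2)$ of Theorem~\ref{Smyth-sober} to the open set $X\setminus(\da x\cap C)$ forces $\bigcap\mathcal A\not\subseteq X\setminus(\da x\cap C)$, i.e. $\da x\cap C\cap\bigcap\mathcal A\neq\emptyset$, whence $x\in\ua(C\cap\bigcap\mathcal A)$.

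For $(5)\Rightarrow(1)$: fix $A\in\ir_c(X)$ and put $\mathcal A=\overline{\xi_X(A)}$, the closure taken in $P_S(X)$; then $\mathcal A\in\ir_c(P_S(X))$ by Lemma~\ref{X-Smyth-irr}. Since $\xi_X(A)=\{\ua a:a\in A\}\subseteq\Diamond A$ and $\Diamond A$ is closed in $P_S(X)$, we get $\mathcal A\subseteq\Diamond A$, so $K\cap A\neq\emptyset$ for every $K\in\mathcal A$. By Remark~\ref{meet-in-Smyth}, $\bigcap\mathcal A=\bigcap\xi_X(A)=\bigcap_{a\in A}\ua a=A^{\ua}$, and $A^{\ua}\neq\emptyset$ by $\mathbf{RIP}$. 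Moreover $A^{\ua}\subseteq\bigcap_{K\in\mathcal A}\ua(A\cap K)$: if $x\in A^{\ua}$ and $K\in\mathcal A$, any $y\in A\cap K$ satisfies $y\le x$, hence $x\in\ua(A\cap K)$. Applying the identity in $(5)$ with $C=A\in\ir_c(X)$ and this $\mathcal A\in\ir_c(P_S(X))$ then gives $\ua(A\cap A^{\ua})=\ua\left(A\cap\bigcap\mathcal A\right)=\bigcap_{K\in\mathcal A}\ua(A\cap K)\supseteq A^{\ua}\neq\emptyset$. Hence there is $x\in A\cap A^{\ua}$; then $A\subseteq\da x$, while $x\in A=\overline A$ gives $\da x=\overline{\{x\}}\subseteq A$, so $A=\overline{\{x\}}$ and $X$ is sober.

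The step I expect to require the most care — and really the heart of the argument — is the choice $\mathcal A=\overline{\xi_X(A)}$ together with the two observations that $\Diamond A$ is closed in the Smyth power space (which propagates $K\cap A\neq\emptyset$ from the generators $\ua a$ to the whole closure) and that Remark~\ref{meet-in-Smyth} collapses $\bigcap\mathcal A$ back to $A^{\ua}$. Once these are in place, the equation in $(5)$ does exactly the work of locating a greatest element of $A$ inside $A$, and everything else is bookkeeping strictly parallel to Theorem~\ref{WFequat2} and Proposition~\ref{sobercharac2}.
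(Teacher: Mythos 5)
Your proof is correct, and while the easy implications and the direction $(1)\Rightarrow(2)$ coincide with the paper's argument (sobriety of $P_S(X)$ via Theorem~\ref{Smyth-sober}, then the contrapositive of the Heckmann--Keimel criterion applied to $X\setminus(\da x\cap C)$), your proof of $(5)\Rightarrow(1)$ takes a genuinely different and more economical route. The paper verifies condition (3) of Theorem~\ref{Smyth-sober}: given $\mathcal A\in\ir_c(P_S(X))$ with $\bigcap\mathcal A\subseteq U$ and no $K\subseteq U$, it invokes the topological Rudin Lemma~\ref{t Rudin} to extract a minimal irreducible closed $C\subseteq X\setminus U$ meeting all members of $\mathcal A$, proves by hand that $\mathcal A_C=\{\ua(C\cap K):K\in\mathcal A\}$ is irreducible in $P_S(X)$, and then derives the contradiction $\emptyset\neq\bigcap\mathcal A_C=\ua\bigl(C\cap\bigcap\mathcal A\bigr)=\emptyset$ from $\mathbf{RIP}$ and the identity in (5). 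You instead start from an arbitrary $A\in\ir_c(X)$, lift it to $\mathcal A=\overline{\xi_X(A)}\in\ir_c(P_S(X))$, collapse $\bigcap\mathcal A$ to $A^{\ua}$ via Remark~\ref{meet-in-Smyth}, and let the equation in (5) with $C=A$ manufacture a point of $A\cap A^{\ua}$, i.e.\ a greatest element of $A$, directly yielding $A=\overline{\{x\}}$. This is exactly the pattern of $(3)\Rightarrow(1)$ in Proposition~\ref{sobercharac2}, transported into the Smyth power space, and it avoids Rudin's Lemma and the minimality/irreducibility bookkeeping entirely; the one extra care it needs (and which you supply) is passing to the closure $\overline{\xi_X(A)}$ so that $\mathcal A$ is genuinely in $\ir_c(P_S(X))$, and noting $\mathcal A\subseteq\Diamond A$ so that each $A\cap K$ is nonempty. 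The paper's longer route has the virtue of being uniform with the proof of Theorem~\ref{WFequat2}, where no analogue of $\xi_X(A)$ is available for a filtered family of compact sets; your route exploits the special feature of the sober case that irreducible subsets of $X$ already embed as irreducible subsets of $P_S(X)$.
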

\begin{proof} We directly have (2) $\Rightarrow$ (3) $\Rightarrow$ (5) and  (2) $\Rightarrow$ (4) $\Rightarrow$ (5).

(1) $\Rightarrow$ (2): By Remark \ref{Soberirrcomp} and Corollary \ref{Smyth-sober}, $X$ is $r$-complete. Suppose $\mathcal A\subseteq \ir(P_S(X))$ and $C\in \mathcal C(X)$. Obviously, $\ua \left(C\cap\bigcap \mathcal A\right)\subseteq\bigcap_{K\in\mathcal A}\ua (C\cap K)$. On the other hand, if $x\in \bigcap_{K\in\mathcal A}\ua (C\cap K)$, then for each $K\in \mathcal A$, $\downarrow x\cap C\cap K\neq\emptyset$, and hence $K\not\subseteq X\setminus \downarrow x\cap C$. By Corollary \ref{Smyth-sober}, we have $\bigcap \mathcal K\not\subseteq X\setminus \downarrow x\cap C$, that is, $\downarrow x\cap C\cap\bigcap \mathcal K\neq\emptyset$. Therefore $x\in \ua \left(C\cap\bigcap \mathcal A\right)$. The equation $\ua \left(C\cap\bigcap \mathcal A\right)=\bigcap_{K\in\mathcal A}\ua (C\cap K)$ thus holds.

(5) $\Rightarrow$ (1): Suppose that $\mathcal A\subseteq \ir_c(P_S(X))$, $U\in \mathcal O(X)$, and $\bigcap \mathcal A \subseteq U$. If $K\not\subseteq U$ for each $K\in \mathcal A$, then by Lemma \ref{t Rudin}, $X\setminus U$ contains a minimal irreducible closed subset $C$ that still meets all members of $\mathcal A$. Let $\mathcal A_C=\{\ua (C\cap K) : K\in \mathcal A\}$. Then $\mathcal A_C\subseteq \mk (X)$. Now we show that $\mathcal A_C\in \ir (P_S(X))$. Suppose $V, W\in \mathcal O(X)$ such that $\mathcal A_C\bigcap \Box V\neq\emptyset$ and $\mathcal A_C\bigcap \Box W\neq\emptyset$. Then $\mathcal A\bigcap \Box (V\cup(X\setminus C))\neq\emptyset$ and $\mathcal A\bigcap \Box (W\cup(X\setminus C)\neq\emptyset$, and whence $\mathcal A\bigcap \Box ((V\cap W)\cup(X\setminus C))=\mathcal A\bigcap \Box (V\cup(X\setminus C))\bigcap \Box (W\cup(X\setminus C))\neq\emptyset$ by the irreducibility of $\mathcal A$. It follows that $\mathcal A_C\bigcap \Box V\bigcap \Box W=\mathcal A_C\bigcap \Box (V\cap W)\neq\emptyset$, proving the irreducibility of $\mathcal A_C$. By condition (5) we have $\emptyset \neq \bigcap \mathcal A_C=\ua \left(C\cap\bigcap \mathcal K\right)=\emptyset$ since $\bigcap \mathcal A \subseteq U\subseteq X\setminus C$, a contradiction. Thus $X$ is sober by Theorem \ref{Smyth-sober}.

\end{proof}

As a corollary of Theorem \ref{sobercharact3}, we get a similar result to Corollary \ref{WFmincorollary}.

\begin{corollary}\label{Sobermincorollary}  Let $X$ be a sober space and $\mathcal A\subseteq \ir (P_S(X))$. Then $C=\bigcap \mathcal A\in \mk (X)$, and for each $c\in min(C)$, $\bigcap_{K\in \mathcal A} \ua (\da c\cap K)=\ua \left(\da c\cap\bigcap \mathcal A\right)=\ua c$.
\end{corollary}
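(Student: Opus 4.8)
The plan is to read this statement off from Theorem~\ref{sobercharact3}, together with Lemma~\ref{Kmeet} and Lemma~\ref{COMPminimalset}; throughout, $\mathcal A$ is understood as an irreducible subset of $P_S(X)$ (so $\mathcal A\subseteq\mk(X)$ and $\bigcap\mathcal A=\bigcap_{K\in\mathcal A}K$ is an intersection of compact saturated subsets of $X$ computed inside $X$, not inside $P_S(X)$). First I would establish that $C=\bigcap\mathcal A\in\mk(X)$. Since $X$ is sober, the equivalence (1)$\Leftrightarrow$(2) of Theorem~\ref{sobercharact3} gives in particular that $P_S(X)$ is $r$-complete, so $\bigvee\mathcal A$ exists in $P_S(X)=(\mk(X),\sqsubseteq)$; by Lemma~\ref{Kmeet} this forces $\bigcap\mathcal A\in\mk(X)$ and $\bigcap\mathcal A=\bigvee_{\mk(X)}\mathcal A$. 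In particular $C$ is a nonempty saturated set, and by Lemma~\ref{COMPminimalset} we have $C=\ua min(C)$ with $min(C)$ compact, so the quantifier ``for each $c\in min(C)$'' ranges over a nonempty set.

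Now fix $c\in min(C)$ and apply the equation in Theorem~\ref{sobercharact3}(2) to the irreducible family $\mathcal A$ and the closed set $\da c=\overline{\{c\}}\in\mathcal C(X)$: this yields at once $\bigcap_{K\in\mathcal A}\ua(\da c\cap K)=\ua(\da c\cap\bigcap\mathcal A)$, which is the first asserted equality. For the second equality, $\ua(\da c\cap\bigcap\mathcal A)=\ua(\da c\cap C)=\ua c$: since $c\in\da c\cap C$ we have $\ua c\subseteq\ua(\da c\cap C)$, while if $x\in\da c\cap C$ then $x\le c$ and $x\in C$, so the minimality of $c$ in $C$ forces $x=c$; hence $\da c\cap C=\{c\}$ and $\ua(\da c\cap C)=\ua c$. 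Combining the two equalities completes the argument.

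I do not expect a genuine obstacle: all of the substantive content is already packaged into Theorem~\ref{sobercharact3} (and, behind it, Theorem~\ref{Smyth-sober}), so what remains is bookkeeping. The only two points needing a little care are (i) reading $\mathcal A\in\ir(P_S(X))$ correctly, so that $\bigcap\mathcal A$ is the intersection of compact saturated sets inside $X$ — it is exactly this interpretation that makes ``$C=\bigcap\mathcal A\in\mk(X)$'' a nontrivial use of sobriety rather than a tautology — and (ii) observing that it is the minimality of $c$ \emph{within $C$} (not within $X$) that collapses $\da c\cap C$ to the singleton $\{c\}$. Should a self-contained proof be preferred, the nontrivial inclusion $\bigcap_{K\in\mathcal A}\ua(\da c\cap K)\subseteq\ua c$ can be obtained directly: for $x$ in the left-hand side one has $K\cap\da x\cap\da c\neq\emptyset$ for every $K\in\mathcal A$, so the characterization of sobriety in Theorem~\ref{Smyth-sober} (the contrapositive of ``$\bigcap\mathcal A\subseteq U$ implies $K\subseteq U$ for some $K\in\mathcal A$'', applied to the open set $X\setminus(\da x\cap\da c)$) gives $C\cap\da x\cap\da c\neq\emptyset$; any point of this set is $\le c$ and lies in $C$, hence equals $c$ by minimality, so $c\le x$.
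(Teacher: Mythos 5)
Your proposal is correct and follows essentially the same route as the paper, which presents this statement precisely as a corollary of Theorem~\ref{sobercharact3} (together with Lemma~\ref{Kmeet} and Lemma~\ref{COMPminimalset} for $C\in\mk(X)$ and the nonemptiness of $min(C)$). The only substantive steps beyond citing that theorem are the ones you supply: $\bigcap\mathcal A=\bigvee_{\mk(X)}\mathcal A\in\mk(X)$ via irreducible completeness of $P_S(X)$, and $\da c\cap C=\{c\}$ by minimality of $c$ in $C$.
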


\begin{theorem}\label{soberequat4} Let $X$ be a $T_0$ space and $\mathbf{K}$ a full subcategory of $\mathbf{Top}_0$ containing $\mathbf{Sob}$. Then the following conditions are equivalent:
\begin{enumerate}[\rm (1)]
		\item $X$ is sober.
        \item For every continuous mapping $f:X\longrightarrow Y$ from $X$ to a $T_0$ space $Y$ and any $\mathcal A\in \ir (P_S(X))$, $\ua f\left(\bigcap\mathcal K\right)=\bigcap_{K\in\mathcal K}\ua f(K)$,
        \item For every continuous mapping $f:X\longrightarrow Y$ from $X$ to a $T_0$ space $Y$ and any $\mathcal A\in \ir_c (P_S(X))$, $\ua f\left(\bigcap\mathcal K\right)=\bigcap_{K\in\mathcal K}\ua f(K)$,

        \item For every continuous mapping $f:X\longrightarrow Y$ from $X$ to a $\mathbf{K}$-space $Y$ and any $\mathcal A\in \ir (P_S(X))$, $\ua f\left(\bigcap\mathcal K\right)=\bigcap_{K\in\mathcal K}\ua f(K)$.
        \item For every continuous mapping $f:X\longrightarrow Y$ from $X$ to a a $\mathbf{K}$-space $Y$ and any $\mathcal A\in \ir_c (P_S(X))$, $\ua f\left(\bigcap\mathcal K\right)=\bigcap_{K\in\mathcal K}\ua f(K)$.

        \item For every continuous mapping $f:X\longrightarrow Y$ from $X$ to a sober space $Y$ and any $\mathcal A\in \ir (P_S(X))$, $\ua f\left(\bigcap\mathcal K\right)=\bigcap_{K\in\mathcal K}\ua f(K)$.
        \item For every continuous mapping $f:X\longrightarrow Y$ from $X$ to a sober space $Y$ and any $\mathcal A\in \ir_c (P_S(X))$, $\ua f\left(\bigcap\mathcal K\right)=\bigcap_{K\in\mathcal K}\ua f(K)$.
\end{enumerate}
\end{theorem}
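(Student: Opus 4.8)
The plan is to establish the two content-bearing implications $(1)\Rightarrow(2)$ and $(7)\Rightarrow(1)$, and to observe that every other implication among the conditions $(2)$ through $(7)$ is immediate: since $\ir_c(P_S(X))\subseteq\ir(P_S(X))$ and $\mathbf{Sob}\subseteq\mathbf{K}\subseteq\mathbf{Top}_0$, each listed condition quantifying over a smaller family of $\mathcal A$'s or over a larger class of target spaces is formally weaker than an earlier one, which yields $(2)\Rightarrow(3)\Rightarrow(5)\Rightarrow(7)$ and $(2)\Rightarrow(4)\Rightarrow(6)\Rightarrow(7)$. Thus the whole block becomes equivalent once the cycle $(1)\Rightarrow(2)$ and $(7)\Rightarrow(1)$ is closed. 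Both of these run parallel to the arguments for $d$-spaces in Theorem \ref{d-spacemapcharac} and for well-filtered spaces in Theorem \ref{WFequat1}.

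For $(1)\Rightarrow(2)$, fix a continuous $f:X\to Y$ with $Y$ a $T_0$ space and $\mathcal A\in\ir(P_S(X))$. By Corollary \ref{Sobermincorollary}, $\bigcap\mathcal A\in\mk(X)$, so it is nonempty and the identity $\ua f(\bigcap\mathcal A)=\bigcap_{K\in\mathcal A}\ua f(K)$ is meaningful; the inclusion from left to right is just monotonicity of $f$. For the reverse inclusion, take $y\in\bigcap_{K\in\mathcal A}\ua f(K)$: then $\overline{\{y\}}\cap f(K)\neq\emptyset$, i.e. $K\cap f^{-1}(\overline{\{y\}})\neq\emptyset$, for every $K\in\mathcal A$, so no member of $\mathcal A$ lies inside the open set $X\setminus f^{-1}(\overline{\{y\}})$. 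Theorem \ref{Smyth-sober}(2) then forces $\bigcap\mathcal A\cap f^{-1}(\overline{\{y\}})\neq\emptyset$, and any point of this intersection witnesses $y\in\ua f(\bigcap\mathcal A)$.

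For $(7)\Rightarrow(1)$, I would use the canonical embedding $\eta_X:X\to X^s=P_H(\ir_c(X))$ into the (always sober) soberification, whose specialization order is set inclusion, together with the criterion $\ir_c(X)=\mathcal S_c(X)$ for sobriety (Proposition \ref{sobercharac1}). Given $A\in\ir_c(X)$, set $\mathcal A=\cl_{P_S(X)}(\xi_X(A))$, which lies in $\ir_c(P_S(X))$ by Lemma \ref{X-Smyth-irr}; by Remark \ref{meet-in-Smyth}, $\bigcap\mathcal A=\bigcap_{a\in A}\ua a=A^{\ua}$. Since $\{K\in\mk(X):K\cap A\neq\emptyset\}=\mk(X)\setminus\Box(X\setminus A)$ is closed in $P_S(X)$ and contains $\xi_X(A)$, it contains $\mathcal A$; hence $A\cap K\neq\emptyset$, equivalently $A\in\ua_{X^s}\eta_X(K)$, for every $K\in\mathcal A$. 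Applying condition $(7)$ to $f=\eta_X$ and this $\mathcal A$ gives $A\in\bigcap_{K\in\mathcal A}\ua_{X^s}\eta_X(K)=\ua_{X^s}\eta_X(\bigcap\mathcal A)=\ua_{X^s}\eta_X(A^{\ua})$, so there is $c\in A^{\ua}$ with $\overline{\{c\}}=\eta_X(c)\subseteq A$, i.e. $c\in A$; combined with $A\subseteq\da c=\overline{\{c\}}$ this yields $A=\overline{\{c\}}$. Thus $\ir_c(X)=\mathcal S_c(X)$ and $X$ is sober.

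The only points demanding a little care are the identification $\ua_{X^s}\eta_X(K)=\{B\in\ir_c(X):B\cap K\neq\emptyset\}$ and the observation that $\{K\in\mk(X):K\cap A\neq\emptyset\}$ is closed in $P_S(X)$; these are precisely what allow the closure $\cl_{P_S(X)}(\xi_X(A))$ to be substituted into $(7)$ while still meeting $A$ member-wise. The remainder is routine bookkeeping about which of the earlier results to invoke at each step.
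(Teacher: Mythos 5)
Your proof is correct and follows essentially the same route as the paper: the trivial implications among (2)--(7), the implication $(1)\Rightarrow(2)$ via Theorem \ref{Smyth-sober}, and $(7)\Rightarrow(1)$ by applying the hypothesis to the soberification embedding $\eta_X$ and the closed irreducible set $\cl_{P_S(X)}(\xi_X(A))=\Diamond A$, using Remark \ref{meet-in-Smyth} to identify $\bigcap\mathcal A$ with $A^{\ua}$. Your explicit verification that $\Diamond A$ is closed in $P_S(X)$ and that $\ua_{X^s}\eta_X(K)=\{B\in\ir_c(X):B\cap K\neq\emptyset\}$ just makes precise a step the paper leaves terse.
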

\begin{proof} We only need to prove the equivalences of conditions (1), (2), (3), (6), and (7).

(1) $\Rightarrow$ (2): It needs only to check $\bigcap_{K\in\mathcal A}\ua f(K)\subseteq \ua f\left(\bigcap\mathcal A\right)$. Let $y\in\bigcap_{K\in\mathcal A}\ua f(K)$. Then for each $K\in\mathcal A$, $\overline{\{y\}}\cap f(K)\neq\emptyset$, that is, $K\cap f^{-1}\left(\overline{\{y\}}\right)\neq\emptyset$. Since $X$ is sober, $f^{-1}\left(\overline{\{y\}}\right)\cap\bigcap\mathcal A\neq\emptyset$ (otherwise, $\bigcap\mathcal A\subseteq X\setminus f^{-1}\left(\overline{\{y\}}\right)$, and consequently,  by Theorem \ref{Smyth-sober},  $K\subseteq X\setminus f^{-1}\left(\overline{\{y\}}\right)$ for some $K\in\mathcal A$, a contradiction). It follows that $\overline{\{y\}}\cap f\left(\bigcap\mathcal A\right)\neq\emptyset$. This implies that $y\in\ua f\left(\bigcap\mathcal K\right)$. So $\bigcap_{K\in\mathcal A}\ua f(K)\subseteq \ua f\left(\bigcap\mathcal A\right)$.

(2) $\Rightarrow$ (3), (2) $\Rightarrow$ (6),(3) $\Rightarrow$ (7) and (6) $\Rightarrow$ (7): Trivial.

(7) $\Rightarrow$ (1):  Let $\eta_X : X \rightarrow X^s$ ($=P_H(\ir_c(X))$) be the canonical topological embedding from $X$ into its soberification and $\xi_X : X \rightarrow P_S(X)$ the canonical topological embedding from $X$ into the Smyth power space of $X$.  Suppose that $\mathcal A\subseteq \ir_c (X)$. Then $\Diamond_{\mk (X)} A=\mathrm{cl}_{P_S(X)}{\xi_X(A)}\in \ir_c (P_S(X))$. By Remark \ref{meet-in-Smyth} and condition (7), we have $\ua_{\ir_c(X)}\eta_X(A^{\ua})=\ua_{\ir_c(X)}\eta_X(\bigcap \Diamond_{\mk (X)} A)=\bigcap\limits_{K\in \Diamond_{\mk (X)} A} \ua_{\ir_c(X)}\eta_X(K)=\bigcap\limits_{a\in A} \ua_{\ir_c(X)} \overline{\{a\}}$. It follows that $A\in \ua_{\ir_c(X)}\eta_X(A^{\ua})$ by $A\in \bigcap\limits_{a\in A} \ua_{\ir_c(X)} \overline{\{a\}}$. Therefore, there is an $x\in A^{\ua}$ such that $\overline{\{x\}}\subseteq A$, and consequently, $A=\overline{\{x\}}$. Thus $X$ is sober.

\end{proof}

\section{Well-filtered determined spaces}

In this section, we introduce another  new type of subsets in a $T_0$ topological space - well-filtered
determined sets ($\wdd$ sets for short), which is closed related to Rudin sets. Using $\wdd$ sets, we introduce and investigate another new kind of spaces - well-filtered determined spaces ($\wdd$ spaces for short). The Rudin spaces lie between $wdd$ spaces and $DC$ spaces, and $DC$ spaces lie between Rudin spaces and sober spaces. For a $T_0$ space $X$, it is proved that $X$ is sober if{}f  $X$ is a well-filtered Rudin space if{}f $X$ is a well-filtered $\wdd$ space.

 In \cite{E_20182}, it is shown that in a locally hypercompact $T_0$ space $X$, every irreducible closed subset $A$ of $X$ is the closure of a certain directed subset of $X$. Therefore, locally hypercompact spaces are $\mathbf{DC}$ spaces. Further, we prove that every locally compact $T_0$ space is a  Rudin space and every core compact $T_0$ space is a $\wdd$ space. As a corollary we have that every core compact well-filtered space is sober, giving a positive answer to Jia-Jung problem \cite{jia-2018}, which has been independently given by Lawson and Xi \cite{Lawson-Xi} in a different way.

Firstly, motivated by Proposition \ref{rudinwf}, we give the following definition.

\begin{definition}\label{WDspace}
	 A subset $A$ of a $T_0$ space $X$ is called a \emph{well-filtered determined set}, $\wdd$ \emph{set} for short, if for any continuous mapping $ f:X\longrightarrow Y$
to a well-filtered space $Y$, there exists a unique $y_A\in Y$ such that $\overline{f(A)}=\overline{\{y_A\}}$.
Denote by $\mathsf{WD}(X)$ the set of all closed well-filtered determined subsets of $X$. $X$ is called a \emph{well-filtered determined}, $\mathsf{WD}$ \emph{space} for short, if all irreducible closed subsets of $X$ are well-filtered determined, that is, $\ir_c(X)=\wdd (X)$.
\end{definition}

Obviously, a subset $A$ of a space $X$ is well-filtered determined if{}f $\overline{A}$ is well-filtered determined.

\begin{proposition}\label{DRWIsetrelation}
	Let $X$ be a $T_0$ space. Then $\mathcal{D}_c(X)\subseteq \mathsf{RD}(X)\subseteq\mathsf{WD}(X)\subseteq\ir_c(X)$.
\end{proposition}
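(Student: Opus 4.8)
The plan is to prove the three inclusions one at a time, reading off the leftmost one from a result already in hand and reducing the other two to the characterisation of Rudin sets and to the universal property of the soberification. The leftmost inclusion $\mathcal{D}_c(X)\subseteq\mathsf{RD}(X)$ requires nothing new: it is the first half of Lemma~\ref{DRIsetrelation}, obtained by attaching to a directed set $D$ the filtered family $\{\ua d : d\in D\}$ of supercompact saturated sets. For $\mathsf{RD}(X)\subseteq\mathsf{WD}(X)$ I would simply unwind the definition of a $\wdd$ set and feed it into Proposition~\ref{rudinwf}. Let $A\in\mathsf{RD}(X)$; then $A$ is closed and has the Rudin property. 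For an arbitrary continuous map $f:X\longrightarrow Y$ into a well-filtered space $Y$, Proposition~\ref{rudinwf} produces a point $y_A$ with $\overline{f(A)}=\overline{\{y_A\}}$, and its uniqueness is forced by the $T_0$ separation of $Y$. This is exactly the defining condition for $A\in\mathsf{WD}(X)$.

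The inclusion $\mathsf{WD}(X)\subseteq\ir_c(X)$ is the one worth spelling out. A $\wdd$ set $A$ is closed by definition, and it must be nonempty: if $A=\emptyset$, then for any map $f$ to a well-filtered space $f(A)=\emptyset$, so the equality $\overline{\{y_A\}}=\overline{f(A)}=\emptyset$ is impossible. Hence only irreducibility is at issue. Here I would test the defining property of $\mathsf{WD}(X)$ against the canonical embedding $\eta_X:X\longrightarrow X^s=P_H(\ir_c(X))$ into the soberification (Remark~\ref{eta continuous}(2)); the space $X^s$ is sober, hence well-filtered. Consequently $\overline{\eta_X(A)}=\overline{\{\mathcal{F}\}}$ in $X^s$ for some point $\mathcal{F}\in X^s$. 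Since the closure of a singleton is always irreducible, $\eta_X(A)$ has irreducible closure in $X^s$ and is therefore itself an irreducible subset of $X^s$; as $\eta_X$ is a topological embedding, Lemma~\ref{irrsubspace} transports this back to $X$, yielding $A\in\ir(X)$ and hence $A\in\ir_c(X)$.

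The only place where any care is needed is this last step, and even there the subtleties are small: one must not overlook that $\wdd$ sets are nonempty, so that ``irreducible closure implies irreducible'' may legitimately be applied, and one must choose as the test target the soberification, which has the double virtue of being well-filtered (so the $\wdd$ property applies to it) and of allowing irreducibility to be pulled back along the embedding $\eta_X$. Everything else is a direct appeal to results already established in the paper.
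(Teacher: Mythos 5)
Your proof is correct and follows essentially the same route as the paper: the first two inclusions are obtained exactly as in the paper from Lemma~\ref{DRIsetrelation} and Proposition~\ref{rudinwf}, and for $\mathsf{WD}(X)\subseteq\ir_c(X)$ you test $A$ against the same map $\eta_X:X\longrightarrow X^s$ into the (sober, hence well-filtered) soberification. The only cosmetic difference is the last step: the paper compares $A\cap U$ with $C\cap U$ for all open $U$ to conclude $A=C\in\ir_c(X)$, whereas you pull irreducibility of the point closure $\overline{\eta_X(A)}$ back through the embedding via Lemma~\ref{irrsubspace}; both are valid, and your explicit remark that $\wdd$ sets are nonempty is a harmless added precaution.
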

\begin{proof}
	By Lemma \ref{DRIsetrelation} and Lemma \ref{rudinwf}, $\mathcal{D}_c(X)\subseteq \mathsf{RD}(X)\subseteq\mathsf{WD}(X)$. We need to show $\mathsf{WD}(X)\subseteq\ir_c(X)$. Let $A\in\mathsf{WD}(X)$.
	Since $\eta_X: X\longrightarrow X^s,\ x\mapsto\da x$, is a continuous mapping to a well-filtered space ($X^s$ is sober), there exists $C\in \ir_c(X)$ such that $\overline{\eta_X(A)}=\overline{\{C\}}$.
	Let $U\in\mathcal O(X)$.
	Note that $$\begin{array}{lll}
	A\cap U\neq\emptyset &\Leftrightarrow& \eta_X(A)\cap  \Diamond U\neq\emptyset\\
	&\Leftrightarrow&\{C\}\cap \Diamond U\neq\emptyset\\
	&\Leftrightarrow& C\in \Diamond U\\
	&\Leftrightarrow& C\cap U\neq\emptyset.
	\end{array}$$
	This implies that $A=C$, and hence $A\in \ir_c(X)$.
\end{proof}

\begin{corollary}\label{SDRWspacerelation}
	Sober $\Rightarrow$ $\mathsf{DC}$ $\Rightarrow$ $\mathsf{RD}$ $\Rightarrow$ $\mathsf{WD}$.
\end{corollary}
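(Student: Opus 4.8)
The plan is to derive all three implications from Proposition~\ref{DRWIsetrelation}, which already records the chain of inclusions
\[
\mathcal{D}_c(X)\subseteq\mathsf{RD}(X)\subseteq\mathsf{WD}(X)\subseteq\ir_c(X)
\]
valid for \emph{every} $T_0$ space $X$. Since each of the classes $\mathsf{DC}$, $\mathsf{RD}$, $\mathsf{WD}$ is defined precisely by the equality of $\ir_c(X)$ with the corresponding intermediate family, once these inclusions are in hand the implications $\mathsf{DC}\Rightarrow\mathsf{RD}\Rightarrow\mathsf{WD}$ collapse to a routine squeeze: if $X$ is a $\dc$ space then $\ir_c(X)=\mathcal{D}_c(X)$, and combining this with $\mathcal{D}_c(X)\subseteq\mathsf{RD}(X)\subseteq\ir_c(X)$ forces $\mathsf{RD}(X)=\ir_c(X)$; the same argument with $\mathsf{RD}(X)\subseteq\mathsf{WD}(X)\subseteq\ir_c(X)$ gives $\mathsf{RD}\Rightarrow\mathsf{WD}$.

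For the first link $\text{Sober}\Rightarrow\mathsf{DC}$, I would argue directly rather than through the Proposition. Every singleton $\{x\}$ is trivially directed, so $\mathcal{S}_c(X)\subseteq\mathcal{D}_c(X)$, and since directed subsets are irreducible (as noted in Section~2, $\mathcal{D}_c(X)\subseteq\ir_c(X)$), we always have $\mathcal{S}_c(X)\subseteq\mathcal{D}_c(X)\subseteq\ir_c(X)$. Now if $X$ is sober, then by definition every $A\in\ir_c(X)$ is of the form $\overline{\{x\}}$ for a (unique) $x\in X$, i.e.\ $\ir_c(X)=\mathcal{S}_c(X)$; chaining the inclusions yields $\ir_c(X)=\mathcal{D}_c(X)$, which is exactly the statement that $X$ is a $\dc$ space.

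I do not anticipate any genuine obstacle here: the entire content is carried by Proposition~\ref{DRWIsetrelation} and the definitions of the three classes, the only elementary inputs being that a one-point set is directed and that directed sets are irreducible. (One could alternatively obtain the weaker link $\text{Sober}\Rightarrow\mathsf{WD}$ from Remark~\ref{Soberirrcomp} together with Proposition~\ref{rudinwf}, but the singleton argument above is the most economical route to the sharper statement $\text{Sober}\Rightarrow\mathsf{DC}$.)
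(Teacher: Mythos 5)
Your proposal is correct and matches the paper's (implicit) argument: the paper states this corollary without proof as an immediate consequence of Proposition~\ref{DRWIsetrelation} together with the definitions of $\dc$, Rudin and $\wdd$ spaces, which is exactly your squeeze argument, and Sober $\Rightarrow$ $\dc$ via $\ir_c(X)=\mathcal S_c(X)\subseteq\mathcal D_c(X)\subseteq\ir_c(X)$ is the standard intended route.
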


By Theorem \ref{WFequat1} and Corollary \ref{SDRWspacerelation}, we have the following corollary.

\begin{corollary}\label{WFequat3} For a $T_0$ space $X$, the following conditions are equivalent:
\begin{enumerate}[\rm (1)]
		\item $X$ is well-filtered.
        \item For every continuous mapping $f:X\longrightarrow Y$ from $X$ to a $\wdd$ space $Y$ and a filtered family $\mathcal K\subseteq \mk(X)$, $\ua f\left(\bigcap\mathcal K\right)=\bigcap_{K\in\mathcal K}\ua f(K)$.
        \item For every continuous mapping $f:X\longrightarrow Y$ from $X$ to a $\kf$ space $Y$ and a filtered family $\mathcal K\subseteq \mk(X)$, $\ua f\left(\bigcap\mathcal K\right)=\bigcap_{K\in\mathcal K}\ua f(K)$.
        \item For every continuous mapping $f:X\longrightarrow Y$ from $X$ to a $\dc$ space $Y$ and a filtered family $\mathcal K\subseteq \mk(X)$, $\ua f\left(\bigcap\mathcal K\right)=\bigcap_{K\in\mathcal K}\ua f(K)$.

\end{enumerate}
\end{corollary}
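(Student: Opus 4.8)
The plan is to obtain Corollary \ref{WFequat3} as a direct instantiation of Theorem \ref{WFequat1}, applied three times with three different choices of the auxiliary category $\mathbf{K}$. Recall that Theorem \ref{WFequat1} asserts, for every full subcategory $\mathbf{K}$ of $\mathbf{Top}_0$ containing $\mathbf{Sob}$, that $X$ is well-filtered if and only if $\ua f\left(\bigcap\mathcal K\right)=\bigcap_{K\in\mathcal K}\ua f(K)$ holds for all continuous $f:X\longrightarrow Y$ with $Y$ a $\mathbf{K}$-space and all filtered $\mathcal K\subseteq\mk(X)$. Thus the whole argument reduces to verifying that the classes of $\wdd$ spaces, of Rudin ($\kf$) spaces and of $\dc$ spaces are admissible choices of $\mathbf{K}$.

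First I would record that each of these three classes is a full subcategory of $\mathbf{Top}_0$: in each case membership is a property of the underlying $T_0$ space only (namely $\ir_c(X)=\wdd(X)$, $\ir_c(X)=\mathsf{RD}(X)$, or $\ir_c(X)=\mathcal D_c(X)$) and the morphisms are all continuous maps, so the inclusions are full. Second, by Corollary \ref{SDRWspacerelation} we have Sober $\Rightarrow$ $\dc$ $\Rightarrow$ $\mathsf{RD}$ $\Rightarrow$ $\wdd$, so every sober space belongs to each of the three classes; hence each of them contains $\mathbf{Sob}$.

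Having checked these two points, I would apply Theorem \ref{WFequat1} with $\mathbf{K}$ equal successively to the category of $\wdd$ spaces, the category of Rudin spaces, and the category of $\dc$ spaces. In each case the equivalence of conditions (1) and (3) of that theorem says exactly that well-filteredness of $X$ is equivalent to the corresponding one of conditions (2), (3), (4) of the present corollary. Hence (1), (2), (3) and (4) are all equivalent.

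I do not expect a genuine obstacle here; the only point to keep in mind is that we are not required to re-run the topological Rudin's Lemma argument used for the ``hard'' implication (4)$\Rightarrow$(1) of Theorem \ref{WFequat1}, since that argument only uses the canonical embedding $\eta_X:X\longrightarrow X^s$ into the sobrification, and $X^s$ --- being sober --- automatically lies in every one of the three categories considered. So the corollary follows immediately from Theorem \ref{WFequat1} together with Corollary \ref{SDRWspacerelation}.
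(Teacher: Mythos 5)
Your proposal is correct and is essentially the paper's own proof: the paper derives this corollary in one line from Theorem \ref{WFequat1} together with Corollary \ref{SDRWspacerelation}, exactly by noting that the categories of $\wdd$ spaces, Rudin spaces and $\dc$ spaces are full subcategories of $\mathbf{Top}_0$ containing $\mathbf{Sob}$. Your extra remark that the hard implication of Theorem \ref{WFequat1} only needs the sober target $X^s$ is accurate but not needed beyond what the theorem already provides.
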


By \cite[Proposition 4]{well} and \cite[Theorem 5.7]{ZhaoHo}, we get the following result.

\begin{proposition}\label{alpha}
	Let $P$ be a poset. Then the Alexandroff space $(P,\alpha(P))$ is a $\mathsf{DC}$ space and the following conditions are equivalent:
	\begin{enumerate}[\rm (1)]
		\item $(P,\alpha(P))$ is sober.
		\item $(P,\alpha(P))$ is well-filtered.
		\item $(P,\alpha(P))$ is a $d$-space.
		\item $P$ satisfies the $\mathrm{ACC}$ condition;
		\item $P$ is a dcpo such that every element of $P$ is compact \emph{(}i.e., $x\ll x$ for all $x\in P$\emph{)}.
		\item $P$ is a dcpo such that $\alpha(P)=\sigma(P)$.
	\end{enumerate}
\end{proposition}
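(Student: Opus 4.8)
The plan is to establish the two assertions separately: first that $(P,\alpha(P))$ is always a $\dc$ space, and then the list of equivalences, which I would arrange as the cycle $(1)\Rightarrow(2)\Rightarrow(3)\Rightarrow(6)\Rightarrow(5)\Rightarrow(4)\Rightarrow(1)$.

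For the first assertion the key observation is a description of the irreducible closed subsets of an Alexandroff space. In $(P,\alpha(P))$ the open sets are precisely the upper sets, so the closed sets are precisely the lower sets, the specialization order is the given order on $P$, and $(P,\alpha(P))$ is $T_0$. I would then show that a nonempty lower set $A$ is irreducible if and only if it is directed. The implication ``directed $\Rightarrow$ irreducible'' is a general fact. For the converse, given $a,b\in A$, note that $P\setminus\ua a$ and $P\setminus\ua b$ are lower sets, hence closed; if $a$ and $b$ admitted no common upper bound in $A$, then every element of $A$ would lie outside $\ua a$ or outside $\ua b$, so $A\subseteq(P\setminus\ua a)\cup(P\setminus\ua b)$, and irreducibility would give $A\subseteq P\setminus\ua a$ or $A\subseteq P\setminus\ua b$, contradicting $a\in A\cap\ua a$ or $b\in A\cap\ua b$. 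Since for a directed set $D$ one has $\overline D=\da D$, a directed lower set, we conclude that $\ir_c(P,\alpha(P))$ is exactly the collection of nonempty directed lower sets, which is exactly $\mathcal D_c(P,\alpha(P))=\{\overline D:D\in\mathcal D(P)\}$. Hence $(P,\alpha(P))$ is a $\dc$ space.

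For the equivalences, $(1)\Rightarrow(2)\Rightarrow(3)$ hold for every $T_0$ space (sobriety implies well-filteredness implies the monotone convergence property). For $(3)\Rightarrow(6)$: a $d$-space structure forces $P$ to be a dcpo and $\mathcal O(P,\alpha(P))=\alpha(P)\subseteq\sigma(P)$, and since $\sigma(P)\subseteq\alpha(P)$ always (Scott-open sets are upper sets), $\alpha(P)=\sigma(P)$. For $(6)\Rightarrow(5)$: for each $x\in P$, $\ua x$ is an upper set, hence $\ua x\in\alpha(P)=\sigma(P)$ is Scott open, and unwinding Scott openness of $\ua x$ at the point $x$ is precisely the statement $x\ll x$. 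For $(5)\Rightarrow(4)$: if $D\in\mathcal D(P)$ with $d=\bigvee D$, applying $d\ll d$ to the directed family $D$ gives $d_0\in D$ with $d_0\ge d$, so $d_0=d$ is the greatest element of $D$; hence every directed set of $P$ has a largest element, i.e., $P$ satisfies the $\mathrm{ACC}$. For $(4)\Rightarrow(1)$: $(P,\alpha(P))$ is $T_0$ and, by the first assertion, a $\dc$ space, so every $A\in\ir_c(P,\alpha(P))$ equals $\overline D=\da D$ for some $D\in\mathcal D(P)$; the $\mathrm{ACC}$ furnishes a greatest element $d_0$ of $D$, whence $A=\da d_0=\overline{\{d_0\}}$, and $T_0$-separation gives uniqueness of $d_0$. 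Thus $(P,\alpha(P))$ is sober.

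I expect the only genuinely delicate point to be the first assertion, the characterization of $\ir_c(P,\alpha(P))$ as the directed lower sets, since it is exactly what links the order-combinatorial condition $\mathrm{ACC}$ on $P$ to topological sobriety of $(P,\alpha(P))$. The remaining steps are routine once one records the elementary facts that $\sigma(P)\subseteq\alpha(P)$ always holds, that $\ua x\in\sigma(P)$ is equivalent to $x\ll x$, and that a directed set possessing a greatest element $d_0$ coincides with $\da d_0$.
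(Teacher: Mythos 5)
Your argument is correct, but it is genuinely different from what the paper does: the paper gives no internal proof of this proposition at all, simply deriving it from \cite[Proposition 4]{well} and \cite[Theorem 5.7]{ZhaoHo}, whereas you supply a self-contained argument. The heart of your proof, the identification $\ir_c(P,\alpha(P))=\mathcal D_c(P,\alpha(P))=\{\mbox{nonempty directed lower sets}\}$, is exactly the content hidden in those citations (it is the Alexandroff-space case of the Zhao--Ho analysis of irreducible sets, and it is also what makes Ern\'e's well-filtration criterion applicable), and your derivation of it is clean: directedness implies irreducibility in general, and for the converse the complements $P\setminus\ua a$, $P\setminus\ua b$ are closed in $\alpha(P)$, so irreducibility forces a common upper bound inside $A$. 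The cycle $(1)\Rightarrow(2)\Rightarrow(3)\Rightarrow(6)\Rightarrow(5)\Rightarrow(4)\Rightarrow(1)$ then works as you describe; the steps $(3)\Rightarrow(6)$, $(6)\Rightarrow(5)$, $(5)\Rightarrow(4)$ and $(4)\Rightarrow(1)$ are all straightforward, with $(4)\Rightarrow(1)$ correctly exploiting the $\dc$ property you established first, together with the paper's remark that Noetherianness (ACC) amounts to every directed set having a largest element. Two minor points: in $(2)\Rightarrow(3)$ you invoke ``well-filtered $\Rightarrow$ $d$-space'', which in this paper only appears later (Corollary on well-filtered spaces being $d$-spaces); since it is a standard external fact this is harmless, but you could keep the argument entirely local by noting that $\mathcal S^u(X)\subseteq\mk(X)$ and applying Proposition \ref{d-spacecharac1}(4) directly. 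Also, your phrase ``a directed lower set'' should be read as nonempty throughout, matching the paper's convention that irreducible and directed sets are nonempty. What your route buys is a proof readable inside this paper without consulting the two references; what the paper's route buys is brevity and the extra generality of the cited theorems.
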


\begin{theorem}\label{soberequiv} For a $T_0$ space $X$, the following conditions are equivalent:
	\begin{enumerate}[\rm (1)]
		\item $X$ is sober.
		\item $X$ is a $\mathsf{DC}$ $d$-space.
        \item $X$ is a well-filtered $\mathsf{DC}$ space.
		\item $X$ is a well-filtered Rudin space.
		\item $X$ is a well-filtered $\mathsf{WD}$ space.
	\end{enumerate}
\end{theorem}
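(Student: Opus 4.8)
The plan is to run the cycle $(1)\Rightarrow(3)\Rightarrow(4)\Rightarrow(5)\Rightarrow(1)$ together with the side equivalence $(1)\Leftrightarrow(2)$, leaning throughout on the containments and implications already recorded. Almost every step will be immediate from those facts; the only one carrying genuine content is $(5)\Rightarrow(1)$.

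For the routine part: $(1)\Rightarrow(3)$ holds because a sober space is well-filtered (the implication chain following Definition \ref{weakd-space}) and is a $\mathsf{DC}$ space (Corollary \ref{SDRWspacerelation}); $(3)\Rightarrow(4)$ and $(4)\Rightarrow(5)$ are just the inclusions $\mathsf{DC}\Rightarrow\mathsf{RD}$ and $\mathsf{RD}\Rightarrow\mathsf{WD}$ from Corollary \ref{SDRWspacerelation}, with well-filteredness carried along untouched. For $(1)\Rightarrow(2)$: sobriety gives $\ir_c(X)=\mathcal S_c(X)$, and since a singleton is directed one always has $\mathcal S_c(X)\subseteq\mathcal D_c(X)\subseteq\ir_c(X)$ (Lemma \ref{DRIsetrelation}); hence $\mathcal D_c(X)=\mathcal S_c(X)$, so $X$ is a $\mathsf{DC}$ space and, by Proposition \ref{d-spacecharac1}, also a $d$-space. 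For $(2)\Rightarrow(1)$: in a $\mathsf{DC}$ $d$-space we have $\ir_c(X)=\mathcal D_c(X)$ by the $\mathsf{DC}$ property and $\mathcal D_c(X)=\mathcal S_c(X)$ by Proposition \ref{d-spacecharac1}, so every irreducible closed set is the closure of a point; uniqueness of that point is exactly the $T_0$ axiom, and therefore $X$ is sober.

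The crux is $(5)\Rightarrow(1)$, and the idea is to apply the defining property of $\wdd$ sets to $X$ regarded as its own well-filtered target space. Let $X$ be a well-filtered $\wdd$ space and fix $A\in\ir_c(X)$. Then $A\in\wdd(X)$, so Definition \ref{WDspace} applied to the continuous identity map $\mathrm{id}_X\colon X\longrightarrow X$ — which is a legitimate choice of $f$ precisely because $X$ is assumed well-filtered — produces a unique $y_A\in X$ with $\overline{\{y_A\}}=\overline{\mathrm{id}_X(A)}=\overline{A}=A$. Thus every irreducible closed subset of $X$ is the closure of a unique point, i.e.\ $X$ is sober.

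I do not expect a serious obstacle anywhere; the one subtlety worth flagging is that the $\wdd$-set definition already bundles in the uniqueness clause, so both halves of the definition of sobriety fall out simultaneously once one observes that $X$ itself qualifies as an admissible codomain. Everything else is bookkeeping with the inclusions $\mathcal D_c(X)\subseteq\mathsf{RD}(X)\subseteq\mathsf{WD}(X)\subseteq\ir_c(X)$ of Proposition \ref{DRWIsetrelation} and the characterization of $d$-spaces via $\mathcal D_c(X)=\mathcal S_c(X)$.
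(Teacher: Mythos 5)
Your proposal is correct and follows essentially the same route as the paper: the crux $(5)\Rightarrow(1)$ is handled exactly as in the paper's proof, by applying Definition \ref{WDspace} to the identity map $\mathrm{id}_X\colon X\longrightarrow X$ (admissible because $X$ is assumed well-filtered), while the remaining implications are the same bookkeeping via Corollary \ref{SDRWspacerelation}. Your only divergence is cosmetic: you spell out $(1)\Leftrightarrow(2)$ via $\mathcal D_c(X)=\mathcal S_c(X)$ and Proposition \ref{d-spacecharac1}, a detail the paper leaves implicit.
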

\begin{proof}
	By Corollary \ref{SDRWspacerelation} we only need to check (5) $\Rightarrow$ (1).
	Assume $X$ is a well-filtered $\mathsf{WD}$ space. Let $A\in\ir_c(X)$. Since the identity $id_X : X\longrightarrow X$ is continuous, there is a unique $x\in X$ such that $\overline{A}=\overline{\{x\}}$. So $X$ is sober.
\end{proof}

\begin{lemma}\label{LHCdirected} \emph{(\cite{E_20182})}
	Let $X$ be a locally hypercompact $T_0$ space and $A\in\ir(X)$. Then there exists a directed subset $D\subseteq\da A$ such that $\overline{A}=\overline{D}$.
\end{lemma}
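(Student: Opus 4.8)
The plan is to reduce the statement to the poset form of Rudin's Lemma (Corollary~\ref{rudin}) applied to $X$ with its specialization order. Since $\da a=\overline{\{a\}}\subseteq\overline{A}$ for each $a\in A$, we have $\da A\subseteq\overline{A}$, so any directed $D\subseteq\da A$ automatically satisfies $\overline{D}\subseteq\overline{A}$. Hence it suffices to produce a directed set $D\subseteq\da A$ that meets every open set meeting $A$: for such a $D$ every open neighbourhood of every point of $A$ meets $D$, i.e. $A\subseteq\overline{D}$, and therefore $\overline{D}=\overline{A}$.

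To build $D$, first put $\mathcal{U}_A=\{U\in\mathcal O(X):U\cap A\neq\emptyset\}$. Since $A$ is irreducible, $\mathcal{U}_A$ is closed under finite intersections, and it is nonempty because $A\neq\emptyset$. By local hypercompactness, for each $U\in\mathcal{U}_A$ choose a point $a_U\in A\cap U$ and a finite set $F_U\in X^{(<\omega)}$ with $a_U\in\ii\,\ua F_U\subseteq\ua F_U\subseteq U$. The key point is that $\mathcal{F}:=\{\ua F_U:U\in\mathcal{U}_A\}\subseteq\mathbf{Fin}~X$ is a filtered family: given $U,V\in\mathcal{U}_A$, the set $W:=\ii\,\ua F_U\cap\ii\,\ua F_V$ is open and, being the intersection of two open sets each meeting the irreducible set $A$ (they contain $a_U$, resp. $a_V$), it meets $A$, so $W\in\mathcal{U}_A$ and $\ua F_W\subseteq W\subseteq\ua F_U\cap\ua F_V$. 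Moreover each $\ua F_U$ meets the nonempty lower set $\da A$, since $a_U\in A\cap\ua F_U$; thus $\mathcal{F}\subseteq\Diamond(\da A)$.

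Rudin's Lemma now gives a directed $D\subseteq\da A$ with $\mathcal{F}\subseteq\Diamond\,\da D$, i.e. $\ua F_U\cap\da D\neq\emptyset$ for every $U\in\mathcal{U}_A$. Because $\ua F_U$ is an upper set, $\ua F_U\cap\da D\neq\emptyset$ implies $\ua F_U\cap D\neq\emptyset$, and then $\ua F_U\subseteq U$ gives $D\cap U\neq\emptyset$. So $D$ meets every member of $\mathcal{U}_A$, which by the first paragraph yields $\overline{D}=\overline{A}$, as required.

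I expect the only genuine difficulty to be the directedness of $D$, and this is precisely why Rudin's Lemma is invoked rather than a direct selection argument: if one simply picks one point out of each approximant $F_U$, a common refinement $F_W$ of $F_U$ and $F_V$ only dominates \emph{some} element of $F_U$ and \emph{some} element of $F_V$, not the two points previously selected, so the naive choice need not be directed. Presenting the hypercompact neighbourhoods as a filtered family in $\mathbf{Fin}~X$ delegates exactly this bookkeeping to Rudin's Lemma. A small routine point is the equivalence $\ua F_U\cap\da D\neq\emptyset\Leftrightarrow\ua F_U\cap D\neq\emptyset$, valid since $\ua F_U$ is an upper set.
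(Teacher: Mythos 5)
Your proof is correct. The paper states this lemma without proof, merely citing Ern\'e \cite{E_20182}, and your argument is essentially the standard one from that source, carried out with exactly the tools the paper has already set up: the family $\{\ua F_U : U\in\mathcal U_A\}$ of hypercompact approximants is filtered by irreducibility of $A$, meets the lower set $\da A$, and Corollary~\ref{rudin} (the poset form of Rudin's Lemma, applied to $X$ with its specialization order) then supplies the directed set $D\subseteq\da A$; the verification that $\overline D=\overline A$ via $\ua F_U\subseteq U$ is handled correctly, including the small point that $\ua F_U\cap\da D\neq\emptyset$ upgrades to $\ua F_U\cap D\neq\emptyset$ because $\ua F_U$ is an upper set.
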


\begin{remark}\label{LHCdirectedremark} For $C\subseteq X$, we have $C^{\delta}=\bigcap \{\downarrow x: C\subseteq \downarrow x\}=\bigcap \{\downarrow x: \overline{C}\subseteq \downarrow x\}=\overline{C}^{\delta}$. So in Lemma \ref{LHCdirected} we also have $A^{\delta}=D^{\delta}$.
\end{remark}

By Corollary \ref{SDRWspacerelation} and Lemma \ref{LHCdirected}, we get the following corollary.

\begin{corollary}\label{erne1}
	If $X$ is a locally hypercompact $T_0$ space, then it is a $\mathsf{DC}$ space. Therefore, it is a Rudin space and a $\wdd$ space.
\end{corollary}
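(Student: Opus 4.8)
The plan is to derive this as an immediate consequence of two facts already established in the excerpt: the chain of implications Sober $\Rightarrow$ $\mathsf{DC}$ $\Rightarrow$ $\mathsf{RD}$ $\Rightarrow$ $\mathsf{WD}$ from Corollary \ref{SDRWspacerelation}, and Erné's Lemma \ref{LHCdirected}, which says that in a locally hypercompact $T_0$ space every irreducible subset $A$ admits a directed subset $D \subseteq \da A$ with $\overline{A} = \overline{D}$. Given these, essentially nothing remains to be done, so the ``proof'' is really a short unpacking of definitions.

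First I would fix a locally hypercompact $T_0$ space $X$ and take an arbitrary $A \in \ir_c(X)$. Applying Lemma \ref{LHCdirected} yields a directed set $D \subseteq \da A \subseteq X$ with $\overline{A} = \overline{D}$; since $A$ is closed this gives $A = \overline{A} = \overline{D} \in \mathcal{D}_c(X)$. Hence $\ir_c(X) \subseteq \mathcal{D}_c(X)$. The reverse inclusion $\mathcal{D}_c(X) \subseteq \ir_c(X)$ always holds, because every directed subset is irreducible (noted right after Lemma \ref{irrimage}) and so its closure is an irreducible closed set. Therefore $\ir_c(X) = \mathcal{D}_c(X)$, which is exactly the definition of a $\mathsf{DC}$ space (Definition \ref{DCspace}).

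Finally, invoking Corollary \ref{SDRWspacerelation}, being a $\mathsf{DC}$ space implies being a Rudin space and a $\wdd$ space, which finishes the argument.

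There is no genuine obstacle here: the entire content is carried by Lemma \ref{LHCdirected}, and the only thing to verify is the trivial bookkeeping that a closed irreducible set equal to the closure of a directed set lies in $\mathcal{D}_c(X)$, together with the routine reverse inclusion. If one wanted to spell out more, the only place worth a sentence is recalling why $\mathcal{D}_c(X) \subseteq \ir_c(X)$, but even that is standard.
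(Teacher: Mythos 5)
Your proof is correct and follows essentially the same route as the paper, which likewise derives the corollary directly from Lemma \ref{LHCdirected} together with Corollary \ref{SDRWspacerelation}. The only difference is that you spell out the (routine) verification that $\ir_c(X)=\mathcal D_c(X)$, which the paper leaves implicit.
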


\begin{theorem}\label{LCrudin}
	Every locally compact $T_0$ space is a Rudin space.
\end{theorem}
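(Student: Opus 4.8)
The plan is to prove the stronger fact that in a locally compact $T_0$ space $X$ every irreducible closed set has the Rudin property; since $\mathsf{RD}(X)\subseteq\ir_c(X)$ always holds (Lemma~\ref{DRIsetrelation}), this yields $\ir_c(X)=\mathsf{RD}(X)$, i.e.\ $X$ is a Rudin space. As a preliminary remark I would note that in a $T_0$ space the compact neighbourhoods witnessing local compactness may be taken saturated: if $Q$ is compact with $x\in\ii\,Q\subseteq Q\subseteq U$, then $\ua Q\in\mk(X)$, $x\in\ii\,\ua Q$, and $\ua Q\subseteq\ua U=U$ since $U$ is an upper set.

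Fix $A\in\ir_c(X)$. The filtered family I would use is
\[
\mathcal K_A=\{K\in\mk(X) : A\cap\ii\,K\neq\emptyset\}.
\]
(The coarser choice $\{K\in\mk(X):A\cap K\neq\emptyset\}$ is \emph{not} in general filtered, which is precisely why one passes to interiors.) First I would check that $\mathcal K_A$ is nonempty and filtered: nonemptiness follows by applying local compactness to any point of the nonempty set $A$ and the neighbourhood $X$; for filteredness, given $K_1,K_2\in\mathcal K_A$, irreducibility of $A$ yields a point $x\in A\cap\ii\,K_1\cap\ii\,K_2$, and local compactness applied to $x$ and the open set $\ii\,K_1\cap\ii\,K_2$ produces $K_3\in\mk(X)$ with $x\in\ii\,K_3\subseteq K_3\subseteq K_1\cap K_2$, so that $K_3\in\mathcal K_A$ and $K_3\subseteq K_1,\,K_3\subseteq K_2$.

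Next I would verify $A\in m(\mathcal K_A)$. That $A$ meets every member of $\mathcal K_A$ is immediate, since $A\cap K\supseteq A\cap\ii\,K\neq\emptyset$. For minimality, suppose $B\in\mathcal C(X)$ with $B\subseteq A$, $B\neq A$, and $B$ meets every member of $\mathcal K_A$. Choosing $x\in A\setminus B$, the set $X\setminus B$ is an open neighbourhood of $x$, so local compactness gives $K\in\mk(X)$ with $x\in\ii\,K\subseteq K\subseteq X\setminus B$; then $x\in A\cap\ii\,K$ shows $K\in\mathcal K_A$, while $K\cap B=\emptyset$, a contradiction. Hence $A=B$, so $A=\overline A\in m(\mathcal K_A)$ and $A$ has the Rudin property.

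The argument is short, and its one genuinely delicate point --- the ``hard part'' --- is the choice of the family: the obvious family of compact saturated sets hitting $A$ need not be filtered, whereas imposing the interior condition $A\cap\ii\,K\neq\emptyset$ simultaneously restores filteredness (using one instance of local compactness together with the irreducibility of $A$) and still forces minimality of $A$ (using a second instance of local compactness, now inside $X\setminus B$).
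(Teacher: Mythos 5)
Your proof is correct and follows essentially the same route as the paper: you use the identical filtered family $\mathcal K_A=\{K\in\mk(X) : A\cap\ii\,K\neq\emptyset\}$ and verify nonemptiness, filteredness (via irreducibility plus local compactness) and minimality of $A$ (via a compact neighbourhood inside $X\setminus B$) exactly as in the paper's argument. The only addition is your remark on passing to saturated compact neighbourhoods, which is a harmless standard observation.
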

\begin{proof}
	Suppose that $X$ is a locally compact $T_0$ space and $A\in \ir_c(X)$. Let $\mathcal K_A=\{K \in \mk (X) : A\cap\ii \, K\neq\emptyset\}$.
	
	{Claim 1:} $\mathcal K_A\neq\emptyset$.
	
	Let $a\in A$. Since $X$ is locally compact, there exists a $K\in \mk (X)$ such that $a\in\ii \, K$. So $a\in A\cap\ii \, K$ and $K\in \mathcal K_A$.
	
	{Claim 2:} $\mathcal K_A$ is filtered.
	
	Let $K_1, K_2\in\mathcal K_A$, that is, $A\cap\ii \, K_1\neq\emptyset$ and $A\cap\ii \, K_2\neq\emptyset$. Since $A$ is irreducible, $A\cap\ii \,  F_1\cap\ii \, K_2\neq\emptyset$. Let $x\in A\cap\ii \, K_1\cap\ii \, K_2$. By the local compactness of $X$ again, there exists a $K_3\in \mk (X)$ such that $x\in \ii \, K_3\subseteq K_3\subseteq \ii \, K_1\cap\ii \, K_2$. Thus $K_3\in\mathcal K_A$ and $K_3\subseteq K_1\cap K_2$. So $\mathcal K_A$ is filtered.

    {Claim 3:}  $A\in m(\mathcal K_A)$.	

Clearly, $\mathcal K_A\subseteq \Diamond A$. If $B$ is a proper closed subset of $A$, then there is $a\in A\setminus B$. Since $X$ is locally compact, there is $K_a\in \mk (X)$ such that $a\in \ii \, K_a\subseteq K_a\subseteq X\setminus B$. Then $K_a\in \mathcal K_A$ but $K_a\cap B=\emptyset$, and whence $B\not\in M(\mathcal K_A)$, proving $A\in m(\mathcal K_A)$. Thus $X$ is a Rudin space.
\end{proof}

\begin{definition}\label{subsetWaybelow} For a $T_0$ space and $A, B\subseteq X$, we say $A$ is \emph{way below} $B$, or $A$ is \emph{compact relative to} $B$, written as $A\ll B$, if for each $\{U_i : i\in I\}\subseteq \mathcal O(X)$, $B\subseteq \bigcup_{i\in I} U_i$ implies $A\subseteq\bigcup_{i\in J} U_i$ for some finite subset $J$ of $I$.
\end{definition}

Clearly, we have $A\ll B \Rightarrow \ua A\subseteq \ua B$, and if $A, B, G, H\in \mathbf{up}(X)$, then $G\subseteq A\ll B \subseteq H\Rightarrow G\ll H$.

\begin{definition}\label{waybelowsequence} Let $X$ be a $T_0$ space and $\mathcal F=\{F_\infty, ..., F_n, ..., F_2, F_1\}\subseteq \mathbf{up}(X)$. $\mathcal F$ is called a \emph{bounded decreasing $\ll$-sequence} in $X$ if $F_\infty\ll ...\ll F_n\ll ...\ll F_2\ll F_1$. Denote the minimal set $F_\infty$ in $\mathcal F$ by $min ~\!\mathcal F$ and the maximal set $F_1$ in $\mathcal F$ by $max~\! \mathcal F$.
\end{definition}

\begin{lemma}\label{waybelowimage} Let $f:X\longrightarrow Y$ be a continuous mapping and $A, B\subseteq X$. If $A\ll B$, then $f(A)\ll f(B)$.
\end{lemma}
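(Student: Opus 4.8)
The plan is to unwind Definition \ref{subsetWaybelow} directly and simply pull an open cover of $f(B)$ back along $f$. First I would fix an arbitrary family $\{V_i : i\in I\}\subseteq \mathcal O(Y)$ with $f(B)\subseteq \bigcup_{i\in I} V_i$. Since $B\subseteq f^{-1}(f(B))$, this gives $B\subseteq f^{-1}\!\left(\bigcup_{i\in I} V_i\right)=\bigcup_{i\in I} f^{-1}(V_i)$, and continuity of $f$ guarantees that each $f^{-1}(V_i)$ is open in $X$, so $\{f^{-1}(V_i):i\in I\}$ is a genuine open cover of $B$ in $X$.

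Next I would invoke the hypothesis $A\ll B$ applied to this cover: there is a finite subset $J\subseteq I$ with $A\subseteq \bigcup_{i\in J} f^{-1}(V_i)=f^{-1}\!\left(\bigcup_{i\in J} V_i\right)$. Applying $f$ to both sides yields $f(A)\subseteq \bigcup_{i\in J} V_i$, which is precisely the condition required for $f(A)\ll f(B)$. Since $\{V_i:i\in I\}$ was arbitrary, this finishes the argument.

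I do not expect any real obstacle here: the proof rests only on the two bookkeeping identities $B\subseteq f^{-1}(f(B))$ and the commutation of $f^{-1}$ with unions, together with continuity of $f$. The one point worth a word of care is that $\ll$ is a \emph{relative} notion — there is no standing assumption that $A\subseteq B$ or that either set is saturated or compact in the absolute sense — so the argument must be run entirely at the level of open covers rather than by quoting compactness of $A$; the plan above does exactly this and therefore needs no extra hypotheses.
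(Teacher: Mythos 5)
Your proposal is correct and is essentially identical to the paper's own proof: both pull the cover $\{V_i\}$ back along $f$ to get an open cover of $B$, apply $A\ll B$ to extract a finite subfamily, and push forward to conclude $f(A)\subseteq\bigcup_{i\in J}V_i$. No gaps; your remark that the argument stays at the level of covers (rather than invoking any absolute compactness of $A$) matches the paper's treatment.
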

\begin{proof} Suppose $\{V_i : i\in I\}\subseteq \mathcal O(Y)$. If $f(B)\subseteq \bigcup_{i\in I} V_i$, then $B\subseteq f^{-1} (\bigcup_{i\in I} V_i)=\bigcup_{i\in I} f^{-1}(V_i)$. Since $f$ is continuous and $A\ll B$,  there is a $J\in I^{(<\omega)}$ such that $A\subseteq\bigcup_{i\in J} f^{-1}(V_i)$, and whence $f(A)\subseteq\bigcup_{i\in J} V_i$. Thus $f(A)\ll f(B)$.

\end{proof}

\begin{corollary}\label{d-below-sequence image} Let $X$ and $Y$ be $T_0$ spaces and $f:X\longrightarrow Y$ a continuous mapping. If $\mathcal F\subseteq \mathbf{up}(X)$ is a bounded decreasing $\ll$-sequence in $X$, then $\{\ua f(F) : F\in\mathcal F\}$ is a bounded decreasing $\ll$-sequence in $Y$.
\end{corollary}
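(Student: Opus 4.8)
The plan is to obtain this as a one-line consequence of Lemma~\ref{waybelowimage} once we record a small general fact: for subsets $C,D$ of any topological space $Z$, we have $C\ll D$ if{}f $\ua C\ll\ua D$. Indeed, every open set is an upper set, so for any family $\{U_i:i\in I\}\subseteq\mathcal O(Z)$ one has $D\subseteq\bigcup_{i\in I}U_i$ if{}f $\ua D\subseteq\bigcup_{i\in I}U_i$, and for each $J\in I^{(<\omega)}$ one has $C\subseteq\bigcup_{i\in J}U_i$ if{}f $\ua C\subseteq\bigcup_{i\in J}U_i$ (the union over $J$ being open, hence an upper set); thus the defining condition of $C\ll D$ and that of $\ua C\ll\ua D$ are literally the same statement about covers. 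The same remark shows $C\ll D$ if{}f $\ua C\ll D$ if{}f $C\ll\ua D$.

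Next I would write $\mathcal F=\{F_\infty,\dots,F_n,\dots,F_2,F_1\}\subseteq\mathbf{up}(X)$ with $F_\infty\ll\cdots\ll F_n\ll\cdots\ll F_2\ll F_1$. For each consecutive pair $F_{n+1}\ll F_n$, Lemma~\ref{waybelowimage} gives $f(F_{n+1})\ll f(F_n)$ in $Y$, and by the fact above $\ua f(F_{n+1})\ll\ua f(F_n)$; running this along the whole chain yields $\ua f(F_\infty)\ll\cdots\ll\ua f(F_n)\ll\cdots\ll\ua f(F_2)\ll\ua f(F_1)$. Since each $\ua f(F)$ is by construction an upper set of $Y$, the family $\{\ua f(F):F\in\mathcal F\}$ lies in $\mathbf{up}(Y)$, and hence is a bounded decreasing $\ll$-sequence in $Y$, with $min\,\{\ua f(F):F\in\mathcal F\}=\ua f(min\,\mathcal F)$ and $max\,\{\ua f(F):F\in\mathcal F\}=\ua f(max\,\mathcal F)$.

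There is no genuine obstacle here: the entire content is Lemma~\ref{waybelowimage} together with the observation that $\ll$ is insensitive to passing to upper closures, which is immediate from the fact that open sets are upper sets in the specialization order. One could even absorb that observation into the remark following Definition~\ref{subsetWaybelow} and then state the present corollary with a one-sentence proof.
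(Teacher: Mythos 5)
Your proof is correct and follows the same route the paper intends: the corollary is stated there without proof as an immediate consequence of Lemma~\ref{waybelowimage}. Your only addition is to make explicit the (true and easily verified) observation that $\ll$ between subsets is unchanged under taking upper closures, since open sets are upper sets in the specialization order, which neatly fills the small step the paper leaves implicit.
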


\begin{theorem}\label{CorecomptWD} Every core compact $T_0$ space is well-filtered determined.
\end{theorem}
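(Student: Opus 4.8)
The plan is to unwind the definition and reduce the whole statement to producing a single point. By Proposition \ref{DRWIsetrelation}, which already gives $\wdd(X)\subseteq\ir_c(X)$, it suffices to show that every $A\in\ir_c(X)$ is a $\wdd$ set. So I would fix $A\in\ir_c(X)$, a continuous map $f:X\longrightarrow Y$ into a well-filtered space $Y$, and put $B=\overline{f(A)}$; by Lemmas \ref{irrimage} and \ref{irrsubspace}, $B\in\ir_c(Y)$, and the task is to find $y\in Y$ with $B=\overline{\{y\}}$ (uniqueness being automatic since $Y$ is $T_0$). First I would reduce this to an intersection property: set $\mathcal G_B=\{V\in\mathcal O(Y):V\cap B\neq\emptyset\}$; irreducibility of $B$ makes $\mathcal G_B$ an open filter of $\mathcal O(Y)$, and if $y\in B\cap\bigcap\mathcal G_B$ then $B=\overline{\{y\}}$ (clearly $\overline{\{y\}}\subseteq B$, and if some $z\in B\setminus\overline{\{y\}}$ then $Y\setminus\overline{\{y\}}\in\mathcal G_B$ yields the contradiction $y\notin\overline{\{y\}}$). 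Hence it is enough to prove $B\cap\bigcap\mathcal G_B\neq\emptyset$.

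Next I would find a nonempty filtered family $\mathcal K\subseteq\mk(Y)$ with (i) $K\cap B\neq\emptyset$ for all $K\in\mathcal K$, and (ii) $\mathcal K$ cofinal in $\mathcal G_B$, i.e. every $V\in\mathcal G_B$ contains some $K\in\mathcal K$. Granting such a $\mathcal K$, Corollary \ref{WFmincorollary} gives $\bigcap\mathcal K\in\mk(Y)$; were $\bigcap\mathcal K\cap B=\emptyset$, well-filteredness applied to $\bigcap\mathcal K\subseteq Y\setminus B$ would give $K\subseteq Y\setminus B$ for some $K\in\mathcal K$, contradicting (i); so $\bigcap\mathcal K\cap B\neq\emptyset$, and by (ii) any point of this set lies in every $V\in\mathcal G_B$, which completes the reduction. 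Core compactness enters in constructing $\mathcal K$. Since $\mathcal O(X)$ is a continuous lattice and $\mathcal O(X)\cong\mathcal O(X^s)$, the sobrification $X^s$ is core compact and sober, hence locally compact by Theorem \ref{SoberLC=CoreC}, hence a Rudin space by Theorem \ref{LCrudin}; more precisely, by the proof of that theorem the irreducible closed set $\overline{\eta_X(A)}$ of $X^s$ is minimal among the closed sets meeting every member of the filtered family $\{Q\in\mk(X^s):\overline{\eta_X(A)}\cap\ii\,Q\neq\emptyset\}$, each such $Q$ having nonempty interior meeting $\overline{\eta_X(A)}$. Since $\eta_X(X)$ is dense in $X^s$, the traces $\eta_X^{-1}(\ii\,Q)$ are open sets of $X$ meeting $A$; interpolating inside the continuous lattice $\mathcal O(X)$ and using that each $\ua a$ ($a\in A$) is supercompact, one assembles bounded decreasing $\ll$-sequences in $X$, pushes them forward along $f$ by Lemma \ref{waybelowimage} and Corollary \ref{d-below-sequence image}, and reads off compact saturated subsets of $Y$ from the bottoms of the resulting $\ll$-sequences to form $\mathcal K$.

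The hard part will be exactly this construction of $\mathcal K$. The cofinality clause (ii) is cheap: given $V\in\mathcal G_B$, pick $a\in A\cap f^{-1}(V)$ and take $\ua f(a)\in\mk(Y)$, which meets $B$ and lies in $V$. What is delicate is filteredness, because $\{\ua f(a):a\in A\}$ need not be filtered — $f(A)$, being merely irreducible, need not be up-directed and its finite subsets need not have common upper bounds in $Y$. Circumventing this is precisely why one passes to the locally compact space $X^s$, which has enough compact saturated sets, and transfers a filtered family of them across $f$; the genuine subtlety is that continuous images of open sets need not be compact, so before applying $f$ one must sandwich the relevant opens between compact saturated sets of $X^s$ (using local compactness of $X^s$, i.e. interpolation in $\mathcal O(X)$) and then check Smyth-order monotonicity of the assignment in order to keep the family filtered. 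Once $\mathcal K$ has been produced, the well-filteredness argument of the previous paragraph finishes the proof, and the immediate corollary that core compact well-filtered spaces are sober follows by combining with Theorem \ref{soberequiv}.
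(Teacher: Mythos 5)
Your opening reductions are fine: it does suffice to treat $A\in\ir_c(X)$, to set $B=\overline{f(A)}$, and to produce a nonempty filtered family $\mathcal K\subseteq\mk(Y)$ such that every member meets $B$ and every open $V$ meeting $B$ contains some member; the paper's own proof ends in essentially this way (its Claims 5 and 6, with $\mathcal K=\{K_{\mathcal F}:\mathcal F\in\mathfrak F_A\}$). But the construction of $\mathcal K$ is the entire content of the theorem, and your sketch of it both omits the key idea and, where it is specific, follows a route that cannot work. You propose to obtain compact saturated sets from the sobrification $X^s$ (which is indeed locally compact, being core compact and sober) and then ``push them forward along $f$.'' The obstruction is that $f$ is defined on $X$ and $Y$ is only well-filtered, not sober, so $f$ has no continuous extension $X^s\longrightarrow Y$; a compact saturated $Q\subseteq X^s$ has no compact image in $Y$, and its trace $\eta_X^{-1}(Q)$ or $\eta_X^{-1}(\ii\,Q)$ in $X$ is not compact, so $\ua f(\cdot)$ applied to it is not in $\mk(Y)$. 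The same failure appears in the frame-theoretic disguise: a Scott-open filter of $\mathcal O(Y)$ need not be of the form $\Phi(K)$ when $Y$ is merely well-filtered (that is exactly the Hofmann--Mislove gap between well-filtered and sober), so you cannot transport compactness through $f^{-1}$ either.

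You also locate the difficulty in the wrong place: filteredness of the family is the easy part (the paper gets it from irreducibility of $A$ plus interpolation in the continuous lattice $\mathcal O(X)$). The genuine crux is proving that, for a bounded decreasing $\ll$-sequence $U_\infty\ll\cdots\ll U_2\ll U_1$ of opens of $X$ meeting $A$, the set $K_{\mathcal F}=\bigcap_{n}\ua f(U_n)$ is compact saturated in $Y$ and, more precisely, that $K_{\mathcal F}\subseteq W$ forces $\ua f(U_n)\subseteq W$ for some $n$ (the paper's Claim~3). This is where the well-filteredness of $Y$ must be used in an essential, hands-on way: one runs Zorn's Lemma over the closed sets of $Y$ disjoint from $W$ that meet every $f(U_n)$, takes a minimal such $E$, picks points $e_n\in f(U_n)\cap E$, shows each $\ua\{e_m:m\ge n\}$ is compact by a minimality argument, and intersects this filtered family of compacts using well-filteredness of $Y$ to reach a contradiction. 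Nothing in your proposal produces or replaces this step, so as it stands the argument has a hole exactly where the theorem's difficulty lies.
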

\begin{proof}
Let $X$ be a core compact $T_0$ space and $A\in \ir_c (X)$. We need to show $A\in\wdd (X)$. Suppose that $ f : X\longrightarrow Y$ is a continuous mapping from $X$ to a well-filtered space $Y$. Let $\mathfrak{F}_A=\{\mathcal F : \mathcal F\subseteq \mathcal O(X) \mbox{ is a bounded decreasing $\ll$-sequence in $X$ with } A\cap min ~\!\mathcal F\neq\emptyset\}$. Define a partial order $\prec$ on $\mathfrak{F}_A$ by $\mathcal F_1\prec\mathcal F_2$ if{}f $max ~\!\mathcal F_1\subseteq min ~\!\mathcal F_2$. For each $\mathcal F\in \mathfrak{F}_A$, let $K_{\mathcal F}=\bigcap\limits_{U\in \mathcal F\setminus \{min ~\!\mathcal F\}}\ua f(U)$.

{Claim 1:} $\mathfrak{F}_A\neq\emptyset$.

Select a point $a\in A$ and a $U\in \mathcal O(X)$. Then by the core compactness of $X$, there is a sequence $\mathcal F_a=\{U_\infty, ..., U_n, ..., U_2, U_1\}\subseteq \mathcal O(X)$ such that $a\in U_\infty$ and $U_\infty\ll ...\ll U_n\ll ...\ll U_2\ll U_1=U$. Then $\mathcal F_a\in \mathfrak{F}_A$.

{Claim 2:} $\mathfrak{F}_A$ is $\prec$-filtered.

Suppose that $\mathcal F_1, \mathcal F_2\in \mathfrak{F}_A$. Then $A\cap min~\!\mathcal F_1\neq\emptyset\neq A\cap min~\!\mathcal F_2$, and hence $A\cap min~\!\mathcal F_1\cap min~\!\mathcal F_2\neq\emptyset$ by the irreducibility of $A$. Let $W_1=min~\!\mathcal F_1\cap min ~\!\mathcal F_2$ and select a point $b\in A\cap W_1$. Then by the core compactness of $X$, there is a sequence $\mathcal F_3=\{W_\infty, ..., W_n, ..., W_2, W_1\}\subseteq \mathcal O(X)$ such that $b\in W_\infty$ and $W_\infty\ll ...\ll W_n\ll ...\ll W_2\ll W_1=W$. Then $\mathcal F_3\in \mathfrak{F}_A$, $\mathcal F_3\prec \mathcal F_1$ and  $\mathcal F_3\prec \mathcal F_2$.

{Claim 3:} For $\mathcal F\in \mathfrak{F}_A$ and $W\in \mathcal O(Y)$, if $K_{\mathcal F}\subseteq W$, then $\ua f(U)\subseteq W$ for some $U\in \mathcal F\setminus \{min~\! \mathcal F\}$.

Let $\mathcal F=\{U_\infty, ..., U_n, ..., U_2, U_1\}\subseteq \mathcal O(X)$ and $U_\infty\ll ...\ll U_n\ll ...\ll U_2\ll U_1$. Assume, on the contrary, that $\ua f(U_n)\not\subseteq W$ for all $n\in N$. Let $\mathcal B=\{B\in\mathcal C(Y): B\subseteq Y\setminus W \text{ and } \mathcal \ua f(U_n)\cap B\neq\emptyset \text{ for all }  n\in N\}$. Then we have the following two facts.
	
	{(b1)} $\mathcal B\neq\emptyset$ because $Y\setminus W\in\mathcal B$.
	
	{(b2)} For any filtered family $\mathcal Z\subseteq\mathcal B$, $\bigcap\mathcal Z\in\mathcal B$.
	
	Let $Z=\bigcap\mathcal Z$. Then $Z\in \mathcal C(Y)$ and $Z\subseteq Y\setminus W$. Assume $Z\not\in\mathcal B$. Then there exists $n\in N$ such that $f(U_n)\cap Z=\emptyset$. Then $U_{n+1}\ll U_n\subseteq \bigcup_{C\in \mathcal Z}f^{-1}(Y\setminus C)$, and consequently, there is a $C\in \mathcal Z$ such that $U_{n+1}\subseteq f^{-1}(Y\setminus C)$, that is, $f(U_{n+1})\cap C=\emptyset$, which is a contradiction with $C\in \mathcal Z\subseteq\mathcal B$. Therefore, $\bigcap\mathcal Z\in\mathcal B$.
	
	By Zorn's Lemma, there exists a minimal element $E$ in $\mathcal B$. Since $E=\da E$, $E$ intersects all $f(U_n)$. For each $n\in N$, select an $e_n\in f(U_n)\cap E$ and let $H_n=\{e_m : n\leq m\}$. Now we prove that $\ua H_n\in \mk (Y)$ for all $n\in N$. Suppose that $\{V_d : d\in D\}\subseteq \mathcal O(Y)$ is a directed open cover of $\ua H_n$.

     {(c1)} If for some $d_1\in D$, $H_n\cap (Y\setminus V_{d_1})=H_n\setminus V_{d_1}$ is finite, then $H_n\setminus V_{d_1}\subseteq V_{d_2}$ for some $d_2\in D$ because $H_n\subseteq \bigcup\limits_{d\in D}V_d$. By the directness of $\{V_d : d\in D\}$, $V_{d_1}\cup V_{d_1}\subseteq V_{d_3}$ for some $d_3\in D$. Then $H_n\subseteq V_{d_3}$.

      {(c2)} If for all $d\in D$, $H_n\cap (Y\setminus V_{d})$ is infinite, then $f(U_n)\cap E\cap (Y\setminus V_d)\neq\emptyset$ since $\mathcal F=\{U_\infty, ..., U_n, ..., U_2, U_1\}$ is a bounded decreasing $\ll$-sequence in $X$, and whence $E\cap (Y\setminus V_d)\in \mathcal B$. By the minimality of $E$, $E\cap (Y\setminus V_d)=E$ for all $d\in D$. Therefore, $H_n\subseteq E\subseteq \bigcap_{d\in D}(Y\setminus V_d)=Y\setminus \bigcup_{d\in D}V_d$, which is a contradiction with $H_n\subseteq \bigcup_{d\in D}V_d$.

By (c1) and (c2), $H_n\in  \mk (Y)$. Clearly, $\{\ua H_n : n\in N\}\subseteq \mk (Y)$ is filtered, and whence $H=\bigcap_{n\in N}\ua H_n\in \mk (Y)$ by the well-filteredness of $Y$. It follows that $\emptyset \neq H\subseteq E\cap \bigcap_{n\in N}\ua f(U_n)\subseteq (Y\setminus W)\cap W=\emptyset$ (note that $f(U_1)\supseteq f(U_2)\supseteq ... \supseteq f(U_n)\supseteq ...$), a contradiction, proving Claim 3.

   {Claim 4:} $K_{\mathcal F}\in \mk (Y)$ for each $\mathcal F\in \mathfrak{F}_A$.

Suppose $\{V_i : i\in I\}\subseteq \mathcal O(Y)$ and $K_{\mathcal F}\subseteq \bigcup_{i\in I} V_i$. Then by Claim 3, $\ua f(U)\subseteq \bigcup_{i\in I} V_i$ for some $U\in \mathcal F\setminus \{min~\! \mathcal F\}$, and whence $U\subseteq \bigcup_{i\in I} f^{-1}(V_i)$. Since $\mathcal F$ is a bounded decreasing $\ll$-sequence and $U\neq min~\! \mathcal F$, there is a $U^*\in \mathcal F$ such that $U^{*}\ll U$. It follows that $U^*\subseteq \bigcup_{i\in J} f^{-1}(V_i)$ for some $J\in I^{(<\omega)}$, and consequently, $K_{\mathcal F}=\bigcap\limits_{U\in \mathcal F\setminus \{min~\! \mathcal F\}}\ua f(U)\subseteq\ua f(U^*)\subseteq \bigcup_{i\in J} V_i$. Thus $K_{\mathcal F}\in \mk (Y)$.

   {Claim 5:} $\bigcap_{\mathcal F\in \mathfrak{F}_A} K_{\mathcal F}\in \mk (Y)$.

By Claim 2 and Claim 4, $\{ K_{\mathcal F} : \mathcal F\in \mathfrak{F}_A\}\in \mk (Y)$ if filtered, and whence $\bigcap_{\mathcal F\in \mathfrak{F}_A} K_{\mathcal F}\in \mk (Y)$ by the well-filteredness of $Y$.

 {Claim 6:} $A\in \wdd (X)$.

  We first show that $\bigcap\limits_{\mathcal F\in \mathfrak{F}_A} K_{\mathcal F}\cap\overline{f(A)} \neq\emptyset$. Assume, on the contrary, $\bigcap\limits_{\mathcal F\in \mathfrak{F}_A} K_{\mathcal F} \subseteq Y\setminus \overline{f(A)}$, then by Claim 3, Claim 5 (and its proof) and the well-filteredness of $Y$, there is an $\mathcal F\in \mathfrak{F}_A\}$ such that $\ua f(U)\subseteq Y\setminus \overline{f(A)}$ for some $U\in \mathcal F\setminus \{min~\!\mathcal F\}$, and hence $\emptyset \neq A\cap U\subseteq A\cap f^{-1}(Y\setminus \overline{f(A)})=\emptyset$, a contraction. Therefore, $\bigcap\limits_{\mathcal F\in \mathfrak{F}_A} K_{\mathcal F}\cap\overline{f(A)} \neq\emptyset$. Select a point $y_A\in \bigcap\limits_{\mathcal F\in \mathfrak{F}_A} K_{\mathcal F}\cap \overline{f(A)}$. Then $\overline{\{y_A\}}\subseteq \overline{f(A)}$. On the other hand, for $a\in A$, if $f(a)\not\in \overline{\{y_A\}}$, then $a\in f^{-1}(Y\setminus \overline{\{y_A\}})$. By the core compactness of $X$, there is a sequence $\mathcal F_{a}=\{U_\infty^a, ..., U_n^a, ..., U_2^a, U_1^a\}\subseteq \mathcal O(X)$ such that $a\in U_\infty^a$ and $U_\infty^a\ll ...\ll U_n^a\ll ...\ll U_2^a\ll U_1^a=f^{-1}(Y\setminus \overline{\{y_A\}})$. Then $\mathcal F_{a}\in \mathfrak{F}_A$, and whence
 $$y_A\in  K_{\mathcal F_{a}}\subseteq \ua f(f^{-1}(Y\setminus \overline{\{y_A\}}))\subseteq Y\setminus \overline{\{y_A\}}),$$
 a contradiction. Therefore, $f(A)\subseteq \overline{\{y_A\}})$. Thus $\overline{f(A)}=\overline{\{y_A\}}$, proving $A\in\wdd (X)$.
 \end{proof}

By Theorem \ref{soberequiv} and Theorem \ref{CorecomptWD}, we get the following result, which has been independently obtained by Lawson and Xi (see \cite[Theorem 3.1]{Lawson-Xi}) in a different way.

\begin{theorem}\label{corecwellf sober} Every core compact well-filtered space is sober.
\end{theorem}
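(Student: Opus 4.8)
The plan is to obtain the result as an immediate consequence of the two main theorems just established, namely Theorem \ref{CorecomptWD} (every core compact $T_0$ space is a $\wdd$ space) and the characterization of sobriety in Theorem \ref{soberequiv} (a $T_0$ space is sober if{}f it is a well-filtered $\wdd$ space). Concretely, I would start from a space $X$ that is both core compact and well-filtered, invoke Theorem \ref{CorecomptWD} to conclude that $\ir_c(X)=\wdd(X)$, i.e.\ that $X$ is a $\wdd$ space, and then feed this into the implication $(5)\Rightarrow(1)$ of Theorem \ref{soberequiv} to conclude that $X$ is sober.

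The only real verification needed is that the hypotheses line up: Theorem \ref{soberequiv} requires a well-filtered $\wdd$ space, and we have exactly that, since core compactness gives the $\wdd$ property and well-filteredness is assumed outright. No further argument about the internal structure of $X$ (compact saturated sets, irreducible subsets of $P_S(X)$, Rudin's Lemma, etc.) is needed at this final stage, because all of that work has already been absorbed into the proof of Theorem \ref{CorecomptWD}.

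Accordingly, I do not expect any genuine obstacle here: the substantive difficulty — constructing, for each irreducible closed set $A$ and each continuous map $f$ into a well-filtered space $Y$, the filtered family $\{K_{\mathcal F}\}$ of compact saturated subsets of $Y$ whose intersection meets $\overline{f(A)}$ in a (necessarily unique) generic point — lies entirely inside Theorem \ref{CorecomptWD}. The present statement is the short corollary that packages that construction together with the sobriety criterion. If one wished to be fully self-contained one could instead cite the chain $\text{core compact} \Rightarrow \wdd$ together with $\text{well-filtered} + \wdd \Rightarrow \text{sober}$ and remark that this also reproves, via a different route than \cite{Lawson-Xi}, the positive answer to the Jia--Jung problem.
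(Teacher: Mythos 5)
Your proposal is correct and is exactly the paper's own argument: the paper derives this theorem immediately by combining Theorem \ref{CorecomptWD} (core compact $\Rightarrow$ $\wdd$) with the equivalence (1) $\Leftrightarrow$ (5) of Theorem \ref{soberequiv} (sober $\Leftrightarrow$ well-filtered $\wdd$). No further verification is needed beyond what you stated.
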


Theorem \ref{corecwellf sober} gives a positive answer to Jia-Jung problem \cite{jia-2018} (see \cite[Question 2.5.19]{jia-2018}) and improves a  well-known result that every locally compact well-filtered space is sober (see, e.g., \cite{redbook, Kou}).

By Theorem \ref{SoberLC=CoreC} and Theorem \ref{corecwellf sober}, we get the following corollary.

\begin{corollary}\label{WfLC=CoreC}
	Let $X$ be a well-filtered space. Then $X$ is locally compact if{}f $X$ is core compact.
\end{corollary}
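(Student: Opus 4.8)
The plan is to obtain the corollary as an immediate consequence of the two structural results already established, namely Theorem~\ref{SoberLC=CoreC} (for sober spaces, local compactness and core compactness coincide) and Theorem~\ref{corecwellf sober} (every core compact well-filtered space is sober). Since the statement is an equivalence, I would treat the two implications separately.

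For the implication ``locally compact $\Rightarrow$ core compact'' I would invoke the classical fact that this holds for \emph{every} $T_0$ space, with no extra hypotheses: given $U\in\mathcal O(X)$ and $x\in U$, local compactness produces $K\in\mk(X)$ with $x\in\ii\,K\subseteq K\subseteq U$, so $\ii\,K\ll U$ in $\mathcal O(X)$, and letting $x$ range over $U$ shows $U=\bigcup\{V\in\mathcal O(X):V\ll U\}$. Hence $\mathcal O(X)$ is a continuous lattice, i.e.\ $X$ is core compact. Well-filteredness plays no role here.

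For the converse ``core compact $\Rightarrow$ locally compact'' I would use the hypothesis that $X$ is well-filtered: by Theorem~\ref{corecwellf sober}, a core compact well-filtered space is sober, so $X$ is sober; then Theorem~\ref{SoberLC=CoreC} applies to $X$, and since $X$ is core compact it is therefore locally compact. Putting the two implications together yields the stated equivalence.

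At this point there is essentially no obstacle left to surmount: all the substance has been absorbed into Theorem~\ref{corecwellf sober}, which itself rests on Theorem~\ref{CorecomptWD} (every core compact $T_0$ space is a $\wdd$ space) and on the characterization of sobriety as well-filteredness together with the $\wdd$ property (Theorem~\ref{soberequiv}). The only small point deserving a word of care is to confirm that the meaning of ``locally compact'' invoked in Theorem~\ref{SoberLC=CoreC} is the $\ii\,K$-neighborhood formulation used elsewhere in the paper, which it is; then the proof is a two-line deduction.
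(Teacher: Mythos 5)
Your proposal is correct and follows essentially the same route as the paper, which obtains the corollary directly from Theorem~\ref{corecwellf sober} (core compact well-filtered $\Rightarrow$ sober) combined with Theorem~\ref{SoberLC=CoreC}; the easy direction you spell out (locally compact $\Rightarrow$ core compact for any $T_0$ space) is the standard fact the paper leaves implicit.
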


Figure 1 shows certain relations among some kinds of spaces.

\begin{figure}[ht]
	\centering
	\includegraphics[height=2.2in,width=4.0in]{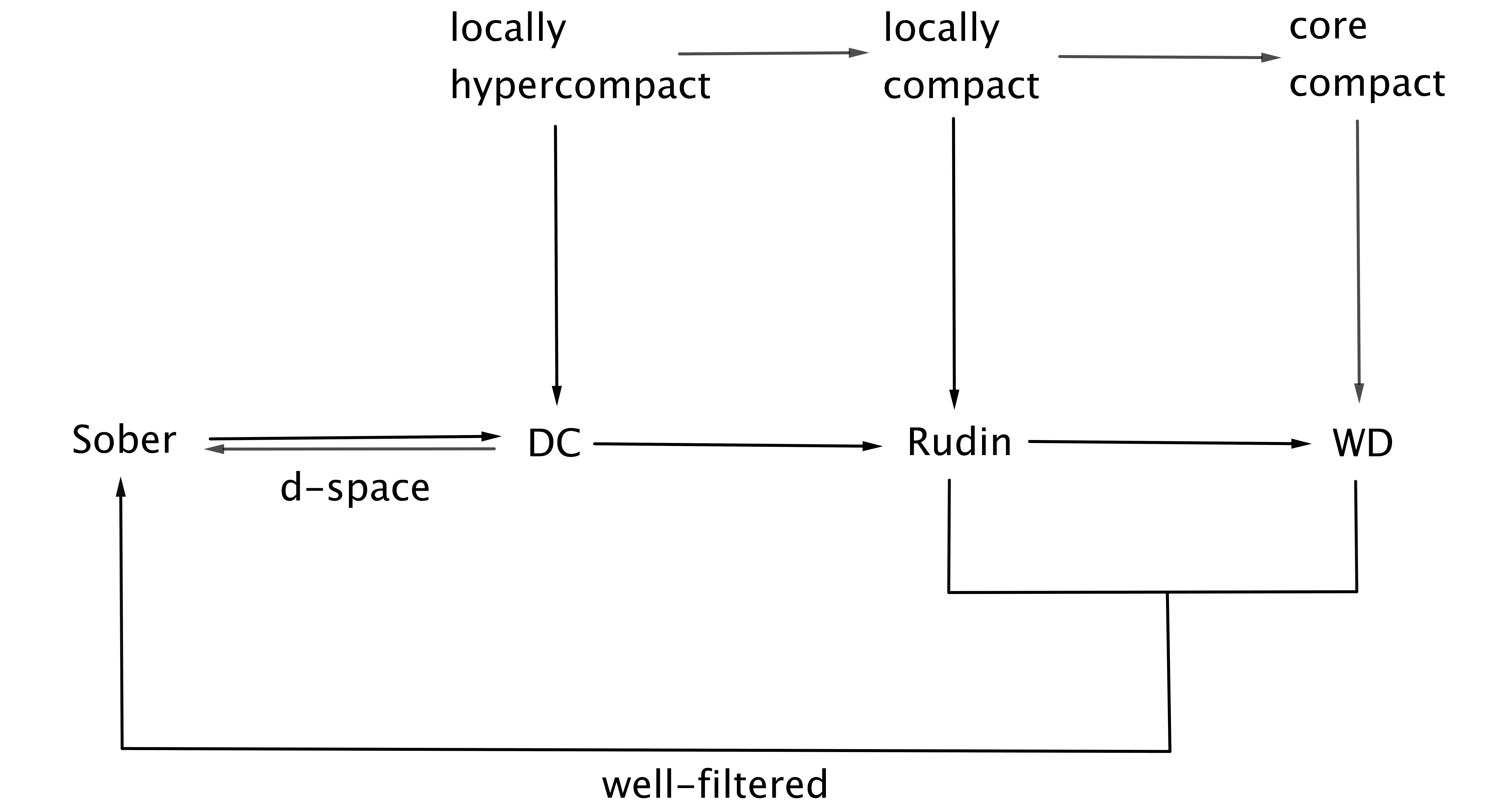}
	\caption{Certain relations among some kinds of spaces}
\end{figure}

\begin{theorem}\label{rudinWF}
	Let $X$ be a $T_0$ space. Consider the following conditions:
 \begin{enumerate}[\rm (1)]
		\item $X$ is sober.
        \item For each $(A, K)\in \ir_c(X)\times\mk (X)$, $max (A)\neq\emptyset$ and $\downarrow (A\cap K)\in \mathcal C(X)$.
        \item $X$ is well-filtered.
\end{enumerate}
Then \emph{(1)} $\Rightarrow$ \emph{(2)} $\Rightarrow$ \emph{(3)}, and all three conditions are equivalent if $X$ is core compact.
\end{theorem}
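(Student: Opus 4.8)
The plan is to establish the chain (1) $\Rightarrow$ (2) $\Rightarrow$ (3) first, and then close the loop (3) $\Rightarrow$ (1) under the hypothesis that $X$ is core compact.

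\emph{For (1) $\Rightarrow$ (2):} Suppose $X$ is sober and take $(A,K)\in\ir_c(X)\times\mk(X)$. Since $A\in\ir_c(X)$, there is a unique $x\in X$ with $A=\overline{\{x\}}=\da x$; then $x=\max(A)$, so $\max(A)\neq\emptyset$. For the second assertion, note $A\cap K=\da x\cap K$. Using that $K=\ua\min(K)$ with $\min(K)$ compact (Lemma \ref{COMPminimalset}) and that in a sober space intersections of the form $\da x\cap K$ behave well, I would show $\downarrow(A\cap K)=\downarrow(\da x\cap K)$ is closed: concretely, $\da x\cap K$ is compact (a closed subset of the compact set $K$ in a sober, hence well-filtered, hence $d$-space where closed sets are Scott-closed — actually one should just argue $\da x\cap K$ is compact since $\da x$ is closed and $K$ compact), and the downward closure of a compact subset of a sober (equivalently, of a well-filtered) space is closed; this last fact follows because $\downarrow C=\bigcap\{X\setminus U: U\in\mathcal O(X),\ U\cap C=\emptyset\}$ once one checks $x\notin\downarrow C$ implies some open $U\ni$-complement separates — more carefully, in a well-filtered (or just sober) space $\downarrow C$ is closed for every compact $C$, which is a standard consequence of the Hofmann–Mislove circle of ideas. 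I expect this to be the routine direction.

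\emph{For (2) $\Rightarrow$ (3):} This is the substantive implication. I would use the characterization of well-filteredness via Rudin's Lemma as in the proof of Theorem \ref{WFequat2}. Let $\mathcal K\subseteq\mk(X)$ be filtered and $U\in\mathcal O(X)$ with $\bigcap\mathcal K\subseteq U$; suppose for contradiction that $K\not\subseteq U$ for all $K\in\mathcal K$. By the topological Rudin Lemma (Lemma \ref{t Rudin}), $X\setminus U$ contains a minimal irreducible closed set $A$ meeting every $K\in\mathcal K$. By (2), $\max(A)\neq\emptyset$; pick $a\in\max(A)$. I would then consider the family $\{\downarrow(A\cap K):K\in\mathcal K\}$, which by (2) consists of closed sets, is filtered (since $\mathcal K$ is), and each member meets $A$ (indeed contains points of $A\cap K$). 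The key point: by minimality of $A$, each $\downarrow(A\cap K)\supseteq$ cannot be a proper closed subset meeting all of $\mathcal K$ in the right way — one shows $\overline{A\cap K}=A$ for each $K$, equivalently $A\cap K$ is dense in $A$, using that $A$ is the \emph{minimal} closed set meeting all members of $\mathcal K$ together with the irreducibility of $A$. Since $a=\max(A)\in\overline{A\cap K}=A$ and $A\cap K\subseteq K=\ua\min(K)$, density forces $a\in\ua K$, i.e. $a\in K$, for every $K\in\mathcal K$; hence $a\in\bigcap\mathcal K\subseteq U$, contradicting $a\in A\subseteq X\setminus U$. Thus some $K\subseteq U$ and $X$ is well-filtered. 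The delicate step here is justifying that $a\in\max(A)$ actually lies in each $K$: I would extract it from the minimality of $A$ applied to the closed set $\downarrow(A\cap K)$, which is where hypothesis (2) is essential — without knowing $\downarrow(A\cap K)$ is closed one cannot invoke minimality.

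\emph{For (3) $\Rightarrow$ (1) when $X$ is core compact:} This is immediate from the machinery already developed. By Theorem \ref{CorecomptWD}, every core compact $T_0$ space is a $\wdd$ space. Hence $X$ is a well-filtered $\wdd$ space, and by Theorem \ref{soberequiv} ((5) $\Rightarrow$ (1)) $X$ is sober. Combined with (1) $\Rightarrow$ (2) $\Rightarrow$ (3) this shows all three conditions are equivalent for core compact $X$.

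\emph{Main obstacle.} The crux is the implication (2) $\Rightarrow$ (3), specifically the argument that the maximal point of the minimal irreducible closed set $A$ produced by Rudin's Lemma must belong to every compact saturated set $K$ in the filtered family. Making the density claim $\overline{A\cap K}=A$ precise and exploiting hypothesis (2) (closedness of $\downarrow(A\cap K)$) to contradict the minimality of $A$ when this fails is where the real work lies; everything else is either a direct appeal to earlier results or a standard fact about compact saturated sets in well-filtered/sober spaces.
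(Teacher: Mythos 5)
Your overall architecture matches the paper's: the trivial direction (1) $\Rightarrow$ (2), then (2) $\Rightarrow$ (3) via the topological Rudin Lemma and a maximal point of the minimal irreducible closed set, and finally (3) $\Rightarrow$ (1) under core compactness by citing Theorem \ref{CorecomptWD} and Theorem \ref{soberequiv} (this last part is exactly the paper's argument and is fine). However, the pivotal step of (2) $\Rightarrow$ (3) is not actually carried out, and the one justification you do offer is invalid: from $a\in\overline{A\cap K}=A$ it does \emph{not} follow that ``density forces $a\in\ua K$''; membership in the closure of $A\cap K$ never by itself puts $a$ above a point of $A\cap K$ (that inference is precisely the kind of statement hypothesis (2) exists to supply). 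Note also that the density claim $\overline{A\cap K}=A$ already follows from minimality of $A$ alone (for $K_1,K_2\in\mathcal K$ pick $K_3\subseteq K_1\cap K_2$ in $\mathcal K$; then $\emptyset\neq A\cap K_3\subseteq \overline{A\cap K_1}\cap K_2$), so it cannot be ``the key point''. The argument the paper makes, and which you gesture at but defer as the ``main obstacle'', is: by (2) the set $\downarrow(A\cap K_1)$ is closed, it is contained in $A$ (as $A$ is a lower set), and by the $K_3$-trick it meets every member of $\mathcal K$; hence minimality gives $\downarrow(A\cap K_1)=A$, so the maximal point $a$ satisfies $a\leq b$ for some $b\in A\cap K_1$, and maximality of $a$ in $A$ yields $a=b\in K_1$. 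This is a short argument, but as written your proposal leaves exactly this step open while offering an incorrect substitute, so there is a genuine gap at the heart of (2) $\Rightarrow$ (3).

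For (1) $\Rightarrow$ (2) you also take a problematic detour. After correctly noting $A=\overline{\{x\}}=\da x$, you invoke the claim that in a sober (or well-filtered) space the down-closure of every compact set is closed, calling it ``a standard consequence of the Hofmann--Mislove circle of ideas''. That claim is nowhere in the paper, is far stronger than what is needed, and is not a routine corollary of Hofmann--Mislove; you give no proof, so as written this step is unjustified. The paper's argument is one line and uses only saturation: if $\da x\cap K\neq\emptyset$ then, since $K$ is an upper set, $x\in K$, whence $\downarrow(A\cap K)=\downarrow(\da x\cap K)=\da x\in\mathcal C(X)$; otherwise $\downarrow(A\cap K)=\emptyset$. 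Replacing your appeal to the unproved lemma by this observation, and carrying out the $\downarrow(A\cap K_1)=A$ argument above, would turn your outline into a complete proof essentially identical to the paper's.
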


\begin{proof}  (1) $\Rightarrow$ (2):  Suppose that $X$ is sober and $(A, K)\in \ir_c(X)\times\mk (X)$. Then there is an $x\in X$ such that $A=\overline {\{x\}}$, and hence $max (A)=\{x\}\neq\emptyset$. Now we show that $\downarrow (A\cap K)=\downarrow (\downarrow x\cap K)$ is closed. If $\downarrow (\downarrow x\cap K)\neq\emptyset$ (i.e., $\downarrow x\cap K\neq\emptyset$), then $x\in K$ since $K$ is saturated (that is, $K$ is an upper set). It follows that $\downarrow (\downarrow x\cap K)=\downarrow x\in \mathcal C(X)$.

(2) $\Rightarrow$ (3): Suppose that $\mathcal K\subseteq \mathord{\mathsf K}(X)$ is filtered, $U\in \mathcal O(X)$, and $\bigcap \mathcal K \subseteq U$. If $K\not\subseteq U$ for each $K\in \mathcal K$, then by Lemma \ref{t Rudin}, $X\setminus U$ contains a minimal irreducible closed subset $A$ that still meets all members of $\mathcal{K}$. For any $\{K_1, K_2\}\subseteq \mathcal K$, we can find $K_3\in \mathcal K$ with $K_3\subseteq K_1\cap K_2$. It follows that $\downarrow (A\cap K_1)\in \mathcal C(X)$ and $\emptyset \neq A\cap K_3\subseteq \downarrow (A\cap K_1)\cap K_2\neq\emptyset$, and hence $\downarrow (A\cap K_1)=A$ by the minimality of $A$. Select an $x\in max (A)$. Then for each $K\in \mathcal K$, $x\in  \downarrow (A\cap K)$, and consequently, there is $a_k\in A\cap K$ such that $x\leq a_k$. By the maximality of $x$ we have $x=a_k$. Therefore, $x\in K$ for all $K\in \mathcal K$, and whence $x\in \bigcap \mathcal K \subseteq U\subseteq X\setminus A$, a contradiction. Thus $X$ is well-filtered.

Finally assume that $X$ is core compact and well-filtered, then by Theorem \ref{corecwellf sober}, $X$ is sober.
\end{proof}

  If $X$ is a $d$-space and $A$ a nonempty closed subset of $X$, then by Zorn's Lemma there is a maximal chain $C$ in $A$. Let $c=\vee C$. Then $c\in max (A)$. So by Theorem \ref{rudinWF} we get the following corollary.

\begin{corollary}\label{dspaceWF}
	Let $X$ be a $d$-space. Consider the following conditions:
 \begin{enumerate}[\rm (1)]
		\item $X$ is sober.
        \item For each $(A, K)\in \ir_c(X)\times\mk (X)$, $\downarrow (A\cap K)\in \mathcal C(X)$.
        \item $X$ is well-filtered.
\end{enumerate}
Then \emph{(1)} $\Rightarrow$ \emph{(2)} $\Rightarrow$ \emph{(3)}, and all three conditions are equivalent if $X$ is core compact.
\end{corollary}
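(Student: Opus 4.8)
The plan is to derive Corollary~\ref{dspaceWF} immediately from Theorem~\ref{rudinWF}; the only thing to check is that for a $d$-space the extra clause ``$max(A)\neq\emptyset$'' occurring in condition~(2) of that theorem is automatically satisfied and may therefore be dropped.

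First I would verify the auxiliary fact noted just before the corollary: if $X$ is a $d$-space and $A$ is a nonempty closed subset of $X$, then $max(A)\neq\emptyset$. Since $X$ is a dcpo with $\mathcal O(X)\subseteq\sigma(X)$, the set $X\setminus A$ is Scott open, so $A$ is Scott closed; in particular $A=\da A$ and $A$ is closed under the joins of its directed subsets. By Zorn's Lemma choose a maximal chain $C\subseteq A$; as $C$ is directed and $X$ is a dcpo, $c=\bigvee C$ exists, and $c\in A$ because $A$ is Scott closed. Maximality of $C$ then forces $c$ to be a maximal element of $A$, so $c\in max(A)$.

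With this in hand, for a $d$-space $X$ the statement ``for each $(A,K)\in\ir_c(X)\times\mk(X)$, $\da(A\cap K)\in\mathcal C(X)$'' is exactly condition~(2) of Theorem~\ref{rudinWF}, since every $A\in\ir_c(X)$ is a nonempty closed set and hence already satisfies $max(A)\neq\emptyset$. Therefore conditions~(1), (2), (3) of the corollary coincide, respectively, with conditions~(1), (2), (3) of Theorem~\ref{rudinWF}, and the implications (1)$\Rightarrow$(2)$\Rightarrow$(3), together with the equivalence of all three when $X$ is core compact, follow at once from that theorem.

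I do not expect any genuine difficulty here; the only step requiring a little care is the chain of observations ``closed in $X$'' $\Rightarrow$ ``Scott closed'' $\Rightarrow$ ``contains the join of a maximal chain'', which is precisely what makes $max(A)$ nonempty and allows that clause to be removed from the hypothesis of Theorem~\ref{rudinWF}.
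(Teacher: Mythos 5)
Your proposal is correct and follows exactly the paper's route: the paper likewise observes (via Zorn's Lemma applied to a maximal chain in a nonempty closed set of a $d$-space) that $max(A)\neq\emptyset$, and then invokes Theorem \ref{rudinWF}. Your additional remark that closed sets are Scott closed, so the join of the maximal chain stays in $A$, is just a slightly more explicit rendering of the same argument.
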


\begin{corollary}\label{xi-lawsonWF}\emph{(\cite{Xi-Lawson-2017})}
	Let $X$ be a $d$-space with the property that $\downarrow (A\cap K)$ is closed whenever
$A\in \mathcal C(X)$ and $K\in \mk (X)$.  Then $X$ is well-filtered.
\end{corollary}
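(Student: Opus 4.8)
The plan is to obtain this as an immediate consequence of Corollary \ref{dspaceWF}. The hypothesis requires that $\downarrow(A\cap K)\in\mathcal C(X)$ for \emph{every} closed set $A$ and every $K\in\mk(X)$; since $\ir_c(X)\subseteq\mathcal C(X)$, this in particular yields condition (2) of Corollary \ref{dspaceWF}, namely that $\downarrow(A\cap K)\in\mathcal C(X)$ for each $(A,K)\in\ir_c(X)\times\mk(X)$. Applying the implication (2) $\Rightarrow$ (3) of Corollary \ref{dspaceWF} then gives that $X$ is well-filtered, which is the claim.

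For a self-contained argument one simply unwinds the proof of (2) $\Rightarrow$ (3) in Theorem \ref{rudinWF}. First I would suppose $\mathcal K\subseteq\mk(X)$ is filtered, $U\in\mathcal O(X)$, and $\bigcap\mathcal K\subseteq U$, and argue by contradiction assuming $K\not\subseteq U$ for every $K\in\mathcal K$. By the topological Rudin's Lemma (Lemma \ref{t Rudin}), $X\setminus U$ then contains a minimal irreducible closed subset $A$ that still meets all members of $\mathcal K$. Fixing $K_1\in\mathcal K$, the set $\downarrow(A\cap K_1)$ is closed by hypothesis and contained in $A$; for any $K_2\in\mathcal K$ choose $K_3\in\mathcal K$ with $K_3\subseteq K_1\cap K_2$, so $\emptyset\neq A\cap K_3\subseteq\downarrow(A\cap K_1)\cap K_2$. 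Hence $\downarrow(A\cap K_1)$ still meets every member of $\mathcal K$, and minimality of $A$ forces $\downarrow(A\cap K_1)=A$.

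The one step that genuinely uses the $d$-space assumption is the production of a maximal element of $A$: since $X$ is a $d$-space it is a dcpo and its closed sets are Scott-closed, so by Zorn's Lemma a maximal chain $C\subseteq A$ has a supremum $x=\bigvee C\in A$, and $x\in\max(A)$ by maximality of $C$. For each $K\in\mathcal K$ we have $x\in A=\downarrow(A\cap K)$, so $x\le a_K$ for some $a_K\in A\cap K$, whence $x=a_K\in K$ by maximality. Thus $x\in\bigcap\mathcal K\subseteq U\subseteq X\setminus A$, contradicting $x\in A$; this contradiction shows $X$ is well-filtered. I do not anticipate any real obstacle; the only point needing care is this existence of maximal points in nonempty closed subsets of a $d$-space, which is precisely where the hypothesis ``$d$-space'' (rather than merely ``well-filtered'' or ``$T_0$'') is essential.
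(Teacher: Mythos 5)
Your proposal is correct and follows essentially the same route as the paper: the paper derives this corollary from Corollary \ref{dspaceWF} (hence Theorem \ref{rudinWF}), with the $d$-space hypothesis used exactly as you do, via Zorn's Lemma applied to a maximal chain to produce a point of $\max(A)$, replacing the $\max(A)\neq\emptyset$ assumption of Theorem \ref{rudinWF}. Your unwound argument (Rudin's Lemma, minimality forcing $\downarrow(A\cap K)=A$, then a maximal point of $A$ lying in $\bigcap\mathcal K$) is precisely the paper's proof of (2) $\Rightarrow$ (3).
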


\begin{example}\label{examp1}
	Let $X$ be a countable infinite set and endow $X$ with the cofinite topology (having the complements of the finite sets as open sets). The
resulting space is denoted by $X_{cof}$. Then $\mk (X_{cof})=2^X\setminus \{\emptyset\}$ (that is, all nonempty subsets of $X$), and hence $X_{cof}$ is a locally compact and first countable $T_1$ space. By Theorem \ref{LCrudin}, $X_{cof}$ is a Rudin space (and hence a $\wdd$-space). Let $\mathcal K=\{X\setminus F : F\in X^{(<\omega)}\}$. It is easy to check that $\mathcal K\subseteq \mk (X_{cof})$ is filtered and $X\in m(\mathcal K)$. Therefore, $X\in \kf(X)$ but $X\not\in \md_c(X)$.  Thus $\kf(X)\neq \md_c(X)$. $X_{cof}$ is not sober, and hence $X_{cof}$ is not well-filtered by Theorem \ref{corecwellf sober}.
\end{example}

\begin{example}\label{examp2}
	Let $L$ be the complete lattice constructed by Isbell \cite{isbell}. Then by \cite[Corollary 3.2]{Xi-Lawson-2017}, $\Sigma L$ is a well-filtered space. Note that it is not sober. Then by Theorem \ref{soberequiv}, it is not a $\wdd$ space (hence not a Rudin space).  So $\wdd(X)\neq\ir_c(X)$ and $\kf(X)\neq\ir_c(X)$.
\end{example}

\begin{lemma}\label{WDimage}
Let $X,Y$ be two $T_0$ spaces. If $f:X\longrightarrow Y$ is a continuous mapping and $A\in \wdd (X)$, then $\overline{f(A)}\in \wdd (Y)$.
\end{lemma}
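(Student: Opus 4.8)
The plan is to reduce the statement directly to the defining universal property of $\wdd$ sets by composing with an arbitrary test map. First I would recall that, by the remark following Definition \ref{WDspace}, a subset is well-filtered determined if{}f its closure is, so it suffices to verify that $\overline{f(A)}$ is a well-filtered determined subset of $Y$; it is closed by construction. To that end I would fix an arbitrary continuous mapping $g : Y \longrightarrow Z$ from $Y$ into a well-filtered space $Z$, and aim to produce a unique $z \in Z$ with $\overline{g(\overline{f(A)})} = \overline{\{z\}}$.

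Next I would observe that $g \circ f : X \longrightarrow Z$ is continuous and $Z$ is well-filtered, so the hypothesis $A \in \wdd(X)$ applies to the test map $g \circ f$: there is a unique $z_A \in Z$ with $\overline{(g\circ f)(A)} = \overline{\{z_A\}}$. Then I would use the elementary closure identity $\overline{g(\overline{f(A)})} = \overline{g(f(A))}$, which holds because continuity of $g$ gives $g(\overline{f(A)}) \subseteq \overline{g(f(A))}$ (hence the closures agree, the reverse inclusion being trivial). Combining, $\overline{g(\overline{f(A)})} = \overline{g(f(A))} = \overline{(g\circ f)(A)} = \overline{\{z_A\}}$, so $z_A$ is the point we need for $g$.

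Uniqueness of that point is immediate from the $T_0$ separation axiom on $Z$: if $\overline{\{z\}} = \overline{\{z'\}}$ then $z = z'$. Since $g$ was an arbitrary continuous map to an arbitrary well-filtered space, $\overline{f(A)} \in \wdd(Y)$.

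As for obstacles, there is essentially none of substance: this is the same compositional pattern used for Rudin sets in Lemma \ref{rudinimage}, but here it is even more transparent because the class of $\wdd$ sets is defined precisely by a universal property tested against well-filtered targets, and the composite $g \circ f$ lands in exactly that class. The only point requiring a moment's care is the bookkeeping identity $\overline{g(\overline{f(A)})} = \overline{g(f(A))}$ and keeping straight that the $\wdd$ hypothesis on $A$ is invoked for the map $g\circ f$ rather than for $f$ itself.
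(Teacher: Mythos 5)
Your proposal is correct and matches the paper's own proof: both test $\overline{f(A)}$ against an arbitrary continuous map $g:Y\longrightarrow Z$ into a well-filtered space, apply the $\wdd$ property of $A$ to the composite $g\circ f$, and use the closure identity $\overline{g(\overline{f(A)})}=\overline{g(f(A))}$ together with $T_0$ separation for uniqueness. No gaps.
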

\begin{proof}	Let $Z$ is a well-filtered space and $g:Y\longrightarrow Z$ is a continuous mapping.
Since $g\circ f:X\longrightarrow Z$ is continuous and $A\in \wdd (X)$, there is $z\in Z$ such that $\overline{g(\overline{f(A)})}=\overline{g\circ f(A)}=\overline{\{z\}}$. Thus $\overline{f(A)}\in \wdd (Y)$.
\end{proof}

\begin{proposition}\label{WDretract}
A retract of a well-filtered determined space is well-filtered determined.
\end{proposition}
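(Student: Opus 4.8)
The plan is to follow verbatim the template already used for Proposition~\ref{DCretract} and Corollary~\ref{rudinretract}, substituting the image-stability result Lemma~\ref{WDimage} for Lemma~\ref{DCimag} (resp.\ Lemma~\ref{rudinimage}). First I would record that $Y$ being a retract of a $\wdd$ space $X$ furnishes continuous maps $f\colon X\longrightarrow Y$ and $g\colon Y\longrightarrow X$ with $f\circ g=\mathrm{id}_Y$. Fix an arbitrary $B\in\ir_c(Y)$; the aim is to show $B\in\wdd(Y)$. Since $\wdd(Y)\subseteq\ir_c(Y)$ always holds (Proposition~\ref{DRWIsetrelation}), establishing this for every $B$ gives $\ir_c(Y)=\wdd(Y)$, i.e.\ $Y$ is a $\wdd$ space.

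Next I would transport $B$ across $g$: by Lemma~\ref{irrimage} together with Lemma~\ref{irrsubspace}, $\overline{g(B)}\in\ir_c(X)$, and since $X$ is a $\wdd$ space this gives $\overline{g(B)}\in\wdd(X)$. Then I would transport back across $f$: by Lemma~\ref{WDimage}, $\overline{f\bigl(\overline{g(B)}\bigr)}\in\wdd(Y)$. It remains only to identify this closed set with $B$. Continuity of $f$ yields $f\bigl(\overline{g(B)}\bigr)\subseteq\overline{f(g(B))}$, hence $\overline{f\bigl(\overline{g(B)}\bigr)}=\overline{f(g(B))}=\overline{(f\circ g)(B)}=\overline{B}=B$, the last equality because $B$ is closed in $Y$. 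Therefore $B\in\wdd(Y)$, and $Y$ is a $\wdd$ space.

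I do not expect a genuine obstacle here: the argument is a straightforward diagram chase once Lemma~\ref{WDimage} is in hand. The only point needing a word of care is the identity $\overline{f\bigl(\overline{g(B)}\bigr)}=B$, which rests on the continuity of $f$ and on the retraction equation $f\circ g=\mathrm{id}_Y$; everything else is bookkeeping. I would also note that this same scheme, combined with the product decomposition of irreducible closed sets (Corollary~\ref{irrcprod}) and the analogue of Lemma~\ref{Rudinsetprod} for $\wdd$ sets, yields closure of $\wdd$ spaces under arbitrary products, exactly as in Theorem~\ref{rudinprod}.
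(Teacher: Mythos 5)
Your proof is correct and follows essentially the same route as the paper's own argument: push $B$ through $g$ to get an irreducible closed (hence $\wdd$) set in $X$, push back through $f$ using Lemma~\ref{WDimage}, and identify the result with $B$ via $f\circ g=\mathrm{id}_Y$ and continuity of $f$. No gaps; the justification of $\overline{f\bigl(\overline{g(B)}\bigr)}=B$ is exactly the point the paper also relies on.
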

\begin{proof}
	Assume $X$ is a well-filtered determined space and $Y$ a retract of $X$.  Then there are continuous mappings $f:X\longrightarrow Y$ and $g:Y\longrightarrow X$ such that $f\circ g=id_Y$.  Let $B\in \ir_c(Y)$. Then $\overline{g(B)}\in\ir_c(X)$ by Lemma \ref{irrsubspace} and Lemma \ref{irrimage}. As $X$ is well-filtered determined, $\overline{g(B)}\in \wdd (X)$). By Lemma \ref{WDimage}, $B=\overline{f(g(B))}=\overline{f(\overline{g(B)})}\in \wdd (Y)$. Hence, $Y$ is well-filtered determined.
\end{proof}

\begin{definition}\label{Xinfty} For a $T_0$ space $X$, select a point $\infty$ such that $\infty\not\in X$. Then $\mathcal C(X)\cup \{X\cup \{\infty\}\}$ (as the set of all closed sets) is a topology on $X\cup\{\infty\}$. The resulting space is denoted by $X_{\infty}$.
\end{definition}

\begin{lemma}\label{lem5}
	If $X$ is a well-filtered space, then $X_{\infty}$ is a well-filtered space.
\end{lemma}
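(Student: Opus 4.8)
The plan is to describe the topology of $X_{\infty}$ explicitly and then reduce the well-filteredness of $X_{\infty}$ directly to that of $X$. First I would record that the nonempty open subsets of $X_{\infty}$ are exactly the sets $U\cup\{\infty\}$ with $U\in\mathcal O(X)$ (allowing $U=\emptyset$, so that $\{\infty\}$ is open, its complement $X$ being closed in $X_{\infty}$); in particular every nonempty open set of $X_{\infty}$ contains $\infty$. From this it follows at once that $X_{\infty}$ is $T_0$ — two points of $X$ are separated as in $X$, and $\{\infty\}$ separates $\infty$ from any $x\in X$ — and that in the specialization order $\infty$ is a top element while the order inherited on $X$ is unchanged.

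The next step is to identify $\mk(X_{\infty})$. Since $\infty$ is the top element, every nonempty saturated (equivalently, upper) subset of $X_{\infty}$ contains $\infty$, so it has the form $K\cup\{\infty\}$ with $K\subseteq X$ an upper set of $X$; a routine open-cover argument (any cover of $K\cup\{\infty\}$ in $X_{\infty}$ must use some $U_i\cup\{\infty\}$ to cover $\infty$, and the traces $U_i$ then cover $K$ in $X$, and conversely) shows that $K\cup\{\infty\}$ is compact in $X_{\infty}$ if and only if $K=\emptyset$ or $K\in\mk(X)$. Hence
\[
\mk(X_{\infty})=\{\{\infty\}\}\cup\{K\cup\{\infty\} : K\in\mk(X)\}.
\]

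Finally I would verify the well-filteredness condition. Let $\mathcal K\subseteq\mk(X_{\infty})$ be a filtered family and $\mathcal V\in\mathcal O(X_{\infty})$ with $\bigcap\mathcal K\subseteq\mathcal V$. Since $\infty$ belongs to every member of $\mathcal K$, we have $\infty\in\bigcap\mathcal K$, so $\mathcal V\neq\emptyset$ and hence $\mathcal V=U\cup\{\infty\}$ for some $U\in\mathcal O(X)$. If $\{\infty\}\in\mathcal K$, then $\{\infty\}\subseteq\mathcal V$ and we are done. Otherwise every member of $\mathcal K$ has the form $K\cup\{\infty\}$ with $K\in\mk(X)$; set $\mathcal K'=\{K\in\mk(X):K\cup\{\infty\}\in\mathcal K\}$. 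The correspondence $K\cup\{\infty\}\mapsto K$ preserves the Smyth order, so $\mathcal K'$ is a (nonempty) filtered family in $\mk(X)$, and $\bigcap\mathcal K'=\left(\bigcap\mathcal K\right)\cap X\subseteq\mathcal V\cap X=U$. By the well-filteredness of $X$ there is some $K\in\mathcal K'$ with $K\subseteq U$, and then $K\cup\{\infty\}\subseteq U\cup\{\infty\}=\mathcal V$ with $K\cup\{\infty\}\in\mathcal K$. Thus $X_{\infty}$ is well-filtered.

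I do not expect a serious obstacle; the only points that need genuine care are getting the description of $\mk(X_{\infty})$ right — in particular noticing that $\{\infty\}$ is an extra compact saturated set and that compactness of $K\cup\{\infty\}$ in $X_{\infty}$ is equivalent to compactness of $K$ in $X$ — and the small observation that $\infty$ lies in every nonempty compact saturated set, which is exactly what excludes the degenerate case $\mathcal V=\emptyset$.
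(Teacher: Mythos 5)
Your proof is correct and takes essentially the same route as the paper: reduce to the well-filteredness of $X$ by stripping off the top point $\infty$ from the open set and from each compact saturated set. You are in fact slightly more careful than the paper's own argument, which passes from $K_i$ to $K_i^{*}=K_i\setminus\{\infty\}$ without noting that $K_i^{*}$ is empty (hence not in $\mk(X)$) when $K_i=\{\infty\}$ --- the degenerate case you dispose of directly via your explicit description of $\mk(X_{\infty})$.
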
\label{WFinfty}
\begin{proof}
	We first show that $X_{\infty}$ is $T_0$. Let $x,y\in X_{\infty}$ with $x\neq y$. There are two cases:
	
	{Case 1:} $x,y\in X$. Then we have $\cl_{X_\infty}\{x\}=\cl_X\{x\}\neq\cl_X\{y\}=\cl_{X_\infty}\{y\}$.
	
	{Case 2:}  $x\in X$ and $y=\infty$. Note that $\cl_{X_\infty}\{\infty\}=X_{\infty}$ and $\cl_{X_\infty}\{x\}\subseteq X$. It follows that $\cl_{X_\infty}\{\infty\}\neq\cl_{X_\infty}\{x\}$.
	
	Thus $X_{\infty}$ is $T_0$. Let $\{K_i:i\in I\}\subseteq\mk(X_{\infty})$ be a filtered family and $U\in \mathcal O(X_{\infty})$ such that $\bigcap_{i\in I}K_i\subseteq U$. Note that $\infty$ is the largest element in $X$ with respect to the specialization order, so
	$\infty\in \bigcap_{i\in I}K_i\subseteq U$. Let $V=U\setminus\{\infty\}=X\setminus(X_{\infty}\setminus U)$. Then $V\in\mathcal O(X)$ and $U=V\cup\{\infty\}$. For each $i\in I$, let $K^*_i=K_i\setminus\{\infty\}$. One can easily check that $\{K^*_i:i\in I\}\subseteq \mk(X)$ is a filtered family and $\bigcap_{i\in I}K^*_i\subseteq V$. Since $X$ is well-filtered, there exists
	$i_0\in I$ such that $K^*_{i_0}\subseteq V$, which implies that $K_{i_0}\subseteq U$. Thus $X_\infty$ is well-filtered.
\end{proof}

\begin{proposition}\label{WDclosed}
Every closed subspace of a well-filtered determined space is well-filtered determined.
\end{proposition}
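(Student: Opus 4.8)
The plan is to reduce the statement to irreducible closed sets and then transport the defining property of $\wdd$ sets from $X$ down to the closed subspace via the one-point extension $Y_\infty$ of the codomain (Definition \ref{Xinfty}). This is in the same spirit as Proposition \ref{WDretract}, but the one-point extension is needed because a continuous map out of a subspace need not extend to all of $X$.

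First I would fix a closed subspace $A$ of a $\wdd$ space $X$ and an arbitrary $B\in\ir_c(A)$; it suffices to show $B\in\wdd(A)$. Since $A$ is closed in $X$, Lemma \ref{irrsubspace} gives $B\in\ir_c(X)$, and as $X$ is a $\wdd$ space, $B\in\wdd(X)$. Now let $f:A\longrightarrow Y$ be continuous with $Y$ well-filtered. I would define $g:X\longrightarrow Y_\infty$ by $g|_A=f$ and $g(x)=\infty$ for $x\in X\setminus A$. The routine verification is that $g$ is continuous: the closed subsets of $Y_\infty$ are the members of $\mathcal C(Y)$ together with $Y_\infty$, and for $C\in\mathcal C(Y)$ we have $g^{-1}(C)=f^{-1}(C)$, which is closed in $A$ and hence closed in $X$ because $A$ is closed; also $g^{-1}(Y_\infty)=X$. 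By Lemma \ref{lem5}, $Y_\infty$ is well-filtered, so applying $B\in\wdd(X)$ to $g$ produces a unique $y\in Y_\infty$ with $\cl_{Y_\infty}(g(B))=\cl_{Y_\infty}\{y\}$.

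It then remains to push this equality back into $Y$. Since $B\neq\emptyset$, the set $g(B)=f(B)$ is a nonempty subset of $Y$; and because every closed subset of $Y$ is closed in $Y_\infty$, closures in $Y_\infty$ of subsets of $Y$ coincide with closures in $Y$, so $\cl_{Y_\infty}(g(B))=\cl_Y(f(B))\subseteq Y$. As $\cl_{Y_\infty}\{\infty\}=Y_\infty$ while $\cl_Y(f(B))\subseteq Y\subsetneq Y_\infty$, we cannot have $y=\infty$; hence $y\in Y$ and $\cl_{Y_\infty}\{y\}=\cl_Y\{y\}$, which yields $\overline{f(B)}=\overline{\{y\}}$ in $Y$, with uniqueness coming from the $T_0$ property of $Y$. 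Therefore $B\in\wdd(A)$, and since $B$ was an arbitrary irreducible closed subset of $A$, the subspace $A$ is a $\wdd$ space. The only delicate point is the bookkeeping of closures in $Y$ versus $Y_\infty$ together with excluding $y=\infty$; the substantive input is entirely contained in Lemma \ref{lem5}.
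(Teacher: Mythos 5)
Your proposal is correct and follows essentially the same route as the paper's proof: extend $f$ to $f_\infty : X \longrightarrow Y_\infty$ by sending $X\setminus A$ to $\infty$, use Lemma \ref{lem5} to get well-filteredness of $Y_\infty$, and apply the $\wdd$ property of $B$ in $X$. The only difference is that you spell out the closure bookkeeping showing $y\neq\infty$, which the paper dismisses with ``Clearly, $y_B\in Y$''; this is a harmless elaboration, not a different argument.
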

\begin{proof}
	Let $X$ be a well-filtered determined space and $A\in\mathcal C(X)$. We need to show $A$, as a subspace of $X$, is well-filtered determined.
	Let $B\in\ir_c(A)$ and $f: A\longrightarrow Y$ a continuous mapping to a well-filtered space $Y$. Then by Lemma \ref{WFinfty}, $Y_{\infty}$ is well-filtered. Define a mapping $f_{\infty}:X\longrightarrow Y_{\infty}$ as follows:
	$$f_{\infty}(x)=
	\begin{cases}
	f(x)& x\in A\\
	\infty& x\notin A.
	\end{cases}$$
Then $f_{\infty}$ is continuous since for each $C\in\mathcal C(Y_{\infty})$, it holds that

	$$f_{\infty}^{-1}(C)=
\begin{cases}
f^{-1}(C)& \infty\notin C\\
X& \infty\in C.
\end{cases}$$

Since $X$ is well-filtered determined, there exists $y_B\in Y_{\infty}$ such that $\overline{f_{\infty}(B)}=\overline{f(B)}=\overline{\{y_B\}}$. Clearly, $y_B\in Y$. So $B\in \wdd (A)$. Thus $A$ is well-filtered determined.

\end{proof}

\begin{lemma}\label{WDsetprod}
	Let	$\{X_i: 1\leq i\leq n\}$ be a finite family of $T_0$ spaces and $X=\prod\limits_{i=1}^{n}X_i$ the product space. For $A\in\ir (X)$, the following conditions are equivalent:
\begin{enumerate}[\rm (1)]
	\item $A$ is a $\wdd$ set.
	\item $p_i(A)$ is a $\wdd$ set for each $1\leq i\leq n$.
\end{enumerate}
\end{lemma}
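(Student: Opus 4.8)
The plan is to get $(1)\Rightarrow(2)$ immediately from Lemma~\ref{WDimage}, and to prove $(2)\Rightarrow(1)$ by induction on $n$, with the whole difficulty concentrated in the two-factor step. For $(1)\Rightarrow(2)$: each projection $p_i:X\longrightarrow X_i$ is continuous, so if $A\in\wdd(X)$ then $\overline{p_i(A)}\in\wdd(X_i)$ by Lemma~\ref{WDimage}; since a subset is a $\wdd$ set exactly when its closure is (the remark following Definition~\ref{WDspace}), $p_i(A)$ is a $\wdd$ set.

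For $(2)\Rightarrow(1)$: writing $B_i=\overline{p_i(A)}$, we have $\overline A=\prod_{i=1}^{n}B_i$ by Lemma~\ref{irrprod}, and each $B_i\in\wdd(X_i)$ by the hypothesis together with the same remark; so, again because $A$ is a $\wdd$ set iff $\overline A$ is, it suffices to prove the following: if $C_i$ is a closed $\wdd$ set of $X_i$ for $1\le i\le n$, then $\prod_{i=1}^{n}C_i$ is a $\wdd$ set of $\prod_{i=1}^{n}X_i$. I would establish this by induction on $n$. The case $n=1$ is trivial; for the inductive step set $Y=\prod_{i=1}^{n-1}X_i$ and $C=\prod_{i=1}^{n-1}C_i$, note $C\in\wdd(Y)$ by the inductive hypothesis and $C_n\in\wdd(X_n)$, apply the two-factor case (below) to obtain $C\times C_n\in\wdd(Y\times X_n)$, and transport this along the canonical homeomorphism $Y\times X_n\cong\prod_{i=1}^{n}X_i$ (which sends $C\times C_n$ to $\prod_{i=1}^{n}C_i$) using Lemma~\ref{WDimage}.

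It then remains to prove the two-factor case: for closed $\wdd$ sets $C_1\subseteq X_1$ and $C_2\subseteq X_2$, the product $C_1\times C_2$ is a $\wdd$ set of $X_1\times X_2$. Let $f:X_1\times X_2\longrightarrow Z$ be continuous with $Z$ well-filtered. For each $x_2\in X_2$ the map $x_1\mapsto f(x_1,x_2)$ is continuous from $X_1$ to $Z$, so since $C_1\in\wdd(X_1)$ there is a unique $g(x_2)\in Z$ with $\overline{f(C_1\times\{x_2\})}=\overline{\{g(x_2)\}}$; this defines $g:X_2\longrightarrow Z$. The key point is that $g$ is continuous: for $W\in\mathcal O(Z)$ one has $g(x_2)\in W\iff\overline{f(C_1\times\{x_2\})}\cap W\neq\emptyset\iff f(C_1\times\{x_2\})\cap W\neq\emptyset$ (because $W$ is open), so $g^{-1}(W)=\bigcup_{x_1\in C_1}\{x_2\in X_2:(x_1,x_2)\in f^{-1}(W)\}$, a union of open sets since each set in the union is the preimage of the open set $f^{-1}(W)$ under the continuous map $x_2\mapsto(x_1,x_2)$. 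Now $g$ is continuous and $C_2\in\wdd(X_2)$, so there is a unique $z\in Z$ with $\overline{g(C_2)}=\overline{\{z\}}$, and I claim $\overline{f(C_1\times C_2)}=\overline{\{z\}}$. For $b\in C_2$ we have $f(C_1\times\{b\})\subseteq\overline{\{g(b)\}}\subseteq\overline{g(C_2)}=\overline{\{z\}}$, whence $\overline{f(C_1\times C_2)}\subseteq\overline{\{z\}}$; conversely $g(b)\in\overline{\{g(b)\}}=\overline{f(C_1\times\{b\})}\subseteq\overline{f(C_1\times C_2)}$ for every $b\in C_2$, so $g(C_2)\subseteq\overline{f(C_1\times C_2)}$ and hence $\overline{\{z\}}=\overline{g(C_2)}\subseteq\overline{f(C_1\times C_2)}$. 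Uniqueness of $z$ follows from $Z$ being $T_0$, so $C_1\times C_2\in\wdd(X_1\times X_2)$.

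The step I expect to be the main obstacle is the continuity of the fibrewise-generic-point map $g$, together with the bookkeeping point that the $\wdd$ property of $C_1$ and $C_2$ is a statement about continuous maps out of the \emph{ambient} spaces $X_1,X_2$; this is exactly why $g$ must be built on all of $X_2$ (and not merely on $C_2$), so that $C_2\in\wdd(X_2)$ applies to it directly. Everything else is a routine chase through closures.
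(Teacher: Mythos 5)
Your proposal is correct and follows essentially the same route as the paper: reduce to the two-factor case by induction, define the fibrewise-generic-point map (the paper's $g_A$, your $g$) on all of $X_2$ using the $\wdd$ property of the first factor, verify its continuity, apply the $\wdd$ property of the second factor, and finish with the same closure computation $\overline{f(C_1\times C_2)}=\overline{g(C_2)}=\overline{\{z\}}$. The only cosmetic difference is the continuity check: you use openness of $\bigcup_{x_1\in C_1}s_{x_1}^{-1}(f^{-1}(W))$ via the section maps $x_2\mapsto(x_1,x_2)$, while the paper chases a basic open rectangle $U_1\times U_2\subseteq f^{-1}(V)$; these are equivalent.
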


\begin{proof} (1) $\Rightarrow$ (2): By Lemma \ref{WDimage}.

(2) $\Rightarrow$ (1): By induction, we need only to prove the implication for the case of $n=2$. Let $A_1=\cl_{X_1} p_1(A)$ and $A_2=\cl_{X_2} p_2(A)$. Then by condition (2), $(A_1, A_2)\in \mathcal \wdd (X_1)\times \wdd (X_2)$. Now we show that the product $A_1\times A_2\in\wdd (X)$. Let $f : X_1\times X_2 \longrightarrow Y$ a continuous mapping from $X_1\times X_2$ to a well-filtered space $Y$. For each $b\in X_2$, $X_1$ is homeomorphic to $X_1\times \{b\}$ (as a subspace of $X_1\times X_2$) via the homeomorphism $\mu_b : X_1 \longrightarrow X_1\times \{b\}$ defined by $\mu_b(x)=(x, b)$. Let $i_b : X_1\times \{b\}\longrightarrow X_1\times X_2$ be the embedding of $X_1\times \{b\}$ in $X_1\times X_2$. Then  $f_{b}=f\circ i_b \circ \mu_b : X_1 \longrightarrow Y$, $f_b(x)=f((x, b))$,  is continuous. Since $A_1\in \mathcal \wdd (X_1)$, there is a unique $y_b\in Y$ such that $\overline{f(A_1\times \{b\})}=\overline{f_b(A_1)}=\overline{\{y_b\}}$. Define a mapping $g_A : X_2 \longrightarrow Y$ by $g_A(b)=y_b$. For each $V\in \mathcal O(Y)$,
$$\begin{array}{lll}
	g_A^{-1}(V)& =\{b\in X_2 : g_A(b)\in V\}\\
	           & =\{b\in X_2 : \overline{f_b(A_1)}\cap V\neq\emptyset\}\\
	           & =\{b\in X_2 : \overline{f(A_1\times \{b\})}\cap V\neq\emptyset\}\\
	           & =\{b\in X_2 : f(A_1\times \{b\})\cap V\neq\emptyset\}\\
               & =\{b\in X_2 : (A_1\times \{b\})\cap f^{-1}(V)\neq\emptyset\}.\\
	\end{array}$$
 Therefore, for each $b\in g_A^{-1}(V)$, there is an $a_1\in A_1$ such that $(a_1, b)\in f^{-1}(V)\in \mathcal O(X_1\times X_2)$, and hence there is $(U_1, U_2)\in \mathcal O(X_1)\times \mathcal O(X_2)$ such that $(a_1, b)\in U_1\times U_2\subseteq  f^{-1}(V)$. It follows that $b\in U_2\subseteq g_A^{-1}(V)$. Thus $g_A : X_2 \longrightarrow Y$ is continuous. Since $A_2\in \mathcal \wdd (X_1)$, there is a unique $y_A\in Y$ such that $\overline{g_A(A_2)}=\overline{\{y_A\}}$. Therefore, by Lemma \ref{irrprod}, we have
 $$\begin{array}{lll}
     \overline{f(\cl_X A)} & =\overline{f(A_1\times A_2)}\\
	                       & =\overline{\bigcup\limits_{a_2\in A_2}f(A_1\times \{a_2\})}\\
	                       & =\overline{\bigcup\limits_{a_2\in A_2}\overline{f(A_1\times \{a_2\})}}\\
	                       & =\overline{\bigcup\limits_{a_2\in A_2}\overline{\{g_A(a_2)\}}}\\
                           & =\overline{\bigcup\limits_{a_2\in A_2}\{g_A(a_2)\}}\\
                           & =\overline{g_A(A_2)}\\
                           & =\overline{\{y_A\}}.\\
	\end{array}$$
Thus $\cl_X A\in \wdd (X)$, and hence $A$ is a $\wdd$ set.
\end{proof}

By Corollary \ref{irrcprod} and Lemma \ref{WDsetprod}, we get the following result.

\begin{corollary}\label{WDclosedsetprod}
	Let	$X=\prod\limits_{i=1}^{n}X_i$ be the product of a finitely family $\{X_i: 1\leq i\leq n\}$ of $T_0$ spaces. If $A\in\wdd (X)$, then $A=\prod\limits_{i=1}^{n}p_i(X_i)$, and $p_i(A)\in \wdd (X_i)$ for all $1\leq i \leq n$.
\end{corollary}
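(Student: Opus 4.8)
The plan is to derive this corollary directly by combining the structural decomposition of irreducible closed sets in products (Corollary \ref{irrcprod}) with the projection characterization of $\wdd$ sets (Lemma \ref{WDsetprod}), after first observing that a $\wdd$ set is automatically irreducible and closed.

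First I would note that $A\in\wdd(X)$ implies $A\in\ir_c(X)$ by Proposition \ref{DRWIsetrelation}; in particular $A$ is a nonempty irreducible closed subset of the product $X=\prod_{i=1}^{n}X_i$. Applying Corollary \ref{irrcprod} to $A$ then gives at once the product decomposition $A=\prod_{i=1}^{n}p_i(A)$ together with the fact that $p_i(A)\in\ir_c(X_i)$ for each $1\leq i\leq n$. This already settles the ``shape'' part of the statement and guarantees that each $p_i(A)$ is a \emph{closed} subset of $X_i$, which is what is needed for membership in $\wdd(X_i)$ (recall $\wdd(X_i)$ consists of closed $\wdd$ sets).

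Next, since $A\in\ir(X)$ and $A$ is a $\wdd$ set, I would invoke the implication $(1)\Rightarrow(2)$ of Lemma \ref{WDsetprod}, which tells us that $p_i(A)$ is a $\wdd$ set for every $1\leq i\leq n$. Combining this with the closedness of $p_i(A)$ obtained in the previous step yields $p_i(A)\in\wdd(X_i)$ for all $1\leq i\leq n$, completing the proof. The finiteness hypothesis on the index set is inherited exactly from Lemma \ref{WDsetprod}.

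There is no real obstacle here: the corollary is a formal consequence of the two cited results. The only point that needs a moment's care is the bookkeeping between the notions ``$\wdd$ set'' (a property a subset may have) and ``element of $\wdd(X_i)$'' (which additionally requires closedness) — this is precisely why one must pass through Corollary \ref{irrcprod} first to know that the projections are closed, rather than trying to conclude directly from Lemma \ref{WDsetprod} alone.
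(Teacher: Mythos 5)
Your argument is correct and is essentially the paper's own proof: the paper derives this corollary exactly by combining Corollary \ref{irrcprod} with Lemma \ref{WDsetprod}, with the inclusion $\wdd(X)\subseteq\ir_c(X)$ from Proposition \ref{DRWIsetrelation} implicitly supplying the hypothesis that $A$ is irreducible and closed. Your explicit remark that closedness of the projections comes from Corollary \ref{irrcprod} (so that ``$\wdd$ set'' upgrades to membership in $\wdd(X_i)$) is exactly the right bookkeeping and matches the intended reading.
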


\begin{theorem}\label{rudinprod}
	Let $\{X_i: 1\leq i\leq n\}$ be a finitely family of $T_0$ spaces. Then the following two conditions are equivalent:
	\begin{enumerate}[\rm(1)]
		\item The product space $\prod\limits_{i=1}^{n}X_i$ is a well-filtered determined space.
		\item For each $1\leq i \leq n$, $X_i$ is a well-filtered determined space.
	\end{enumerate}
\end{theorem}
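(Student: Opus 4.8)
The plan is to reduce the theorem to the closed-subspace and retract results for $\wdd$ spaces (Proposition \ref{WDclosed} and Proposition \ref{WDretract}) together with the product lemma for $\wdd$ sets (Lemma \ref{WDsetprod} and Corollary \ref{WDclosedsetprod}), mirroring the pattern already used for $\dc$ spaces in Corollary \ref{DCprod} and for Rudin spaces in Theorem \ref{rudinprod}. First, for the implication $(1)\Rightarrow(2)$, I would observe that for each fixed $j$ with $1\leq j\leq n$, the factor $X_j$ is a retract of $X=\prod_{i=1}^{n}X_i$: the embedding $s_j:X_j\to X$ sending a point $x$ to the tuple with $x$ in coordinate $j$ and a fixed chosen basepoint in every other coordinate, composed with the projection $p_j:X\to X_j$, yields $p_j\circ s_j=\mathrm{id}_{X_j}$, and both maps are continuous. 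Hence by Proposition \ref{WDretract}, if $X$ is a $\wdd$ space then so is each $X_j$.

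For the converse $(2)\Rightarrow(1)$, I would argue exactly as in the proof of Theorem \ref{rudinprod}. Let $A\in\ir_c(X)$. By Corollary \ref{irrcprod}, $A=\prod_{i=1}^{n}p_i(A)$ and $p_i(A)\in\ir_c(X_i)$ for each $i$. Since each $X_i$ is a $\wdd$ space, $p_i(A)\in\wdd(X_i)$ for every $i$, so all the projections of the irreducible set $A$ are $\wdd$ sets. Then by Lemma \ref{WDsetprod} (the implication $(2)\Rightarrow(1)$ there), $A$ itself is a $\wdd$ set, i.e.\ $A\in\wdd(X)$. As $A$ was an arbitrary irreducible closed subset of $X$, this shows $\ir_c(X)=\wdd(X)$, i.e.\ $X$ is a $\wdd$ space.

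The genuinely substantive content has already been isolated into Lemma \ref{WDsetprod}, whose proof handles the $n=2$ case by building, for a continuous map $f:X_1\times X_2\to Y$ into a well-filtered space, the auxiliary continuous map $g_A:X_2\to Y$ through the slice maps $f_b$ and then applying the $\wdd$ property of $A_2$; the finite case follows by induction. So in the write-up of this final theorem the only thing to check carefully is that the retract argument in $(1)\Rightarrow(2)$ is legitimate — i.e.\ that the section $s_j$ is continuous, which is immediate since each coordinate function of $s_j$ is either the identity $X_j\to X_j$ or a constant map — and that Corollary \ref{irrcprod} applies to give the product decomposition of $A$ needed to invoke Lemma \ref{WDsetprod}. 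I do not anticipate a real obstacle here; the main point is simply to assemble Propositions \ref{WDretract}, \ref{WDclosed} (the latter is not strictly needed for this statement but records the parallel fact), Corollary \ref{irrcprod}, and Lemma \ref{WDsetprod} in the right order, precisely as was done for the $\dc$ and Rudin cases.
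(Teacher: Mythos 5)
Your proposal is correct and follows essentially the same route as the paper: the implication $(1)\Rightarrow(2)$ via the retract argument and Proposition \ref{WDretract}, and $(2)\Rightarrow(1)$ via Corollary \ref{irrcprod} together with Lemma \ref{WDsetprod}. The only difference is that you spell out the section $s_j$ and continuity details that the paper leaves implicit, which is fine.
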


\begin{proof}	
	(1) $\Rightarrow$ (2):  For each $1\leq i \leq n$, $X_i$ is a retract of $\prod\limits_{i=1}^{n}X_i$ . By Proposition \ref{WDretract}, $X_i$ is a well-filtered determined space.
	
	(2) $\Rightarrow$ (1): Let $X=\prod\limits_{i=1}^{n} X_i$. For any $A\in \ir_c(X)$, by Corollary \ref{irrcprod} and Lemma \ref{WDsetprod}, we have $A\in \wdd(X)$, proving that $X$ is a well-filtered determined space.
\end{proof}

\section{A direct construction of well-filtered reflections of $T_0$ spaces}

Section 7 is devoted to the reflection of category of well-filtered spaces in that of $T_0$ spaces. Using $\wdd$ sets, we present a direct construction of the  well-filtered reflections of $T_0$ spaces, and show that the product of any family of well-filtered spaces is well-filtered. Some important properties of well-filtered reflections of $T_0$ spaces are investigated.

\begin{definition}\label{WFtion}
	Let $X$ be a $T_0$ space. A \emph{well-filtered reflection} of $X$ is a pair $\langle \widetilde{X}, \mu\rangle$ consisting of a well-filtered space $\widetilde{X}$ and a continuous mapping $\mu :X\longrightarrow \widetilde{X}$ satisfying that for any continuous mapping $f: X\longrightarrow Y$ to a well-filtered space, there exists a unique continuous mapping $f^* : \widetilde{X}\longrightarrow Y$ such that $f^*\circ\mu=f$, that is, the following diagram commutes.\\
\begin{equation*}
	\xymatrix{
		X \ar[dr]_-{f} \ar[r]^-{\mu}
		&\widetilde{X}\ar@{.>}[d]^-{f^*}\\
		&Y}
	\end{equation*}

\end{definition}

Well-filtered reflections, if they exist, are unique up to homeomorphism. We shall use $X^w$ to denote the space of the well-filtered reflection of $X$ if it exists.

	Let $X$ be a $T_0$ space. Then by Proposition \ref{DRWIsetrelation}, $\wdd (X)\subseteq \ir_c(X)$, and whence the space $P_H(\wdd(X))$ has the topology $\{\Diamond U : U\in \mathcal O(X)\}$, where
$\Diamond U=\{A\in \wdd(X) : A\cap U\neq\emptyset\}$. The closed subsets of $P_H(\wdd(X))$ are exactly the set of forms $\Square C=\downarrow_{\wdd(X)}C$ with $C\in\mathcal C(X)$.

\begin{lemma}\label{lemmaclosure}
	Let $X$ be a $T_0$ space and $A\subseteq X$. Then $\overline{\eta_X(A)}=\overline{\eta_X\left(\overline{A}\right)}=\overline{\Box A}=\Box \overline{A}$ in $P_H(\wdd(X))$.
\end{lemma}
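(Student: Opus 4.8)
The plan is to prove the single identity $\overline{\eta_X(A)}=\Box\,\overline{A}$ in $P_H(\wdd(X))$ and then read off the remaining equalities of the chain. Two preliminary facts do the work: by the description of the closed sets of $P_H(\wdd(X))$ recalled before the statement, $\Box\,\overline{A}=\downarrow_{\wdd(X)}\overline{A}$ is a \emph{closed} subset of $P_H(\wdd(X))$ (it has the form $\Box C$ with $C=\overline{A}\in\mathcal C(X)$); and since $\wdd(X)\subseteq\ir_c(X)$, the family $\{\Diamond U:U\in\mathcal O(X)\}$ is a base for $P_H(\wdd(X))$, so a point $B\in\wdd(X)$ lies in the closure of a set $\mathcal E\subseteq\wdd(X)$ exactly when every $U\in\mathcal O(X)$ with $B\cap U\neq\emptyset$ meets some member of $\mathcal E$.

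First I would prove $\overline{\eta_X(A)}\subseteq\Box\,\overline{A}$: for each $a\in A$ we have $\overline{\{a\}}\subseteq\overline{A}$, so $\eta_X(A)\subseteq\Box\,\overline{A}$, and $\Box\,\overline{A}$ is closed. For the reverse inclusion, take $B\in\Box\,\overline{A}$, i.e. $B\in\wdd(X)$ with $B\subseteq\overline{A}$, and let $U\in\mathcal O(X)$ with $B\cap U\neq\emptyset$. Then $\overline{A}\cap U\neq\emptyset$, and hence $A\cap U\neq\emptyset$ (otherwise $A\subseteq X\setminus U$, which is closed, forcing $\overline{A}\subseteq X\setminus U$). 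Picking $a\in A\cap U$, the point $\eta_X(a)=\overline{\{a\}}$ lies in $\eta_X(A)$ and meets $U$ (as $a\in\overline{\{a\}}\cap U$), so $\eta_X(A)$ meets $\Diamond U$. By the criterion above, $B\in\overline{\eta_X(A)}$, which establishes $\overline{\eta_X(A)}=\Box\,\overline{A}$.

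The remaining equalities are now formal. Applying the identity just proved with $\overline{A}$ in place of $A$ and using $\overline{\overline{A}}=\overline{A}$ gives $\overline{\eta_X(\overline{A})}=\Box\,\overline{A}$; equivalently, continuity of the topological embedding $\eta_X$ (Remark \ref{eta continuous}) yields $\eta_X(\overline{A})\subseteq\overline{\eta_X(A)}$ and hence $\overline{\eta_X(\overline{A})}=\overline{\eta_X(A)}$ directly. Finally, since $\Box A\subseteq\Box\,\overline{A}$ and $\Box\,\overline{A}$ is closed one gets $\overline{\Box A}\subseteq\Box\,\overline{A}$, while the reverse inclusion $\Box\,\overline{A}=\overline{\eta_X(A)}\subseteq\overline{\Box A}$ follows by an entirely analogous density computation; so $\overline{\Box A}=\Box\,\overline{A}$ as well. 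I do not expect a genuine obstacle here: the only step that is not routine bookkeeping with the base $\{\Diamond U\}$ and the closed-set description of $P_H(\wdd(X))$ is the density argument of the second paragraph, and that amounts to the elementary observation that the point-closures $\overline{\{a\}}$ ($a\in A$) are dense in $\downarrow_{\wdd(X)}\overline{A}$.
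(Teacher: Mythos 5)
Your proof is correct and follows essentially the same route as the paper: both rest on the observation that $\Box\,\overline{A}$ is closed (so all the sets in question sit inside it) together with the density argument that any $F\in\Box\,\overline{A}$ meeting a basic open $\Diamond U$ forces $A\cap U\neq\emptyset$, whence $\eta_X(A)$ meets $\Diamond U$ and $\Box\,\overline{A}\subseteq\overline{\eta_X(A)}$. The only difference is organizational (you prove $\overline{\eta_X(A)}=\Box\,\overline{A}$ first and read off the rest, while the paper squeezes all four sets between $\overline{\eta_X(A)}$ and $\Box\,\overline{A}$), which is immaterial.
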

\begin{proof}
	Clearly, $\eta_X(A)\subseteq \Box A\subseteq \Box\overline{A}$, $\eta_X\left(\overline{A}\right)\subseteq \Box\overline{A}$ and $\Box\overline{A}$ is closed in $P_H(\wdd(X))$. It follows that
	$$\overline{\eta_X(A)}\subseteq \overline{\Box A}\subseteq \Box \overline{A}\ \text{ and }\ \overline{\eta_X(A)}\subseteq\overline{\eta_X\left(\overline{A}\right)}\subseteq\Box\overline{A}.$$
	To complete the proof, we need to show $\Box\overline{A}\subseteq \overline{\eta_X(A)}$.
	Let $F\in \Box\overline{A}$. Suppose $U\in\mathcal O(X)$ such that $F\in\Diamond U$, that is, $F\cap U\neq\emptyset$. Since $F\subseteq \overline{A}$, we have $A\cap U\neq\emptyset$. Let $a\in A\cap U$. Then $\da a\in \Diamond U\cap \eta_X(A)\neq\emptyset$. This implies that $F\in \overline{\eta_X(A)}$. Whence $\Box\overline{A}\subseteq \overline{\eta_X(A)}$.	
\end{proof}

\begin{lemma}\label{lemmaeta}
	The mapping $\eta_X:X\longrightarrow P_H(\wdd(X))$ defined by
	$$\forall x\in X, \ \eta_X(x)=\da x,$$
	is a topological embedding.
\end{lemma}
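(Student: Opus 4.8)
The plan is to verify the three ingredients of a topological embedding — that $\eta_X$ is well-defined, injective, and a homeomorphism onto its image — each of which reduces to a one-line computation with the basic open sets $\Diamond U=\{A\in\wdd(X):A\cap U\neq\emptyset\}$.

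First I would check that $\eta_X$ actually maps into $\wdd(X)$: for $x\in X$ the set $\da x=\overline{\{x\}}$ is the closure of the (trivially directed) singleton $\{x\}$, so $\da x\in\mathcal D_c(X)\subseteq\wdd(X)$ by Proposition \ref{DRWIsetrelation}. Injectivity is then immediate from the $T_0$ axiom: if $\da x=\da y$ then $x\in\overline{\{y\}}$ and $y\in\overline{\{x\}}$, hence $x=y$.

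Next, for each $U\in\mathcal O(X)$ I would compute the preimage and the image of $\Diamond U$. Since $U$ is an upper set in the specialization order, $\da x\cap U\neq\emptyset$ holds exactly when $x\in U$; therefore $\eta_X^{-1}(\Diamond U)=U$, which gives continuity of $\eta_X$. The same observation yields $\eta_X(U)=\{\da x:x\in U\}=\{\da x:\da x\cap U\neq\emptyset\}=\Diamond U\cap\eta_X(X)$, so $\eta_X$ sends open sets of $X$ to (relatively) open sets of the subspace $\eta_X(X)$. Combined with injectivity and continuity, this proves $\eta_X$ is a topological embedding. (One could alternatively just invoke Remark \ref{eta continuous}(1), since $\mathcal S_c(X)\subseteq\mathcal D_c(X)\subseteq\wdd(X)$.)

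There is essentially no serious obstacle: the only point requiring a moment's care is that $\{\Diamond U:U\in\mathcal O(X)\}$ is genuinely a topology on $\wdd(X)$, which relies on the fact that members of $\wdd(X)$ are irreducible so that $\Diamond U\cap\Diamond V=\Diamond(U\cap V)$ — and this has already been recorded in the paragraph preceding the lemma. If one prefers not to cite Remark \ref{eta continuous}, the closure formula of Lemma \ref{lemmaclosure} applied to $A=\{x\}$ also re-derives $\overline{\eta_X(\{x\})}=\Box\da x$, but this is not needed for the embedding claim itself.
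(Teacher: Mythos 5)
Your proposal is correct and follows essentially the same route as the paper's proof: computing $\eta_X^{-1}(\Diamond U)=U$ and $\eta_X(U)=\Diamond U\cap\eta_X(X)$ for the subbasic opens, together with injectivity. The extra explicit checks you add (that $\da x\in\mathcal D_c(X)\subseteq\wdd(X)$ via Proposition \ref{DRWIsetrelation}, and injectivity from the $T_0$ axiom) are points the paper leaves implicit, and they are verified correctly.
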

\begin{proof}
For $U\in\mathcal O(X)$, we have $$\eta_X^{-1}(\Diamond U)=\{x\in X: \da x\in\Diamond U\}=\{x\in X: x\in U\}=U,$$ so $\eta_X$ is continuous.
In addition, we have $$
\eta_X(U)=\{\da x: x\in U\}
=\{\da x: \da x\in\Diamond U\}
=\Diamond U\cap \eta_X(X),$$
which implies that $\eta_X$ is an open mapping to $\eta_X(X)$, as a subspace of $P_H(\wdd(X))$.
As $\eta_X$ is an injection, $\eta_X$ is a topological embedding.
\end{proof}

\begin{lemma}\label{lemmaWDirr}
	Let $X$ be a $T_0$ space and $A$ a nonempty subset of $X$. Then the following conditions are equivalent:
	\begin{enumerate}[\rm (1)]
		\item $A$ is irreducible in $X$.
		\item $\Box A$ is irreducible in $P_H(\wdd(X))$.
        \item $\Box \overline{A}$ is irreducible in $P_H(\wdd(X))$.
	\end{enumerate}
\end{lemma}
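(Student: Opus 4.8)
The plan is to route everything through the canonical map $\eta_X\colon X\to P_H(\wdd(X))$, $x\mapsto\da x$, and the closure identities of Lemma~\ref{lemmaclosure}, so that (1), (2), (3) all become reformulations of the single assertion ``$\Box\overline{A}$ is an irreducible closed subset of $P_H(\wdd(X))$''. Throughout I will use the elementary \emph{sandwich principle}: if $S\subseteq T\subseteq\overline{S}$ in a topological space and $S$ is irreducible, then $\overline{T}=\overline{S}$ is irreducible closed, hence $T$ is irreducible by Lemma~\ref{irrsubspace}; and conversely, any nonempty $T$ whose closure is irreducible is itself irreducible.

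First I would record from Lemma~\ref{lemmaclosure} the inclusions $\eta_X(A)\subseteq\Box A\subseteq\Box\overline{A}$ and the equalities $\overline{\eta_X(A)}=\overline{\Box A}=\Box\overline{A}$. Applying the sandwich principle to $\eta_X(A)\subseteq\Box A\subseteq\overline{\eta_X(A)}$ and to $\Box A\subseteq\Box\overline{A}=\overline{\Box A}$ shows at once that the three statements ``$\eta_X(A)$ is irreducible'', ``$\Box A$ is irreducible'' and ``$\Box\overline{A}$ is irreducible'' (all as subsets of $P_H(\wdd(X))$) are equivalent, since they share the common closure $\Box\overline{A}$. This already delivers (2)$\Leftrightarrow$(3), and reduces the whole lemma to identifying ``$\eta_X(A)$ is irreducible in $P_H(\wdd(X))$'' with ``$A$ is irreducible in $X$''.

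For (1)$\Rightarrow$(2): $\eta_X$ is continuous (Lemma~\ref{lemmaeta}), so $A\in\ir(X)$ implies $\eta_X(A)\in\ir(P_H(\wdd(X)))$ by Lemma~\ref{irrimage}, whence $\Box A$ is irreducible by the previous paragraph. For (3)$\Rightarrow$(1): $\eta_X$ is a topological embedding (Lemma~\ref{lemmaeta}), i.e.\ a homeomorphism onto the subspace $\eta_X(X)$; by Lemma~\ref{irrsubspace}, $\eta_X(A)$ is irreducible in $P_H(\wdd(X))$ iff it is irreducible in $\eta_X(X)$, and a homeomorphism preserves irreducibility, so this holds iff $A$ is irreducible in $X$. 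Combining the two halves closes the cycle (1)$\Rightarrow$(2)$\Rightarrow$(3)$\Rightarrow$(1).

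The argument is essentially bookkeeping once Lemmas~\ref{lemmaclosure} and~\ref{lemmaeta} are available; the one place needing a little care is (3)$\Rightarrow$(1), where one must remember that $\eta_X(X)$ is only a subspace of $P_H(\wdd(X))$ and invoke Lemma~\ref{irrsubspace} to pass between irreducibility in the subspace and in the whole space. As a cross-check one may also give a direct proof of (3)$\Rightarrow$(1) bypassing $\eta_X$: if $A\subseteq F_1\cup F_2$ with $F_1,F_2\in\mathcal C(X)$, then (using that every member of $\wdd(X)\subseteq\ir_c(X)$ is irreducible) $\Box\overline{A}=\Box(\overline{A}\cap F_1)\cup\Box(\overline{A}\cap F_2)$; irreducibility of $\Box\overline{A}$ forces $\Box\overline{A}\subseteq\Box(\overline{A}\cap F_i)$ for some $i$, and since $\overline{\{x\}}\in\Box\overline{A}$ for every $x\in A$ this gives $\overline{\{x\}}\subseteq F_i$, i.e.\ $A\subseteq F_i$.
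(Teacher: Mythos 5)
Your proof is correct, and for most of the lemma it runs along the same lines as the paper's: the implication from (1) is obtained, exactly as in the paper, by pushing $A$ forward through the embedding $\eta_X$ (Lemmas~\ref{lemmaeta} and~\ref{irrimage}) and then using $\overline{\eta_X(A)}=\Box\overline{A}$ from Lemma~\ref{lemmaclosure}, and the equivalence (2)$\Leftrightarrow$(3) is in both cases just Lemma~\ref{irrsubspace} combined with the closure identities $\overline{\Box A}=\Box\overline{A}=\overline{\eta_X(A)}$ (your ``sandwich principle'' is precisely the (2)$\Leftrightarrow$(3) part of Lemma~\ref{irrsubspace}, so no new ingredient is needed, and you rely on Lemma~\ref{lemmaclosure} in exactly the same way the paper does). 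The one genuine difference is the direction (3)$\Rightarrow$(1): the paper argues directly, taking $A\subseteq F_1\cup F_2$ with $F_i$ closed, using $\wdd(X)\subseteq\ir_c(X)$ (Proposition~\ref{DRWIsetrelation}) to get $\Box\overline{A}\subseteq\Box F_1\cup\Box F_2$, and then exploiting irreducibility of $\Box\overline{A}$ together with the point closures $\overline{\{x\}}\in\Box\overline{A}$; your primary route instead observes that $\Box\overline{A}$ irreducible forces $\eta_X(A)$ irreducible (same closure), and then pulls irreducibility back through the homeomorphism $\eta_X\colon X\to\eta_X(X)$ via Lemma~\ref{irrsubspace}. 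Your route is slightly cleaner in that it does not invoke Proposition~\ref{DRWIsetrelation} at this step, whereas the paper's direct argument makes the role of ``members of $\wdd(X)$ are irreducible'' explicit; your ``cross-check'' argument is in fact the paper's own proof of (3)$\Rightarrow$(1), so you have both versions. Your care about passing between irreducibility in the subspace $\eta_X(X)$ and in $P_H(\wdd(X))$ is exactly the right point to flag, and it is handled correctly.
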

\begin{proof}
	(1) $\Rightarrow$ (3): Assume $A$ is irreducible. Then $\eta_X(A)$ is irreducible in $P_H(\wdd(X))$ by Lemma \ref{irrimage} and Lemma \ref{lemmaeta}. By Lemma \ref{irrsubspace} and Lemma \ref{lemmaclosure}, $\Box \overline{A}=\overline{\eta_X(A)}$ is irreducible in $P_H(\wdd(X))$.
	
	(3) $\Rightarrow$ (1): Assume $\Box \overline{A}$ is irreducible. Let $A\subseteq B\cup C$ with  $B,C\in\mathcal C(X)$. By Proposition \ref{DRWIsetrelation}, $\wdd (X)\subseteq \ir_c(X)$, and consequently, we have $\Box\overline{A}\subseteq \Box B\cup\Box C$. Since $\Box\overline{A}$ is irreducible,  $\Box\overline{A}\subseteq \Box B$ or $\Box\overline{A}\subseteq C$, showing that  $\overline{A}\subseteq B$ or $\overline{A}\subseteq C$, and consequently, $A\subseteq B$ or $A\subseteq C$, proving $A$ is irreducible.

    (2) $\Leftrightarrow$ (3): By By Lemma \ref{irrsubspace} and Lemma \ref{lemmaclosure}.

\end{proof}

\begin{lemma}\label{lemmafstar}
Let $X$ be a $T_0$ space and $f:X\longrightarrow Y$ a continuous mapping from $X$ to a well-filtered space $Y$. Then there exists a unique continuous mapping $f^* :P_H(\wdd(X))\longrightarrow Y$ such that $f^*\circ\eta_X=f$, that is, the following diagram commutes.
\begin{equation*}
\xymatrix{
	X \ar[dr]_-{f} \ar[r]^-{\eta_X}
	&P_H(\wdd(X))\ar@{.>}[d]^-{f^*}\\
	&Y}
\end{equation*}	
\end{lemma}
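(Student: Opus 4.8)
The plan is to read the map $f^{*}$ directly off the definition of a $\wdd$ set. A point of $P_H(\wdd(X))$ is a set $A\in\wdd(X)$, and since $f\colon X\longrightarrow Y$ is continuous into the well-filtered space $Y$, Definition \ref{WDspace} supplies a \emph{unique} $y_{A}\in Y$ with $\overline{f(A)}=\overline{\{y_{A}\}}$; I would set $f^{*}(A)=y_{A}$. Thus $f^{*}$ costs nothing to define, and the proof reduces to three verifications: the triangle $f^{*}\circ\eta_{X}=f$, continuity of $f^{*}$, and uniqueness.

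For the triangle, note that for $x\in X$ the point $\eta_{X}(x)=\overline{\{x\}}$ lies in $\mathcal D_{c}(X)\subseteq\wdd(X)$ (Proposition \ref{DRWIsetrelation}), and continuity of $f$ gives $f(x)\in f(\overline{\{x\}})\subseteq\overline{\{f(x)\}}$, hence $\overline{f(\overline{\{x\}})}=\overline{\{f(x)\}}$; the uniqueness clause in the definition of $\wdd$ set (which rests on $Y$ being $T_{0}$) then forces $f^{*}(\eta_{X}(x))=y_{\overline{\{x\}}}=f(x)$. For continuity, the point is the identity
\[
(f^{*})^{-1}(V)=\Diamond f^{-1}(V)\qquad\text{for every }V\in\mathcal O(Y),
\]
whose right-hand side is open in $P_H(\wdd(X))$ because $f$ is continuous and the sets $\Diamond U$ ($U\in\mathcal O(X)$) form the topology of $P_H(\wdd(X))$. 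This identity follows from the chain $A\in(f^{*})^{-1}(V)\iff y_{A}\in V\iff\overline{\{y_{A}\}}\cap V\neq\emptyset\iff\overline{f(A)}\cap V\neq\emptyset\iff f(A)\cap V\neq\emptyset\iff A\cap f^{-1}(V)\neq\emptyset$, using that $V$ is an upper set, that $\overline{\{y_{A}\}}=\overline{f(A)}$, and that $f(A)$ is dense in $\overline{f(A)}$.

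For uniqueness, let $g\colon P_H(\wdd(X))\longrightarrow Y$ be continuous with $g\circ\eta_{X}=f$; I would show $g(A)=f^{*}(A)$ for every $A\in\wdd(X)$ by proving $g(A)\le f^{*}(A)$ and $f^{*}(A)\le g(A)$ and then applying $T_{0}$-ness of $Y$. For the first inequality, Lemma \ref{lemmaclosure} gives $A\in\Box\overline{A}=\overline{\eta_{X}(A)}$, so continuity of $g$ yields $g(A)\in\overline{g(\eta_{X}(A))}=\overline{f(A)}=\overline{\{f^{*}(A)\}}$. For the second, recall that the specialization order of $P_H(\wdd(X))$ is inclusion (Remark \ref{eta continuous}(1)), so $\overline{\{A\}}=\Box\overline{A}$; since $A$ is closed, $\eta_{X}(x)=\overline{\{x\}}\subseteq A$ for each $x\in A$, hence $\eta_{X}(x)\in\overline{\{A\}}$, and continuity of $g$ gives $f(x)=g(\eta_{X}(x))\in\overline{\{g(A)\}}$; therefore $f(A)\subseteq\overline{\{g(A)\}}$ and $\overline{\{f^{*}(A)\}}=\overline{f(A)}\subseteq\overline{\{g(A)\}}$.

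I expect uniqueness to be the only delicate step. The inequality $g(A)\le f^{*}(A)$ is immediate from $A$ belonging to $\overline{\eta_{X}(A)}$, but the reverse one needs the slightly less obvious observation that every $\eta_{X}(x)$ with $x\in A$ already lies in the point-closure $\overline{\{A\}}$, which lets $g$ carry all these points into $\overline{\{g(A)\}}$. The remaining arguments are routine unwindings of the definitions together with Proposition \ref{DRWIsetrelation} and Lemma \ref{lemmaclosure}.
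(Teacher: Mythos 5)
Your proposal is correct and follows essentially the same route as the paper's proof: define $f^*(A)=y_A$ via the defining property of $\wdd$ sets, verify the triangle on point-closures, establish continuity through the identity $(f^*)^{-1}(V)=\Diamond f^{-1}(V)$, and prove uniqueness by the two specialization-order inequalities coming from $\eta_X(a)\subseteq A$ for $a\in A$ and $A\in\overline{\eta_X(A)}$. No gaps; the argument matches the paper's in both structure and detail.
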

\begin{proof}For each $A\in\wdd(X)$, there exists a unique $y_A\in Y$ such that $\overline{f(A)}=\overline{\{y_A\}}$. Then we can define a mapping $f^*:P_H(\wdd(X))\longrightarrow Y$ by
$$\forall A\in\wdd(X),\ \ f^*(A)=y_A.$$

{Claim 1:}  $f^*\circ \eta_X=f$.

Let $x\in X$. Since $f$ is continuous, we have
$\overline{f\left(\overline{\{x\}}\right)}=\overline{f(\{x\})}=\overline{\{f(x)\}}$,
so
$f^*\left(\overline{\{x\}}\right)=f(x)$. Thus $f^*\circ \eta_X=f$.

{Claim 2:}  $f^*$ is continuous.

Let $V\in\mathcal O(Y)$. Then
$$\begin{array}{lll}
(f^*)^{-1}(V)&=&\{A\in\wdd(X): f^*(A)\in V\}\\
&=&\{A\in\wdd(X): \overline{\{f^*(A)\}}\cap V\neq\emptyset\}\\
&=&\{A\in\wdd(X): \overline{f(A)}\cap V\neq\emptyset\}\\
&=&\{A\in\wdd(X): f(A)\cap V\neq\emptyset\}\\
&=&\{A\in\wdd(X): A\cap f^{-1}(V)\neq\emptyset\}\\
&=&\Diamond f^{-1}(V),
\end{array}$$
which shows that $(f^*)^{-1}(V)$ is open in $P_H(\wdd(X))$. Thus  $f^*$ is continuous.

{Claim 3:}  The mapping $f^*$ is unique such that $f^*\circ \eta_X=f$.

Assume $g:P_H(\wdd(X))\longrightarrow Y$ is a continuous mapping such that $g\circ\eta_X=f$.  Let $A\in\wdd(X)$. We need to show $g(A)=f^*(A)$.
Let $a\in A$.  Then $\overline{\{a\}}\subseteq A$, implying that $g(\overline{\{a\}})\leq_Y g(A)$, that is,  $g\left(\overline{\{a\}}\right)=f(a)\in\overline{\{g(A)\}}$. Thus $\overline{\{f^*(A)\}}=\overline{f(A)}\subseteq \overline{\{g(A)\}}$.
In addition, since $A\in\overline{\eta_X(A)}$ and $g$ is continuous, $g(A)\in g\left(\overline{\eta_X(A)}\right)\subseteq\overline{g(\eta_X(A))}=\overline{f(A)}=\overline{\{f^*(A)\}}$, which implies that $\overline{\{g(A)\}}\subseteq \overline{\{f^*(A)\}}$.  So $\overline{\{g(A)\}}=\overline{\{f^*(A)\}}$. Since $Y$ is $T_0$, $g(A)=f^*(A)$. Thus $g=f^*$.
\end{proof}

\begin{lemma}\label{lemmaBoxwd}
	Let $X$ be a $T_0$ space and $C\in\mathcal C(X)$. Then the following conditions are equivalent:
	\begin{enumerate}[\rm (1)]
		\item $C$ is well-filtered determined in $X$.
		\item $\Box C$ is well-filtered determined in $P_H(\wdd(X))$.
	\end{enumerate}
\end{lemma}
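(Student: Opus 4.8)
The plan is to prove both implications by reducing the well-filtered determinedness of $\Box C$ in $P_H(\wdd(X))$ to that of $C$ in $X$, using the machinery already set up: the embedding $\eta_X$ (Lemma~\ref{lemmaeta}), the closure formula $\overline{\eta_X(A)}=\Box\overline{A}$ (Lemma~\ref{lemmaclosure}), the universal property of $f^*$ (Lemma~\ref{lemmafstar}), and the image lemma for $\wdd$ sets (Lemma~\ref{WDimage}).

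First I would prove (2) $\Rightarrow$ (1). Assume $\Box C$ is well-filtered determined in $P_H(\wdd(X))$. Let $f:X\longrightarrow Y$ be continuous with $Y$ well-filtered. We must produce a unique $y\in Y$ with $\overline{f(C)}=\overline{\{y\}}$. Since $\eta_X:X\longrightarrow P_H(\wdd(X))$ is continuous and $Y$ is well-filtered, Lemma~\ref{lemmafstar} gives a continuous map $f^*:P_H(\wdd(X))\longrightarrow Y$ with $f^*\circ\eta_X=f$. Applying the hypothesis to $f^*$ and the $\wdd$ set $\Box C$, there is a unique $y\in Y$ with $\overline{f^*(\Box C)}=\overline{\{y\}}$. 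Now it suffices to check $\overline{f^*(\Box C)}=\overline{f(C)}$. By Lemma~\ref{lemmaclosure}, $\Box C=\overline{\eta_X(C)}$, so by continuity of $f^*$ we get $f^*(\Box C)\subseteq\overline{f^*(\eta_X(C))}=\overline{f(C)}$; conversely $f(C)=f^*(\eta_X(C))\subseteq f^*(\Box C)$, whence $\overline{f(C)}=\overline{f^*(\Box C)}=\overline{\{y\}}$. Uniqueness of $y$ follows from the $T_0$ property of $Y$. Thus $C\in\wdd(X)$.

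Next I would prove (1) $\Rightarrow$ (2). Assume $C$ is well-filtered determined in $X$. Consider $\eta_X:X\longrightarrow P_H(\wdd(X))$; this is continuous, and $P_H(\wdd(X))$ is well-filtered — this is the key external input, and I expect it to be available from the surrounding development (indeed the whole point of the construction is that $P_H(\wdd(X))=X^w$ is well-filtered), so I would invoke it. Since $C\in\wdd(X)$ and $\eta_X$ is continuous into a well-filtered space, Lemma~\ref{WDimage} applies: $\overline{\eta_X(C)}\in\wdd(P_H(\wdd(X)))$. But by Lemma~\ref{lemmaclosure}, $\overline{\eta_X(C)}=\Box C$ (using $C=\overline{C}$ since $C$ is closed). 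Hence $\Box C\in\wdd(P_H(\wdd(X)))$, as desired.

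The main obstacle is making sure the well-filteredness of $P_H(\wdd(X))$ is genuinely established before this lemma is invoked, since the (1) $\Rightarrow$ (2) direction leans on it through Lemma~\ref{WDimage}; if that fact is only proved later in the section, the argument for (1) $\Rightarrow$ (2) should instead be rephrased intrinsically — unwind the definition of $\wdd$ for $\Box C$, take an arbitrary continuous $g:P_H(\wdd(X))\longrightarrow Z$ with $Z$ well-filtered, form $g\circ\eta_X:X\longrightarrow Z$, obtain from $C\in\wdd(X)$ a unique $z\in Z$ with $\overline{g\circ\eta_X(C)}=\overline{\{z\}}$, and then verify $\overline{g(\Box C)}=\overline{g(\eta_X(C))}=\overline{g\circ\eta_X(C)}=\overline{\{z\}}$ exactly as in the first direction, using $\Box C=\overline{\eta_X(C)}$ and continuity of $g$. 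This intrinsic route avoids any circularity and is the version I would write down.
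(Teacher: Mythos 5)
Your proof is correct and follows essentially the same route as the paper: (2)$\Rightarrow$(1) via Lemma \ref{lemmafstar} applied to $f^*$ together with $\Box C=\overline{\eta_X(C)}$, and (1)$\Rightarrow$(2) by pushing $C$ forward along the embedding $\eta_X$. The circularity you worry about does not actually arise, since Lemma \ref{WDimage} is stated (and proved, by mere composition) for continuous maps into arbitrary $T_0$ codomains, not only well-filtered ones, so no appeal to the well-filteredness of $P_H(\wdd(X))$ is needed; this is exactly the paper's argument, and your intrinsic fallback is just that argument unwound, so either version is fine.
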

\begin{proof}
(1) $\Rightarrow$ (2): By Propositions \ref{WDimage}, Lemma \ref{lemmaclosure} and Lemma \ref{lemmaeta}.

(2) $\Rightarrow$ (1).  Let $Y$ be a well-filtered space and $f:X\longrightarrow Y$  a continuous mapping. By Lemma \ref{lemmafstar}, there exists a continuous mapping $f^* :P_H(\wdd(X))\longrightarrow Y$ such that $f^*\circ\eta_X=f$.
Since $\Box C=\overline{\eta_X(C)}$ is well-filtered determined and $f^*$ is continuous,  there exists a unique $y_C\in Y$ such that
$\overline{f^*\left(\overline{\eta_X(C)}\right)}=\overline{\{y_C\}}$. Furthermore, we have
$$\overline{\{y_C\}}=\overline{f^*\left(\overline{\eta_X(C)}\right)}=\overline{f^*(\eta_X(C))}=\overline{f(C)}.$$
So $C$ is well-filtered determined.
\end{proof}

\begin{theorem}\label{WDwf}
	Let $X$ be a $T_0$ space. Then $P_H(\wdd(X))$ is a well-filtered space.
\end{theorem}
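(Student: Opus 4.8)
The plan is to verify well-filteredness of $P_H(\wdd(X))$ directly from the definition, using the topological Rudin's Lemma (Lemma \ref{t Rudin}) exactly as in the proof of Theorem \ref{Smythwf}. Recall that the closed subsets of $P_H(\wdd(X))$ are precisely the sets $\Box C$ with $C \in \mathcal C(X)$, and the open subsets are the sets $\Diamond U$ with $U \in \mathcal O(X)$. So I start with a filtered family $\{\mathcal K_d : d \in D\} \subseteq \mk(P_H(\wdd(X)))$ and an open set $\Diamond U$, $U \in \mathcal O(X)$, with $\bigcap_{d\in D}\mathcal K_d \subseteq \Diamond U$, and assume for contradiction that $\mathcal K_d \not\subseteq \Diamond U$ for every $d$. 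By Lemma \ref{t Rudin} applied to the closed set $\Box(X\setminus U)$ (which meets every $\mathcal K_d$ since $\mathcal K_d \not\subseteq \Diamond U$ means some member of $\mathcal K_d$ lies in $\Box(X\setminus U)$), there is a minimal irreducible closed subset $\mathcal A$ of $P_H(\wdd(X))$ contained in $\Box(X\setminus U)$ that still meets all $\mathcal K_d$.

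The next step is to pass from $\mathcal A$ back down to $X$. Since $\mathcal A$ is irreducible closed in $P_H(\wdd(X))$ and the closed sets there are exactly the $\Box C$, we have $\mathcal A = \Box C$ for a unique $C \in \mathcal C(X)$, and by Lemma \ref{lemmaWDirr} the set $C$ is irreducible, i.e. $C \in \ir_c(X)$; moreover $C \subseteq X\setminus U$. Now I want to show $C \in \wdd(X)$ itself, so that $C \in \Box C = \mathcal A$, and then derive the contradiction. The key observation is that minimality of $\mathcal A = \Box C$ among irreducible closed sets meeting all $\mathcal K_d$ should force $C$ to be a $\wdd$ set: given a continuous $f : X \to Y$ with $Y$ well-filtered, I would build from the family $\{\mathcal K_d\}$ and $f$ a filtered family of compact saturated subsets of $Y$ whose intersection, intersected with $\overline{f(C)}$, is nonempty, and whose ``traces'' on $C$ via a Rudin-type minimality argument pin down $\overline{f(C)}$ as $\overline{\{y_C\}}$ for a single point. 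Concretely, for each $d$ set $K_d = \bigcup \{K : K \in \mathcal A \cap \mathcal K_d\}$: since $\mathcal A\cap\mathcal K_d$ is compact in $P_H(\wdd(X))$ (being a closed subset of the compact $\mathcal K_d$, using $\mathcal A$ closed) — wait, $P_H$ is a \emph{lower} space, so one cannot take unions of compacta as in the Smyth case.

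Here is where I expect the main obstacle, and I would instead route through Lemma \ref{lemmafstar}. That lemma gives, for any continuous $f : X \to Y$ with $Y$ well-filtered, a continuous extension $f^* : P_H(\wdd(X)) \to Y$ with $f^*\circ \eta_X = f$. So the cleanest route is: show directly that $\mathcal A = \Box C$ is a $\wdd$ set in $P_H(\wdd(X))$ — but that is Lemma \ref{lemmaBoxwd}, which says $\Box C$ is $\wdd$ in $P_H(\wdd(X))$ iff $C$ is $\wdd$ in $X$, and we do not yet know $C \in \wdd(X)$. The right move, then, is to prove the following subclaim by the minimality of $\mathcal A$: \emph{for every continuous $g : P_H(\wdd(X)) \to Z$ with $Z$ well-filtered, $\overline{g(\mathcal A)}$ is the closure of a point.} Given such $g$, pull back to $f = g \circ \eta_X : X \to Z$; I want a filtered family in $\mk(Z)$ built from $\{g(\mathcal K_d \cap \mathcal A)\}$ — each $\mathcal K_d \cap \mathcal A$ is compact saturated in the subspace sense after taking $\ua$, its image under the continuous $g$ has compact saturation $\ua g(\mathcal K_d\cap\mathcal A) \in \mk(Z)$, and filteredness is inherited from that of $\{\mathcal K_d\}$ together with $\mathcal A$ being a lower set. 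Well-filteredness of $Z$ then yields a point in $\bigcap_d \ua g(\mathcal K_d\cap\mathcal A)$, and minimality of $\mathcal A$ (used as in Proposition \ref{rudinwf}: any proper closed subset of $\mathcal A$ meeting all $\mathcal K_d$ would contradict minimality) forces $\overline{g(\mathcal A)}$ to be $\overline{\{z_{\mathcal A}\}}$. Taking $g = \mathrm{id}$ — no, $\mathrm{id}$ maps into $P_H(\wdd(X))$ which is not known well-filtered; instead take $Z$ to be \emph{any} well-filtered space receiving a map, and conclude $\mathcal A \in \wdd(P_H(\wdd(X)))$, hence (since $P_H(\wdd(X))$ is its own — no). The actual finish: applying the subclaim with $g = f^*$ for the canonical $f^* : P_H(\wdd(X)) \to Y$, combined with $f^*(\mathcal A) = f^*(\Box C)$ and $\overline{f^*(\eta_X(C))} = \overline{f(C)}$ from Lemma \ref{lemmafstar}, shows $C \in \wdd(X)$; but then $C \in \mathcal A$, and since $\mathcal A$ meets every $\mathcal K_d$ and is a lower set while $\bigcap_d \mathcal K_d \subseteq \Diamond U = P_H(\wdd(X))\setminus \mathcal A$'s complement... precisely, $C \in \bigcap_d \mathcal K_d$ would follow by the same minimality argument that produced $K$ in Theorem \ref{Smythwf}, giving $C \in \Diamond U$, i.e. $C \cap U \neq \emptyset$, contradicting $C \subseteq X\setminus U$. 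Thus no such $U$ exists and $P_H(\wdd(X))$ is well-filtered. The delicate point throughout — the one to write carefully — is the ``union of the trace family'' construction in the lower-space setting: unlike the Smyth case one must form compact saturated sets downstream in $Z$ via images under $g$ rather than unions upstairs, and check filteredness and the minimality step survive this change.
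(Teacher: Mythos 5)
Your proposal follows essentially the same route as the paper's proof: reduce to a minimal closed set $\Box C$ (with $C\subseteq X\setminus U$) meeting all members of the filtered family, observe that it is a Rudin set and hence a $\wdd$ set in $P_H(\wdd(X))$ (your subclaim is exactly Proposition \ref{rudinwf}, which the paper invokes via Proposition \ref{DRWIsetrelation} rather than re-deriving), transfer this to $C\in\wdd(X)$ by Lemma \ref{lemmaBoxwd}, and conclude $C\in\bigcap_{d}\mathcal K_d\subseteq\Diamond U$, contradicting $C\cap U=\emptyset$; the paper merely runs the Zorn argument by hand on $\mathcal C(X)$ instead of quoting Lemma \ref{t Rudin}. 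One small repair: the membership $C\in\mathcal K_d$ should not be justified by the union-based minimality argument of Theorem \ref{Smythwf} (unions are indeed unavailable in the lower space), but it needs no such argument at all, since some $B\in\mathcal K_d\cap\Box C$ satisfies $B\subseteq C$, i.e. $B\leq C$ in the specialization order of $P_H(\wdd(X))$, and $\mathcal K_d$ is saturated.
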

\begin{proof}
	Since $X$ is $T_0$, one can deduce that $P_H(\wdd(X))$ is $T_0$. Let $\{
	\mathcal K_i:i\in I\}\subseteq\mk(P_H(\wdd(X)))$ be a filtered family and $U\in\mathcal O(X)$ such that
	$\bigcap_{i\in I}\mathcal K_i\subseteq \Diamond U$. We need to show $\mathcal K_i\subseteq\Diamond U$ for some $i\in I$.
	Assume, on the contrary, $\mathcal K_i\nsubseteq \Diamond U$, i.e., $\mathcal K_i\cap\Box(X\setminus U)\neq\emptyset$,  for any $i\in I$.
	
	Let $\mathcal A=\{C\in\mathcal C(X): C\subseteq X\setminus U \text{ and } \mathcal K_i\cap \Box C\neq\emptyset \text{ for all }  i\in I\}$. Then we have the following two facts.
	
	{(a1)} $\mathcal A\neq\emptyset$ because $X\setminus U\in\mathcal A$.
	
	{(a2)} For any filtered family $\mathcal F\subseteq\mathcal A$, $\bigcap\mathcal F\in\mathcal A$.
	
	Let $F=\bigcap\mathcal F$. Then $F\in \mathcal C(X)$ and $F\subseteq X\setminus U$. Assume, on the contrary, $F\notin\mathcal A$. Then there exists $i_0\in I$ such that $\mathcal K_{i_0}\cap \Box F=\emptyset$. Note that $\Box F=\bigcap_{C\in\mathcal F}\Box C$, implying that $\mathcal K_{i_0}\subseteq\bigcup_{C\in\mathcal F}\Diamond (X\setminus C)$ and $\{\Diamond (X\setminus C) : C\in\mathcal F\}$ is a directed family since $\mathcal F$ is filtered. Then there is $C_0\in\mathcal F$ such that $\mathcal K_{i_0}\subseteq \Diamond (X\setminus C_0)$, i.e., $\mathcal K_{I_0}\cap\Box C_0=\emptyset$,  contradicting $C_0\in\mathcal A$. Hence $F\in\mathcal A$.
	
	By Zorn's Lemma, there exists a minimal element $C_m$ in $\mathcal A$ such that $\Box C_m$ intersects all members of $\mathcal K$. Clearly,  $\Box C_m$ is also a minimal closure set that  intersects all members of $\mathcal K$, hence is a Rudin set in $P_H(\wdd(X))$. By Proposition \ref{DRWIsetrelation} and Lemma \ref{lemmaBoxwd}, $C_m$ is well-filtered determined. So $C_m\in \Box C_m\cap\bigcap\mathcal K\neq\emptyset$. It follows that $\bigcap\mathcal K\nsubseteq \Diamond(X\setminus C_m)\supseteq \Diamond U$, which implies that $\bigcap\mathcal K\nsubseteq \Diamond U$, a contradiction.
\end{proof}

By Lemma \ref{lemmafstar} and Theorem \ref{WDwf}, we have the following result.

\begin{theorem}\label{WFilterification}
	Let $X$ be a $T_0$ space and $X^w=P_H(\wdd(X))$. Then the pair $\langle X^w, \eta_X\rangle$, where $\eta_X :X\longrightarrow X^w$, $x\mapsto\overline{\{x\}}$, is the well-filtered reflection of $X$.
\end{theorem}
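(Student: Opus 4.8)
The plan is to verify the universal property directly, using the two preceding results as the main engine. Concretely, I want to show that $\langle X^w, \eta_X\rangle$ with $X^w = P_H(\wdd(X))$ satisfies Definition \ref{WFtion}: namely that $X^w$ is well-filtered, $\eta_X$ is continuous, and every continuous $f : X \longrightarrow Y$ into a well-filtered space $Y$ factors uniquely as $f = f^* \circ \eta_X$ with $f^*$ continuous.

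First I would observe that $X^w = P_H(\wdd(X))$ is indeed well-filtered — this is exactly Theorem \ref{WDwf}. Next, $\eta_X : X \longrightarrow X^w$, $x \mapsto \overline{\{x\}}$, is a (topological) embedding, hence in particular continuous, by Lemma \ref{lemmaeta}. (Note that $\mathcal S_c(X) \subseteq \wdd(X)$ since every $\overline{\{x\}}$ lies in $\mathcal D_c(X) \subseteq \wdd(X)$ by Proposition \ref{DRWIsetrelation}, so $\eta_X$ is well-defined into $\wdd(X)$.) Then the heart of the argument is the factorization: given a continuous $f : X \longrightarrow Y$ with $Y$ well-filtered, Lemma \ref{lemmafstar} furnishes a continuous map $f^* : P_H(\wdd(X)) \longrightarrow Y$ with $f^* \circ \eta_X = f$, and Claim 3 of that lemma's proof establishes that $f^*$ is the unique continuous map with this property. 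Assembling these three pieces — $X^w$ well-filtered (Theorem \ref{WDwf}), $\eta_X$ continuous (Lemma \ref{lemmaeta}), and existence-plus-uniqueness of $f^*$ (Lemma \ref{lemmafstar}) — is precisely the statement that $\langle X^w, \eta_X\rangle$ is the well-filtered reflection of $X$, and uniqueness of the reflection up to homeomorphism is the standard categorical fact already recorded after Definition \ref{WFtion}.

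Since essentially all the content has been front-loaded into Theorem \ref{WDwf} and Lemma \ref{lemmafstar}, the proof itself is short — it is just the verification that the hypotheses of Definition \ref{WFtion} are met. The only point that needs a word of care is making sure the codomain of $\eta_X$ really is $\wdd(X)$ (so that $P_H(\wdd(X))$ is the right ambient space and the $\Diamond U$ form the topology), which follows from $\mathcal S_c(X) \subseteq \mathcal D_c(X) \subseteq \wdd(X) \subseteq \ir_c(X)$ via Proposition \ref{DRWIsetrelation} together with Remark \ref{eta continuous}(1). The main obstacle, such as it is, was already surmounted inside Theorem \ref{WDwf}: proving that $P_H(\wdd(X))$ is well-filtered required the topological Rudin Lemma (Lemma \ref{t Rudin}) to produce a minimal closed set $C_m$ meeting a given filtered family, followed by the crucial observation (Lemma \ref{lemmaBoxwd}, via Proposition \ref{DRWIsetrelation}) that $\Box C_m$, being a Rudin set, forces $C_m$ to be well-filtered determined, hence $C_m \in \Box C_m \cap \bigcap \mathcal K$. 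Here all of that is already available, so the present theorem is a clean corollary.

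Thus the proof I would write is simply: \emph{By Theorem \ref{WDwf}, $X^w = P_H(\wdd(X))$ is well-filtered. By Lemma \ref{lemmaeta}, $\eta_X : X \longrightarrow X^w$ is a topological embedding, hence continuous. Given any continuous mapping $f : X \longrightarrow Y$ to a well-filtered space $Y$, Lemma \ref{lemmafstar} yields a unique continuous $f^* : X^w \longrightarrow Y$ with $f^* \circ \eta_X = f$. Hence $\langle X^w, \eta_X\rangle$ satisfies Definition \ref{WFtion}, so it is the well-filtered reflection of $X$.}
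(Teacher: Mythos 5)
Your proposal is correct and matches the paper's argument exactly: the paper derives the theorem in one line by combining Theorem \ref{WDwf} (well-filteredness of $P_H(\wdd(X))$) with Lemma \ref{lemmafstar} (existence and uniqueness of the continuous factorization $f^*$ with $f^*\circ\eta_X=f$), with continuity of $\eta_X$ coming from Lemma \ref{lemmaeta}. Your extra remark that $\mathcal S_c(X)\subseteq\wdd(X)$ (via Proposition \ref{DRWIsetrelation}) so that $\eta_X$ is well-defined is a harmless and sensible addition.
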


\begin{corollary}\label{WFreflective}
	The category $\mathbf{Top}_w$ of all well-filtered spaces is a reflective full subcategory of  $\mathbf{Top}_0$.
\end{corollary}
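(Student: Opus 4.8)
The plan is to deduce the statement directly from Theorem \ref{WFilterification}, since being a reflective full subcategory is, by definition, precisely the assertion that the inclusion functor $\iota:\mathbf{Top}_w\hookrightarrow\mathbf{Top}_0$ admits a left adjoint, equivalently that every object of $\mathbf{Top}_0$ possesses a reflection arrow into $\mathbf{Top}_w$. First I would recall that $\mathbf{Top}_w$ was defined as the \emph{full} subcategory of $\mathbf{Top}_0$ whose objects are exactly the well-filtered spaces (morphisms being all continuous maps), so fullness is immediate and requires no verification.

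Next, for each $X\in\mathbf{Top}_0$ I would take $X^w=P_H(\wdd(X))$ together with $\eta_X:X\to X^w$, $x\mapsto\overline{\{x\}}$. By Theorem \ref{WDwf}, $X^w$ is well-filtered, hence an object of $\mathbf{Top}_w$; by Lemma \ref{lemmaeta} the map $\eta_X$ is continuous (indeed a topological embedding). The universal property required of a reflection arrow --- that every continuous $f:X\to Y$ with $Y$ well-filtered factors uniquely as $f=f^*\circ\eta_X$ through $\eta_X$ --- is exactly the content of Lemma \ref{lemmafstar}, as already packaged in Theorem \ref{WFilterification}. Thus $\langle X^w,\eta_X\rangle$ is a reflection of $X$ along $\iota$.

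Finally, to present this as an adjunction I would invoke the standard fact that pointwise existence of reflection arrows yields a left adjoint: define $W:\mathbf{Top}_0\to\mathbf{Top}_w$ on objects by $W(X)=X^w$, and on a morphism $g:X\to X'$ by letting $W(g):X^w\to (X')^w$ be the unique continuous map with $W(g)\circ\eta_X=\eta_{X'}\circ g$, whose existence and uniqueness are guaranteed by applying the universal property of $\langle X^w,\eta_X\rangle$ to the composite $\eta_{X'}\circ g:X\to (X')^w$ (note $(X')^w$ is well-filtered). The same uniqueness clause shows $W$ preserves identities and composites and that $\eta=(\eta_X)_{X\in\mathbf{Top}_0}$ is natural, so that $W\dashv\iota$ with unit $\eta$.

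There is no serious obstacle here: all the substance is already carried by Theorems \ref{WDwf} and \ref{WFilterification} and Lemma \ref{lemmafstar}. The only point needing a little care is the categorical bookkeeping --- confirming that the family of reflection arrows assembles into a functor and a natural transformation --- but this is entirely routine and follows formally from the uniqueness in the universal property, together with the already-noted uniqueness of the reflection up to homeomorphism.
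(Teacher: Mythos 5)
Your proposal is correct and follows the same route as the paper: the corollary is an immediate consequence of Theorem \ref{WFilterification} (itself resting on Theorem \ref{WDwf} and Lemma \ref{lemmafstar}), and the paper indeed gives no further argument, treating the passage from pointwise reflections $\langle X^w,\eta_X\rangle$ to reflectivity (and the induced functor $W$, recorded in Corollary \ref{WFfuctor}) as the standard categorical bookkeeping you describe.
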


\begin{corollary}\label{WFfuctor}
	Let $X,Y$ be two $T_0$ spaces and $f:X\longrightarrow Y$  a continuous mapping. Then there exists a unique continuous mapping $f^w:X^w\longrightarrow Y^w$ such that $f^w\circ \eta_X=\eta_Y\circ f$, that is, the following diagram commutes.
		\begin{equation*}
	\xymatrix{
		X \ar[d]_-{f} \ar[r]^-{\eta_X} &X^w\ar[d]^-{f^w}\\
		Y \ar[r]^-{\eta_Y} &Y^w
	}
	\end{equation*}
For each $A\in \wdd (X)$, $f^w(A)=\overline{f(A)}$.
\end{corollary}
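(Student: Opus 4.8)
The plan is to extract $f^w$ from the universal property of the well-filtered reflection proved in Theorem \ref{WFilterification}, and then to pin down its action on points by a short direct computation. First I would observe that $\eta_Y\circ f:X\longrightarrow Y^w$ is a continuous mapping from $X$ into the well-filtered space $Y^w=P_H(\wdd(Y))$, which is well-filtered by Theorem \ref{WDwf}. Feeding this map into the defining property of $\langle X^w,\eta_X\rangle$ (Definition \ref{WFtion}, realised by Theorem \ref{WFilterification}) produces a unique continuous mapping $f^w:X^w\longrightarrow Y^w$ with $f^w\circ\eta_X=\eta_Y\circ f$. Both existence and uniqueness of $f^w$ are thus contained in the statement of Theorem \ref{WFilterification}, and the commuting square in the corollary is exactly the identity $f^w\circ\eta_X=\eta_Y\circ f$.

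It remains to verify the formula $f^w(A)=\overline{f(A)}$ for $A\in\wdd(X)$. My approach would be to exhibit the assignment $A\mapsto\overline{f(A)}$ as a continuous map $X^w\longrightarrow Y^w$ sitting over $\eta_X,\eta_Y$, and then to conclude by the uniqueness just obtained. This assignment is well defined: by Lemma \ref{WDimage}, $A\in\wdd(X)$ implies $\overline{f(A)}\in\wdd(Y)$, so $\overline{f(A)}$ is genuinely a point of $Y^w$. Writing $g(A)=\overline{f(A)}$, for every $U\in\mathcal O(Y)$ one has
$$g^{-1}(\Diamond U)=\{A\in\wdd(X):\overline{f(A)}\cap U\neq\emptyset\}=\{A\in\wdd(X):A\cap f^{-1}(U)\neq\emptyset\}=\Diamond f^{-1}(U),$$
an open set of $X^w$, so $g$ is continuous; and $g(\eta_X(x))=\overline{f(\overline{\{x\}})}=\overline{\{f(x)\}}=\eta_Y(f(x))$, so $g\circ\eta_X=\eta_Y\circ f$. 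By the uniqueness clause of the universal property, $g=f^w$, which is precisely the claimed formula.

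If one prefers instead to reuse the construction from Lemma \ref{lemmafstar} rather than re-prove continuity, one may take $f^w=(\eta_Y\circ f)^*$ directly from that lemma; then $f^w(A)$ is, by construction, the unique point of $Y^w$ whose closure in $P_H(\wdd(Y))$ equals $\overline{(\eta_Y\circ f)(A)}=\overline{\eta_Y(f(A))}$, and Lemma \ref{lemmaclosure}, applied to $Y$ and the subset $f(A)$, identifies that closure with $\Box\,\overline{f(A)}$; since $\overline{f(A)}\in\wdd(Y)$ and the specialisation order on $P_H(\wdd(Y))$ is set inclusion, $\Box\,\overline{f(A)}=\overline{\{\overline{f(A)}\}}$, so again $f^w(A)=\overline{f(A)}$. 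Either way there is no genuine obstacle here; the only points deserving attention are the membership $\overline{f(A)}\in\wdd(Y)$ (needed so that $\overline{f(A)}$ is a legitimate element of $Y^w$) and keeping the operators $\Diamond,\Box$ over $X$ distinct from those over $Y$. The substance of the argument is just the universal property of Theorem \ref{WFilterification} together with the closure computation of Lemma \ref{lemmaclosure}.
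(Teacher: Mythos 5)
Your argument is correct and matches the route the paper intends: the corollary is exactly the universal property of Theorem \ref{WFilterification} applied to the continuous map $\eta_Y\circ f:X\longrightarrow Y^w$ (well-filtered by Theorem \ref{WDwf}), which the paper states without further proof. Your verification of the formula $f^w(A)=\overline{f(A)}$ via Lemma \ref{WDimage}, the computation $g^{-1}(\Diamond U)=\Diamond f^{-1}(U)$, and the uniqueness clause (or, alternatively, via Lemma \ref{lemmafstar} and Lemma \ref{lemmaclosure}) is a sound and accurate filling-in of the detail the paper leaves implicit.
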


Corollary \ref{WFfuctor} defines a functor $W : \mathbf{Top}_0 \longrightarrow \mathbf{Top}_w$, which is the left adjoint to the inclusion functor $I : \mathbf{Top}_w \longrightarrow \mathbf{Top}_0$.

\begin{corollary}\label{WFwdc}
	For a $T_0$ space $X$, the following conditions are equivalent:
	\begin{enumerate}[\rm (1)]
		\item $X$ is well-filtered.
		\item $\mathsf{RD}(X)=\mathcal S_c(X)$.
        \item $\wdd (X)=\mathcal S_c(X)$, that is, for each $A\in\wdd(X)$, there exists a unique $x\in X$ such that $A=\overline{\{x\}}$.
        \item $X\cong X^w$.

	\end{enumerate}
\end{corollary}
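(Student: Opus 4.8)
The plan is to prove the cycle $(1)\Rightarrow(4)\Rightarrow(3)\Rightarrow(2)\Rightarrow(1)$, relying heavily on the machinery already developed in Section~7. The implication $(1)\Rightarrow(4)$ is the quickest: if $X$ is well-filtered, then $\langle X,\mathrm{id}_X\rangle$ itself is a well-filtered reflection of $X$; since well-filtered reflections are unique up to homeomorphism and $\langle X^w,\eta_X\rangle$ is one by Theorem~\ref{WFilterification}, we get $X\cong X^w$ via $\eta_X$. For $(4)\Rightarrow(3)$, note that $X^w=P_H(\wdd(X))$ is well-filtered by Theorem~\ref{WDwf}; if $X\cong X^w$ then $X$ is well-filtered, and then by Theorem~\ref{soberequiv} (condition (5)) $X$ is sober, hence in particular $\wdd(X)=\ir_c(X)=\mathcal S_c(X)$. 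Actually it is cleaner to argue directly: if $\eta_X:X\to X^w$ is a homeomorphism then for each $A\in\wdd(X)$ the set $\Box A=\overline{\eta_X(A)}$ is (the closure of the image under a homeomorphism of) a point-closure in $X^w$ precisely when $A$ is a point-closure in $X$; one checks that $\eta_X(X)=X^w$ forces every $A\in\wdd(X)$ to equal $\overline{\{x\}}$ for the unique $x$ with $\eta_X(x)=A$. That gives $\wdd(X)\subseteq\mathcal S_c(X)$, and the reverse inclusion $\mathcal S_c(X)\subseteq\wdd(X)$ is automatic since point-closures are in $\mathcal D_c(X)\subseteq\wdd(X)$ by Proposition~\ref{DRWIsetrelation}.

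The implication $(3)\Rightarrow(2)$ is immediate from the sandwich $\mathcal D_c(X)\subseteq\mathsf{RD}(X)\subseteq\wdd(X)$ of Proposition~\ref{DRWIsetrelation}: the hypothesis $\wdd(X)=\mathcal S_c(X)$ together with $\mathcal S_c(X)\subseteq\mathcal D_c(X)\subseteq\mathsf{RD}(X)\subseteq\wdd(X)=\mathcal S_c(X)$ collapses everything, in particular $\mathsf{RD}(X)=\mathcal S_c(X)$. The one genuinely substantive step is $(2)\Rightarrow(1)$. Here I would argue as follows: suppose $\mathcal K\subseteq\mk(X)$ is a filtered family, $U\in\mathcal O(X)$, and $\bigcap\mathcal K\subseteq U$, but $K\not\subseteq U$ for every $K\in\mathcal K$. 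By the topological Rudin Lemma (Lemma~\ref{t Rudin}), $X\setminus U$ contains a minimal irreducible closed subset $A$ meeting every member of $\mathcal K$; by definition of the Rudin property (Definition~\ref{rudinset}), $A\in\mathsf{RD}(X)$. By hypothesis $(2)$, $A=\overline{\{x\}}$ for some $x\in X$. Now one uses that $A$ meets every $K\in\mathcal K$: since $K$ is saturated (an upper set) and $\overline{\{x\}}\cap K\neq\emptyset$ means some $y\leq x$ lies in $K$, hence $x\in K$. Thus $x\in\bigcap\mathcal K\subseteq U$, contradicting $A=\overline{\{x\}}\subseteq X\setminus U$. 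Therefore $X$ is well-filtered.

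The main obstacle is really just being careful in $(4)\Rightarrow(3)$ about what "$X\cong X^w$" is allowed to mean. The statement should be read as: the canonical map $\eta_X$ is a homeomorphism (equivalently, it is surjective, since Lemma~\ref{lemmaeta} already gives that $\eta_X$ is a topological embedding). With that reading the argument above is clean; without it one would need to know that $X^w$ has no well-filtered space abstractly homeomorphic to $X$ other than through $\eta_X$, which is true but an unnecessary detour. I would therefore state explicitly at the start of the proof of $(4)\Rightarrow(3)$ that $X\cong X^w$ is to be understood via $\eta_X$, so that surjectivity of $\eta_X$ is available, and then the rest is a short verification. All other implications are essentially bookkeeping with the inclusions of Proposition~\ref{DRWIsetrelation} and the already-proved Theorems~\ref{WDwf} and~\ref{WFilterification}.
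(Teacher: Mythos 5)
Your overall architecture is sound and genuinely different from the paper's. The paper proves (1)$\Rightarrow$(2) by applying Proposition \ref{rudinwf} to the identity map, gets (2)$\Rightarrow$(3) from Proposition \ref{DRWIsetrelation}, obtains (3)$\Rightarrow$(4) from the explicit identification $X^w=P_H(\wdd(X))=P_H(\mathcal S_c(X))\cong X$, and closes with (4)$\Rightarrow$(1) via Theorem \ref{WDwf}. You run the cycle the other way: (1)$\Rightarrow$(4) by uniqueness of well-filtered reflections (which indeed shows that $\eta_X$ itself is a homeomorphism), (4)$\Rightarrow$(3) from surjectivity of $\eta_X$, (3)$\Rightarrow$(2) from the sandwich $\mathcal S_c(X)\subseteq\mathcal D_c(X)\subseteq\kf(X)\subseteq\wdd(X)$, and a self-contained (2)$\Rightarrow$(1) via the topological Rudin Lemma (the minimal closed set $A\subseteq X\setminus U$ meeting all members of the filtered family $\mathcal K$ is a Rudin set, hence by (2) a point closure $\overline{\{x\}}$; saturation of each $K\in\mathcal K$ then forces $x\in\bigcap\mathcal K\subseteq U$, contradicting $A\subseteq X\setminus U$). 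That last step is a genuine gain: the paper never proves (2)$\Rightarrow$(1) directly but routes it through (3) and (4), and your orientation also makes (3)$\Rightarrow$(2) truly immediate, whereas the paper's (2)$\Rightarrow$(3) needs more than the quoted inclusion chain.

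Two blemishes. First, your interim claim in (4)$\Rightarrow$(3) that well-filteredness of $X$ yields soberness via Theorem \ref{soberequiv}(5) is false: condition (5) there requires $X$ to be a $\wdd$ space in addition to being well-filtered, and Isbell's $\Sigma L$ (Example \ref{examp2}) is well-filtered with $\wdd(\Sigma L)\neq\ir_c(\Sigma L)$. You do discard this in favour of the direct argument, but it should not appear even as a first attempt. Second, reinterpreting (4) as ``$\eta_X$ is a homeomorphism'' is unnecessary and amounts to changing the statement; keep the abstract reading and argue instead: $X\cong X^w$ together with Theorem \ref{WDwf} gives that $X$ is well-filtered (this is exactly the correct part of your first paragraph, i.e.\ (4)$\Rightarrow$(1)), and then either your (1)$\Rightarrow$(4) argument shows $\eta_X$ is onto, or, more directly, applying the definition of a $\wdd$ set to $id_X:X\longrightarrow X$ yields $A=\overline{\{y_A\}}$ for every $A\in\wdd(X)$, which is (3). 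With that small repair the cycle closes for the statement exactly as written.
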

\begin{proof}  (1) $\Rightarrow$ (2): Applying Lemma \ref{rudinwf} to the identity $id_X : X \longrightarrow X$.

(2) $\Rightarrow$ (3): By Proposition \ref{DRWIsetrelation}.

(3) $\Rightarrow$ (4): By assumption, $\wdd(X)=\left\{\overline{\{x\}}:x\in X\right\}$, so $X^w=P_H(\wdd(X))=P_H(\{\overline {\{x\}} : x\in X\})$, and whence $X\cong X^w$.

(4) $\Rightarrow$ (1): By Theorem \ref{WDwf} or by Proposition \ref{DRWIsetrelation} and Corollary \ref{WFwdc}.
\end{proof}

The equivalence of (1) and (2) in Corollary \ref{WFwdc} has been proved in \cite{Shenchon} in a different way.

By Proposition \ref{d-spacecharac1}, Proposition \ref{DRWIsetrelation} and Corollary \ref{WFwdc}, we get the following known result (see, e.g., \cite[Proposition 2.1]{Xi-Lawson-2017})

\begin{corollary}\label{wf-dspace} A well-filtered space is a $d$-space.
\end{corollary}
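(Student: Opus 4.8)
The plan is to chain together the set-theoretic characterizations of $d$-spaces and well-filtered spaces that are already available. The key observation is that ``being a $d$-space'' and ``being well-filtered'' have been reformulated, respectively, as $\mathcal D_c(X)=\mathcal S_c(X)$ (Proposition~\ref{d-spacecharac1}, condition (2)) and as $\mathsf{RD}(X)=\mathcal S_c(X)$ (Corollary~\ref{WFwdc}, condition (2)); and by Proposition~\ref{DRWIsetrelation} the class $\mathcal D_c(X)$ of closures of directed sets always sits inside the class $\mathsf{RD}(X)$ of Rudin sets.

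So, first I would invoke Proposition~\ref{DRWIsetrelation} to get $\mathcal D_c(X)\subseteq\mathsf{RD}(X)$, which holds for any $T_0$ space. Then, using the hypothesis that $X$ is well-filtered together with Corollary~\ref{WFwdc}, I would replace the right-hand side by $\mathcal S_c(X)$, obtaining $\mathcal D_c(X)\subseteq\mathcal S_c(X)$. The opposite inclusion is immediate: every singleton $\{x\}$ is a directed subset of $X$, so $\overline{\{x\}}\in\mathcal D_c(X)$, whence $\mathcal S_c(X)\subseteq\mathcal D_c(X)$. Combining the two inclusions gives $\mathcal D_c(X)=\mathcal S_c(X)$, which by the equivalence (1)$\Leftrightarrow$(2) of Proposition~\ref{d-spacecharac1} is precisely the assertion that $X$ is a $d$-space.

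There is no real obstacle in this argument once the earlier results are in place; the substance is entirely contained in Corollary~\ref{WFwdc} (which in turn rests on Lemma~\ref{rudinwf} applied to the identity map $\mathrm{id}_X$) and in the dictionary between $d$-spaces and the equality $\mathcal D_c(X)=\mathcal S_c(X)$ recorded in Proposition~\ref{d-spacecharac1}. One could equally route the argument through $\wdd$ sets instead of Rudin sets, using $\mathcal D_c(X)\subseteq\wdd(X)$ from Proposition~\ref{DRWIsetrelation} and $\wdd(X)=\mathcal S_c(X)$ from Corollary~\ref{WFwdc}; the two proofs are cosmetically different but identical in content.
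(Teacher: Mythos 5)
Your proof is correct and follows exactly the route the paper intends: the paper derives this corollary by citing Proposition~\ref{d-spacecharac1}, Proposition~\ref{DRWIsetrelation} and Corollary~\ref{WFwdc}, and your argument simply spells out how those three results combine (including the easy inclusion $\mathcal S_c(X)\subseteq\mathcal D_c(X)$ via singletons). Nothing is missing.
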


\begin{corollary}\label{wfretract}\emph{(\cite{xi-zhao-MSCS-well-filtered})}  A retract of a well-filtered space is well-filtered.
\end{corollary}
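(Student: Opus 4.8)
The plan is to mimic the standard "retract of sober is sober" argument, now using well-filtered reflections instead of sobrifications. Let $Y$ be a retract of a well-filtered space $X$, so we have continuous maps $s : Y \longrightarrow X$ and $r : X \longrightarrow Y$ with $r \circ s = \mathrm{id}_Y$. First I would apply the functoriality established in Corollary \ref{WFfuctor} (and the universal property from Theorem \ref{WFilterification}) to obtain continuous maps $s^w : Y^w \longrightarrow X^w$ and $r^w : X^w \longrightarrow Y^w$ fitting into the two naturality squares with $\eta_X$ and $\eta_Y$. Since $W$ is a functor, $(r\circ s)^w = r^w \circ s^w$ and $(\mathrm{id}_Y)^w = \mathrm{id}_{Y^w}$, so $r^w \circ s^w = \mathrm{id}_{Y^w}$; thus $Y^w$ is a retract of $X^w$.

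Next I would use the hypothesis that $X$ is well-filtered: by Corollary \ref{WFwdc} (the equivalence of (1) and (4)), $\eta_X : X \longrightarrow X^w$ is a homeomorphism. Consequently $X^w$ is well-filtered, and $Y^w$ is a retract of a well-filtered space — but more to the point, I can transfer the retraction structure back down to the level of $Y$. Concretely, consider the composite $\eta_Y : Y \longrightarrow Y^w$. I claim $\eta_Y$ is a homeomorphism, which by Corollary \ref{WFwdc} is exactly the assertion that $Y$ is well-filtered. To see this, note that $\eta_Y$ is always a topological embedding by Lemma \ref{lemmaeta}, so it suffices to show $\eta_Y$ is surjective, i.e. that every $B \in \wdd(Y)$ is the closure of a point.

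For the surjectivity step: take $B \in \wdd(Y)$. Then $\overline{s(B)} \in \wdd(X)$ by Lemma \ref{WDimage}. Since $X$ is well-filtered, Corollary \ref{WFwdc}(3) gives a point $x_B \in X$ with $\overline{s(B)} = \overline{\{x_B\}}$. Applying $r$ and using Lemma \ref{WDimage} again together with $r\circ s = \mathrm{id}_Y$: $B = \overline{r(s(B))} = \overline{r\!\left(\overline{s(B)}\right)} = \overline{r(\overline{\{x_B\}})} = \overline{\{r(x_B)\}}$, since continuous maps send $\overline{\{x_B\}}$ into $\overline{\{r(x_B)\}}$ and hence $\overline{r(\overline{\{x_B\}})} = \overline{\{r(x_B)\}}$. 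Thus $B = \overline{\{r(x_B)\}} \in \mathcal S_c(Y)$, so $\wdd(Y) = \mathcal S_c(Y)$, and $Y$ is well-filtered by Corollary \ref{WFwdc}. The only mild subtlety — the step I would be most careful about — is the identity $\overline{r(\overline{s(B)})} = \overline{r(s(B))}$, i.e. that closing a set before applying a continuous map does not change the closure of the image; this is routine from continuity of $r$ (equivalently it is the content already used repeatedly, e.g. in Lemma \ref{DCimag} and Lemma \ref{WDimage}), but it is the one place where one must not slip.
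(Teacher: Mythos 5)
Your proposal is correct and, at its core, identical to the paper's proof: the paper also takes a Rudin/$\wdd$ set $B$ of $Y$, pushes it into $X$ via the section, uses Lemma \ref{WDimage} together with Corollary \ref{WFwdc} (well-filteredness of $X$) to write $\overline{g(B)}=\overline{\{x_B\}}$, pushes back with the retraction to get $B=\overline{\{f(x_B)\}}$, and concludes by Corollary \ref{WFwdc}. The preliminary discussion of $r^w\circ s^w=\mathrm{id}_{Y^w}$ and of $\eta_Y$ being an embedding is harmless but not needed, since your surjectivity argument already establishes $\wdd(Y)=\mathcal S_c(Y)$ directly.
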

\begin{proof} Suppose that $Y$ is a retract of a well-filtered space $X$. Then there are continuous mappings $f : X\longrightarrow Y$ and $g : Y\longrightarrow X$ such that $f\circ g=id_Y$. Let $B\in \mathsf{RD}(Y)$, then by Lemma \ref{WDimage} and Corollary \ref{WFwdc}, there exists a unique $x_B\in X$ such that $\overline{g(B)}=\overline{\{x_B\}}$. Therefore, $\overline{B}=\overline{f\circ g(B)}=\overline{f(\overline{g(B)})}=\overline{f(\overline{\{x_B\}})}=\overline{\{f(x_B)\}}$. By Corollary \ref{WFwdc}, $Y$ is well-filtered.
\end{proof}

\begin{theorem}\label{wfreflectionprod}
	Let $\{X_i: 1\leq i\leq n\}$ be a finitely family of $T_0$ spaces. Then $(\prod\limits_{i=1}^{n}X_i)^w=\prod\limits_{i=1}^{n}X_i^w$ (up to homeomorphism).
\end{theorem}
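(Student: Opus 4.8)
The plan is to build the homeomorphism from the universal properties of the two constructions, using the finiteness of the index set to commute products past $\wdd$. First I would recall that by Theorem~\ref{WFilterification} the pair $\langle (\prod_{i=1}^n X_i)^w, \eta_{\prod X_i}\rangle$ is the well-filtered reflection of $\prod_{i=1}^n X_i$, and by Theorem~\ref{rudinprod} (the finite product theorem for $\wdd$ spaces) the space $\prod_{i=1}^n X_i^w$ is well-filtered, since each $X_i^w=P_H(\wdd(X_i))$ is well-filtered by Theorem~\ref{WDwf}. Thus $\prod_{i=1}^n X_i^w$ is a candidate target, and it suffices to exhibit a continuous map $\mu:\prod_{i=1}^n X_i\longrightarrow \prod_{i=1}^n X_i^w$ with the universal property of the well-filtered reflection, which will then force $\prod_{i=1}^n X_i^w\cong(\prod_{i=1}^n X_i)^w$ by uniqueness of reflections.

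The natural choice is $\mu=\prod_{i=1}^n\eta_{X_i}$, i.e. $\mu(x_1,\dots,x_n)=(\overline{\{x_1\}},\dots,\overline{\{x_n\}})$, which is continuous since each $\eta_{X_i}$ is (Lemma~\ref{lemmaeta}). Given a continuous $f:\prod_{i=1}^n X_i\longrightarrow Y$ to a well-filtered space $Y$, I must produce a unique continuous $f^\ast:\prod_{i=1}^n X_i^w\longrightarrow Y$ with $f^\ast\circ\mu=f$. Here is where I would invoke the componentwise reflection maps: for each $i$, $X_i^w$ is the reflection of $X_i$, so in principle a continuous map out of the product of the $X_i^w$ should be obtainable by extending in each coordinate successively. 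Concretely, by induction on $n$ it suffices to treat $n=2$: view $f:X_1\times X_2\longrightarrow Y$, fix $b\in X_2$, extend $f(-,b):X_1\longrightarrow Y$ along $\eta_{X_1}$ to $f_b^\ast:X_1^w\longrightarrow Y$ (using that $X_1^w$ is the reflection of $X_1$ and $Y$ is well-filtered, via Lemma~\ref{lemmafstar}); this yields a map $X_1^w\times X_2\longrightarrow Y$, and one checks it is continuous (a separate-continuity-to-joint-continuity style argument, but here using openness of $\eta_{X_1}$ onto its image and the product subbasis, exactly as in the proof of Lemma~\ref{WDsetprod}). Then fix $a\in X_1^w$ and extend in the second coordinate along $\eta_{X_2}$ to obtain $f^\ast:X_1^w\times X_2^w\longrightarrow Y$; continuity follows by the same argument, and $f^\ast\circ(\eta_{X_1}\times\eta_{X_2})=f$ by construction. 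Uniqueness of $f^\ast$ follows because $\eta_{X_1}(X_1)\times\eta_{X_2}(X_2)$ is dense in $X_1^w\times X_2^w$ (each $\eta_{X_i}(X_i)$ is dense in $X_i^w=P_H(\wdd(X_i))$ by Lemma~\ref{lemmaclosure} applied to $A=X_i$, and a finite product of dense subsets is dense) and $Y$ is $T_0$, so two continuous maps agreeing on a dense set into a $T_0$ space that are comparable in specialization order coincide — the argument is the one used in Claim 3 of Lemma~\ref{lemmafstar}.

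The main obstacle I anticipate is verifying the joint continuity of the "extend one coordinate at a time" maps $X_1^w\times X_2\longrightarrow Y$ and $X_1^w\times X_2^w\longrightarrow Y$: a priori one only gets continuity separately in each variable after extension, and promoting this to joint continuity requires exploiting the specific structure of $P_H(\wdd(X_i))$ — namely that its topology is generated by the sets $\Diamond U$, $U\in\mathcal O(X_i)$, together with the fact that $\eta_{X_i}:X_i\to P_H(\wdd(X_i))$ is an open embedding onto a dense subspace. This is precisely the technical heart of the proof of Lemma~\ref{WDsetprod}, and I would essentially transport that computation: pull back a basic open $V\subseteq Y$, describe $(f^\ast)^{-1}(V)$ in terms of the $\Diamond$-sets, and use the description of $\wdd$ sets via closures of images. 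Once joint continuity is in hand, the rest is a formal diagram chase invoking uniqueness of reflections; the induction from $n=2$ to general finite $n$ is routine since $\prod_{i=1}^n X_i^w\cong(\prod_{i=1}^{n-1}X_i^w)\times X_n^w$ and $(\prod_{i=1}^{n-1}X_i^w)$ is itself the reflection $(\prod_{i=1}^{n-1}X_i)^w$ by the inductive hypothesis.
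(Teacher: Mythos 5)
Your route is genuinely different from the paper's. The paper does not verify a universal property at all: it directly defines $\gamma : P_H(\wdd(\prod_{i=1}^{n}X_i))\longrightarrow \prod_{i=1}^{n}P_H(\wdd(X_i))$ by $\gamma(A)=(p_1(A),\dots,p_n(A))$, uses Lemma \ref{WDsetprod} and Corollary \ref{WDclosedsetprod} to see that $\gamma$ is a well-defined bijection, and then checks on the opens $\Box U_1\times\dots\times\Box U_n$ that $\gamma$ is a homeomorphism. You instead exhibit $\langle\prod_{i=1}^{n}X_i^w,\ \prod_{i=1}^{n}\eta_{X_i}\rangle$ as a well-filtered reflection of $\prod_{i=1}^{n}X_i$ and invoke uniqueness of reflections; the coordinate-by-coordinate extension with the joint-continuity check on the subbasic sets $\Diamond U$ is exactly the computation inside the paper's proof of Lemma \ref{WDsetprod}, re-run at the level of the reflections rather than quoted as a black box. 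Your approach is more conceptual, but it needs an extra ingredient the paper's proof never uses: that $\prod_{i=1}^{n}X_i^w$ is itself well-filtered.

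Two justifications need repair. First, that extra ingredient does not follow from the theorem you cite: the finite product theorem for $\wdd$ spaces says that a finite product of well-filtered \emph{determined} spaces is well-filtered determined, which is not well-filteredness. What you need is that finite products of well-filtered spaces are well-filtered; in the paper this is Theorem \ref{WFprod} (stated later, but its proof uses only Corollary \ref{wfretract}, Proposition \ref{DRWIsetrelation}, Lemma \ref{WDimage}, Lemma \ref{irrprod} and Corollary \ref{WFwdc}, so there is no circularity), or one can quote \cite{Shenchon}. Second, your uniqueness argument as literally stated -- two continuous maps into a $T_0$ space agreeing on a dense set coincide -- is false in general (already for maps of the Sierpi\'nski space into itself). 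What actually works is precisely the Claim 3 mechanism of Lemma \ref{lemmafstar}, which you allude to: for a point $(A_1,\dots,A_n)\in\prod_{i=1}^{n}X_i^w$ one has $\mu(a_1,\dots,a_n)\leq (A_1,\dots,A_n)$ for all $a_i\in A_i$, and $(A_1,\dots,A_n)\in\overline{\mu(A_1\times\dots\times A_n)}$ by Lemma \ref{lemmaclosure} together with the fact that the closure of a product is the product of the closures; these two facts force $\overline{\{g(A_1,\dots,A_n)\}}=\overline{f(A_1\times\dots\times A_n)}$ for any continuous $g$ with $g\circ\mu=f$, and $T_0$-ness of $Y$ then gives uniqueness. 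With these two points made explicit, your argument goes through and yields the same homeomorphism as the paper's $\gamma$.
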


\begin{proof}	
	Let $X=\prod\limits_{i=1}^{n}X_i$. By Corollary \ref{WDclosedsetprod}, we can define a mapping $\gamma : P_H(\wdd (X))  \longrightarrow \prod\limits_{i=1}^{n}P_H(\wdd (X_i))$ by

\begin{center}
$\forall A\in \wdd (X)$, $\gamma (A)=(p_1(A), p_2(A), ..., p_n(A))$.
\end{center}

By Lemma \ref{WDsetprod} and Corollary \ref{WDclosedsetprod}, $\gamma$ is bijective. Now we show that $\gamma$ is a homeomorphism. For any $(U_1, U_2, ..., U_n)\in \mathcal O(X_1)\times \mathcal O(X_2)\times ... \times \mathcal O(X_n)$, by Lemma \ref{WDsetprod} and Corollary \ref{WDclosedsetprod}, we have

$$\begin{array}{lll}
\gamma^{-1}(\Box U_1\times \Box U_2\times ... \times\Box U_n)&=&\{A\in\wdd(X): \gamma(A)\in \Box U_1\times \Box U_2\times ... \times\Box U_n\}\\
&=&\{A\in\wdd(X): p_1(A)\subseteq U_1, p_2(A)\subseteq U_2, ..., p_n(A)\subseteq U_n\}\\
&=&\{A\in\wdd(X): A\subseteq U_1\times U_2\times ... \times U_n\}\\
&=&\Box U_1\times U_2\times ... \times U_n\in \mathcal O(P_H(\wdd (X)), \mbox{~and~}
\end{array}$$

$$\begin{array}{lll}
\gamma (\Box U_1\times U_2\times ... \times U_n)&=&\{\gamma (A): A\in \wdd (X) \mbox{~and~} A\subseteq U_1\times U_2\times ... \times U_n \}\\
&=&\Box U_1\times \Box U_2\times ... \times\Box U_n\in O(\prod\limits_{i=1}^{n}P_H(\wdd (X_i))).
\end{array}$$

Therefore, $\gamma : P_H(\wdd (X))  \longrightarrow \prod\limits_{i=1}^{n}P_H(\wdd (X_i))$ is a homeomorphism, and hence $X^w$ ($=P_H(\wdd (X)$) and $\prod\limits_{i=1}^{n}X_i^w$ ($=\prod\limits_{i=1}^{n}P_H(\wdd (X_i))$ are homeomorphic.
\end{proof}

Using $\wdd$ sets and Corollary \ref{WFwdc}, we can present a simple proof the following result, which is proved in \cite{Shenchon} by using Rudin sets.

\begin{theorem}\label{WFprod} \emph{(\cite{Shenchon})}
	Let $\{X_i:i\in I\}$ be a family of $T_0$ spaces. Then the following two conditions are equivalent:
	\begin{enumerate}[\rm(1)]
		\item The product space $\prod_{i\in I}X_i$ is well-filtered.
		\item For each $i \in I$, $X_i$ is well-filtered.
	\end{enumerate}
\end{theorem}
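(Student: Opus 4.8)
The plan is to deduce the general (arbitrary-index) case from the finite case (Theorem~\ref{wfreflectionprod}, or rather its consequence Corollary~\ref{WFwdc}) together with the standard fact that retracts of well-filtered spaces are well-filtered (Corollary~\ref{wfretract}). The implication (1)~$\Rightarrow$~(2) is immediate: for each $i\in I$, the projection $p_i : \prod_{i\in I}X_i \longrightarrow X_i$ is a continuous retraction (a section is obtained by fixing a point in each $X_j$, $j\neq i$), so $X_i$ is a retract of a well-filtered space and hence well-filtered by Corollary~\ref{wfretract}. The substance is in (2)~$\Rightarrow$~(1).

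For (2)~$\Rightarrow$~(1), let $X=\prod_{i\in I}X_i$ with each $X_i$ well-filtered, and by Corollary~\ref{WFwdc} it suffices to show $\wdd(X)=\mathcal S_c(X)$, i.e. every $A\in\wdd(X)$ is the closure of a singleton. Fix $A\in\wdd(X)$; since $\wdd(X)\subseteq\ir_c(X)$ by Proposition~\ref{DRWIsetrelation}, $A$ is irreducible closed, so by Corollary~\ref{irrcprod} we have $A=\prod_{i\in I}p_i(A)$ with $p_i(A)\in\ir_c(X_i)$ for each $i$. By Lemma~\ref{WDimage}, $\overline{p_i(A)}=p_i(A)\in\wdd(X_i)$ for each $i$; since $X_i$ is well-filtered, Corollary~\ref{WFwdc} gives a (unique) point $x_i\in X_i$ with $p_i(A)=\overline{\{x_i\}}$. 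Then $A=\prod_{i\in I}\overline{\{x_i\}}=\overline{\{(x_i)_{i\in I}\}}$, the last equality being the standard description of closures of points in a product $T_0$ space. Hence $A\in\mathcal S_c(X)$, and by Corollary~\ref{WFwdc} $X$ is well-filtered.

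The only genuine subtlety is that the argument for (2)~$\Rightarrow$~(1) does not in fact require the finite-product machinery: it rests on three ingredients already available, namely $\wdd(X)\subseteq\ir_c(X)$ (Proposition~\ref{DRWIsetrelation}), the product decomposition of irreducible closed sets for \emph{arbitrary} index sets (Corollary~\ref{irrcprod}), and the characterization of well-filteredness via $\wdd$ sets being singleton closures (Corollary~\ref{WFwdc}). The main point to be careful about is invoking Corollary~\ref{irrcprod} correctly for an arbitrary index set $I$ (it is stated there in that generality) and checking that the coordinatewise projection of a $\wdd$ set is again a $\wdd$ set of the factor, which is exactly Lemma~\ref{WDimage} applied to $p_i$. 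No transfinite or choice-heavy step beyond what is already packaged in these lemmas is needed, so there is no serious obstacle; the proof is short once one observes that $\wdd$-sets, unlike Rudin sets, behave well enough under arbitrary products via the irreducible-set decomposition. I would present it in precisely this order: (1)~$\Rightarrow$~(2) by retraction; then (2)~$\Rightarrow$~(1) by reducing an arbitrary $A\in\wdd(X)$ to its projections, applying Corollary~\ref{WFwdc} in each factor, and reassembling.
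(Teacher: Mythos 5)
Your proposal is correct and follows essentially the same route as the paper: (1)~$\Rightarrow$~(2) by the retraction argument with Corollary~\ref{wfretract}, and (2)~$\Rightarrow$~(1) by taking $A\in\wdd(X)$, decomposing it as $\prod_{i\in I}\cl_{X_i}(p_i(A))$ via the irreducible-closed product decomposition (the paper cites Lemma~\ref{irrprod} together with Engelking's product-of-closures fact, which is the content of your Corollary~\ref{irrcprod} step and your ``standard description of point closures''), pushing forward with Lemma~\ref{WDimage}, and applying Corollary~\ref{WFwdc} in each factor and then in $X$. The only difference is bookkeeping in how the product decomposition is cited, not in the argument itself.
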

\begin{proof}	
	(1) $\Rightarrow$ (2):  For each $i \in I$, $X_i$ is a retract of $\prod_{i\in I}X_i$. By Corollary \ref{wfretract}, $X_i$ is well-filtered.
	
	(2) $\Rightarrow$ (1): Let $X=\prod_{i\in I}X_i$. Suppose $A\in \wdd (X)$. Then by Proposition \ref{DRWIsetrelation} and Lemma \ref{WDimage}, $A\in \ir_c(X)$ and for each $i \in I$, $\cl_{X_i}(p_i(A))\in \wdd (X_i)$, and consequently, there is a $u_i\in X_i$ such that $\cl_{X_i}(p_i(A))=cl_{X_i}\{u_i\}$ by condition (2) and Corollary \ref{WFwdc}. Let $u=(u_i)_{i\in I}$. Then by Lemma \ref{irrprod} and \cite[Proposition 2.3.3]{Engelking}), we have $A=\prod_{i\in I}\cl_{X_i}(p_i(A))=\prod_{i\in I}\cl_{u_i}\{u_i\}=\cl_X \{u\}$. Whence $X$ is well-filtered by Corollary \ref{WFwdc}.
\end{proof}

\begin{theorem}\label{Wfication=sober}
	For a $T_0$ space $X$, the following conditions are equivalent:
	\begin{enumerate}[\rm (1)]
		\item $X^w$ is the sobrification of $X$, in other words, the well-filtered reflection of $X$ and sobrification of $X$ are the same.
        \item $X^w$ is sober.
		\item $X$ is well-filtered determined, that is, $\wdd(X)=\ir_c(X)$.
	\end{enumerate}
\end{theorem}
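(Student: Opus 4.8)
The plan is to prove the cycle $(3)\Rightarrow(1)\Rightarrow(2)\Rightarrow(3)$, using the explicit models $X^w=P_H(\wdd(X))$ from Theorem~\ref{WFilterification} and $X^s=P_H(\ir_c(X))$ from Remark~\ref{eta continuous}(2). The first two implications will be essentially formal. For $(3)\Rightarrow(1)$: if $X$ is well-filtered determined, then by definition $\wdd(X)=\ir_c(X)$, so $X^w=P_H(\wdd(X))=P_H(\ir_c(X))=X^s$, and since the two canonical maps $\eta_X\colon x\mapsto\overline{\{x\}}$ coincide, the well-filtered reflection of $X$ is (the same pair as) the sobrification of $X$. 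For $(1)\Rightarrow(2)$: the sobrification $X^s$ is always sober, hence so is $X^w$.

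The substantial step is $(2)\Rightarrow(3)$. By Proposition~\ref{DRWIsetrelation} we already have $\wdd(X)\subseteq\ir_c(X)$, so it suffices to prove $\ir_c(X)\subseteq\wdd(X)$. Take $A\in\ir_c(X)$. By Lemma~\ref{lemmaWDirr}, $\Box A=\Box\overline{A}$ is irreducible in $X^w=P_H(\wdd(X))$, and it is closed since the closed subsets of $P_H(\wdd(X))$ are precisely the sets $\Box C$ with $C\in\mathcal C(X)$. As $X^w$ is sober, there is a (unique) $B\in\wdd(X)$ with $\Box A=\overline{\{B\}}$ in $P_H(\wdd(X))$. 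Because $\mathcal S_c(X)\subseteq\mathcal D_c(X)\subseteq\wdd(X)$, the specialization order on $P_H(\wdd(X))$ is set inclusion (Remark~\ref{eta continuous}(1)), so $\overline{\{B\}}=\{C\in\wdd(X):C\subseteq B\}=\Box B$. Now $B\in\Box B=\Box A$ gives $B\subseteq A$; and for every $x\in A$ we have $\overline{\{x\}}\in\wdd(X)$ with $\overline{\{x\}}\subseteq A$, hence $\overline{\{x\}}\in\Box A=\Box B$, i.e.\ $x\in B$, so $A\subseteq B$. Therefore $A=B\in\wdd(X)$, which proves $\ir_c(X)\subseteq\wdd(X)$ and thus $(3)$.

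I expect the main obstacle to be exactly this last step, and within it the point of identifying the witness $B\in\wdd(X)$ produced by sobriety of $X^w$ with the original irreducible closed set $A$. The argument hinges on two facts already available in the text: the explicit description of the closed sets of $P_H(\wdd(X))$ as the $\Box C$'s together with the coincidence of the specialization order with inclusion, and the fact that every singleton-closure is a $\wdd$ set (so that points of $A$ re-enter $\Box A$). Everything else is bookkeeping: checking that the canonical embeddings agree and that $P_H(\ir_c(X))$ is literally the sobrification.
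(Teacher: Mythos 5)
Your proof is correct and follows essentially the same route as the paper: the same cycle of implications, with (3)$\Rightarrow$(1) and (1)$\Rightarrow$(2) formal, and the substance in (2)$\Rightarrow$(3) via the two inclusions $B\subseteq A$ and $A\subseteq B$ obtained from the inclusion (specialization) order on $P_H(\wdd(X))$ and the fact that point closures are $\wdd$ sets. The only cosmetic difference is that you produce the witness $B$ by applying sobriety of $X^w$ directly to the irreducible closed set $\Box A$ (via Lemma \ref{lemmaWDirr} and Lemma \ref{lemmaclosure}), whereas the paper first routes through the universal property of the sobrification to get the map $\eta_X^{w*}:X^s\to X^w$; both yield the same identification $A=B$.
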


\begin{proof}
(1) $\Rightarrow$ (2): Trivial.

(2) $\Rightarrow$ (3): Let $\eta_X^w : X\longrightarrow X^w$ be the canonical topological embedding defined by $\eta_X^w(x)=\overline{\{x\}}$ (see Theorem \ref{WFilterification}). Since the pair $\langle X^s, \eta_{X}^s\rangle$, where $\eta_{X}^s :X\longrightarrow X^s=P_H(\ir_c(X))$, $x\mapsto\overline{\{x\}}$, is the soberification of $X$ and $X^w$ is sober, there exists a unique continuous mapping $\eta_{X}^{w*} :X^s\longrightarrow X^w$ such that $\eta_{X}^{w*}\circ\eta_X^s=\eta_X^w$, that is, the following diagram commutes.
\begin{equation*}
\xymatrix{
	X \ar[dr]_-{\eta_X^w} \ar[r]^-{\eta_X^s}
	&X^s\ar@{.>}[d]^-{\eta_{X}^{w*}}\\
	&X^w}
\end{equation*}	
So for each $A\in\ir_c(X)$, there exists a unique $B\in \wdd (X)$ such that $\downarrow_{\wdd (X)} A=\overline{\eta_X^w(A)}=\overline{\{B\}}=\downarrow_{\wdd (X)}B $. Clearly, we have $B\subseteq A$. On the other hand, for each $a\in A, \overline {\{a\}}\in \downarrow_{\wdd (X)} A=\downarrow_{\wdd (X)}B$, and whence $\overline {\{a\}}\subseteq B$. Thus $A\subseteq B$, and consequently, $A=B$. Thus $A\in \wdd(X)$.

(3) $\Rightarrow$ (1): If $\wdd(X)=\ir_c(X)$, then $X^w=P_H(\wdd (X))=P_H(\ir_c(X))=X^s$, with $\eta_X^w=\eta_X^s : X\longrightarrow X^w$, is the sobrification of $X$.

\end{proof}

\begin{proposition}\label{WFicationcomp}
	A $T_0$ space $X$ is compact if{}f $X^w$ is compact.
\end{proposition}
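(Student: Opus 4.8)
The plan is to use the universal property of the well-filtered reflection together with the fact (Lemma \ref{lemmaeta}) that $\eta_X : X \longrightarrow X^w = P_H(\wdd(X))$ is a topological embedding whose image is dense. First I would handle the easy direction: if $X^w$ is compact, then since $\eta_X(X)$ is a subspace of $X^w$ that is dense (by Lemma \ref{lemmaclosure}, $\overline{\eta_X(X)} = \Box \overline{X} = \Box X = X^w$), and since the only open set of $X^w$ containing the closed point... actually more simply, compactness passes to dense subspaces in the following sense: any open cover of $X$ by sets $U_i$ gives an open cover $\{\Diamond U_i\}$ of $\eta_X(X)$, hence of $\overline{\eta_X(X)} = X^w$ (because an open set $\Diamond V$ meeting $\eta_X(X)$... ), so a finite subcover $\Diamond U_{i_1},\dots,\Diamond U_{i_k}$ of $X^w$ pulls back under $\eta_X$ to a finite subcover of $X$. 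So $X$ is compact.

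For the converse, suppose $X$ is compact. I would take an open cover $\{\mathcal V_j : j \in J\}$ of $X^w$; each $\mathcal V_j$ is a union of basic opens $\Diamond U$ with $U \in \mathcal O(X)$, so without loss of generality the cover consists of sets $\Diamond U_i$, $i \in I$, with $\bigcup_{i\in I}\Diamond U_i = X^w$. Applying $\eta_X^{-1}$ and using $\eta_X^{-1}(\Diamond U_i) = U_i$ (shown in the proof of Lemma \ref{lemmaeta}), we get $\bigcup_{i\in I} U_i \supseteq \eta_X^{-1}(X^w) = X$, i.e. $\{U_i\}$ covers $X$. By compactness of $X$ there is a finite $J_0 \subseteq I$ with $X = \bigcup_{i\in J_0} U_i$. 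It remains to check that $\{\Diamond U_i : i \in J_0\}$ still covers $X^w$: for any $A \in \wdd(X)$, $A$ is a nonempty subset of $X = \bigcup_{i\in J_0} U_i$, so $A \cap U_i \neq \emptyset$ for some $i \in J_0$, i.e. $A \in \Diamond U_i$. Hence $X^w = \bigcup_{i \in J_0} \Diamond U_i$, proving $X^w$ is compact.

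The main thing to be careful about — though it is not really an obstacle — is the reduction to covers by basic open sets and, in the "compact $\Rightarrow$ $X^w$ compact" direction, the observation that a $\wdd$ set (being a nonempty subset of $X$) must meet at least one member of any finite open cover of the whole space $X$; this is exactly why the finite subcover of $X$ lifts back to a finite subcover of $X^w$. No deep results are needed beyond Lemma \ref{lemmaeta} and the explicit description of the topology of $P_H(\wdd(X))$ recalled just before it; the argument is essentially that $\eta_X$ induces a bijection between open sets of $X$ and a base of open sets of $X^w$ that preserves and reflects the covering relation, so compactness transfers in both directions.
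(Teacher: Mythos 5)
Your argument is correct and is essentially the paper's own proof: both directions translate covers back and forth along $\eta_X$, using $\eta_X^{-1}(\Diamond U)=U$ (equivalently $\mathcal S_c(X)\subseteq \wdd(X)$) together with the fact that every $A\in\wdd(X)$ is a nonempty subset of $X$ and hence meets some member of any open cover of $X$; the paper merely packages the direction ``$X$ compact $\Rightarrow X^w$ compact'' via Alexander's Subbase Lemma, while your reduction to basic opens is equally fine since the topology of $X^w$ is exactly $\{\Diamond U : U\in\mathcal O(X)\}$. One small caution: the dangling density remark in your first direction is not the right justification (an open cover of a dense subspace need not cover the whole space); the reason $\{\Diamond U_i\}$ covers $X^w$ is precisely the nonemptiness observation you state explicitly later, applied to the arbitrary cover $\{U_i\}$ of $X$.
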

\begin{proof} By Proposition \ref{DRWIsetrelation}, we have $\mathcal S_c(X)\subseteq \wdd (X)\subseteq \ir_c (X)$. Suppose that $X$ is compact. For $\{U_i : i\in I\}\subseteq \mathcal O(X)$, if $\wdd (X)\subseteq \bigcup_{i\in I}\Diamond U_i$, then $X\subseteq \bigcup_{i\in I} U_i$ since $\mathcal S_c(X)\subseteq \wdd (X)$, and consequently, $X\subseteq \bigcup_{i\in I_0} U_i$ for some $I_0\in I^{(<\omega)}$. It follows that $\wdd (X)\subseteq \bigcup_{i\in I_0} \Diamond U_i$. Thus $X^w$ is compact by Alexander's Subbase Lemma (see, eg., \cite[Proposition I-3.22]{redbook}). Conversely, if $X^w$ is compact and $\{V_j : j\in J\}$ is a open cover of $X$, then $\wdd (X)\subseteq \bigcup_{j\in J}\Diamond V_j$. By the compactness of $X^w$, there is a finite subset $J_0\subseteq J$ such that $\wdd (X)\subseteq \bigcup_{j\in J_0}\Diamond V_j$, and whence $X\subseteq \bigcup_{j\in J_0}V_j$, proving the compactness of $X$.
\end{proof}

Since $\mathcal{S}_c(X)\subseteq \mathsf{WD}(X)\subseteq\ir_c(X)$ (see Proposition \ref{DRWIsetrelation}), the correspondence $U \leftrightarrow \Diamond_{\wdd (X)} U$ is a lattice isomorphism between $\mathcal O(X)$ and $\mathcal O(X^w)$, and whence we have the following proposition.

\begin{proposition}\label{wficationLHC}
	Let $X$ be a $T_0$ space. Then
	\begin{enumerate}[\rm (1)]
		\item $X$ is locally hypercompact if{}f $X^w$ is locally hypercompact.
        \item $X$ is a C-space if{}f $X^w$ is a C-space.
	\end{enumerate}
\end{proposition}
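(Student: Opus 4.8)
The plan is to push everything through the frame isomorphism recorded in the Remark immediately preceding the statement: the assignment $U\mapsto\Diamond_{\wdd(X)}U$ is an isomorphism of complete lattices from $\mathcal O(X)$ onto $\mathcal O(X^w)$ (recall $X^w=P_H(\wdd(X))$, whose open sets are precisely the $\Diamond_{\wdd(X)}U$ with $U\in\mathcal O(X)$, by the paragraph before Lemma \ref{lemmaclosure}; and $X^w$ is $T_0$ by the proof of Theorem \ref{WDwf}, while $X$ is $T_0$ by hypothesis). Since both ``being a $C$-space'' and ``being locally hypercompact'' are, for $T_0$ spaces, properties of the open-set lattice alone that are stable under lattice isomorphism, the two equivalences follow at once.

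For (2) I would invoke the characterization already recalled in Section~2 (cf.\ \cite{E_2009}): a $T_0$ space $Z$ is a $C$-space if and only if $\mathcal O(Z)$ is completely distributive. Complete distributivity is plainly preserved and reflected by a complete-lattice isomorphism, so $X$ is a $C$-space $\iff$ $\mathcal O(X)$ is completely distributive $\iff$ $\mathcal O(X^w)$ is completely distributive $\iff$ $X^w$ is a $C$-space.

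For (1) I would argue in exactly the same way via Ern\'e's characterization (see \cite{E_2009, E_20182}) that a $T_0$ space $Z$ is locally hypercompact if and only if $\mathcal O(Z)$ is a hypercontinuous lattice, hypercontinuity being again a lattice-isomorphism invariant. If one prefers to avoid that external input, the neighbourhood condition can be transported directly through $U\mapsto\Diamond_{\wdd(X)}U$: given $A\in\wdd(X)$ with $A\in\Diamond U$, choose $a\in A\cap U$ and, by local hypercompactness of $X$, some $\ua F\in\mathbf{Fin}\,X$ with $a\in\ii\,\ua F\subseteq\ua F\subseteq U$; then $\{\overline{\{y\}}:y\in F\}$ is a nonempty finite subset of $\wdd(X)$ and, using that the specialization order of $X^w$ is set inclusion and that $\wdd(X)\subseteq\ir_c(X)$, one checks $A\in\Diamond(\ii\,\ua F)\subseteq\ua_{X^w}\{\overline{\{y\}}:y\in F\}\subseteq\Diamond U$, so $X^w$ is locally hypercompact; conversely, for $x\in U\in\mathcal O(X)$ one has $\overline{\{x\}}\in\Diamond U$, and writing a witnessing interior in $X^w$ as $\Diamond V$ with $\overline{\{x\}}\in\Diamond V\subseteq\ua_{X^w}\{B_1,\dots,B_n\}\subseteq\Diamond U$, one picks $b_i\in B_i\cap U$ and verifies $x\in V\subseteq\ua\{b_1,\dots,b_n\}\subseteq U$ (using that open sets are upper sets), recovering local hypercompactness of $X$.

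The transfer step is trivial once the inputs are in place, so the only genuine content is the characterization of local hypercompactness, and that is the point I would be most careful about: one must ensure, either by citing \cite{E_2009, E_20182} or by the direct verification sketched above, that local hypercompactness really is a property of $\mathcal O(X)$ alone, i.e.\ that the finitely generated saturated sets $\ua F$ and their interiors are faithfully mirrored on $X^w$ by the correspondence $U\leftrightarrow\Diamond_{\wdd(X)}U$.
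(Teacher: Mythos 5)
Your proposal is correct and takes essentially the same route as the paper: the paper justifies this proposition solely by the preceding remark that, since $\mathcal S_c(X)\subseteq\wdd(X)\subseteq\ir_c(X)$, the correspondence $U\leftrightarrow\Diamond_{\wdd(X)}U$ is a lattice isomorphism between $\mathcal O(X)$ and $\mathcal O(X^w)$, which is exactly your transfer argument, with (2) handled by the complete-distributivity characterization of $C$-spaces recalled in Section 2. Your direct point-level verification for local hypercompactness (passing from $\ua F$ in $X$ to $\ua_{X^w}\{\overline{\{y\}}:y\in F\}$ and back by choosing $b_i\in B_i\cap U$) is sound and supplies precisely the detail the paper leaves implicit, so you need not lean on the cited lattice-theoretic characterization at all.
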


\begin{proposition}\label{wdficationLC} For a $T_0$ space $X$, the following conditions are equivalent:
\begin{enumerate}[\rm (1)]
		\item $X$ is core compact.
        \item $X^w$ is core compact.
        \item $X^w$ is locally compact.
\end{enumerate}
\end{proposition}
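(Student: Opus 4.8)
The plan is to split the equivalences as (1) $\Leftrightarrow$ (2) and (2) $\Leftrightarrow$ (3), exploiting two facts about $X^w=P_H(\wdd(X))$ that are already available: the observation made just before this proposition that $U\mapsto\Diamond_{\wdd(X)}U$ is a lattice isomorphism between $\mathcal O(X)$ and $\mathcal O(X^w)$ (a consequence of $\mathcal S_c(X)\subseteq\wdd(X)\subseteq\ir_c(X)$ from Proposition \ref{DRWIsetrelation}), and the fact that $X^w$ is a well-filtered space (Theorem \ref{WDwf}).

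For (1) $\Leftrightarrow$ (2): by definition a $T_0$ space $Z$ is core compact precisely when $\mathcal O(Z)$ is a continuous lattice, and continuity of a complete lattice is an order-theoretic property, hence invariant under lattice isomorphism (the way-below relation, and thus the property of each element being a directed sup of elements way below it, is defined purely from the order). Since $\mathcal O(X)\cong\mathcal O(X^w)$ via the isomorphism above, $\mathcal O(X)$ is continuous if and only if $\mathcal O(X^w)$ is, i.e., $X$ is core compact if and only if $X^w$ is core compact. This step is essentially immediate once the lattice isomorphism is invoked.

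For (2) $\Leftrightarrow$ (3): since $X^w$ is well-filtered by Theorem \ref{WDwf}, we apply Corollary \ref{WfLC=CoreC} to the space $X^w$ itself, which yields directly that $X^w$ is locally compact if and only if $X^w$ is core compact. Combining the two equivalences gives that (1), (2) and (3) are equivalent.

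I do not expect a genuine obstacle here, since all the substantive work has been front-loaded into Theorem \ref{WDwf}, the lattice isomorphism $\mathcal O(X)\cong\mathcal O(X^w)$, and Corollary \ref{WfLC=CoreC} (which in turn rests on Theorem \ref{corecwellf sober}, the solution to the Jia--Jung problem). The one point deserving an explicit sentence is the transfer of core compactness across a lattice isomorphism of open-set lattices, but this is transparent from the definition. It is worth noting that $X$ itself need not be locally compact even when $X^w$ is, so condition (3) is genuinely a statement about $X^w$; the proposition makes no claim that core compactness of $X$ forces local compactness of $X$.
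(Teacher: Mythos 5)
Your proposal is correct and follows essentially the same route as the paper: (1) $\Leftrightarrow$ (2) via the lattice isomorphism $\mathcal O(X)\cong\mathcal O(X^w)$, and (2) $\Leftrightarrow$ (3) by applying Corollary \ref{WfLC=CoreC} to the well-filtered space $X^w$ (Theorem \ref{WDwf}). The only cosmetic difference is that the paper records (3) $\Rightarrow$ (2) as trivial and uses the corollary only for (2) $\Rightarrow$ (3), whereas you invoke the corollary as a single biconditional; the content is identical.
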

\begin{proof} (1) $\Leftrightarrow$ (2): Since $\mathcal O(X)$ and $\mathcal O(X^w)$ are lattice-isomorphic.

(2) $\Rightarrow$ (3): By Theorem \ref{WDwf}, $X^w$ is well-filtered. If $X^w$ is core compact, then $X^w$ is locally compact by Corollary \ref{WfLC=CoreC}.

(3) $\Rightarrow$ (2): Trivial.
\end{proof}

\begin{remark}\label{corecompnotLC} In \cite{Hofmann-Lawson} (see also \cite[Exercise V-5.25]{redbook}) Hofmann and Lawson given a core compact $T_0$ space but not locally compact. By Proposition \ref{wdficationLC}, $X^w$ is locally compact. So the local compactness of $X^w$ does not imply the local compactness of $X$ in general.
\end{remark}

\begin{theorem}\label{SmythWD}  Let $X$ be a $T_0$ space. If $P_S(X)$ is well-filtered determined, then $X$ is well-filtered determined.
\end{theorem}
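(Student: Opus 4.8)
The plan is to show that every $A\in\ir_c(X)$ belongs to $\wdd(X)$; since $\wdd(X)\subseteq\ir_c(X)$ always holds (Proposition \ref{DRWIsetrelation}), this yields $\wdd(X)=\ir_c(X)$, i.e.\ $X$ is a $\wdd$ space. So fix $A\in\ir_c(X)$ and an arbitrary continuous map $f:X\longrightarrow Y$ into a well-filtered space $Y$; we must produce a (necessarily unique, by the $T_0$ property of $Y$) point $y_A\in Y$ with $\overline{f(A)}=\overline{\{y_A\}}$. The idea is to transport everything to the Smyth power spaces. First note that $\widehat f:P_S(X)\longrightarrow P_S(Y)$, $K\mapsto\ua f(K)$, is well defined ($f(K)$ is compact as a continuous image of a compact set, so $\ua f(K)\in\mk(Y)$) and continuous (since $\widehat f^{-1}(\Box U)=\Box f^{-1}(U)$ for each $U\in\mathcal O(Y)$), and that it satisfies $\widehat f\circ\xi_X=\xi_Y\circ f$, whence $\widehat f(\xi_X(A))=\xi_Y(f(A))$.

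Next, by Lemma \ref{X-Smyth-irr} we have $\xi_X(A)\in\ir(P_S(X))$, so $\overline{\xi_X(A)}\in\ir_c(P_S(X))$; since $P_S(X)$ is assumed to be a $\wdd$ space, $\overline{\xi_X(A)}\in\wdd(P_S(X))$. By Theorem \ref{Smythwf} the space $P_S(Y)$ is well-filtered. Applying the definition of a $\wdd$ set to the continuous map $\widehat f:P_S(X)\longrightarrow P_S(Y)$ into this well-filtered space, we obtain a unique $\mathcal Q\in\mk(Y)=P_S(Y)$ with $\overline{\widehat f(\overline{\xi_X(A)})}=\overline{\{\mathcal Q\}}$; and since $\overline{\widehat f(\overline{\xi_X(A)})}=\overline{\widehat f(\xi_X(A))}=\overline{\xi_Y(f(A))}$, this says $\overline{\xi_Y(f(A))}=\overline{\{\mathcal Q\}}$ in $P_S(Y)$.

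It remains to decode this identity back inside $Y$. The key computation is: for $K\in\mk(Y)$ and $B\subseteq Y$, one has $K\in\overline{\xi_Y(B)}$ (closure in $P_S(Y)$) if and only if $K\cap\overline B\neq\emptyset$. Indeed, since $\{\Box U:U\in\mathcal O(Y)\}$ is a base of $P_S(Y)$, $K\in\overline{\xi_Y(B)}$ says exactly that every open $U\supseteq K$ meets $B$ (equivalently $\overline B$), and this is equivalent to $K\cap\overline B\neq\emptyset$: if $K\cap\overline B=\emptyset$ then $U=Y\setminus\overline B$ is an open superset of $K$ missing $\overline B$, while if $x\in K\cap\overline B$ then $x$ lies in every open superset of $K$. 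We also record that $\overline{\{\mathcal Q\}}=\{K\in\mk(Y):\mathcal Q\subseteq K\}$, since the specialization order of $P_S(Y)$ is the Smyth order. Now apply both facts. On one hand $\mathcal Q\in\overline{\{\mathcal Q\}}=\overline{\xi_Y(f(A))}$, so $\mathcal Q\cap\overline{f(A)}\neq\emptyset$; choose $y_A\in\mathcal Q\cap\overline{f(A)}$, so $\overline{\{y_A\}}\subseteq\overline{f(A)}$. On the other hand, for each $y\in f(A)$ we have $\ua y\in\xi_Y(f(A))\subseteq\overline{\xi_Y(f(A))}=\overline{\{\mathcal Q\}}$, hence $\mathcal Q\subseteq\ua y$, i.e.\ $y\leq q$ for all $q\in\mathcal Q$; in particular $y\leq y_A$. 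Thus $f(A)\subseteq\overline{\{y_A\}}$, so $\overline{f(A)}\subseteq\overline{\{y_A\}}$, and therefore $\overline{f(A)}=\overline{\{y_A\}}$, proving $A\in\wdd(X)$.

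The only genuinely delicate step is the closure identity $K\in\overline{\xi_Y(B)}\Leftrightarrow K\cap\overline B\neq\emptyset$, and in particular its use with $K=\mathcal Q$. A naive reading of $\overline{\xi_Y(f(A))}=\overline{\{\mathcal Q\}}$ — for instance obtained by intersecting with the (merely dense, not closed) subspace $\mathcal S^u(Y)\cong Y$ — only recovers $\overline{f(A)}=\bigcap_{q\in\mathcal Q}\da q$ together with $\mathcal Q\subseteq\bigcap_{y\in f(A)}\ua y$, information that need not by itself exhibit a generic point of $\overline{f(A)}$. It is precisely the fact that $\mathcal Q$ lies in its \emph{own} closure $\overline{\xi_Y(f(A))}$, rather than merely sitting above it in the Smyth order, that forces $\mathcal Q$ to meet $\overline{f(A)}$ and hands us $y_A$ (which is then forced to be $\min\mathcal Q$, so that $\mathcal Q=\ua y_A$ and $\overline{f(A)}=\da y_A$); the rest of the argument is routine.
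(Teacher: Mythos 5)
Your proposal is correct and follows essentially the same route as the paper: transport $A$ to $\overline{\xi_X(A)}=\Diamond A\in\wdd(P_S(X))$, push it forward along the continuous map $P_S(f):K\mapsto\ua f(K)$ into the well-filtered space $P_S(Y)$ (Theorem \ref{Smythwf}), and then decode $\overline{P_S(f)(\Diamond A)}=\overline{\{\mathcal Q\}}$ back inside $Y$. The only (harmless) divergence is in the final decoding: the paper first shows $\mathcal Q$ is supercompact, hence $\mathcal Q=\ua y_Q$ by \cite[Fact 2.2]{Klause-Heckmann}, and then argues as you do, whereas your closure identity $\overline{\xi_Y(B)}=\Diamond\overline{B}$ (within $\mk(Y)$) lets you pick $y_A\in\mathcal Q\cap\overline{f(A)}$ directly and skip the supercompactness claim.
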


\begin{proof} Let $A\in\ir_c(X)$, $Y$ a well-filtered space and $f:X\longrightarrow Y$  a continuous mapping. Then $\overline{\xi_X(A)}=\Diamond A\in\ir_c(P_S(X))=\wdd (P_S(X))$ since $P_S(X)$ is well-filtered determined, where $\xi_X : X \longrightarrow P_S(X)$, $x\mapsto\ua x$. Define a mapping $P_S(f): P_S(X)\longrightarrow P_S(Y)$ by
	$$\forall K\in\mk(X),\ P_S(f)(K)=\ua f(K).$$
	
	{Claim 1:} $P_S(f)\circ\xi_X=\xi_Y\circ f$.
	
	For each $ x\in X$, we have
	$$P_S(f)\circ\xi_X(x)=P_S(f)(\ua x)=\ua f(x)=\xi_Y\circ f(x),$$
	that is, the following diagram commutes.
	\begin{equation*}
	\xymatrix{
		X \ar[d]_-{f} \ar[r]^-{\xi_X} &P_S(X)\ar[d]^-{P_S(f)}\\
		Y \ar[r]^-{\xi_Y} &P_S(Y)	}
	\end{equation*}

{Claim 2:} $P_S(f): P_S(X)\longrightarrow P_S(Y)$ is continuous.

Let $V\in\mathcal O(Y)$. We have
$$\begin{array}{lll}
P_S(f)^{-1}(\Box V)&=&\{K\in\mk(X): P_S(f)(K)=\ua f(K)\subseteq V\}\\
&=&\{K\in\mk(X): K\subseteq f^{-1}(V)\}\\
&=&\Box f^{-1}(V),
\end{array}$$
which is open in $P_S(X)$. This implies that $P_S(f)$ is continuous.

By Theorem \ref{Smythwf}, $P_S(Y)$ is well-filtered. Since $P_S(f)$ is continuous and $\Diamond A\in \wdd(P_S(X))$,
there exists a unique $Q\in \mk(Y)$ such that $\overline{P_S(f)(\Diamond A)}=\overline{\{Q\}}$.

 {Claim 3:} $Q$ is supercompact.

 Let $\{U_j:j\in J\}\subseteq\mathcal O(X)$ with $Q\subseteq \bigcup_{j\in J}U_j$, i.e., $Q\in\Box \bigcup_{j\in J}U_j$. Note that $\overline{P_S(f)(\Diamond A)}=\overline{\{\ua f(a):a\in A\}}$, thus $\{\ua f(a):a\in A\}\cap \Box \bigcup_{j\in J}U_j\neq\emptyset$. Then there exists $a_0\in A$ and $j_0\in J$ such that
 $Q\subseteq \ua f(a_0)\subseteq U_{j_0}$.

 Hence, by \cite[Fact 2.2]{Klause-Heckmann}, there exists $y_Q\in Y$ such that $Q=\ua y_Q$.

 {Claim 4:} $\overline{f(A)}=\overline{\{y_Q\}}$.

 Note that $\overline{\{\ua f(a):a\in A\}}=\overline{\{\ua y_Q\}}$. Thus for each $y\in f(A)$, $\ua y\in \overline{\{\ua y_Q\}}$, showing that $\ua y_Q\subseteq \ua y$, i.e.,  $y\in\overline{\{y_Q\}}$. This implies that $f(A)\subseteq \overline{\{y_Q\}}$.
 In addition, since $\ua y_Q\in\overline{\{\ua f(a):a\in A\}}=\Diamond \overline{f(A)}$, $\ua y_Q\cap\overline{f(A)}\neq\emptyset$. This implies that $y_Q\in \overline{f(A)}$. Therefore, $\overline{f(A)}=\overline{\{y_Q\}}$.
 \end{proof}

\section{Conclusion}

In this paper, we introduced and instigated two new classes of subsets in $T_0$ spaces - Rudin sets and $\wdd$ sets lying between the class of all closures of directed subsets and that of irreducible closed subsets, as well as  three new types of spaces - $\mathsf{DC}$ spaces, Rudin spaces and $\wdd$ spaces. Rudin spaces lie between $\wdd$ spaces and $\dc$ spaces, and $\dc$ spaces lie between Rudin spaces and sober spaces. Through such spaces, sober spaces can be factored. More precisely, for a $T_0$ space $X$, it is proved that the following conditions are equivalent: (1) $X$ is sober; (2) $X$ is a $\mathsf{DC}$ $d$-space; (3) $X$ is a well-filtered $\mathsf{DC}$ space; (4) $X$ is a well-filtered Rudin space; and (5) $X$ is a well-filtered $\mathsf{WD}$ space. It is shown that locally hypercompact $T_0$ spaces are $\mathsf{DC}$ spaces, locally compact $T_0$ spaces are Rudin spaces, and core compact $T_0$ spaces are $\wdd$ spaces. As a corollary we have that every core compact well-filtered space is sober, giving a positive answer to Jia-Jung problem \cite{jia-2018}. Using Rudin sets and $\wdd$ sets, we formulate and prove a number of  new characterizations of well-filtered spaces and sober spaces.

Recently, following Keimel and Lawson's method \cite{Keimel-Lawson}, which originated from Wyler's method \cite{Wyler}, Wu, Xi, Xu and Zhao [9] gave a positive answer to the above problem. Following Ershov's method of constructing the $d$-completion of $T_0$ spaces, Shen, Xi, Xu and Zhao have presented a construction of the well-filtered reflection of $T_0$ spaces. In this paper, using $\wdd$ sets, we give a direct approach to well-filtered reflections of $T_0$ spaces, and show that products of well-filtered spaces are well-filtered. Some important properties of well-filtered reflections of $T_0$ spaces are investigated. Comparatively, the technique presented in the paper is not just more direct, but also simpler. Furthermore, it can be also applied to the general $K$-ifications considered  by Keimel and Lawson \cite{Keimel-Lawson}.

Our work shows that $\mathsf{DC}$ spaces, Rudin spaces and $\wdd$ spaces may deserve further investigation. Our study also leads to a number of problems, whose answering will deepen our understanding of the related spaces and structures.

We now close our paper with the following questions about Rudin spaces, $\wdd$ spaces, products of $\wdd$ spaces and well-filtered reflections of products of $T_0$ spaces.

\begin{question}\label{R-Wquestion} Does $\mathsf{RD}(X)=\mathsf{WD}(X)$ hold for ever $T_0$ space $X$?
\end{question}

\begin{question} Is every well-filtered determined space a Rudin space?
\end{question}

\begin{question}\label{WDinfiniteset-prodquestion} Let $X=\prod_{i\in I}X_i$ be the product space of a family $\{X_i: i\in I\}$ of $T_0$ spaces. If each $A_i\subseteq X_i (i\in I)$ is a $\wdd$ set, must  the product set $\prod_{i\in I}A_i$ be a $\wdd$ set of $X$?
\end{question}

\begin{question}\label{WDinfinite-prodquestion} Is the product space of an arbitrary  collection of $\wdd$ spaces well-filtered determined?
\end{question}

\begin{question}\label{infiniteWFication} Does $(\prod\limits_{i\in I}X_i)^w=\prod\limits_{i\in I}X_i^w$ (up to homeomorphism) hold for any family $\{X_i : i\in I\}$ of $T_0$ spaces?
\end{question}

\begin{question}\label{SmythWD} Is the Smyth power space $P_S(X)$ of a well-filtered determined $T_0$ space $X$ again well-filtered determined?
\end{question}


\begin{thebibliography}{99}

\bibitem{Engelking} R. Engelking, General Topology, Polish Scientific Publishers, Warzawa, 1989.
\bibitem{E_2009} M. Ern\'e, Infinite distributive laws versus local connectedness and compactness properties. Topol. Appl.
    156 (2009) 2054每2069.
\bibitem{well}M. Ern\'e, Sober spaces, well-filtration and compactness principles. 2007, http://www.iazd.uni-hannover.de/erne/pre-
    prints/sober.pdf.
\bibitem{E_20181}M. Ern\'e, The strength of prime ideal separation, sobriety, and compactness theorems. Topol. Appl. 241 (2018) 263每290.
\bibitem{E_20182} M. Ern\'e, Categories of Locally Hypercompact Spaces and Quasicontinuous Posets, Applied Categorical Structures. 26 (2018) 823-854.
\bibitem{Esc-Laws-Simp2004} M. Escar\'do, J. Lawson, A. Simpson, Comparing Cartesian closed categories of (core) compactly generated spaces, Topol. Appl. 143 (2004) 105每145.
\bibitem{Hofmann-Lawson} K. Hofmann and J. Lawson, The spectral theory of distributive continuous lattices.
         Trans. of the Amer. Math.l Soc. 246 (1978) 285每310.
\bibitem{redbook} G. Gierz, K. Hofmann, K. Keimel, J. Lawson, M. Mislove, D. Scott, Continuous Lattices and Domains, Encycl. Math. Appl., vol. 93, Cambridge University Press, 2003.
\bibitem{Jean-2013} J. Goubault-Larrecq, Non-Hausdorff topology and Domain Theory, New Mathematical Monographs, vol. 22, Cambridge University Press, 2013.
\bibitem{gcont} G. Gierz, J. Lawson, Generalized continuous and hypercontinuous lattices, Rocky Mt. J. Math. 11 (1981) 271每296.
\bibitem{quasicont} G. Gierz, J. Lawson, A. Stralka, Quasicontinuous posets, Houst. J. Math. 9 (1983) 191每208.
\bibitem{Heckmann} R. Heckmann, An upper power domain construction in terms of strongly compact sets, in: Lecture Notes in Computer Science, vol. 598, Springer, Berlin Heidelberg New York, 1992, pp. 272-293.
\bibitem{Keimel-Lawson} K. Keimel, J. Lawson, $D$-completion and $d$-topology, Ann. Pure Appl. Log. 159 (3) (2009) 292每306.
\bibitem{Klause-Heckmann} R. Heckmann, K. Keimel, Quasicontinuous domains and the Smyth powerdomain, Electronic Notes in Theor. Comp. Sci. 298 (2013) 215-232.
\bibitem{Hofmann-Mislove} K. Hofmann and M. Mislove, Local compactness and continuous lattices, in: Lecture Notes in Mathematics, vol. 871, 1981, pp.125-158.
\bibitem{Kou}H. Kou, $U_k$-admitting dcpo＊s need not be sober, in: Domains and Processes, Semantic Structure on Domain Theory, vol. 1, Kluwer, 2001, pp. 41-50.
\bibitem{isbell}J. Isbell,  Completion of a construction of Johnstone, Proc. Amer. Math. Soci. 85 (1982) 333-334.
\bibitem{jia-2018} X. Jia, Meet-Continuity and Locally Compact Sober Dcpos, PhD thesis, University of Birmingham, 2018.
\bibitem{jia-Jung-2016} X. Jia, A. Jung, A note on coherence of dcpos, Topol. Appl. 209 (2016) 235-238.
\bibitem{johnstone-81} P. Johnstone, Scott is not always sober, in: Continuous Lattices, Lecture Notes in Math., vol. 871, Springer-Verlag, 1981, pp. 282-283.
\bibitem{Lawson-Xi} J. Lawson, X. Xi, Well-filtered spaces, compactness, and the lower topology, preprint.
\bibitem{Rudin} M. Rudin, Directed sets which converge, in: General Topology and Modern Analysis, University of California, Riverside, 1980, Academic Press, 1981, pp. 305每307.
\bibitem{Schalk} A. Schalk, Algebras for Generalized Power Constructions, PhD Thesis, Technische Hochschule Darmstadt, 1993.
\bibitem{Shenchon} C. Shen, X. Xi, X. Xu, D. Zhao, On well-filtered reflections of $T_0$ spaces, Topol. Appl. DOI: 10.1016/j.topol.2019.106869.
\bibitem{wu-xi-xu-zhao-19}G. Wu, X. Xi, X. Xu, D. Zhao, Existence of well-filterification, arXiv:1906.10832 [math.GN](2019).
\bibitem{Wyler} U. Wyler, Dedekind complete posets and Scott topologies, in: Lecture Notes in Mathematics, vol. 871, 1981, pp. 384-389.
\bibitem{Xi-Lawson-2017} X. Xi, J. Lawson, On well-filtered spaces and ordered sets, Topol. Appl. 228 (2017) 139-144.
\bibitem{xi-zhao-MSCS-well-filtered} X. Xi, D. Zhao, Well-filtered spaces and their dcpo models, Math. Struct. Comput. Sci. 27 (2017) 507-515.
\bibitem{xuxizhao} X. Xu, X. Xi, D. Zhao, A complete Heyting algebra whose Scott topology is not sober, arXiv:1903.00615 [math.GN](2019).
\bibitem{ZhaoHo}D. Zhao, W. Ho, On topologies defined by irreducible sets, Journal of Logical and
   Algebraic Methods in Programmin. 84(1) (2015) 185-195.
\end{thebibliography}
\end{document}